\newtheorem{theorem}{Theorem}
\newtheorem{lemma}[theorem]{Lemma}
\newtheorem{corollary}[theorem]{Corollary}
\newtheorem{proposition}[theorem]{Proposition}
\theoremstyle{definition}
\newtheorem{remark}[theorem]{Remark}
\newtheorem{definition}[theorem]{Definition}
\newcommand{\eref}[1]{(\ref{e.#1})}
\newcommand{\tref}[1]{Theorem \ref{t.#1}}
\newcommand{\lref}[1]{Lemma \ref{l.#1}}
\newcommand{\pref}[1]{Proposition \ref{p.#1}}
\newcommand{\cref}[1]{Corollary \ref{c.#1}}
\newcommand{\fref}[1]{Figure \ref{f.#1}}
\newcommand{\sref}[1]{Section \ref{s.#1}}
\newcommand{\deflink}[2]{\hyperref[#1]{#2}}
\numberwithin{theorem}{section}
\numberwithin{equation}{section}
\newcommand{\R}{\mathbb{R}}
\newcommand{\grad}{\nabla}
\def\XXint#1#2#3{{\setbox0=\hbox{$#1{#2#3}{\int}$ }
\vcenter{\hbox{$#2#3$ }}\kern-.6\wd0}}
\newcommand{\ep}{\varepsilon}
\newcommand{\osc}{\mathop{\textup{osc}}}
\newcommand{\Diss}{\operatorname{Diss}}
\newcommand{\FS}{\Sigma^{f}} % free surface
\newcommand{\PS}{\Sigma^{p}} % plate contact surface
\newcommand{\CS}{\Sigma^{o}} % container contact surface
\newcommand{\PL}{\gamma^{p}} % plate contact line
\newcommand{\CL}{\gamma^{o}} % container contact line
\newcommand{\THP}[1]{\theta_{\gamma^p_{#1}}} %plate contact angle
\newcommand{\THC}[1]{\theta_{\gamma^o_{#1}}} %container contact angle
\newcommand{\THG}[1]{\theta_{\gamma_{#1}}} %generic contact angle
\newcommand{\Cyl}{\mathrm{Cyl}^d} % d-dim annular cylinder, the region liquid fills
\newcommand{\thinCyl}{\mathrm{Cyl}^{d-1}} % the (d-1)-dim product of a (d-2)-sphere and an interval
\newcommand{\YP}{\theta^{p}_{Y}} % Young angle, plate
\newcommand{\YC}{\theta^{o}_{Y}} % Young angle, container
\newcommand{\relAreaP}[1]{\lvert{#1}\rvert_{\mathrm{rel},p}} % relative area of a subset of the plate boundary
\newcommand{\relAreaC}[1]{\lvert{#1}\rvert_{\mathrm{rel},o}} % relative area of a subset of the container boundary
\newcommand{\vol}{\operatorname{vol}} % d-dimensional volume of a set
\newcommand{\one}{\mathbf{1}}
\definecolor{niceblue}{rgb}{0,0,0.7}
\def\strikethrough#1{\setbox0\hbox{#1}\rlap{#1}\hbox to \wd0{\hss\strikebox\hss}}
\def\strikebox{\vrule height 0.6\ht0 depth -0.4\ht0 width 1.1\wd0}
\newcommand\numberthis{\stepcounter{equation}\tag{\theequation}}
\begin{document}

% \title{Wilhelmy plate model with gravity}
\title{Rate independent capillary motion on a narrow Wilhelmy plate}
\author{Carson Collins}
\address{University of California, Los Angeles, Department of Mathematics, Los Angeles, CA}
\author{William M Feldman}
\address{University of Utah, Department of Mathematics, Salt Lake City, UT}
\subjclass{49Q20, 35Q35, 35R35}
\keywords{Rate-independent evolutions, gradient systems, capillary boundary conditions, contact angle hysteresis}
\begin{abstract}
    We study a rate independent energetic model of the Wilhelmy plate experiment in capillarity. The evolution is driven by vertical motions of the plate. We show stability of energy solutions to the evolution, in the sense used in the rate-independent systems literature, as the ratio between container width and plate width goes to infinity. In particular, we show that the volume-constraint for the finite-ratio problem disappears in the limit. This leads to a volume-unconstrained Dirichlet-forced evolution, a setting where monotonicity, uniqueness, and contact line regularity properties have been established in previous literature.

    Our result is based on using comparison principle techniques for the prescribed mean curvature equation with capillary contact angle condition that characterizes the liquid surface at equilibrium. Through barrier arguments, we are able to develop asymptotics for the energy which give us control independent of the container-to-plate ratio.
\end{abstract}

\maketitle

\setcounter{tocdepth}{1}
\tableofcontents

\section{Introduction}
In this paper we study a model for the evolution of a capillary surface in the scenario of the Wilhelmy plate experiment. In this experiment a solid plate is dipped into a liquid bath and then slowly lifted in order to measure dynamic contact angles. See \fref{intro-fig}. We will work with a rate-independent energetic evolution incorporating the effects of contact angle hysteresis on the plate.  Our main result is a limit theorem in the case where the ratio $R>1$ of the container radius to the plate radius becomes very large.  We focus on the evolution of the contact line on the plate, and show how a monotone evolution arises in the limit from volume constrained evolutions as $R \to \infty$.

From a mathematical perspective one of the issues that we want to explore in this paper is the connection between volume-forced evolutions, as studied in \cite{AlbertiDeSimone}, and Dirichlet-forced evolutions as studied in \cites{FeldmanKimPozarEnergy,FeldmanKimPozarViscosity,CollinsFeldman}. The volume constrained case is more relevant to typical experimental setups; however, its spatial regularity of solutions and uniqueness properties are very challenging to address. On the other hand, with monotonically varying Dirichlet data, there has been progress toward describing the motion law of arbitrary energy solutions \cites{FeldmanKimPozarEnergy}, and showing regularity and uniqueness of solutions constructed by discrete-time schemes \cite{CollinsFeldman}. Intuitively speaking one may expect Dirichlet driven evolutions to arise in a setting where the length scale defined by the volume constraint is quite large relative to the length scale of displacements of the contact line. Our main result will make this idea rigorous in a model of the Wilhelmy plate experiment.  We show that the Dirichlet-forced evolution can be derived as a suitable scaling limit of the volume-forced evolution.

%\begin{figure}
%    \centering
%    \includegraphics[width=0.5\linewidth]{}
%    \caption{Caption}
%    \label{fig:placeholder}
%\end{figure}

We work in the abstract framework of \cite{mielke2015book} for energetic rate-independent systems. To introduce this framework, we require a state space $X$, an energy functional $\mathcal{E}:X\to (-\infty, \infty]$ (representing the potential energy stored in the system), a dissipation functional $\Diss:X\times X\to [0,\infty)$ (representing the energy lost to friction when moving from one state to another), and a flux $Q:X\times [0,T]\to \R$ (representing the energy added or removed to the system by external forces over time). Then the notion of energy solution is:
\begin{definition}
A measurable path $L: [0,T]\to X$ is an \emph{energy solution} if it satisfies
\begin{enumerate}
    \item (Global stability) For each $t\in [0, T]$ and for all $L'\in X$,
    \[ \mathcal{E}[L(t)] \leq \mathcal{E}[L'] + \Diss[L(t), L']. \]
    \item (Energy-dissipation inequality) For each $t_0, t_1$ with $0 \leq t_0 < t_1 \leq T$,
    \[ \mathcal{E}[L(t_0)] - \mathcal{E}[L(t_1)] + \int_{t_0}^{t_1} Q[L(t),t] dt \geq \Diss[L; [t_0, t_1]] \]
\end{enumerate}
\end{definition}
\begin{remark}
    Energy solutions actually satisfy energy-dissipation balance:
    \[ \mathcal{E}[L(t_0)] - \mathcal{E}[L(t_1)] + \int_{t_0}^{t_1} Q[L(t),t] dt = \overline{\Diss}[L; [t_0, t_1]] \]
    where the \emph{total dissipation} $\overline{\Diss}$ is defined as the supremum over partitions of $\sum \Diss[L(t_i), L(t_{i+1})]$. The reverse inequality follows from global stability (see \lref{reverse_edi}).
\end{remark}

In our setting, the energy $\mathcal{E}$ accounts for the gravitational energy of the liquid, as well as the capillary energy associated to the free surface of the liquid, the liquid-plate interface, and the liquid-container interface. The state space $X$ incorporates the volume-constraint for the liquid. The dissipation $\Diss$ is modeled as arising from a constant frictional force per unit length applied along the contact line between the liquid and the inserted plate. The flux $Q$ represents the energy put into the system by moving the plate vertically, and depends on both the state of the system and the rate at which the plate is moved. We state all of these quantities more precisely in \sref{energy_intro}, and we subsequently restate the definition of energy solution adapted to our problem in Definition \ref{def:constrained_energy_sln}.

Our main result addresses stability of this notion of solution as the plate becomes small compared to the container. In the sequel, we represent this by introducing a parameter $R\in (1,\infty]$ for the ratio of the container width to the plate width; we annotate the state space $X_R$ and energy $\mathcal{E}_R$ accordingly. Then, the result is as follows:

\begin{theorem}[see \cref{compactness_of_energy_solutions_in_r}]\label{t.main}
    Let $(R_n)$ be given with $R_n\to \infty$ as $n\to\infty$, and let $L_{R_n}(t)$ be a family of energy solutions for $\mathcal{E}_{R_n}$ under the same external forcing. Then there exists a subsequence $n_k$ and a measurable path $L_\infty(t)$ such that
    \begin{itemize}
        \item $L_\infty(t)$ is an energy solution for $\mathcal{E}_\infty$.
        \item Relative energies converge: for any $t_0, t_1$: \[\mathcal{E}_{R_{n_k}}[L_{R_{n_k}}(t_0)] - \mathcal{E}_{R_{n_k}}[L_{R_{n_k}}(t_1)]\to \mathcal{E}_\infty[L_\infty(t_0)] - \mathcal{E}_{\infty}[L_\infty(t_1)] \]
        \item The liquid-plate contact sets of the $L_{R_{n_k}}(t)$ converge to those of the $L_\infty(t)$, pointwise in time.
        \item For each $t$, $L_\infty(t)$ is an $L^1_{\mathrm{loc}}$-subsequential limit of the $L_{R_{n_k}}(t)$ (for a subsequence which may depend on $t$).
        \item The fluxes $Q_{R_n}[L_{R_n}(t),t]$ converge in measure on $[0,T]$ to $Q_\infty[L_\infty(t),t]$.
    \end{itemize}

    %Conceptual result: limit of energy solutions $L_R(t)$ is an energy solution. More details
    %\begin{itemize}
    %    \item Existence of solutions $L_R(t)$ for all $R$
    %    \item All subsequences $L_R(t)$ as $R \to \infty$ have a further subsequence converging to an energy solution of $\infty$ problem (at the level of contact sets).
    %    \item Convergence of profiles, relativized energies, and pressure along $t$-dependent subsequences, to a measurable limit.
    %\end{itemize}
\end{theorem}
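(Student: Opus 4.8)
The plan is to run a standard compactness argument for energetic rate-independent systems (in the spirit of \cite{mielke2015book}), but with the crucial twist that the limiting object lives in a *different* state space than the approximating ones: the $X_R$ incorporate a volume constraint that must be shown to disappear as $R\to\infty$. So the argument has three ingredients that must be developed in parallel: (i) $R$-independent energy bounds, (ii) a compactness mechanism for the paths $L_{R_n}(t)$, and (iii) recovery of the two defining inequalities of an energy solution for $\mathcal{E}_\infty$ in the limit.

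First I would establish the $R$-uniform a priori estimates. Using the energy-dissipation balance for each $L_{R_n}$ together with global stability, and the fact that the forcing (plate motion) is fixed, one gets a uniform bound on the total dissipation $\overline{\Diss}[L_{R_n};[0,T]]$. Since $\Diss$ is modeled as a constant force per unit length along the liquid--plate contact line, this bounds the total variation of the contact set on the plate, uniformly in $n$. This is where the barrier/comparison arguments advertised in the abstract enter: one needs that the energies $\mathcal{E}_{R_n}$, restricted to the relevant configurations, differ from $\mathcal{E}_\infty$ by a quantity that is controlled uniformly in $R$ — i.e., the contribution of the far-field container region and of the volume constraint to the energy is negligible at leading order. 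I would prove this via explicit sub/super-solutions (barriers) for the prescribed-mean-curvature equation with the capillary angle condition, obtaining asymptotics for $\mathcal{E}_{R_n}$ as $R_n\to\infty$. With these asymptotics in hand, relative-energy differences are uniformly bounded and, in fact, one gets the compactness needed for the second bullet of the theorem.

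Next, compactness: the uniform bound on the total variation of the contact-line position on the plate lets me apply a Helly-type selection theorem to extract a subsequence $n_k$ along which the liquid--plate contact sets converge pointwise in time to a limit, giving a candidate path $L_\infty(t)$ (at least its trace on the plate; the full configuration is then recovered for each fixed $t$ as an $L^1_{\mathrm{loc}}$ subsequential limit of the $L_{R_{n_k}}(t)$, using standard BV/sets-of-finite-perimeter compactness and lower semicontinuity of the limiting energy). Convergence of the fluxes in measure follows once one has the convergence of contact sets plus the uniform energy control, since $Q$ depends on the state through quantities (wetted area on the plate, essentially) that are captured by the contact set, times the prescribed plate velocity.

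Finally, passing to the limit in the two conditions. For global stability: given any competitor $L'\in X_\infty$ (no volume constraint), one must produce, for each large $R_{n_k}$, an admissible competitor $L'_{R_{n_k}}\in X_{R_{n_k}}$ (satisfying the volume constraint) with $\mathcal{E}_{R_{n_k}}[L'_{R_{n_k}}] + \Diss[L_{R_{n_k}}(t), L'_{R_{n_k}}] \le \mathcal{E}_\infty[L'] + \Diss[L_\infty(t), L'] + o(1)$; this is the mutual-recovery-sequence construction, and it is exactly where one adjusts the volume in the far-field container region at asymptotically negligible energy cost — again a barrier computation. Combined with lower semicontinuity of $\mathcal{E}_\infty$ along the $L^1_{\mathrm{loc}}$ convergence and continuity of $\Diss$ under contact-set convergence, this yields global stability for $L_\infty(t)$. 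For the energy-dissipation inequality, I would pass to the limit in the balance for $L_{R_{n_k}}$: lower semicontinuity of total dissipation under pointwise (Helly) convergence, convergence of relative energies (second bullet), and convergence in measure of the fluxes together with a uniform integrable bound (dominated convergence) give the inequality; the reverse inequality, hence balance, then comes for free from global stability via \lref{reverse_edi}.

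The main obstacle I anticipate is step (i)–(iii)'s shared core: showing that the volume constraint can be accommodated at $o(1)$ energy cost uniformly in $R$, i.e. the barrier asymptotics for $\mathcal{E}_{R_n}$. Everything else is a fairly routine adaptation of the abstract energetic-solutions machinery, but the decoupling of the near-plate behavior from the far-field container — quantitatively, and in a way that survives both the stability competitor construction and the relative-energy convergence — is the real content, and it is delicate precisely because the volume constraint is a global (nonlocal) condition while the dissipation and the interesting dynamics are local to the plate.
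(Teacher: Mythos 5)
Your overall architecture — $R$-uniform energy/dissipation bounds via barrier asymptotics, Helly compactness for the contact sets, recovery-sequence competitors to transfer global stability, and lower semicontinuity plus reverse EDI to close the energy-dissipation balance — is essentially the route the paper takes (Sections 5–7, culminating in Corollary~\ref{c.compactness_of_energy_solutions_in_r} via Lemmas~\ref{l.uniform_in_r_total_diss_bound}, \ref{l.helly}, \ref{l.limiting_global_stability}, \ref{l.energy_diss_lower_bound}).

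There is, however, a genuine gap in the step where you claim that ``convergence of the fluxes in measure follows once one has the convergence of contact sets \dots since $Q$ depends on the state through quantities \dots that are captured by the contact set.'' This is false, and it is exactly the delicacy the paper has to work around. The pressure \eqref{e.pressure_with_r0} is an integral of $\nu^f\cdot D\mathcal{V}\nu^f$ and a gravity term over the free surface in a fixed outer region $\Cyl_{R_0}$; it is not a functional of the plate contact set $\PS$ alone. Because the dissipation only records motion of $\PS$, Helly gives no time-compactness for the free surface, and the set $\mathcal{M}(\PS)$ of globally stable profiles with a given contact set may be non-singleton — and the pressure may genuinely differ among its elements (Remark~\ref{rem:msb_sel}, and the discussion opening \sref{energy_diss_lower_bound}). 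So one cannot conclude flux convergence, or pass to the limit in the EDI, by picking an arbitrary $L^1_{\mathrm{loc}}$ subsequential limit at each $t$.

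The fix is what Lemma~\ref{l.energy_diss_lower_bound} does: for each $t$, restrict to a subsequence achieving $\limsup_n Q_n(t)$, use the pointwise compactness and pressure continuity (\lref{pressure_continuous}, which needs $L^1_{\mathrm{loc}}$ convergence of profiles, not just contact sets) to identify a limiting profile $\overline{L}(t)$ with $P^*[\overline{L}(t)]\dot F = \limsup_n Q_n(t)$, and invoke a measurable-selection theorem (Lemma~\ref{lem:msb_sel}, \lref{subseq_msb_sel}) to make the map $t\mapsto \overline{L}(t)$ Borel. Then the reverse EDI (\lref{reverse_edi}) applied to this specific selection, combined with the $\liminf$ inequality from the approximate EDIs, traps $\int \limsup Q_n$ between $\liminf\int Q_n$ and $\limsup \int Q_n$, which by \lref{limsup_liminf_integral} forces $Q_n \to \limsup Q_n$ in measure. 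In other words, the flux convergence in the fifth bullet is a \emph{consequence} of the correctly chosen measurable selection, not an input derived from contact-set convergence. Without this extremizing selection your EDI-passage step does not close.
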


We also present a proof that solutions for any $R$ and any initial data can be constructed by a discrete scheme.

The proof of \tref{main} takes advantage of frameworks developed in previous literature for limits of energetic rate independent systems, especially \cite{AlbertiDeSimone}. The main new ideas in the proof center around the introduction of comparison principle techniques to obtain sufficiently strong quantitative control on the solutions in order to pass to the limit $R \to \infty$. In particular this includes a uniform bound on the vertical variations of the free surface and exponential decay of oscillations, due to gravity, away from the inner and outer containers.  One key point is that we are able to estimate the Lagrange multiplier associated with the volume constraint to within order $R^{1-d}$. This allows us to obtain strong asymptotics on the solutions in an outer region away from the plate. In particular global approximation errors in each term of the energy decay in $R$, even as the energy itself diverges in $R$; thus, we can show that a suitably relativized energy converges. At the end of the introduction, we state \tref{main_quantitative} as a summary of the quantitative content of this argument.

Though we do not present details, we believe that the analysis of \cite{CollinsFeldman} can be adapted in a straightforward manner to describe minimizing movements solutions of the $R=\infty$ system. These arguments depend in an essential way on comparison arguments that only become available once the volume constraint disappears. In summary, this should imply:
\begin{itemize}
    \item If $F(t)$ is strictly monotone in time, then there is a unique minimizing movements solution to the evolution. If $F(t)$ is piecewise strictly monotone, then minimizing movements solutions can be classified by their values at monotonicity changes.
    \item Initial regularity of the liquid-solid-vapor contact line is preserved in time for minimizing movements solutions, according to the optimal $C^{1,1/2-}$ contact line regularity for minimizers of the capillary energy with obstacle \cite{DePhilippisFuscoMorini}.
\end{itemize}

\begin{figure}[h]
    \centering
    \begin{tabular}{cc}
       \begin{tikzpicture}[xscale = .7, yscale = .23, line cap=round, line join=round]

  %things which need to go behind the inner cylinder
% Parameters
  \def\R{4} 
  \def\H{15} 
    %circles
  \draw[black!20!white,dashed] (0,0) ++(-\R,0) arc[start angle=180, end angle=0, radius=\R];
    \draw[black] (0,.45*\H) ++(-\R,0) arc[start angle=180, end angle=0, radius=\R];

\def\R{.5} 
\foreach \r in {1,...,13}
  \draw[black] (0,{.5*\H-.05*(\r-6)}) ++(-\R-.25*\r,0) arc[start angle=180, end angle=0, radius=\R+.25*\r];

    % Parameters
  \def\R{.5} 
  \def\H{15} 

     \filldraw[white,semitransparent] (-\R,0) rectangle (\R,\H);

       % Parameters
  \def\R{4} 
  \def\H{15} 
 \filldraw[black!10!white,semitransparent] (0,0) ++(-\R,0) arc[start angle=180, end angle=360, radius=\R] -- (\R,.45*\H) arc [start angle = 360,end angle=180, radius=\R] ;

       \def\R{.5} 
  \def\H{15} 

  \filldraw[black!10!white,semitransparent] (0,0) ++(-\R,0) arc[start angle=180, end angle=360, radius=\R] -- (\R,.55*\H) arc [start angle = 360,end angle=180, radius=\R] ;
    %circles
  \draw[dashed] (0,0) ++(-\R,0) arc[start angle=180, end angle=0, radius=\R];
  \draw        (0,0) ++(-\R,0) arc[start angle=180, end angle=360, radius=\R];
  \draw (0,\H) ++(-\R,0) arc[start angle=180, end angle=0, radius=\R];
  \draw         (0,\H) ++(-\R,0) arc[start angle=180, end angle=360, radius=\R];

\draw[black]         (0,.55*\H) ++(-\R,0) arc[start angle=180, end angle=360, radius=\R];
\foreach \r in {1,...,13}
  \draw[black] (0,{.5*\H-.05*(\r-6)}) ++(-\R-.25*\r,0) arc[start angle=180, end angle=360, radius=\R+.25*\r];

    %sides
  \draw (-\R,0) -- (-\R,\H);
  \draw ( \R,0) -- ( \R,\H);

  % Parameters
  \def\R{4} 
  \def\H{15} 

    %circles
  %\draw[dashed] (0,0) ++(-\R,0) arc[start angle=180, end angle=0, radius=\R];
  \draw        (0,0) ++(-\R,0) arc[start angle=180, end angle=360, radius=\R];
  \draw (0,\H) ++(-\R,0) arc[start angle=180, end angle=0, radius=\R];
  \draw         (0,\H) ++(-\R,0) arc[start angle=180, end angle=360, radius=\R];

\draw[black]         (0,.45*\H) ++(-\R,0) arc[start angle=180, end angle=360, radius=\R];

    %sides
  \draw (-\R,0) -- (-\R,\H);
  \draw ( \R,0) -- ( \R,\H);

\node at (-.5*\R,0*\H) {$L$};
\node at (-.5*\R,.85*\H) {$V$};
 \node at (0,.85*\H) {$P$};
 
\end{tikzpicture}  &  \begin{tikzpicture}
        \begin{scope}[rotate = 90]
            \draw[dotted] (0,0) circle (3);
        \draw (-3,0)--(3,0);
        \draw (-3/1.41,3/1.41)--(0,0);
        \node at (1,1.75) {$V$};
        \node at (-1.75,.75) {$L$};
        \node at (0,-1.5) {$P$};
        %\node[above] at (3,0) {$B_r$};
        \node[left] at (-1.6/1.41,1.85/1.41) {$\FS_L$};
        \node[right] at (-1.5,0) {$\PS_L$};
        \filldraw (0,0) circle (1.5pt);
        \node[right] at (0,0) {$\PL_L$};
        \draw (-.35,0) arc (180:130:.35);
        \node at (-1.7*.6/1.41,0.65*.6/1.41) {$\THP{L}$};
        \end{scope}
    \end{tikzpicture}
    \end{tabular}
    \caption{\emph{Left}: a 3D view of the liquid-vapor-plate configuration. \emph{Right}: a cross-sectional view of the configuration near the liquid-vapor-plate contact line, labeling the free surface $\FS_L$, the liquid-plate contact surface $\PS_L$, the liquid-vapor-plate contact line $\PL_L$, and the liquid-plate contact angle $\theta_{\PL_L}$, which may differ from the optimal Young angle $\YP$ due to hysteresis.}
    \label{f.intro-fig}
\end{figure}
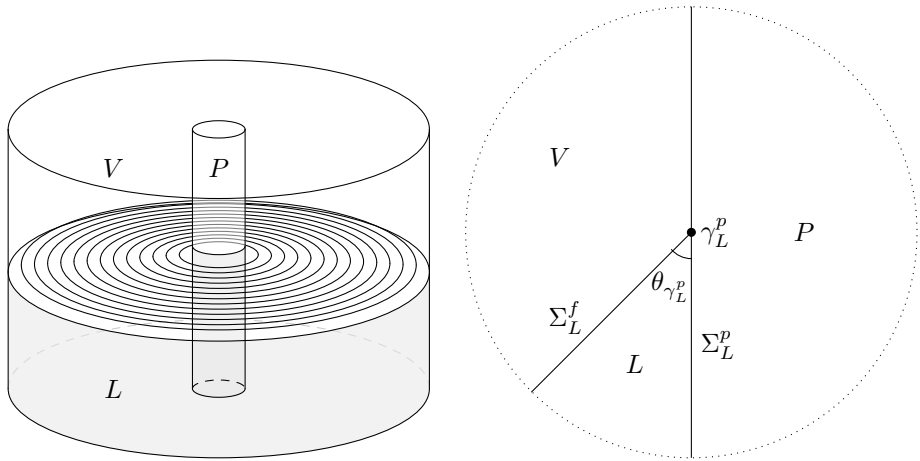

%\subsection{Literature} 

\subsection{Notation for Wilhelmy plate setting}\label{s.energy_intro}

Throughout, we write $(x,z)\in \R^{d-1}\times \R$ to distinguish the vertical component, which plays a special role due to the presence of gravity. In our setup of the problem, we will use the following notation for common lower-dimensional sets, where $R$ is a parameter in $(0, \infty]$:
\begin{align*}
    B_R &= \{ x\in \R^{d-1} : |x| < R\}, \\
    \partial B_R &= \{ x\in \R^{d-1} : |x| = R\}, \\
    \thinCyl_R &= \partial B_R\times \R, \ \hbox{ and } \\
    \thinCyl_{R,a} &= \thinCyl_R\cap \{z \leq a \}.
\end{align*}
In our setting, the liquid will be allowed to occupy the solid annular cylinder bounded by $\thinCyl_{1}$ and $\thinCyl_R$, for $R\in (1, \infty]$. We use the notation:
\begin{align*}
    \Cyl_R &= (B_R\times \R)\setminus \overline{B_1\times \R} \\
    \Cyl_{R,a} &= \Cyl_R\cap \{ z \leq a \}   
\end{align*}
for such $d$-dimensional regions.

To avoid confusion between sets of different dimension, we preference $| \cdot |$ to refer to $d-1$-dimensional areas, i.e. surface areas. We write $\vol(U)$ for the $d$-dimensional volume of a set $U$, when relevant, and use Hausdorff measure explicitly notated with dimension for $(d-2)$-dimensional objects.

We set the plate as the cylinder $\overline{B_1\times \R}$, and the container as the cylinder $\overline{B_R\times \R}$, for $R > 1$. Thus, $\Cyl_R$ is the open region that the liquid may occupy. Let $F$ denote the vertical displacement of the container relative to the plate, and let $\cos\theta_{Y_c}, \cos\theta_{Y_p}, g$ be constant parameters.

Recall the Caccioppoli subsets of an open set $U \subset \R^d$
\[\textup{Cacc}(U) := \left\{ V \subset U : \underset{|\vec{\varphi}| \leq 1}{\sup_{\vec{\varphi}\in C^1_c(U; \R^d)}}\int_V \nabla\cdot\vec{\varphi} < \infty \right\}.\]
We say $V$ is locally Caccioppoli in $U$ if it is Caccioppoli in any compactly contained open subset of $U$. We say that $V$ is locally Caccioppoli in the closed set $F$ if $V\cap F\in \mathrm{Cacc}_{\mathrm{loc}}(\R^d)$.

In what follows, we will always assume that we are considering locally Caccioppoli sets in $\overline{\Cyl_R}$. When referring to boundary operators such as $\partial L$, we mean reduced boundary or $BV$-trace (written $\mathrm{tr}_S(L)$, when taken on a $d-1$ surface $S$) as appropriate; in practice, the sets we consider will have enough regularity that this does not play an important role.

We now develop the notation for the different elements of a liquid profile $L$. For the liquid-plate contact set, we write
\[\PS_L = \textup{tr}_{\thinCyl_1}(L).\]
For the liquid-container contact set, we write:
\[ \CS_{L} = \textup{tr}_{\thinCyl_R}(L). \]
For the free surface of the liquid, we write:
\[ \FS_L = \partial L\cap \Cyl_R. \]
We also write $\nu^f_L$ for the outward unit normal to $L$ along the free surface.

For the two contact lines on the plate and container, we write respectively $\PL_L = \PS_L\cap \FS_L$ and $ \CL_L = \CS_L\cap \FS_L$.

We measure the area of the liquid-plate contact set relative to height $z = 0$:
\[ \relAreaP{\PS} = |\PS\setminus \thinCyl_{1,0}| - |\thinCyl_{1,0} \setminus \PS| \]
and the area of the liquid-container contact set relative to height $z = F$
\[ \relAreaC{\CS} = |\CS\setminus \thinCyl_{R,F}| - |\thinCyl_{R,F} \setminus \CS|.\]
We note the dependence on $R,F$, though they are suppressed from the left side notation.

\subsection{Energy formulation} Our energy is composed of contributions from each liquid-vapor and liquid-solid interface, as well as gravity:
\begin{equation}\label{eq:energy}
    \mathcal{E}_R[L; F] = |\FS_L| - \cos\YP\relAreaP{\PS_L} - \cos\YC \relAreaC{\CS_L} + \int_{L\Delta \Cyl_{R,F}} g|z - F|.
\end{equation}
Here $\YP$ and $\YC$ are the Young contact angles for the plate and outer container respectively.  The quantity
\begin{equation*}\label{e.bond-number}
    g  = \frac{\rho \mathfrak{g} r^2}{\sigma_{\textup{LV}}}
\end{equation*}
is the non-dimensional Bond number, where $r$ is the characteristic length scale, $\mathfrak{g}$ is the gravitational constant, $\rho$ is the density different between the liquid and vapor phases, and $\sigma_{\textup{LV}}$ is the surface tension between liquid and vapor phases. Since we take $g$ as an $O(1)$ constant and the width of the plate is normalized to $1$, we are essentially assuming that the Bond number for plate radius $r$ is of unit order. This can be checked to be physically reasonable for, for example, water and air where the Bond number will be of unit size around $2.7$ millimeters, for example see \cite{RAPP2017445}*{Section 21.2}.

For minimization purposes, we impose the volume constraint inside $\Cyl_R$:
\begin{equation}\label{eq:volume_constraint}
    \mathcal{C}_{R,F}(L) := \int_{L\setminus \Cyl_{R,F}} 1 - \int_{\Cyl_{R,F}\setminus L} 1 = 0
\end{equation}
We remark that as part of this constraint, we require both integrals to be finite.
    
To consider the container limit as $R\to \infty$, we also define
\begin{equation}\label{e.limiting_energy}
    \mathcal{E}_\infty[L] = \int_{\FS_L} (1 - e_z\cdot \nu^f_L)\,d\mathcal{H}^{d-1} - \cos\YP\relAreaP{\PS_L} + \int_{L\Delta \Cyl_{\infty,F}} g|z - F|
\end{equation}
Here, the relativization of $|\FS_L|$ is consistent with subtracting $|B_R|$ from $\mathcal{E}_R$.

The dissipation distance between two liquid-plate contact sets is
\[ \Diss(L_0,L_1) = \Diss(\PS_{L_0}, \PS_{L_1}) = \mu_+ | \PS_{L_1} \setminus \PS_{L_0}| + \mu_-| \PS_{L_0}\setminus \PS_{L_1}|\]
Let us note that the dissipation is nonsymmetric, but still satisfies the following triangle inequality (see, for example, \cite{FeldmanKimPozarEnergy}*{Lemma A.2}):
\begin{equation}\label{e.diss_triangle_inequality}
    \Diss[L_0, L_2] \leq \Diss[L_0, L_1] + \Diss[L_1, L_2].
\end{equation}
%The pressure in the finite radius case is
%\begin{equation}\label{eq:pressure_r}
%    P_R[L] = \cos\YC \mathcal{H}^{d-2}(\partial B_R) + \lambda(L)|B_R\setminus B_1|
%\end{equation}
%In the limit case the pressure is
%\begin{equation}\label{eq:pressure_inf}
%    P_\infty[L] = -\int_{\PL_L} e_z\cdot \eta^{\mathrm{co}}_{\FS_{L}}\,d\mathcal{H}^{d-2} 
%\end{equation}
% The energy dissipation inequality: for all $t_0 < t_1$
% \begin{equation}\label{eq:EDI}
%     \mathcal{E}_R(L_{t_0}) - \mathcal{E}_R(L_{t_1}) + \int_{t_0}^{t_1}P(t)\dot{F}(t)dt \geq \Diss[L_{t_0}, L_{t_1}]
% \end{equation}
% where
% \begin{equation}\label{eq:pressure}
%     P(t) = -\int_{\FS_{L_t}\cap \partial C_1} e_z\cdot \eta^{\mathrm{co}}_{\FS_{L_t}}\,d\mathcal{H}^{d-2} = \cos\YC |\partial B_R| + \lambda(t)|B_R\setminus B_1|
% \end{equation}
% And the limiting EDI
% \begin{equation}\label{eq:limiting_EDI}
% \mathcal{E}(L_{t_0})- \mathcal{E}(L_{t_1}) + \int_{t_0}^{t_1}\dot{F}(t)P(t) \ dt\geq\Diss(L(t_0),L(t_1))
% \end{equation}

We define the space of admissible sets for the finite container evolution to be all sets which are locally Caccioppoli, satisfy the volume constraint, and are such that the relative surface areas of both contact sets are finite, so that the value of $\mathcal{E}_R\in (-\infty, \infty]$ is well-defined:
\[ X^R_F = \{ X\in \textup{Cacc}_{loc}(\overline{\Cyl_R}) : \mathcal{C}_{R,F}[L] = 0 \hbox{ and } \relAreaC{\CS_L},\relAreaP{\PS_L} \hbox{ are finite }\} \]

We note that $X^R_F \subset L^1_{\mathrm{loc}}(\Cyl_R)$, a Fr\'echet space with topology induced by the $L^1$ seminorms on a countable exhaustion of $\Cyl_R$ by compact sets. When we discuss measurability of evolutions, it will always be with respect to the $L^1_{\mathrm{loc}}$ topology.

\begin{definition}\label{def:constrained_energy_sln}
    A measurable mapping $t \mapsto L(t) \in X^R_{F(t)}$ on $t \in [0,T]$ is a solution if
    \begin{enumerate}
        \item[(i)] (Global stability with volume constraint) For all $t \in [0,T]$ and all $L' \in X^R_F$
        \[ \mathcal{E}_R[L(t);F(t)] \leq \mathcal{E}_R[L';F(t)] + \Diss(L(t),L').\]
        As a consequence of global stability, $L_t$ satisfies the Euler-Lagrange equation:
        \begin{equation}
            -H_{\FS_{L(t)}} + gz = \lambda(t) \hbox{ on } \FS_{L(t)}
        \end{equation}
        where $\lambda(t)$ is a constant.
        
        \item[(ii)] (Energy dissipation inequality) For all $0 \leq t_0 <t_1 \leq T$,
        \begin{equation}\label{e.constrained_edi}
            \mathcal{E}_R[L(t_0); F(t_0)]- \mathcal{E}_R[L(t_1); F(t_1)] + \int_{t_0}^{t_1}P_R[L(t)]\dot{F}(t)dt\geq\Diss(L(t_0),L(t_1)).
        \end{equation}
        where the pressure $P_R[L(t)]$ is defined by
        \begin{equation}\label{eq:pressure_r}
            P_R(t) = -\int_{\PL_{L_t}} e_z\cdot \eta^{\mathrm{co}}_{\FS_{L_t}}\,d\mathcal{H}^{d-2} -\int_{\CL_{L_t}} e_z\cdot \eta^{\mathrm{co}}_{\FS_{L_t}}\,d\mathcal{H}^{d-2}.
        \end{equation}
        In practice, we use a weaker formulation of the pressure stated in \pref{energy_taylor_exp_alt}, for which it is easier to show convergence.
    \end{enumerate}
\end{definition}
For the limiting evolution, we expect the volume constraint to become ineffective. Thus, we define admissible sets as:
\[X^\infty_F = X^\infty = \{ X\in \textup{Cacc}_{loc}(\Cyl_\infty) : \relAreaP{\PS_L}\in (-\infty, \infty) \}\]
\begin{definition}\label{def:limiting_energy_sln}
    A measurable mapping $t \mapsto L(t) \in X$ on $t \in [0,T]$ is a solution if
    \begin{enumerate}
        \item[(i)] (Global stability) For all $t \in [0,T]$ and all $L' \in X$
        \begin{equation}
            \mathcal{E}_\infty[L(t);F(t)] \leq \mathcal{E}_\infty[L';F(t)] + \Diss(L(t),L').
        \end{equation}
        \item[(ii)] (Energy dissipation inequality) For all $0 \leq t_0 <t_1 \leq T$
        \begin{equation}
            \mathcal{E}_\infty[L(t_0); F(t_0)]- \mathcal{E}_\infty[L(t_1); F(t_1)] + \int_{t_0}^{t_1}P_\infty[L(t)]\dot{F}(t)dt\geq\Diss(L(t_0),L(t_1))
        \end{equation}
        where
        \begin{equation}\label{eq:pressure}
            P_\infty[L(t)] = -\int_{\PL_{L_t}} e_z\cdot \eta^{\mathrm{co}}_{\FS_{L_t}}\,d\mathcal{H}^{d-2}.
        \end{equation}
    \end{enumerate}
\end{definition}

We now state a quantitative counterpart to \tref{main}. This result estimates the energy-error incurred in modifying a stable profile in $X^R_F$ to be admissible for the infinite radius setting. Notably, this estimate requires a shift in forcing which decays with $R$.

\begin{theorem}\label{t.main_quantitative}
    Let $L_R\in X^R_F$ be globally stable. Then there exists an explicit universal constant $a_0 := -\cos\YC \frac{\omega_{d-2}}{\omega_{d-1}}$, where $\omega_n$ is the area of the $n$-dimensional unit ball, such that:
    \[ X^\infty_{F + \frac{a_0}{gR}}\ni \widetilde{L}_R := (L_R\cap \Cyl_{R/2})\cup (\Cyl_{\infty, F + \frac{a_0}{gR}}\setminus \Cyl_{R/2})  \]
    satisfies
    \[ \mathcal{E}_\infty[\widetilde{L}_R; F + \frac{a_0}{gR}] = \mathcal{E}_R[L_R; F] + E(R) + O\left(\frac{\log R}{R}\right) \]
    where $E(R)$ is an explicit correction for the energy of $L_R$ near the container independent of $L_R$ and $F$.
\end{theorem}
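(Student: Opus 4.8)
The plan is to analyze the energy difference $\mathcal{E}_\infty[\widetilde{L}_R; F + \tfrac{a_0}{gR}] - \mathcal{E}_R[L_R; F]$ term by term, exploiting the fact that $\widetilde{L}_R$ agrees with $L_R$ inside $\Cyl_{R/2}$ and is exactly the flat reference configuration $\Cyl_{\infty, F + a_0/(gR)}$ outside. The key analytic input is that a globally stable $L_R$ solves the prescribed mean curvature equation $-H_{\FS_{L_R}} + gz = \lambda_R$ with capillary contact angles, and that the Lagrange multiplier $\lambda_R$ can be estimated: since the reference flat profile has mean curvature zero and the contact angle at the outer container forces a specific asymptotic height of the free surface near $\thinCyl_R$, a volume-balance computation (the displaced volume near the outer wall scales like the perimeter $\omega_{d-2}R^{d-2}$ times an $O(1)$ height governed by $\cos\YC$) shows $\lambda_R = \tfrac{a_0}{R^{d-1}} + O(R^{-d}\log R)$ or similar. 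This pins down why the forcing shift is exactly $a_0/(gR)$ in the physically relevant dimension and why the error is of order $R^{-1}\log R$: $\lambda_R$ acts as the pressure/height-offset of the bulk liquid level relative to the plate.

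The first step is to set up barriers. Using the comparison principle for the capillary PMC equation (as invoked in the introduction), I would sandwich the free surface $\FS_{L_R}$ between explicit radial sub/supersolutions that are flat plus exponentially-in-$z$-decaying corrections emanating from the inner cylinder $\thinCyl_1$ and from the outer cylinder $\thinCyl_R$; gravity gives the exponential decay rate $\sqrt{g}$, so in the annular region $\tfrac{R}{4} < |x| < \tfrac{3R}{4}$ the surface is within $O(e^{-c\sqrt{g}R})$ of the flat level $z = \lambda_R/g =: h_R$, which after the forcing shift matches the reference profile to exponential order. This makes the free-surface term and the gravitational term in $\mathcal{E}_\infty[\widetilde{L}_R]$ versus $\mathcal{E}_R[L_R]$ differ, in the transition annulus, by exponentially small amounts; the $\relAreaP{\PS}$ terms are literally identical since $\PS$ is determined inside $\Cyl_{R/2}$; and the $\relAreaC{\CS_{L_R}}$ contribution (which is absent from $\mathcal{E}_\infty$) together with the part of $|\FS_{L_R}|$ and the gravitational integral in $\tfrac{R}{2} < |x| < R$ gets absorbed into the $L_R$-independent correction $E(R)$ plus the advertised error, precisely because near the outer wall $L_R$ is exponentially close to a single $F$-independent and $L_R$-independent radial profile (the capillary rise/fall solution against a flat wall of radius $R$).

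The second step is the bookkeeping of the forcing shift. Changing $F$ to $F + \tfrac{a_0}{gR}$ changes the relativization heights in $\relAreaC{\CS}$ and in the gravitational integral $\int_{L \Delta \Cyl_{\infty,F}} g|z-F|$. For $\widetilde{L}_R$, which is flat at level $F + \tfrac{a_0}{gR}$ outside $\Cyl_{R/2}$, the gravitational term splits as (contribution from $\Cyl_{R/2}$, where $\widetilde{L}_R = L_R$, reconciled against the old reference $\Cyl_{R,F}$) plus (contribution from $|x| > R/2$, which for the flat profile at the shifted height against the shifted reference is exactly zero). The mismatch between the old and new reference sets inside $\Cyl_{R/2}$ is a thin slab of height $\tfrac{a_0}{gR}$ over an area $\sim R^{d-1}$, contributing $\sim g \cdot \tfrac{a_0}{gR}\cdot R^{d-1}\cdot(\text{typical } |z-F|)$ — but the typical $|z-F|$ on the bulk is itself $h_R - F \sim \lambda_R/g \sim R^{1-d}$, so in the relevant dimension this is $O(R^{-1})$; the leading $O(R^{d-1}\cdot R^{-1}) = O(R^{d-2})$ pieces must cancel against corresponding terms in $E(R)$ and in the expansion of $\relAreaC{}$, and verifying this cancellation is the delicate part. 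I would organize it by writing a clean "reference energy" $E(R) := \mathcal{E}_\infty[\Cyl_{\infty,F+a_0/(gR)} \text{ modified near } \thinCyl_R \text{ by the flat-wall capillary profile}] - (\text{flat baseline})$ and checking it is independent of $L_R, F$ by construction, then showing every remaining discrepancy is controlled by the barrier estimates.

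The main obstacle I anticipate is getting the Lagrange-multiplier estimate $\lambda_R = \tfrac{a_0}{R^{d-1}} + o(R^{1-d})$ sharp enough, with the correct constant $a_0 = -\cos\YC\,\tfrac{\omega_{d-2}}{\omega_{d-1}}$, and simultaneously controlling the $O(R^{-1}\log R)$ error: one needs the volume of liquid trapped in the meniscus against the outer wall to second order, which requires knowing the outer capillary profile not just to leading exponential order but with its curvature-induced $O(1/R)$ correction (a cylinder of radius $R$ is not a flat wall), and this is presumably where the $\log R$ enters. I would handle this by deriving a two-term asymptotic expansion of the radial solution to the capillary problem exterior to $B_R$, matching the flat-wall solution plus an $O(1/R)$ correction, and feeding its displaced-volume integral into the volume constraint $\mathcal{C}_{R,F}(L_R)=0$ to solve for $\lambda_R$.
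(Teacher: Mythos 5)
Your high-level strategy is aligned with the paper's: cut at $R/2$, replace the outer part of $L_R$ by an $L_R$-independent reference profile, shift the forcing by the amount dictated by the Lagrange multiplier, and absorb what remains into an $R$-only correction $E(R)$. You also correctly note that the plate-contact term $\relAreaP{\PS_{L}}$ is unchanged by the outer replacement and that exponential decay of oscillations away from the two walls (the width bound, \lref{width-bound}) is what localizes the discrepancies. However, there are two genuine gaps.

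First, the scaling of the Lagrange multiplier. You write $\lambda_R = \tfrac{a_0}{R^{d-1}} + O(R^{-d}\log R)$, but the correct scaling is $\lambda_R = \tfrac{a_0}{R} + O(R^{1-d})$ (see \lref{lambda_decomposition}): the displaced meniscus volume near the outer wall is $\sim \mathcal{H}^{d-2}(\partial B_R) = O(R^{d-2})$, while the horizontal cross-sectional area is $|B_R \setminus B_1| = O(R^{d-1})$, so the bulk offset $\lambda_R/g$ is their ratio, $O(R^{-1})$, in \emph{every} dimension. Your own heuristic actually gives this, but the formula you wrote does not match it, and your hedge ``in the physically relevant dimension'' suggests you saw the tension without resolving it. This matters: the forcing shift $a_0/(gR)$ in the theorem statement is exactly $\lambda_R/g$ to leading order, so the $R^{-1}$ scaling must be established in all $d$, not rationalized away.

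Second, the route to that multiplier estimate. You propose a two-term asymptotic expansion of the radial exterior capillary profile plus a volume-balance, and you attribute the $\log R$ loss to the $O(1/R)$ curvature correction to the outer wall. The paper does something cleaner and more robust: it applies a volume-preserving test flow $\mathcal{V} = -\tfrac{e_z}{|B_R\setminus B_1|} + \mathcal{V}_0$ (a uniform downward shift compensated by a localized bump) to $L_R$ and reads off $\lambda_L$ from the first-order variation of the energy (\lref{lambda_decomposition}). The leading term is exactly $-\cos\YC\,\mathcal{H}^{d-2}(\partial B_R)/|B_R\setminus B_1|$, coming from the change in $\relAreaC{\CS_L}$ under vertical translation, and the remainder is controlled not by an ODE expansion but by the contact-line length bound $\mathcal{H}^{d-2}(\PL_L) = O(1)$ (\tref{length_bound}), which bounds the dissipation incurred by the shift. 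This sidesteps solving the outer capillary problem at all. As for the $\log R$: in the paper it does not come from a curvature correction to the outer profile, but from the annulus-splitting in \lref{energy_asymptotics} at $R_1 = R - c^{-1}\log R$, trading the width of the ``near-wall'' zone (where one only knows $|z-F|=O(1)$) against the exponential smallness of the gravity potential in the bulk. Your proposal as written does not clearly furnish either the uniform-in-$F$, uniform-in-$L_R$ structure of $E(R)$ or the sharp $O(\log R / R)$ error; adopting the energy-perturbation computation of $\lambda_R$ and the two-zone split would close both gaps.
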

\begin{proof}
    This is a consequence of \lref{energy_asymptotics}. We can first replace $L_R$ in $\Cyl_R\setminus \Cyl_{R/2}$ by the reference configuration $\mathsf{L}_R$ defined in \eqref{e.reference_configuration}, incurring an $O(\frac{\log R}{R})$ error in the energy. Subsequently, we can replace the reference configuration by the flat profile $\Cyl_{\infty, F + \frac{a_0}{gR}}\setminus \Cyl_{R/2}$ extended to infinity, incurring a finite shift in the energy which depends only on $R$.
\end{proof}

\subsection{Sign conventions}

We adopt the sign convention that $H$ is negative when $L$ is convex; this ensures that $H$ behaves like $+\Delta$ in terms of sign. With regards to geometric formulas, this implies that the first variation of $\FS_L$ is $-H$. This is the opposite of the convention which makes balls positively curved.

We also take the convention that $\YP$ refers to the contact angle measured inside of the liquid phase, see \fref{intro-fig}. This implies the following relations:
\begin{align*}
    &\eta^{\mathrm{co}}_{\FS}\cdot \eta^{\mathrm{co}}_{\PS} := \cos\theta
    \\&\nu^f\cdot \eta^{\mathrm{co}}_{\PS} = \eta^{\mathrm{co}}_{\FS}\cdot \nu^c = -\sin\theta
    \\&\nu^f\cdot \nu^c = -\cos\theta
\end{align*}
Here, the $\eta$ are inward conormal vectors (normal to the contact line, tangent to the subscripted surface), while the $\nu$ are outward normal vectors.

\subsection{Parameters and ellipticity hypotheses}\label{s.ellipticity-hypothesis}

We assume throughout that $|\cos\YC| < 1$, $[ \cos\YP - \mu_+,\cos\YP  +\mu_-] \subset (-1, 1)$, $g > 0$, and $R > 1$.

\subsection{Outline of the paper} In \sref{basic_properties_glbl_stbl}, we show coercivity of the energy and develop basic estimates for how the energy behaves under perturbations of the profile. In \sref{viscosity}, we build on this by developing a viscosity theory for the Euler-Lagrange equation satisfied by globally stable profiles, which we apply to show an oscillation bound for the free surface of the liquid. In \sref{higher_regularity_glbl_stbl}, we use this oscillation bound to develop higher regularity of globally stable bounds, in the framework of perimeter almost-minimizers. In \sref{rate_independent_energy_solutions}, we develop some abstract lemmas for convergence to energy solutions. In \sref{discrete_scheme_and_convergence}, we show existence of energy solutions to both the finite container radius and infinite container radius evolutions. Finally, in \sref{container_limit}, we prove the energy asymptotics which let us show compactness of energy solutions in the $R\to \infty$ limit.

\subsection{Acknowledgments}

Both authors received support from the NSF Grant DMS-2407235. The first author also received support from the NSF Grant DMS-2153254.

\section{Basic properties of globally stable profiles}\label{s.basic_properties_glbl_stbl}

In this section we establish several basic regularity properties of globally stable states, along with some important calculus formulas which are useful in computing the time derivative of the energy along the evolution. We accomplish:
\begin{enumerate}
    \item A local coercivity property controlling the contact surface area by the free surface area up to a lower order term. As corollaries, we obtain a lower bound on the energy, lower semicontinuity of the energy, and existence of globally stable profiles.
    \item A derivation of the Euler-Lagrange equation for globally stable profiles.
    \item Taylor expansion of the energy of a profile under perturbations which modify the forcing $F$, or the volume. The introduction of the pressure.
\end{enumerate}

\subsection{Energy coercivity and existence of globally stable states}

We first recall the following estimate, which can be viewed as a trace inequality for functions of bounded variation. Our proof is similar in spirit to standard averaging techniques as in \cite{Giusti1984}*{Equation (2.11)}.

\begin{lemma}\label{l.bv_trace_ineq}
    Let $\Gamma$ be a bounded $C^2$ surface with unit normal $\nu_\Gamma$. Write $\Gamma_r$ for the one-sided neighborhood $\Gamma_r = \{ y + s\nu_\Gamma(y) : y\in \Gamma, s\in (0,r)\}$, with $r$ sufficiently small that the tubular coordinates $(y,s)$ are a diffeomorphism of $\Gamma\times (0,r)\to \Gamma_r$.

    Let $L\in \mathrm{Cacc}(\Gamma_r)$. Then there exists $C > 0$ such that for any $\beta\in (0,r)$
    \begin{equation}\label{eq:bv_trace_ineq}
        |\mathrm{tr}_\Gamma(L)| \leq |\partial^* L\cap \Gamma_\beta| + \frac{C}{\beta}\vol(L\cap \Gamma_\beta)
    \end{equation}
    In particular, both $C, r$ depend only on the $C^2$ character of $\Gamma$.
\end{lemma}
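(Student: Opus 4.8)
The plan is to run an averaging-over-level-sets argument organized around the distance function to $\Gamma$ rather than around a flattening diffeomorphism; a flattening would contribute a bi-Lipschitz factor in front of the perimeter term, whereas the distance-function formulation delivers the sharp coefficient $1$. Shrinking $r$ if necessary --- in a way depending only on the $C^2$ character of $\Gamma$, and, if $\Gamma$ has boundary, after extending it slightly to a closed $C^2$ surface --- the function $w(x) := \dist(x,\Gamma)$ is $C^2$ on $\overline{\Gamma_r}$ with $|\nabla w|\equiv 1$, with level sets $\{w=s\}\cap\Gamma_r = \{y+s\nu_\Gamma(y):y\in\Gamma\}$, with $\nabla w(y+s\nu_\Gamma(y)) = \nu_\Gamma(y)$, and with $C_0 := \sup_{\Gamma_r}|\Delta w| < \infty$ controlled by the $C^2$ character of $\Gamma$; note that $\Gamma_\beta = \{0 < w < \beta\}\cap\Gamma_r$.

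First I would fix $\beta\in(0,r)$ and, for a.e. $s\in(0,\beta)$, apply the Gauss--Green formula to $E_s := L\cap\{0<w<s\}$ with the $C^1$ vector field $\nabla w$. For a.e. $s$ the set $E_s$ has finite perimeter in $\R^d$ and its reduced boundary splits, up to $\mathcal{H}^{d-1}$-null sets, into: the interior part $\partial^*L\cap\{0<w<s\}$, on which $|\nabla w\cdot\nu_L|\le 1$; the part of $\Gamma$ where $\mathrm{tr}_\Gamma L = 1$, carrying outer normal $-\nabla w$; the part of $\{w=s\}$ where the trace of $L$ equals $1$, carrying outer normal $+\nabla w$; and, when $\Gamma$ has a boundary, a lateral part whose outer normal is orthogonal to $\nu_\Gamma$, hence to $\nabla w$, so it contributes nothing to the flux. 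This produces, for a.e. $s$, the identity
\[
\mathcal{H}^{d-1}(\mathrm{tr}_\Gamma L) = \mathcal{H}^{d-1}\bigl(\mathrm{tr}_{\{w=s\}}L\bigr) + \int_{\partial^*L\cap\{0<w<s\}}\nabla w\cdot\nu_L\, d\mathcal{H}^{d-1} - \int_{E_s}\Delta w\, dx,
\]
and hence $\mathcal{H}^{d-1}(\mathrm{tr}_\Gamma L) \le \mathcal{H}^{d-1}(\mathrm{tr}_{\{w=s\}}L) + |\partial^*L\cap\Gamma_\beta| + C_0\vol(L\cap\Gamma_\beta)$, using $\{0<w<s\}\subset\Gamma_\beta$.

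Then I would average this bound over $s\in(0,\beta)$. The coarea formula for $w$ (using $|\nabla w|\equiv 1$) together with the standard fact that $\mathrm{tr}_{\{w=s\}}L$ agrees $\mathcal{H}^{d-1}$-a.e. with the slice $L\cap\{w=s\}$ for a.e. $s$ gives $\frac1\beta\int_0^\beta \mathcal{H}^{d-1}(\mathrm{tr}_{\{w=s\}}L)\, ds = \frac1\beta\vol(L\cap\Gamma_\beta)$, so that
\[
|\mathrm{tr}_\Gamma L| \le |\partial^*L\cap\Gamma_\beta| + \Bigl(\tfrac1\beta + C_0\Bigr)\vol(L\cap\Gamma_\beta) \le |\partial^*L\cap\Gamma_\beta| + \tfrac{C}{\beta}\vol(L\cap\Gamma_\beta)
\]
with $C := 1 + C_0 r$, which depends only on the $C^2$ character of $\Gamma$, as claimed.

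The routine ingredients --- tubular coordinates near $\Gamma$, $C^2$-regularity of the distance function, the coarea formula --- I would take off the shelf. The step requiring genuine care, and the main obstacle, is the measure-theoretic bookkeeping for a.e. slice: that $E_s$ has finite perimeter in $\R^d$ and that its reduced boundary decomposes exactly as above, with the two trace pieces carrying the stated normals and the lateral piece contributing zero flux, together with the identification of $\mathrm{tr}_{\{w=s\}}L$ with $L\cap\{w=s\}$ for a.e. $s$ that legitimizes the coarea step. These are standard facts about intersections of finite-perimeter sets with sublevel sets of a $C^1$ function having non-vanishing gradient, but they are where all the care goes; granted them, the estimate follows from the elementary manipulations above.
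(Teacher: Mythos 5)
Your proof is correct and follows essentially the same strategy as the paper's: average the Gauss--Green identity for the distance function $w$ over level sets $\{w=s\}$, $s\in(0,\beta)$, bound $|\Delta w|$ by the $C^2$ character of $\Gamma$, and use coarea to convert the averaged slice measures into the volume term. The only cosmetic difference is that the paper proves the inequality first for smooth $BV$ functions and regularizes $\one_L$, whereas you apply the measure-theoretic Gauss--Green theorem directly to the finite-perimeter set $E_s = L\cap\{0<w<s\}$, which avoids the smoothing step and, by using $|\nabla w|\equiv 1$ in the coarea formula, also sidesteps the Jacobian factor $J$ appearing in the paper's proof.
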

\begin{proof}
    We prove the corresponding estimate for smooth $BV$ functions, and then derive the statement by mollifying $\one_L$. Let $f\in C^\infty(\Gamma_r)\cap BV(\Gamma_r)$. Write $d(x)$ for the signed distance from $x$ to $\Gamma$; i.e. $d(y + s\nu_\Gamma(y)) = s$, using the uniqueness of the tubular projection. Note that $d$ is $C^2$ in $\Gamma_r$, with $\nabla d(\xi(y,s)) = \nu_\Gamma(y)$.

    Using divergence theorem on $\Gamma_t$ with $0 < t < r$, we have
    \[ \int_{\partial \Gamma_t} f(\nabla d\cdot \nu_{\Gamma_t}) = -\int_{\Gamma_t} \nabla\cdot(f\nabla d) \]
    with $\nu_{\Gamma_t}$ the inward unit normal, coinciding with $\nu_{\Gamma}$ on $\Gamma$. Now, observe that $\partial \Gamma_t = \{ y + s\nu_\Gamma(y) : y\in \partial \Gamma \hbox{ or } s\in \{0, t\}\}$. The $\partial \Gamma$ terms vanish from the above since $\nabla d$ is parallel to $\nu_\Gamma$ (hence tangential in that portion). Thus, we can rearrange as
    \[ \int_{\Gamma} f = \int_{\{y + t\nu_\Gamma(y) : y\in \Gamma\}} f -\int_{\Gamma_t} \nabla\cdot(f\nabla d) \]
    Then, we average in $t$ to get
    \[ \int_{\Gamma} f d\mathcal{H}^{d-1}(y)= \frac{1}{\beta}\int_0^\beta\int_{\{y + t\nu_\Gamma(y) : y\in \Gamma\}} fd\mathcal{H}^{d-1}(y)\,dt - \frac{1}{\beta}\int_0^\beta\int_{\Gamma_t} \nabla\cdot(f\nabla d)\,dxdt \]
    To bound the right-hand-side, we note that 
    \begin{align*}
        \frac{1}{\beta}\int_0^\beta\int_{\{y + t\nu_\Gamma(y) : y\in \Gamma\}} fd\mathcal{H}^{d-1}(y)\,dt &\leq \frac{J}{\beta}\int_{\Gamma_\beta} |f| dx
    \end{align*}
    where $J$ bounds the weight for the change of measure. For the next term, we have
    \[ \frac{1}{\beta}\int_0^\beta\int_{\Gamma_t} \nabla d\cdot \nabla f \leq \int_{\Gamma_\beta} |\nabla f| \]
    since $|\nabla d| = 1$ and $\Gamma_t \subset \Gamma_\beta$ for each $t < \beta$. For the last term, we use
    \[ \frac{1}{\beta}\int_0^\beta\int_{\Gamma_t} f\Delta d \leq \frac{C}{\beta}\int_{\Gamma_\beta} |f| \]
    where $C$ is a bound for $|\Delta d|$. Altogether,
    \[ \int_{\Gamma} f\,d\mathcal{H}^{d-1}(y)\leq \int_{\Gamma_\beta} |\nabla f|\,dx + \frac{C + J}{\beta}\int_{\Gamma_\beta} |f|\,dx \]
    Then, it is straightforward to replace $f$ by $|f|$ on the left hand side and extend to general $BV$ functions by regularization.
\end{proof}

A variant of \lref{bv_trace_ineq} which we will need concludes that given $L^1$-convergence of profiles, we have $L^1$-convergence of contact sets, provided that the perimeter of the free surface does not accumulate near the fixed boundary. We state this below:
\begin{lemma}\label{l.conv_of_contact_sets}
Let $L_n, L_\infty\in \mathrm{Cacc}(\Gamma_{r})$ such that
\[ \lim_{n\to\infty} \vol(L_n\Delta L_\infty) = 0 \]
and
\[ \lim_{t\to 0^+} \limsup_{n\to\infty} \mathrm{Per}(L_n; \Gamma_t) = 0 \]
Then
\[ \lim_{n\to\infty}|\mathrm{tr}_\Gamma(L_n)\Delta \mathrm{tr}_\Gamma(L_\infty)| = 0 \]
\end{lemma}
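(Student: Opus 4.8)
The plan is to apply the trace inequality \lref{bv_trace_ineq} to the symmetric difference $M_n := L_n \Delta L_\infty$, which again lies in $\mathrm{Cacc}(\Gamma_r)$ because $\mathrm{Per}(M_n;\Gamma_r)\le\mathrm{Per}(L_n;\Gamma_r)+\mathrm{Per}(L_\infty;\Gamma_r)<\infty$. The first point is a reduction: since the one-sided $BV$-trace of a Caccioppoli set on a $C^2$ hypersurface is $\{0,1\}$-valued $\mathcal{H}^{d-1}$-a.e. and is compatible with the measure-theoretic set operations, one has $\mathrm{tr}_\Gamma(M_n)=\mathrm{tr}_\Gamma(L_n)\,\Delta\,\mathrm{tr}_\Gamma(L_\infty)$ up to an $\mathcal{H}^{d-1}$-null set, so that $\bigl|\mathrm{tr}_\Gamma(L_n)\,\Delta\,\mathrm{tr}_\Gamma(L_\infty)\bigr| = \bigl|\mathrm{tr}_\Gamma(M_n)\bigr|$. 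Hence it suffices to prove $|\mathrm{tr}_\Gamma(M_n)|\to 0$.

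Next I would invoke \lref{bv_trace_ineq} for $M_n$: for every $\beta\in(0,r)$,
\[ \bigl|\mathrm{tr}_\Gamma(M_n)\bigr| \le \mathrm{Per}(M_n;\Gamma_\beta) + \frac{C}{\beta}\,\vol(M_n\cap\Gamma_\beta), \]
with $C$ depending only on the $C^2$ character of $\Gamma$, hence uniform in $n$ and $\beta$. Since $\partial^*M_n\subseteq\partial^*L_n\cup\partial^*L_\infty$ modulo $\mathcal{H}^{d-1}$-null sets, the first term is at most $\mathrm{Per}(L_n;\Gamma_\beta)+\mathrm{Per}(L_\infty;\Gamma_\beta)$, and the second is at most $\tfrac{C}{\beta}\vol(L_n\Delta L_\infty)\to 0$ by hypothesis. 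Now fix $\varepsilon>0$. Using the hypothesis $\lim_{t\to0^+}\limsup_n\mathrm{Per}(L_n;\Gamma_t)=0$ together with the fact that $\mathrm{Per}(L_\infty;\cdot)$ is a finite measure on $\Gamma_r$ and $\Gamma_\beta$ decreases to $\emptyset$ as $\beta\downarrow0$ (so $\mathrm{Per}(L_\infty;\Gamma_\beta)\to0$), I choose $\beta\in(0,r)$ with $\limsup_n\mathrm{Per}(L_n;\Gamma_\beta)<\varepsilon$ and $\mathrm{Per}(L_\infty;\Gamma_\beta)<\varepsilon$. With $\beta$ fixed, for all large $n$ we get $\mathrm{Per}(L_n;\Gamma_\beta)<\varepsilon$ and $\tfrac{C}{\beta}\vol(L_n\Delta L_\infty)<\varepsilon$, hence $|\mathrm{tr}_\Gamma(M_n)|<3\varepsilon$. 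Thus $\limsup_n|\mathrm{tr}_\Gamma(M_n)|\le 3\varepsilon$ for every $\varepsilon>0$, which gives the claim.

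The step requiring the most care is the reduction of the left-hand side to $|\mathrm{tr}_\Gamma(M_n)|$. It rests on the standard but slightly delicate facts that the one-sided trace of a locally finite-perimeter set on a $C^2$ hypersurface exists and is $\{0,1\}$-valued $\mathcal{H}^{d-1}$-a.e. (using, e.g., that at $\mathcal{H}^{d-1}$-a.e. point of $\Gamma\cap\partial^* L$ the approximate tangent plane of $\partial^* L$ coincides with $T\Gamma$, so the one-sided density is $0$ or $1$), and that such traces commute with complement, union and intersection. An alternative that avoids both this identity and the inclusion $\partial^*M_n\subseteq\partial^*L_n\cup\partial^*L_\infty$ is to re-run the computation in the proof of \lref{bv_trace_ineq} directly with the $BV$ function $f=\one_{L_n}-\one_{L_\infty}$ in place of $\one_L$: by linearity its one-sided trace is $\mathrm{tr}_\Gamma(\one_{L_n})-\mathrm{tr}_\Gamma(\one_{L_\infty})$, by subadditivity $|Df|\le|D\one_{L_n}|+|D\one_{L_\infty}|$, and $|f|=\one_{L_n\Delta L_\infty}$, yielding the same two-term bound for $\int_\Gamma|\mathrm{tr}_\Gamma(\one_{L_n})-\mathrm{tr}_\Gamma(\one_{L_\infty})|$. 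Beyond this identification, the argument is just a routine combination of \lref{bv_trace_ineq} with the two hypotheses.
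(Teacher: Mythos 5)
Your proof is correct and follows essentially the same path as the paper's: reduce to $|\mathrm{tr}_\Gamma(L_n\Delta L_\infty)|$, apply \lref{bv_trace_ineq} to the symmetric difference, bound its perimeter by $\mathrm{Per}(L_n;\Gamma_\beta)+\mathrm{Per}(L_\infty;\Gamma_\beta)$, and let the volume term vanish before shrinking $\beta$. The extra care you take with the trace identity (and the alternative argument via $f=\one_{L_n}-\one_{L_\infty}$) is a welcome addition to a step the paper dismisses as ``straightforward,'' but does not change the structure of the argument.
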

\begin{proof}
    It is straightforward that
    \[ |\mathrm{tr}_\Gamma(L_n)\Delta \mathrm{tr}_\Gamma(L_\infty)| = |\mathrm{tr}_\Gamma(L_n\Delta L_\infty)| \]
    By \lref{bv_trace_ineq}, we have for all $t \in (0,r)$ that:
    \begin{align*}
        |\mathrm{tr}_\Gamma(L_n\Delta L_\infty)| &\leq \mathrm{Per}(L_n\Delta L_\infty; \Gamma_t)+ \frac{C}{t}\vol((L_n\Delta L_\infty)\cap \Gamma_t)
        \\&\leq \mathrm{Per}(L_n; \Gamma_t) + \mathrm{Per}(L_\infty; \Gamma_t) + \frac{C}{t}\vol(L_n\Delta L_\infty)
    \end{align*}
    Sending $n\to \infty$, the volume term disappears and we are left with
    \[ \limsup_{n\to\infty}|\mathrm{tr}_\Gamma(L_n\Delta L_\infty)| \leq \mathrm{Per}(L_\infty; \Gamma_t) +\limsup_{n\to\infty} \mathrm{Per}(L_n; \Gamma_t) \]
    Sending $t\to 0^+$, we have that $\Gamma_t$ shrinks to the empty set, so the first term on the right side goes to $0$. The second term on the right goes to $0$ by the hypothesis, and we conclude.
\end{proof}

Now we return to the Wilhelmy plate setting. We will apply \lref{bv_trace_ineq} to conclude that the global energy $\mathcal{E}_R$ is bounded below.

\begin{lemma}\label{l.energy_bdd_below}
    For $L\in X^R_F$ such that $|\PS_L\Delta \Cyl_{R,0}|, |\CS_L\Delta \Cyl_{R,F}| < \infty$, the contact surface terms in the energy are controlled by the free surface and gravity terms, up to an additive constant:
    \begin{align*}
        &|\cos \YP| |\relAreaP{\PS_L}| + |\cos\YC||\relAreaC{\CS_L}|
        \\&\leq \left(|\cos\YP|\vee |\cos\YC|\right)\left(|\FS_L| + \int_{L \Delta \Cyl_{R,F}} g|z-F|\right)
        \\&\quad\dots+ C|\cos\YP|(1 + |F|) + C|\cos\YC|R^{d-2}.
    \end{align*}
    It follows that the energy is bounded from below
    \[\mathcal{E}_R[L;F] \geq -C|\cos\YP|(1 + |F|) -C|\cos\YC|R^{d-2}. \]
    Moreover, since $|\cos\YP|, |\cos\YC| < 1$ with strict inequality, the energy controls the area of the free surface and the gravitational energy up to additive and multiplicative constants:
    \begin{align*}
        &|\FS_L| + \int_{L\Delta \Cyl_{R,F}} g|z-F|
        \\&\leq \frac{1}{1 - (|\cos\YP|\vee |\cos\YC|)}\left(\mathcal{E}_R[L;F] + C|\cos\YP|(1 + |F|) + C|\cos\YC|R^{d-2}\right).
    \end{align*}
\end{lemma}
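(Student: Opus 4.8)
The plan is to bound the two contact-set contributions $|\cos\YP|\,|\relAreaP{\PS_L}|$ and $|\cos\YC|\,|\relAreaC{\CS_L}|$ separately, each via the trace inequality \lref{bv_trace_ineq} applied in a thin collar of the relevant wall, and then to add the two estimates. We may assume $|\FS_L|<\infty$ and $\int_{L\Delta\Cyl_{R,F}}g|z-F|<\infty$, since otherwise all three assertions are trivial. From the definitions of the relative areas, $|\relAreaP{\PS_L}|\le|\PS_L\Delta\thinCyl_{1,0}|$ and $|\relAreaC{\CS_L}|\le|\CS_L\Delta\thinCyl_{R,F}|$, and because the $BV$-trace commutes with symmetric differences these equal $|\mathrm{tr}_{\thinCyl_1}(L\Delta\{z\le0\})|$ and $|\mathrm{tr}_{\thinCyl_R}(L\Delta\{z\le F\})|$. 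Replacing $\{z\le0\}$ by $\{z\le F\}$ in the plate term costs only $|\partial B_1\times(0,F)|\le C|F|$, so, setting $E:=L\Delta\Cyl_{R,F}$, it remains to estimate $|\mathrm{tr}_{\thinCyl_1}(E)|$ and $|\mathrm{tr}_{\thinCyl_R}(E)|$; near each wall $E$ coincides with $L\Delta\{z\le F\}$.

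Fix a small universal $\beta>0$ and let $N^p=\{1<|x|<1+\beta\}$ and $N^c=\{R-\beta<|x|<R\}$ be the outward/inward collars of $\thinCyl_1,\thinCyl_R$ (tubular neighborhoods with uniformly bounded $C^2$ character once $\beta$ is small, using $R>1$ for $N^c$). Covering each collar by the unit-height cylindrical slabs $\partial B_\rho\times[k,k+1]$ ($k\in\Z$, $\rho\in\{1,R\}$), applying \lref{bv_trace_ineq} to each slab with the set $E$, and summing — the constants are uniform in $k$ by vertical translation invariance — gives, with $N$ the collar of $\thinCyl_\rho$,
\[ |\mathrm{tr}_{\thinCyl_\rho}(E)|\ \le\ |\FS_L\cap N| + \mathcal{H}^{d-1}(\{z=F\}\cap N) + \frac{C}{\beta}\,\vol(E\cap N), \]
where we used $\mathrm{Per}(E;N)\le\mathrm{Per}(L;N)+\mathrm{Per}(\Cyl_{R,F};N)$, that $N\subset\Cyl_R$, and $\FS_L=\partial L\cap\Cyl_R$. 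The middle term is the area of the annular cross-section, hence $\le C$ for $N^p$ and $\le CR^{d-2}$ for $N^c$.

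The key step is handling the volume error $\tfrac{C}{\beta}\vol(E\cap N)$: a naive bound $\vol\le\tfrac1g\int g|z-F|$ produces the coefficient $\tfrac{C}{\beta g}$ on the gravity term, which is too large to be absorbed. Instead I exploit the thinness of the collar. If $E\cap N$ has $z$-slice areas $m(z)\le\sigma$, where $\sigma$ is the cross-sectional area of $N$, then the bathtub principle gives $\int_{E\cap N}g|z-F|\ge\tfrac{g}{4\sigma}\,\vol(E\cap N)^2$, i.e. $\vol(E\cap N)\le2(\sigma/g)^{1/2}\big(\int_{E\cap N}g|z-F|\big)^{1/2}$. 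Since $\sigma\le C\beta$ for $N^p$ and $\sigma\le C\beta R^{d-2}$ for $N^c$, Young's inequality yields for every $\delta>0$
\[ \frac{C}{\beta}\vol(E\cap N^p)\le\delta\!\int_{N^p}\! g|z-F|\,\one_E + \frac{C_\delta}{\beta g}, \qquad \frac{C}{\beta}\vol(E\cap N^c)\le\delta\!\int_{N^c}\! g|z-F|\,\one_E + \frac{C_\delta}{\beta g}R^{d-2}. \]
Taking $\delta=\tfrac14$, multiplying the plate and container collar estimates by $|\cos\YP|$ and $|\cos\YC|$ respectively, adding, and using that $N^p,N^c$ are disjoint subsets of $\Cyl_R$ (so their free-surface areas and gravity integrals sum to at most $|\FS_L|$ and $\int_{L\Delta\Cyl_{R,F}}g|z-F|$) together with $au+bv\le(a\vee b)(u+v)$ and $\tfrac14+\tfrac14\le1$, we obtain exactly the first displayed inequality of the lemma. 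The lower bound on $\mathcal{E}_R$ then follows from $\mathcal{E}_R[L;F]=|\FS_L|+\int g|z-F|-\cos\YP\relAreaP{\PS_L}-\cos\YC\relAreaC{\CS_L}$ upon dropping the nonnegative term $(1-(|\cos\YP|\vee|\cos\YC|))(|\FS_L|+\int g|z-F|)$, and the final ``moreover'' is the same chain solved for $|\FS_L|+\int g|z-F|$, using $|\cos\YP|\vee|\cos\YC|<1$.

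The step I expect to be the main obstacle is pinning down the \emph{sharp} coefficient $|\cos\YP|\vee|\cos\YC|$ rather than some larger constant: this forces one to match the coefficient-one perimeter term of \lref{bv_trace_ineq} against $|\FS_L|$ and $\int g|z-F|$ while giving the trace inequality's volume error an arbitrarily small slice of the gravitational energy — precisely what the one-dimensional isoperimetric/Young argument in a thin vertical tube provides. A secondary technical point is that the collars are unbounded in $z$, whereas \lref{bv_trace_ineq} is stated for bounded surfaces, so it must be applied slab-by-slab and summed with $z$-translation-uniform constants.
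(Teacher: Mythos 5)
Your proof is correct, and it uses the same central tool as the paper — the $BV$ trace inequality of \lref{bv_trace_ineq} applied in a thin one-sided collar of each wall — but you handle the key technical obstacle, absorbing the $\tfrac{C}{\beta}\vol$ error from that inequality, by a genuinely different mechanism. The paper covers $\PS_L\Delta\thinCyl_{1,0}$ and $\CS_L\Delta\thinCyl_{R,F}$ by small cubes, splits them by a height cut-off $h_0$: for cubes in $\{|z-F|\geq h_0\}$ it uses the linear bound $gh_0\vol\le\int g|z-F|$ (so choosing $h_0\gg C/g$ makes the gravity coefficient small), and for cubes near $z=F$ it uses a na\"ive geometric volume bound that produces the additive constants $C(1+|F|)$ and $CR^{d-2}$. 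You instead apply the trace inequality once to the whole symmetric difference $E=L\Delta\Cyl_{R,F}$ in each collar, and replace the cut-off argument with the one-dimensional bathtub estimate $\int_{E\cap N}g|z-F|\ge\tfrac{g}{4\sigma}\vol(E\cap N)^2$ (valid because the $z$-slices of $E\cap N$ have area at most the cross-section $\sigma$ of the collar), followed by Young's inequality; this bundles the near- and far-from-$z=F$ regimes into a single convex-duality step, and the $\sigma^{1/2}$ factor — small because the collar is thin and $\propto R^{d-2}$ for the container — produces exactly the same additive constants. Both routes reach the same sharp coefficient $|\cos\YP|\vee|\cos\YC|$ and the same structure of remainder. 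Your version is arguably a bit cleaner (no cut-off parameter to tune, and one trace estimate per wall rather than four), at the cost of invoking the rearrangement inequality as an extra input; the paper's is more elementary and local. One small expository point: be slightly careful when summing \lref{bv_trace_ineq} over unit slabs to note that the slabs' one-sided tubular neighborhoods tile the collar without overlap (so the perimeter and volume terms add up to the whole-collar quantities), and that the $C^2$ character of $\partial B_R\times[k,k+1]$ is uniform in $k$ and in $R\ge R_0>1$ since the curvature $1/R$ is bounded.
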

\begin{proof}

Using \lref{bv_trace_ineq}, we can locally estimate the area of the surfaces $\PS_L\setminus \Cyl_{R,0}, \Cyl_{R,0}\setminus \PS_L, \CS_L\setminus \Cyl_{R,F}$ and $\Cyl_{R,F}\setminus \CS_L$ in the same way. We cover these surfaces by nonoverlapping open sets $B_i$, chosen as uniform open cubes in tubular coordinates with the property that $B_i\cap (\partial B_1\times \{z = 0\}) = \emptyset$ and $B_i\cap \partial B_R\times \{z = F\}) = \emptyset$ for each $i$. For future use, we write $r$ for the coordinate side length of the $B_i$ in the normal direction. 

Now, each $B_i$ only hits one of the four surfaces we need to estimate, and we can apply the lemma for each $B_i$ with a fixed constant $C$. The resulting inequalities we obtain are:
\begin{equation}\label{eq:example_coercivity_inequality}
\begin{cases}
    |(\PS_L\setminus \thinCyl_{1,0})\cap B_i| \leq |\FS_L\cap B_i| + C\vol(L\cap B_i) \\
    |(\CS_L\setminus \thinCyl_{R,F})\cap B_i| \leq |\FS_L\cap B_i| + C\vol(L\cap B_i) \\
    |(\thinCyl_{1,0}\setminus \PS_L)\cap B_i| \leq |\FS_L\cap B_i| + C\vol(L^c\cap B_i) \\
    |(\thinCyl_{R,F}\setminus \PS_L)\cap B_i| \leq |\FS_L\cap B_i| + C\vol(L^c\cap B_i)
\end{cases}
\end{equation}
Now, we can sum these over the covering. For the free surface terms, we have
\[ \sum |\FS_L\cap B_i| \leq |\FS_L|  \]
since the covering is nonoverlapping. For the volume terms, let us consider for a moment just the $B_i$ which cover $\PS_L\setminus \thinCyl_{1,0}$. For any $h_0 > 0$, we can choose the covering so that it splits exactly into cubes contained in $\{ z - F \geq h_0 \}$ and cubes contained in $\{ z - F < h_0\}$. In the former region, we have
\[ \int_{(L\Delta \Cyl_{R,0})\cap \{ |z-F| \geq h_0\}} g|z-F| \geq gh_0\vol((L\Delta \Cyl_{R,0})\cap \{ |z-F| \geq h_0\}) \]
Therefore,
\begin{align*}
    \sum_{B_i\cap (\PS_L\setminus \thinCyl_{1,0})\cap \{ |z-F|\geq h_0\}\neq \emptyset} C\vol(L\cap B_i) &\leq C\vol((L\Delta \Cyl_{R,0})\cap \{ |z-F| \geq h_0\})
    \\&\leq \frac{C}{gh_0}\int_{(L\Delta \Cyl_{R,F})\cap \{ z-F \geq h_0\}} g|z-F|
\end{align*}
If we choose $h_0 > \frac{C}{g}$, then we obtain that the left hand side is controlled by the gravitational energy. The remaining region is $\{ 0 < z < F + h_0 \}$. Assuming we have chosen uniform cubes of length $r$ in the normal direction, the volume of this region is bounded by $Cr\mathcal{H}^{d-2}(\partial B_1)(F + h_0)_+$, for a constant $C$ that can be estimated in terms of the tubular parametrization.

The volume terms for the other three contact surfaces are estimated similarly, using gravity for balls far from $z = F$ and using the na\"ive estimate for the volume near $z = F$. The latter scales like $Cr\mathcal{H}^{d-2}(\partial B_R)h_0$ for the two outer contact surfaces with the same justification as before (the $F$ disappears since we work with respect to $\thinCyl_{R,F}$ rather than $\thinCyl_{1,0}$). We note for emphasis that $r, h_0$ are uniformly bounded with respect to $R$. 

Finally, we sum \eqref{eq:example_coercivity_inequality} with weights $|\cos\YP|$ and $|\cos\YC|$ for the inner and outer contact terms, to obtain:
\begin{align*}
    &|\cos\YP||\relAreaP{\PS_L}| + |\cos\YC|||\relAreaC{\CS_{L}}|
    \\&\leq|\cos\YP||\PS_{L}\Delta \thinCyl_{1,0}| + |\cos\YC||\CS_L\Delta \thinCyl_{R,F}| 
    \\&\leq \left(|\cos\YP|\vee |\cos\YC|\right)\left(|\FS_L| + \int_{L\Delta \Cyl_{R,F}} g|z-F|\right)
    \\&\quad\dots+C|\cos\YP|(|F| + 1) + C|\cos\YC|R^{d-2}
\end{align*}
With this, the other conclusions in the statement follow directly.
\end{proof}

\begin{lemma}[Local lower semicontinuity of energy]\label{l.local_lower_semicontinuity}
Fix $R\in (1, \infty]$. If $L_n\to L_\infty$ in $L^1_{\mathrm{loc}}$ with $L_n, L_\infty\in X^R_F$, then in any bounded set $U$ we have
\[ \mathcal{E}[L_\infty; U] \leq \liminf_{n\to\infty} \mathcal{E}[L_n; U] \]
and for any competitor $L_0\in X^R_F$, it holds that
\[ \mathcal{E}[L_\infty; U] + \Diss[L_0, L_\infty; U] \leq \liminf_{n\to\infty} \mathcal{E}[L_n; U] + \Diss[L_0, L_n; U] \]
\end{lemma}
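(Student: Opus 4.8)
The plan is to recognize the $U$-localized functionals $L\mapsto\mathcal E[L;U]$ and $L\mapsto\mathcal E[L;U]+\Diss[L_0,L;U]$ as capillary-type functionals and to prove their lower semicontinuity by the interior vector-field method of Modica and Giusti. A naive term-by-term argument cannot work: the contact quantities $\relAreaP{\PS_L}$, $\relAreaC{\CS_L}$ and the dissipation $\Diss$ are built from $BV$-traces on $\thinCyl_1$ and $\thinCyl_R$, and traces are not continuous under $L^1_{\mathrm{loc}}$ convergence --- this is precisely why \lref{conv_of_contact_sets} needs an extra hypothesis. So the whole functional must be reorganized, trading the boundary terms for a bulk integral.

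First I would reduce to $U$ a bounded open set for which $\Omega:=\Cyl_R\cap U$ is a bounded Lipschitz domain whose boundary meets $\thinCyl_1$ and $\thinCyl_R$ transversally (for instance $U$ a solid cylinder $B_{R'}\times(-a,a)$ with $R'>R$); the general case reduces to this. On such $U$ the contributions $-\cos\YP\relAreaP{\PS_L}$ and $-\cos\YC\relAreaC{\CS_L}$ to $\mathcal E[L;U]$ are honest boundary integrals $-\cos\YP\int_{\thinCyl_1\cap U}\mathrm{tr}(\one_L)\,d\mathcal H^{d-1}$ and $-\cos\YC\int_{\thinCyl_R\cap U}\mathrm{tr}(\one_L)\,d\mathcal H^{d-1}$ up to $L$-independent constants; and expanding $\Diss[L_0,L;U]$ and collecting the terms linear in $\mathrm{tr}_{\thinCyl_1}(\one_L)$ yields
\[
\mathcal E[L;U]+\Diss[L_0,L;U]=\mathrm{Per}(L;\Omega)+\int_{\partial\Omega}\Phi\,\mathrm{tr}(\one_L)\,d\mathcal H^{d-1}+\int_{(L\Delta\Cyl_{R,F})\cap U}g|z-F|\,dx+c(L_0,U),
\]
where $\Phi=-\cos\YC$ on $\thinCyl_R\cap U$, $\Phi=0$ on the remaining (transversal) faces of $\partial\Omega$, and on $\thinCyl_1\cap U$ one has $\Phi=-\cos\YP+\mu_+$ off $\PS_{L_0}$ and $\Phi=-\cos\YP-\mu_-$ on $\PS_{L_0}$. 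Dropping $\Diss$ (formally $\mu_\pm=0$) leaves $\Phi=-\cos\YP$ on $\thinCyl_1\cap U$, which is the case corresponding to the first inequality. The ellipticity hypotheses $|\cos\YC|<1$ and $[\cos\YP-\mu_+,\cos\YP+\mu_-]\subset(-1,1)$ say exactly that $\|\Phi\|_\infty<1$, so $\Phi$ extends to a $C^1$ field $X$ on $\overline{\Omega}$ with $X\cdot\nu_\Omega=\Phi$ and $\|X\|_\infty<1$; the transversal right-angle corners cause no difficulty, since there a nonzero boundary value always meets a zero one.

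For $L$ with finite $\mathcal E[L;U]$ I would then apply the Gauss--Green formula for $BV$ functions, $\int_{\partial\Omega}\Phi\,\mathrm{tr}(\one_L)\,d\mathcal H^{d-1}=\int_\Omega\one_L\,\nabla\!\cdot\!X\,dx+\int_\Omega X\cdot\frac{D\one_L}{|D\one_L|}\,d|D\one_L|$, turning the functional into
\[
\int_\Omega\Big(1+X\cdot\tfrac{D\one_L}{|D\one_L|}\Big)\,d|D\one_L|+\int_\Omega\one_L\,\nabla\!\cdot\!X\,dx+\int_{(L\Delta\Cyl_{R,F})\cap U}g|z-F|\,dx+c .
\]
The last two terms are \emph{continuous} along $L_n\to L_\infty$ in $L^1_{\mathrm{loc}}$, since $\nabla\!\cdot\!X\in L^\infty(\Omega)$, $g|z-F|$ is bounded on the bounded set $U$, and $\one_{L_n}\to\one_{L_\infty}$ in $L^1(U)$. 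The first integrand equals $|p|+X(x)\cdot p\geq(1-\|X\|_\infty)|p|\geq0$, convex and positively $1$-homogeneous in $p$; assuming (as we may) $\liminf_n\mathcal E[L_n;U]<\infty$, the local coercivity estimate proved within \lref{energy_bdd_below} gives $\sup_n\mathrm{Per}(L_n;\Omega)<\infty$ along a subsequence, hence $D\one_{L_n}\rightharpoonup^* D\one_{L_\infty}$ and the lower semicontinuity of convex $1$-homogeneous integral functionals (Reshetnyak) controls this term. Combining the three contributions yields both inequalities. The case $R=\infty$ is identical, with $\mathrm{Per}(L;\Omega)$ replaced by the still convex, $1$-homogeneous, nonnegative integral $\int_{\FS_L\cap U}(1-e_z\cdot\nu^f_L)\,d\mathcal H^{d-1}$ and $X$ corrected by the constant field $e_z$ where needed; $\thinCyl_R$ being absent, no additional boundary term appears.

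I expect the main point --- and the only place the full strength of the hypothesis on $\mu_\pm$ enters --- to be the verification that after folding $\Diss$ into the capillary coefficient on $\thinCyl_1$ the result still has modulus strictly below $1$, which is exactly what makes the sub-unit extension $X$ exist; the rest is bookkeeping of the constant $c(L_0,U)$ and the (routine but slightly delicate) choice of a localization domain $\Omega$ with transversal corners.
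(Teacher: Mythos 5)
Your proposal takes a genuinely different route from the paper's. The paper follows the Caffarelli--Mellet approach: treat the gravity term by Fatou, the free-surface term by lower semicontinuity of relative perimeter, and near $\thinCyl_1$ argue by contradiction, using the trace inequality of \lref{bv_trace_ineq} to bound $|\PS_{L_\infty}\Delta\PS_{L_n}|$ by free-surface mass in a thin collar $\Gamma_t$ and sending $t\to 0$. You instead use the classical Modica/Giusti vector-field method: fold the dissipation into the capillary coefficient, turn the resulting boundary integral into a bulk one via Gauss--Green, and apply Reshetnyak's lower semicontinuity theorem to the convex, $1$-homogeneous integrand $|p|+X(x)\cdot p$. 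Both are standard for capillary lower semicontinuity; your reorganization cleanly identifies the ellipticity hypothesis as exactly $\|\Phi\|_\infty<1$, and the Gauss--Green trade makes the contact and dissipation contributions explicitly \emph{continuous} under $L^1_{\mathrm{loc}}$ convergence rather than merely lower semicontinuous by a delicate collar estimate.

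There is, however, a genuine gap. Once the dissipation is absorbed, $\Phi$ equals $-\cos\YP+\mu_+$ on $(\thinCyl_1\cap U)\setminus\PS_{L_0}$ and $-\cos\YP-\mu_-$ on $\PS_{L_0}\cap U$; since $L_0$ is only required to lie in $X^R_F$, $\PS_{L_0}$ is merely a measurable set of finite relative perimeter, so $\Phi$ is a genuinely discontinuous $L^\infty$ function on the single face $\thinCyl_1\cap U$. Because the normal direction on that face is constant, $X\cdot\nu$ is continuous whenever $X$ is, so no $C^1$ (nor even continuous) extension $X$ with $X\cdot\nu=\Phi$ exists. You correctly note that the right-angle corners of $\partial\Omega$ are harmless because the two adjacent faces have independent normal directions, but the jump across $\partial^*\PS_{L_0}$ happens within one face, where that reasoning does not apply. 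The argument can be repaired by a density step you do not supply: approximate $\Phi$ in $L^1(\partial\Omega)$ by continuous $\Psi_\varepsilon$ with $\|\Psi_\varepsilon\|_\infty<1$, run the vector-field argument for $\Psi_\varepsilon$, and use the $L$-uniform bound $\bigl|\int_{\partial\Omega}(\Phi-\Psi_\varepsilon)\,\mathrm{tr}(\one_L)\bigr|\leq\int_{\partial\Omega}|\Phi-\Psi_\varepsilon|<\varepsilon$ before sending $\varepsilon\to 0$. Without this step the proof does not close. A smaller point: the local energy $\mathcal{E}[L;U]$ used in this lemma carries the term $|\FS_L\cap U|$ also when $R=\infty$; the relativization $\int_{\FS_L}(1-e_z\cdot\nu^f_L)\,d\mathcal{H}^{d-1}$ belongs to the global $\mathcal{E}_\infty$, so your closing remark about replacing $\mathrm{Per}(L;\Omega)$ in the $R=\infty$ case is unnecessary and, as stated, not what the local lemma asserts.
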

\begin{proof}
We split into cases. Away from the container and plate, the convergence of the $L_n$ implies that the integrand $|z-F|\one_{L_n\Delta \Cyl_{R,F}}$ converges locally in $L^1$ to $|z-F|\one_{L\Delta \Cyl_{R,F}}$. Thus, we get lower semicontinuity of gravity from Fatou (which only requires local convergence in measure). The lower semicontinuity of the free surface energy follows from lower semicontinuity of relative perimeter. 

The situations near the container and near the plate are similar, so we focus on the latter (in the case with dissipation), following the approach in \cite{caffarelliMellet}*{Lemma 2}. We will apply \lref{bv_trace_ineq} with $\Gamma = \mathrm{tr}_{\thinCyl_1}(U)$. Up to subdivision and covering, it then suffices to prove the lower semicontinuity with $U$ replaced by the one-sided tubular neighborhood $\Gamma_r$ of $\Gamma$.

Suppose that lower semicontinuity fails in $\Gamma_r$. Then we may pass to a subsequence along which 
\[ \mathcal{E}[L_\infty; \Gamma_r] > \mathcal{E}[L_n; \Gamma_r] + \delta \]
for all $n$ for some $\delta > 0$. Ignoring gravity, which converges just by $|L_n\Delta L_\infty|\to 0$, we have
\begin{align*}
    |\FS_{L_n}\cap \Gamma_r| + \delta &\leq |\FS_{L_\infty}\cap \Gamma_r| - \cos\YP |\PS_{L_\infty}\cap \Gamma_r| + \cos\YP |\PS_{L_n}\cap \Gamma_r|
    \\&\dots + \Diss(L_0, L_\infty; \Gamma_r) - \Diss(L_0, L_n; \Gamma_r)
    \\&\leq |\FS_{L_\infty}\cap \Gamma_r| - \cos\YP |\PS_{L_\infty}\cap \Gamma_r| + \cos\YP |\PS_{L_n}\cap \Gamma_r|
    \\&\dots + \Diss(L_\infty, L_n; \Gamma_r)
    \\&\leq |\FS_{L_\infty}\cap \Gamma_r| + |\PS_{L_\infty}\Delta \PS_{L_n}|
\end{align*}
Here, in the second line we use the dissipation distance triangle inequality \eref{diss_triangle_inequality}, and in the third line we use the ellipticity hypotheses in \sref{ellipticity-hypothesis} to bound all contact surface terms. Now, from \eqref{eq:bv_trace_ineq}, we have for all sufficiently small $t$ that
\[ |\PS_{L_\infty}\Delta \PS_{L_n}| \leq |\FS_{L_\infty}\cap \Gamma_t| + |\FS_{L_n}\cap \Gamma_t| + C(t)\vol((L_\infty\Delta L_n)\cap \Gamma_t) \]
Putting this with $t < r$ into the previous inequality, we use lower semicontinuity of perimeter to obtain:
\begin{align*}
    |\FS_{L_\infty} \cap (\Gamma_r\setminus \Gamma_t)| + \delta &\leq \liminf_{n\to\infty}|\FS_{L_n}\cap (\Gamma_r\setminus \Gamma_t)| + \delta \\&\leq |\FS_{L_\infty}\cap \Gamma_r| + |\FS_{L_\infty}\cap \Gamma_t| 
    \\&\dots + \liminf_{n\to\infty} C(t)\vol((L_\infty\Delta L_n)\cap \Gamma_t)
\end{align*}
Since $L_n\to L_\infty$, we may drop the volume term. Thus, we reduce to
\[ \delta \leq 2|\FS_{L_\infty}\cap \Gamma_t| \]
As $t\to 0$, the right side decreases to 0, being a finite measure evaluated on a family of sets shrinking to empty. Then $\delta \leq 0$ is a contradiction, so we conclude.

\end{proof}

\begin{lemma}[Lower semicontinuity of energy]\label{l.lower_semicontinuity}
If $L_n\to L_\infty$ in $L^1_{\mathrm{loc}}$ with $L_n, L_\infty\in X^R_F$, then we have
\[ -\infty < \mathcal{E}_R[L_\infty; F] \leq \liminf_n \mathcal{E}_R[L_n; F] \leq +\infty \]
For any $L_0\in \mathrm{Cacc}_{\mathrm{loc}}$, we have
\[ -\infty < \mathcal{E}_R[L_\infty; F] + \Diss[L_0, L_\infty] \leq \liminf_n \left(\mathcal{E}_R[L_n; F] + \Diss[L_0, L_n] \right) \leq +\infty \]
\end{lemma}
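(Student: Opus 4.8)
The plan is to bootstrap from the local lower semicontinuity of \lref{local_lower_semicontinuity} by controlling the contribution of the energy ``at infinity'' with the coercivity estimate of \lref{energy_bdd_below}. First, note that the lower bound $\mathcal{E}_R[L_\infty;F]>-\infty$, and likewise $\mathcal{E}_R[L_\infty;F]+\Diss[L_0,L_\infty]>-\infty$, is immediate and needs no limiting argument: in \eqref{eq:energy} the free surface and gravitational terms are nonnegative and the dissipation is nonnegative, while the only possibly negative terms $-\cos\YP\relAreaP{\PS_{L_\infty}}$ and $-\cos\YC\relAreaC{\CS_{L_\infty}}$ are finite by definition of $X^R_F$. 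So the content is the $\liminf$ inequality.

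To prove it I would fix an exhaustion of $\Cyl_R$ by bounded open sets $U_k\uparrow\Cyl_R$ chosen so that $U_k$ eventually contains $\Cyl_R\cap\{|z|\le M\}$ for every fixed $M$ (hence eventually contains the strips near $\thinCyl_1\cap\{z=0\}$ and $\thinCyl_R\cap\{z=F\}$ where the proof of \lref{energy_bdd_below} uses its crude volume bound), and so that $\partial U_k$ carries no $\FS_{L_\infty}$-, $\PS_{L_\infty}$-, $\CS_{L_\infty}$- or $\PS_{L_0}$-surface mass -- possible for cofinitely many choices in a one-parameter family. Then I would split $\mathcal{E}_R[L_n;F]=\mathcal{E}[L_n;U_k]+\mathcal{E}[L_n;\Cyl_R\setminus\overline{U_k}]$, and similarly add $\Diss[L_0,L_n;U_k]+\Diss[L_0,L_n;\Cyl_R\setminus\overline{U_k}]$ for the dissipative statement. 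The key claim is that for $k$ large the tail term is nonnegative: re-running the covering argument of \lref{energy_bdd_below} with all cubes restricted to $\Cyl_R\setminus\overline{U_k}$, this region is disjoint from the two strips, so the additive constants $C|\cos\YP|(1+|F|)$ and $C|\cos\YC|R^{d-2}$ never appear and all volume terms are absorbed by gravity; what remains is $\bigl(1-(|\cos\YP|\vee|\cos\YC|)\bigr)$ times the (nonnegative) tail free surface plus tail gravity, hence $\ge 0$. Since the omitted dissipation term is also nonnegative, the same holds with dissipation included. Combining with \lref{local_lower_semicontinuity} on the bounded set $U_k$ then gives $\liminf_n\mathcal{E}_R[L_n;F]\ge\liminf_n\mathcal{E}[L_n;U_k]\ge\mathcal{E}[L_\infty;U_k]$ for every $k$, and the same with dissipation.

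Finally I would let $k\to\infty$: the free surface and gravity parts of $\mathcal{E}[L_\infty;U_k]$ increase to those of $\mathcal{E}_R[L_\infty;F]$ by monotone convergence, and the dissipation part increases to $\Diss[L_0,L_\infty]$; for the two contact terms, \lref{bv_trace_ineq} applied near $\{z=0\}$ and $\{z=F\}$ (as in the proof of \lref{energy_bdd_below}) shows that finiteness of $\relAreaP{\PS_{L_\infty}}$ and $\relAreaC{\CS_{L_\infty}}$ together with finiteness of the free surface and gravity actually forces $|\PS_{L_\infty}\Delta\thinCyl_{1,0}|,|\CS_{L_\infty}\Delta\thinCyl_{R,F}|<\infty$, so those relativized areas restricted to $U_k$ converge by dominated convergence. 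Hence $\mathcal{E}[L_\infty;U_k]\to\mathcal{E}_R[L_\infty;F]$, and likewise with $\Diss$, which yields the inequality; the case $\mathcal{E}_R[L_\infty;F]=+\infty$ is forced along the same chain. The one genuinely new step beyond the local lemma -- and the main thing to get right -- is the tail nonnegativity, where the point is precisely that the lower-order additive terms in \lref{energy_bdd_below} are localized near $z=0$ and $z=F$ and so can be swept entirely into $U_k$; everything else is bookkeeping about the exhaustion and about the $\infty-\infty$ cancellations in the relativized areas.
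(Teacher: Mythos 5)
Your proof is correct, and it follows the same two ingredients as the paper---local lower semicontinuity from \lref{local_lower_semicontinuity} together with the coercivity mechanism of \lref{energy_bdd_below}---but via a genuinely different decomposition. The paper partitions $\Cyl_R$ into a countable \emph{disjoint} family $(U_n)$, records the constants $C(U_n)$ for which $\mathcal{E}[L;U_n]+C(U_n)$ is the contribution of $U_n$ to the relativized energy, notes from the coercivity proof that all but finitely many of these relativized pieces are nonnegative (and the rest are uniformly bounded below), applies \lref{local_lower_semicontinuity} term by term, and passes the infinite sum inside the $\liminf$ by Fatou for series. You instead take an \emph{increasing exhaustion} $U_k\uparrow\Cyl_R$, show the single remaining tail piece $\mathcal{E}_R[L;F] - (\mathcal{E}[L;U_k]+C(U_k))$ is nonnegative uniformly in $L$ once $\overline{U_k}$ swallows the bounded strips near $\thinCyl_1\cap\{z=0\}$ and $\thinCyl_R\cap\{z=F\}$ where the additive constants in \lref{energy_bdd_below} live, apply \lref{local_lower_semicontinuity} to the bulk piece, and then let $k\to\infty$ using monotone convergence for the free surface, gravity, and dissipation plus dominated convergence for the relativized contact terms (justified because the two pieces of each relativized area are individually finite for $L_\infty\in X^R_F$). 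Your observation that the a priori lower bound $\mathcal{E}_R[L_\infty;F]>-\infty$ is immediate from the definition of $X^R_F$, with no limiting argument needed, is also a clean shortcut relative to the paper. What your route buys is conceptual simplicity---a two-piece split and a single limit in $k$---at the cost of one extra step at the end (the $k\to\infty$ passage, where you correctly flag the potential $\infty-\infty$ issue and resolve it); what the paper's route buys is that each piece is handled in parallel and the limit in the partition index is subsumed into the Fatou step. The one place to be slightly more careful in your writeup is notational: when you write $\mathcal{E}[L_n;U_k]$ you must mean the contribution of $U_k$ to the \emph{relativized} energy (i.e., including the additive constant $C(U_k)$ matching the reference configurations $\thinCyl_{1,0}$ and $\thinCyl_{R,F}$), not the local energy with absolute contact areas defined in \sref{higher_regularity_glbl_stbl}, since the latter does not telescope to $\mathcal{E}_R[L_n;F]$; this is a pure bookkeeping point and does not affect the validity of the argument, since the constant $C(U_k)$ cancels on both sides of the local lower semicontinuity inequality.
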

\begin{proof}
    We cover $\Cyl_R$ by countably many disjoint bounded sets $U_n$. Near the plate and (if $R < \infty$) the container, we choose these as in the proof of \lref{energy_bdd_below} and write $C(U_n)$ for the additive constant such that $\mathcal{E}[L; U_n]+ C(U_n)$ is the contribution of $U_n$ to the relativized energy $\mathcal{E}_R[L; F]$. From the proof of \lref{energy_bdd_below}, we have:
    \begin{itemize}
        \item For all but finitely many $n$, for all $L\in X^R_F$, $\mathcal{E}[L; U_n]+ C(U_n) \geq 0$.
        \item On the finitely many remaining $n$, $\mathcal{E}[L; U_n]+ C(U_n)$ is bounded below by a negative constant over all $L\in X^R_F$.
    \end{itemize}
    Thus, we apply the local lower semicontinuity of the energy from \lref{local_lower_semicontinuity} in each set. We have
    \[ \mathcal{E}[L_\infty; U_k] \leq \liminf_{n\to\infty} \mathcal{E}[L_n; U_k] \]
    and
    \[ \mathcal{E}[L_\infty; U_k] + \Diss[L_0, L_\infty; U_k] \leq \liminf_{n\to\infty} \mathcal{E}[L_n; U_k] + \Diss(L_0, L_n; U_k) \]
    In both cases, we relativize the energies and sum in $k$, using the aforementioned bounds from \lref{energy_bdd_below} to ensure that this sum is well-defined in $(-\infty, \infty]$. Subsequently, we bring the summation inside the liminf to conclude.
\end{proof}

With the lower semicontinuity of the energy augmented by dissipation, it is straightforward to show existence of globally stable profiles by the direct method:

\begin{lemma}[Existence of globally stable profiles]\label{l.existence_of_minimizer}
    Let $R\in (1,\infty]$. For any $L_0\in \mathrm{Cacc}_{\mathrm{loc}}(C_R)$, there exists a minimizer of
    \begin{equation}\label{eq:global_stability_functional}
        X^R_F\ni L\mapsto \mathcal{E}_R[L; F] + \Diss[L_0, L]
    \end{equation}
    Any minimizing $L$ is a globally stable profile.
\end{lemma}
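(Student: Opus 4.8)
The plan is to apply the direct method to the functional $G[L] := \mathcal{E}_R[L;F] + \Diss[L_0,L]$ on $X^R_F$. We may assume $I := \inf_{X^R_F} G < \infty$, since otherwise $G \equiv +\infty$ and the statement carries no content; for the $L_0$ that actually arise (admissible initial data, and profiles produced by the discrete scheme) a competitor of finite value is obtained explicitly, e.g. by adjusting $L_0$ far above the plate to restore the volume constraint at $O(1)$ energy cost without changing $\PS_{L_0}$. Fix a minimizing sequence $L_n \in X^R_F$ with $G[L_n] \le M$. Since $\Diss \ge 0$ and $\mathcal{E}_R$ is bounded below by \lref{energy_bdd_below}, we have $\mathcal{E}_R[L_n;F] \le M$, and then the coercivity estimate of \lref{energy_bdd_below} gives uniform (in $n$) bounds on $|\FS_{L_n}|$ and on $\int_{L_n\Delta\Cyl_{R,F}} g|z-F|$.

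For compactness, observe that on any bounded open $U$ compactly contained in $\Cyl_R$ one has $\mathrm{Per}(L_n;U) \le |\FS_{L_n}| \le M$ and $\vol(L_n\cap U) \le \vol(U)$, so $\{L_n\}$ is bounded in $BV(U)$; exhausting $\Cyl_R$ by such sets and passing to a diagonal subsequence (not relabeled) yields $L_n \to L_\infty$ in $L^1_{\mathrm{loc}}(\Cyl_R)$, with $L_\infty \in \mathrm{Cacc}_{\mathrm{loc}}(\overline{\Cyl_R})$ and $|\FS_{L_\infty}| \le \liminf_n |\FS_{L_n}| < \infty$ by lower semicontinuity of relative perimeter.

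The key step is to verify $L_\infty \in X^R_F$; a merely local $L^1$ limit need not respect the volume constraint, and to rule out escape of mass as $z \to \pm\infty$ we use the gravity bound. From $\int_{L_n\Delta\Cyl_{R,F}} g|z-F| \le M$ we get $\vol\big((L_n\Delta\Cyl_{R,F})\cap\{|z-F|>\Lambda\}\big) \le M/(g\Lambda)$ for every $\Lambda$, uniformly in $n$, and likewise for $L_\infty$. Splitting $\Cyl_R$ (with $R < \infty$) into the bounded slab $\{|z-F| \le \Lambda\}$, where $L_n \to L_\infty$ in $L^1$, and its complement, where both deviations are uniformly $O(1/\Lambda)$, and then letting $\Lambda \to \infty$, we obtain $\vol(L_n\setminus\Cyl_{R,F}) \to \vol(L_\infty\setminus\Cyl_{R,F}) < \infty$ and $\vol(\Cyl_{R,F}\setminus L_n) \to \vol(\Cyl_{R,F}\setminus L_\infty) < \infty$, so $\mathcal{C}_{R,F}(L_\infty) = \lim_n \mathcal{C}_{R,F}(L_n) = 0$ (for $R = \infty$ there is no volume constraint and this is skipped). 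Finiteness of $\relAreaP{\PS_{L_\infty}}$ and $\relAreaC{\CS_{L_\infty}}$ then follows by applying the trace inequality \lref{bv_trace_ineq} to $L_\infty$ on a covering of $\thinCyl_1$ (resp. $\thinCyl_R$) by bounded $C^2$ pieces: the perimeter terms sum to a multiple of $|\FS_{L_\infty}| < \infty$, while the volume terms are finite because, outside a bounded range of heights, the portion of $L_\infty$ (resp. $L_\infty^c$) meeting the relevant one-sided neighborhood is contained in $L_\infty\Delta\Cyl_{R,F}$, of finite volume there. Hence $L_\infty \in X^R_F$.

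It then follows from \lref{lower_semicontinuity} (the version with dissipation, valid since $L_0 \in \mathrm{Cacc}_{\mathrm{loc}}$ and $L_n, L_\infty \in X^R_F$) that $G[L_\infty] \le \liminf_n G[L_n] = I$, so $L_\infty$ minimizes $G$. For the last assertion, if $L$ is any minimizer then $\mathcal{E}_R[L;F]$ is finite and $\Diss[L_0,L] \le I < \infty$; so for every $L' \in X^R_F$, combining $G[L] \le G[L']$ with the dissipation triangle inequality \eref{diss_triangle_inequality}, $\Diss[L_0,L'] \le \Diss[L_0,L] + \Diss[L,L']$, and cancelling the finite quantity $\Diss[L_0,L]$, gives $\mathcal{E}_R[L;F] \le \mathcal{E}_R[L';F] + \Diss[L,L']$, which is global stability. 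The main obstacle is the third step: showing the volume constraint survives the weak limit and that $L_\infty$ has finite relative contact areas, both resting on the tightness forced by the gravitational term; the remaining steps are a routine assembly of \lref{energy_bdd_below} and \lref{lower_semicontinuity}.
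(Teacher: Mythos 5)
Your proposal is correct and follows essentially the same route as the paper: the direct method, coercivity from \lref{energy_bdd_below} to bound perimeter and gravitational energy along a minimizing sequence, $BV$-compactness to extract an $L^1_{\mathrm{loc}}$ limit, tightness from the gravity term to show the volume constraint survives the limit, lower semicontinuity via \lref{lower_semicontinuity}, and the dissipation triangle inequality for the final stability claim. The small differences are cosmetic rather than substantive: the paper proves $|L_n\Delta L_\infty|\to 0$ directly and deduces the constraint from that, whereas you track the two volume pieces separately; and you spell out the finiteness of $\relAreaP{\PS_{L_\infty}}$ and $\relAreaC{\CS_{L_\infty}}$ (which the paper leaves implicit), a reasonable extra check given the definition of $X^R_F$.
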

\begin{proof}
    We restrict to the case $R < \infty$. The case $R = \infty$ is similar, omitting the subproof to verify the volume constraint.

    Let $(L_n)$ be a minimizing sequence for \eqref{eq:global_stability_functional}. Then $\mathcal{E}_R[L_n; F]$ is uniformly bounded, so by \lref{energy_bdd_below}, we have $|\FS_{L_n}| = \mathrm{Per}(L_n; \Cyl_R)$ is uniformly bounded, and $\int_{L_n\Delta \Cyl_{R,F}} g|z-F|$ is uniformly bounded. From compactness of sets of finite perimeter, we can restrict to a subsequence (without relabeling) which converges in $L^1_{\mathrm{loc}}(\Cyl_R)$ and pointwise a.e. to some $L\in \mathrm{Cacc}_{\mathrm{loc}}(\Cyl_R)$. Using the pointwise convergence with Fatou, we have
    \[ \int_{L\Delta \Cyl_{R,F}} g|z-F| \leq \liminf_n \int_{L_n\Delta \Cyl_{R,F}} g|z-F| \]
    As a corollary, $|L\setminus \Cyl_{R,F}|$ and $|\Cyl_{R,F}\setminus L^c|$ are both finite. Furthermore, we observe that for any $h > 0$,
    \begin{align*}
        \limsup_n|L_n\Delta L| &= \limsup_n \bigg[|(L_n\Delta L)\cap \{ |z-F| \leq h\}| + |(L_n\Delta L)\cap \{ |z-F| > h \}|\bigg]
        \\&\leq \frac{C}{h}
    \end{align*}
    where we use the $L^1_{\mathrm{loc}}$ convergence on the portion in $|z-F| \leq h$, and the uniform bound on gravitational energy of $L_n$ and $L$ to control the portion in $|z-F| \geq h$. Sending $h\to\infty$, we get that $|L_n\Delta L|\to 0$, and thus
    \begin{align*}
        |\mathcal{C}_F[L]| &= ||L\setminus \Cyl_{R,F}| - |\Cyl_{R,F}\setminus L||
        \\&\leq \liminf_n||L_n\setminus \Cyl_{R,F}| - |\Cyl_{R,F}\setminus L_n|| + |L\Delta L_n|
        \\&= \liminf_n |\mathcal{C}_F[L_n]| + |L\Delta L_n|
        \\&=0
    \end{align*}
    so $L$ satisfies the volume constraint, and $L\in X^R_F$. Using \lref{lower_semicontinuity}, we conclude that $L$ minimizes \eqref{eq:global_stability_functional}. Global stability of $L$ follows directly from the dissipation triangle inequality \eref{diss_triangle_inequality}.
\end{proof}

\subsection{Euler-Lagrange equation}

\begin{lemma}\label{l.initial_local_regularity}
    Let $L$ be globally stable for forcing $F$. For all test vector fields $\xi\in C^\infty_c(\Cyl_R; \R^d)$, we have
    \begin{equation}
        \int_{\FS_L} (\nabla\cdot \xi - \nu^f\cdot D\xi\nu^f) + g(z-F)\xi\cdot\nu^f = \lambda_L\int_{\FS_L} \xi\cdot \nu^f
    \end{equation}
    Thus, $\FS_L$ has distributional mean curvature which satisfies the equation 
    \[ -H_{\FS_L} + g(z-F) = \lambda_L\] 
    where $\lambda_L\in \R$ is a Lagrange multiplier for the volume constraint.
\end{lemma}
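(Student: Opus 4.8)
\emph{Strategy.} The plan is to derive the Euler--Lagrange equation by inner (domain) variations, with $\lambda_L$ appearing as a Lagrange multiplier for the volume constraint. Global stability says $L$ minimizes $L'\mapsto\mathcal{E}_R[L';F]+\Diss(L,L')$ over $X^R_F$; the first thing I would observe is that any deformation of $L$ supported inside the open region $\Cyl_R$ leaves the liquid--plate contact set $\PS_L$ unchanged, so the dissipation term drops out and $L$ is simply a constrained critical point of $\mathcal{E}_R[\cdot;F]$.

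\emph{First variations.} Next I would fix $\xi\in C^\infty_c(\Cyl_R;\R^d)$ with flow $\Phi^\xi_s$ and set $L_s=\Phi^\xi_s(L)$. Since $\xi$ is supported in the open annular region, $\Phi^\xi_s$ is the identity near $\thinCyl_1$ and $\thinCyl_R$; hence $\PS_{L_s}=\PS_L$, $\CS_{L_s}=\CS_L$, the relative areas $\relAreaP{\PS_{L_s}}$ and $\relAreaC{\CS_{L_s}}$ stay finite, $L_s$ lies in $X^R_F$ apart from the volume constraint, and $\Diss(L,L_s)=0$ for $|s|$ small. I would then compute, at $s=0$: the free-surface term gives $\int_{\FS_L}(\nabla\cdot\xi-\nu^f\cdot D\xi\,\nu^f)$; the two contact-area terms give $0$ because their traces are frozen; and the gravity term gives $\int_{\FS_L}g(z-F)\,\xi\cdot\nu^f$. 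For this last term the point to track is the sign produced by the symmetric-difference structure of $\int_{L\Delta\Cyl_{R,F}}g|z-F|$: moving an infinitesimal slug of liquid outward across $\FS_L$ at height $z$ changes the integral by $g(z-F)$ per unit volume both when $z<F$ (the slug leaves $\Cyl_{R,F}\setminus L$) and when $z>F$ (it enters $L\setminus\Cyl_{R,F}$). The same computation gives first variation $\int_{\FS_L}\xi\cdot\nu^f$ for the constraint functional $\mathcal{C}_{R,F}$.

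\emph{The multiplier.} To extract $\lambda_L$, I would fix once and for all a vector field $\xi_0\in C^\infty_c(\Cyl_R;\R^d)$ with $c_0:=\int_{\FS_L}\xi_0\cdot\nu^f\neq0$; such $\xi_0$ exists because $\FS_L=\partial^*L\cap\Cyl_R$ has positive $\mathcal{H}^{d-1}$-measure, as otherwise $L$ would agree a.e. with a union of connected components of $\Cyl_R$ and could not satisfy the volume constraint. For arbitrary $\xi$, with $a:=\int_{\FS_L}\xi\cdot\nu^f$, I would consider the two-parameter family $L_{s,\tau}=\Phi^\xi_s\circ\Phi^{\xi_0}_\tau(L)$; since $(s,\tau)\mapsto\mathcal{C}_{R,F}(L_{s,\tau})$ is $C^1$ near the origin with nonzero $\partial_\tau$-derivative $c_0$, the implicit function theorem produces $\tau(s)$ with $\tau(0)=0$, $\tau'(0)=-a/c_0$, and $\mathcal{C}_{R,F}(L_{s,\tau(s)})\equiv0$ for $|s|$ small. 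The corrected curve $s\mapsto L_{s,\tau(s)}$ then stays in $X^R_F$ with zero dissipation from $L$, so global stability makes $s=0$ an interior minimum of the differentiable function $s\mapsto\mathcal{E}_R[L_{s,\tau(s)};F]$ (finite since $L$ is a minimizer). Differentiating at $s=0$ and substituting the first variations yields
\[ 0=\int_{\FS_L}\big(\nabla\cdot\xi-\nu^f\cdot D\xi\,\nu^f\big)+\int_{\FS_L}g(z-F)\,\xi\cdot\nu^f-\frac{a}{c_0}\,\mathcal{D}_0, \]
where $\mathcal{D}_0$ is the fixed first variation of $\mathcal{E}_R[\cdot;F]$ in direction $\xi_0$. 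Setting $\lambda_L:=\mathcal{D}_0/c_0\in\R$ — independent of $\xi$, hence forced by the identity, and by construction the Lagrange multiplier of the volume constraint — gives the claimed weak identity for all $\xi\in C^\infty_c(\Cyl_R;\R^d)$.

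\emph{Mean curvature and the main obstacle.} Finally, the identity exhibits the first variation of area of $\FS_L$ as represented against the normal speed $\xi\cdot\nu^f$ by the locally bounded density $\lambda_L-g(z-F)$; letting $\xi\cdot\nu^f$ range over a rich enough class of functions on $\FS_L$, this is exactly the statement that $\FS_L$ carries a generalized mean curvature $H_{\FS_L}$, in the paper's sign convention (first variation of area $=-H$), solving $-H_{\FS_L}+g(z-F)=\lambda_L$ on $\FS_L$. I do not expect a deep difficulty here: the two slightly delicate steps are the sign bookkeeping in the gravitational first variation and the verification that $s\mapsto L_{s,\tau(s)}$ remains admissible in $X^R_F$ (the volume constraint is restored by the implicit-function correction, and finiteness of the relative contact areas is automatic because the deformation is supported away from $\thinCyl_1$ and $\thinCyl_R$); everything else is the textbook first-variation calculus of perimeter and volume under a compactly supported diffeomorphism.
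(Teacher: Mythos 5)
Your proof is correct and is essentially the same argument the paper relies on: the paper's one-line proof simply cites Maggi, Theorem 17.20 (the Lagrange-multiplier derivation for volume-constrained perimeter minimizers), and your write-up spells out that argument, with the right modifications for the gravity term and the observation that compactly supported inner variations leave the contact sets, hence the dissipation, unchanged.
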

\begin{proof}
    The distributional mean curvature can be checked analogous to (\cite{Maggi_2012}*{Theorem 17.20}), which treats volume-constrained perimeter minimizers among sets of finite perimeter.
\end{proof}

\subsection{Perturbations of globally stable states}

In this section, we estimate the energy cost of perturbing a state to satisfy a perturbed constraint. This estimate will be key in obtaining the energy-dissipation balance for limits of discrete-time schemes.  The ideas in this section are more or less following \cite{AlbertiDeSimone}*{Proposition 5.3}, though due to our different setting and constraint we must use a different family of perturbations.

Fix $L_0, F_0$ such that $L_0$ is an admissible volume-constrained state for forcing $F_0$. Fix $R_0$ such that $1 < R_0 < R$. Let $V\in C^\infty(B_R\setminus \overline{B_1})$, such that
\begin{equation}
\begin{cases}
    V\equiv 1 & \hbox{ on }B_R\setminus B_{R_0} \\
    V\equiv 0 & \hbox{ near } \partial B_1 \\
    \int_{B_{R_0}\setminus B_1} V = |B_{R_0}\setminus B_1|
\end{cases}
\end{equation}
We form the vector field $\mathcal{V}(x,z) = V(x)e_z$ in the vertical direction, and introduce the flow $\Phi_s = I + s\mathcal{V}$. Write 
\[ L_s = \Phi_s(L_0),\; F_s = F_0 + s \]
Note that if we take $V \equiv 1$ on $\R^{d-1}\setminus B_{R_0}$, then $V$ is admissible for every $R > R_0$.

The main result of this section is a Taylor expansion at $s=0$ with second order remainder for the energy along the flow $\Phi_s$. The pressure appears as the linear term.

\begin{proposition}\label{p.energy_taylor_exp_alt}
Let $L_0$ be admissible for $\mathcal{E}_R[\cdot; F_0]$ with $R\in (R_0, \infty]$ with finite energy, and satisfy that $\FS_{L_0}$ is bounded above and below in the vertical direction: there exists $h_0 > 0$ such that $\FS_{L_0}\subset B_R\times [F_0 - h_0, F_0 + h_0]$. We have the pressure formula
\begin{equation}\label{e.pressure_with_r0}
    P^*_R[L_0, F_0] := \left.\frac{d}{ds}\right|_{s=0}\mathcal{E}_R[L_s, F_s] = \int_{\FS_{L_0}\cap \Cyl_{R_0}} \nu^f_{L_0}\cdot D\mathcal{V}\nu^f_{L_0} + g(z-F_0)(\mathcal{V} - e_z)\cdot \nu^f_{L_0}
\end{equation}
Moreover, we have the Taylor estimate
\[ \left|\mathcal{E}_R[L_s; F_s] - \mathcal{E}_R[L_0; F_0] - P^*[L_0,F_0]s\right| \leq C|\FS_{L_0}\cap \Cyl_{R_0}|s^2 \]
which holds for all $|s| \leq \frac{1}{\|\nabla_x V\|_{C_0}}$ and for a $C = C(\|V\|_{C^1})(1+h_0)$.
\end{proposition}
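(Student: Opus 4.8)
The plan is to exploit how carefully the flow is engineered. Since $V\equiv 1$ on $B_R\setminus B_{R_0}$, on $\{|x|\ge R_0\}$ the map $\Phi_s$ is the vertical translation $(x,z)\mapsto(x,z+s)$, perfectly synchronized with the shift $F_s=F_0+s$; and since $V\equiv 0$ near $\partial B_1$, $\Phi_s$ is the identity near the plate. I would first verify that $L_s\in X^R_{F_s}$: the shear has $\det D\Phi_s\equiv 1$, so it preserves $d$-volume, and the normalization $\int_{B_{R_0}\setminus B_1}V=|B_{R_0}\setminus B_1|$ is exactly what forces the net volume swept by $\Phi_s$, relative to the reference-region shift, to vanish, so that $\mathcal{C}_{R,F_s}(L_s)=\mathcal{C}_{R,F_0}(L_0)=0$; finiteness of the two relative contact areas of $L_s$ and of its gravitational integral follows from the structural facts below together with admissibility of $L_0$ and \lref{energy_bdd_below} (which also gives $|\FS_{L_0}\cap\Cyl_{R_0}|<\infty$). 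Next I would record: (i) $\PS_{L_s}=\PS_{L_0}$, so $\relAreaP{\PS_{L_s}}$ is constant in $s$; (ii) near $\thinCyl_R$, both $\CS_{L_s}$ and $\thinCyl_{R,F_s}$ are $se_z$-translates of their $s=0$ versions, so $\relAreaC{\CS_{L_s}}$ is constant; (iii) on $\Cyl_R\setminus\overline{\Cyl_{R_0}}$, the sets $L_s$, $\Cyl_{R,F_s}$ and the surface $\FS_{L_s}$ are all $se_z$-translates of their $s=0$ versions, so the free-surface area and the gravitational energy contributed there are $s$-independent. Thus everything reduces to Taylor-expanding the single (now finite) quantity $s\mapsto|\FS_{L_s}\cap\Cyl_{R_0}|+\int_{(L_s\Delta\Cyl_{R,F_s})\cap\Cyl_{R_0}}g|z-F_s|$.

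For the area term I would apply the area formula to the injective smooth map $\Phi_s$, writing $|\FS_{L_s}\cap\Cyl_{R_0}|=\int_{\FS_{L_0}\cap\Cyl_{R_0}}J^{\FS_{L_0}}\Phi_s\,d\mathcal{H}^{d-1}$ with tangential Jacobian $J^{\FS_{L_0}}\Phi_s(y)=\sqrt{\det G_s(y)}$ for the Gram matrix $G_s$ of $D\Phi_s|_{T_y\FS_{L_0}}=I+sD\mathcal{V}|_{T_y\FS_{L_0}}$. Since $G_s$ is polynomial in $s$ and stays uniformly non-degenerate for $|s|\le\|\nabla_x V\|_{C^0}^{-1}$, the map $s\mapsto\sqrt{\det G_s(y)}$ is smooth with second derivative $\le C\|\nabla_x V\|_{C^0}^2$ uniformly in $y$; Taylor's theorem then gives $J^{\FS_{L_0}}\Phi_s=1+s\,\mathrm{div}_{\FS_{L_0}}\mathcal{V}+O(\|\nabla_x V\|_{C^0}^2s^2)$ pointwise, so that the first-order term is $\int_{\FS_{L_0}\cap\Cyl_{R_0}}\mathrm{div}_{\FS_{L_0}}\mathcal{V}$ — equivalently, since $\mathrm{div}\,\mathcal{V}=0$ ($V=V(x)$), $-\int_{\FS_{L_0}\cap\Cyl_{R_0}}\nu^f_{L_0}\cdot D\mathcal{V}\,\nu^f_{L_0}$, the free-surface part of $P^*_R$ as in the first variation of \lref{initial_local_regularity} — with remainder $\le C\|\nabla_x V\|_{C^0}^2|\FS_{L_0}\cap\Cyl_{R_0}|s^2$. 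For the gravity term I would change variables by $\Phi_s^{-1}$ (again unit Jacobian), rewriting the integral over the fixed sets $L_0$ and $\Cyl_R\setminus L_0$ inside $\Cyl_{R_0}$ of $g\bigl((z-F_0)-s(1-V(x))\bigr)_{\pm}$; differentiating at $s=0$ and converting the resulting volume integral back onto $\FS_{L_0}$ by the divergence theorem (the plate- and container-boundary terms drop, the relevant fields being vertical) recovers the second integrand $g(z-F_0)(\mathcal{V}-e_z)\cdot\nu^f_{L_0}$ of \eref{pressure_with_r0}.

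The step I expect to be the real obstacle is the quadratic remainder for the gravity term, since $t\mapsto(t)_{\pm}$ is only Lipschitz and its linearization error at scale $s$ is pointwise only $O(|s|)$. The saving observation is convexity: $s\mapsto g\bigl((z-F_0)-s(1-V(x))\bigr)_{\pm}$ is convex in $s$, so the error has a sign and is supported on the thin slab $\{|z-F_0|\lesssim|s|\}\cap\Cyl_{R_0}$, of $d$-volume $O(|s|\,|B_{R_0}\setminus B_1|)$ — here $|s|\le\|\nabla_x V\|_{C^0}^{-1}$ keeps the shear and this slab under control, and $h_0$ bounds the region where $L_0$ differs from the flat reference (so only there does anything change, and the individual terms are finite) — and integration upgrades the bound to $O(s^2)$ with a constant of the claimed form $C(\|V\|_{C^1})(1+h_0)$. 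Finally, to package the combined remainder as $C|\FS_{L_0}\cap\Cyl_{R_0}|s^2$ I would use $|B_{R_0}\setminus B_1|\le|\FS_{L_0}\cap\Cyl_{R_0}|$, valid because $L_0$ fills $\Cyl_R$ far below the slab and is empty far above, so $\FS_{L_0}$ projects onto the whole annulus $B_{R_0}\setminus B_1$ under the area-nonincreasing vertical projection. Summing the two expansions and reading $P^*_R[L_0,F_0]$ off as the sum of the two linear coefficients closes the argument.
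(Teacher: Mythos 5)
Your proof is correct and, for the gravity term, takes a genuinely different route than the paper's.

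For the free-surface area you follow essentially the paper's path: in the paper the tangential Jacobian of $\Phi_s$ is computed via the cofactor identity $J^{\FS_{L_0}}\Phi_s = |(D\Phi_{-s})^T\nu^f_{L_0}| = |\nu^f_{L_0} - s(\nu^f_{L_0}\cdot e_z)(\nabla_x V,0)|$, and its derivatives in $s$ are read off directly; your route through $\sqrt{\det G_s}$ is the same fact by polar decomposition and gives the same expansion. Note, however, that this expansion gives $\left.\frac{d}{ds}\right|_{s=0}|\FS_{L_s}| = \int_{\FS_{L_0}\cap\Cyl_{R_0}}\mathrm{div}^{\FS_{L_0}}\mathcal{V} = -\int_{\FS_{L_0}\cap\Cyl_{R_0}} \nu^f_{L_0}\cdot D\mathcal{V}\,\nu^f_{L_0}$, which is what you derived and which matches the first variation in Lemma 2.7 of the paper; the plus sign in the displayed formula \eref{pressure_with_r0} and in the paper's intermediate computation is a sign slip in the paper. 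Do flag this rather than silently assert consistency.

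The real divergence from the paper is the gravity term. The paper's trick is to convert $\int_{L_s\Delta\Cyl_{R,F_s}}g|z-F_s|$ via the divergence theorem into the surface integral $\int_{\FS_{L_s}}\tfrac{1}{2}|z-F_s|^2(e_z\cdot\nu^f_{L_s})$ \emph{before} pulling back by $\Phi_{-s}$; the resulting integrand $\tfrac{1}{2}|z+sV-F_s|^2(e_z\cdot\nu^f_{L_0})$ is $C^\infty$ in $s$, and the second derivative is then \emph{exactly} $\int_{B_{R_0}\setminus B_1} g(V-1)^2$ by the integration-by-parts identity \eqref{eq:ez_dot_nu_IBP_alt}, with no analysis of the Lipschitz corner needed. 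You instead keep the gravity as a volume integral, pull back first, and confront the piecewise-linear $s\mapsto(a+bs)_\pm$ directly. Your thin-slab argument is correct: the Taylor error at a point is nonzero only on the crossing set $\{|z-F_0|\lesssim(1+\|V\|_{C^0})|s|\}\cap\Cyl_{R_0}$, on which it is pointwise $O(|s|)$, so the integrated error is $O(s^2|B_{R_0}\setminus B_1|)$. This is a more elementary, but slightly longer, route to the same remainder bound. One caution: when you differentiate at $s=0$ and then integrate by parts to recover $\int_{\FS_{L_0}\cap\Cyl_{R_0}}g(z-F_0)(\mathcal{V}-e_z)\cdot\nu^f_{L_0}$, you should spell out that the boundary contributions on $\thinCyl_{R_0}$ vanish because $V\equiv 1$ there (not merely because the field is vertical) and those on $\{z=F_0\}$ vanish because $z-F_0$ is the weight; you implicitly use both.

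Your observation that $|B_{R_0}\setminus B_1|\le|\FS_{L_0}\cap\Cyl_{R_0}|$ — via vertical slicing/projection, using $\FS_{L_0}\subset\{|z-F_0|\le h_0\}$ and the constraint to show $L_0$ is full below and empty above the slab, so a.e.\ slice has a boundary point — is a useful addition. The paper's own second-derivative bound has the separate summand $g|B_{R_0}\setminus B_1|(1+\|V\|_{C^0}^2)$, and the stated estimate $C|\FS_{L_0}\cap\Cyl_{R_0}|s^2$ requires exactly this absorption, which the paper does not justify explicitly. So this is a genuine gap in the paper's write-up that your argument closes.
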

\begin{remark}
    We will later show in \lref{qualitative_height_bound} that globally stable profiles satisfy the assumption that $\FS$ is bounded in the vertical direction, and in \lref{width-bound} that $h_0$ can be controlled uniformly for globally stable profiles.

    We will also later show the local perimeter bound \lref{lambda_and_energy_bound} for globally stable profiles. With that, both the pressure and second derivative bound are uniformly bounded for fixed $R_0$, over all $R\in (R_0, \infty]$, all $F_0$, and all $L_0$ globally stable for $\mathcal{E}_R[\cdot; F_0]$.
\end{remark}

We will establish this Taylor expansion by explicit computations involving the constraint function and the energy. First we show that the constraint is preserved under the flow.

\begin{lemma}\label{l.volume-fix-flow-alt}
    We have $\mathcal{C}_{F_s}[L_s] = \mathcal{C}_{F_0}[L_0] = 0$.
\end{lemma}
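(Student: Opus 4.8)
The plan is to exploit two structural features of the flow $\Phi_s = I + s\mathcal{V}$: that it preserves Lebesgue measure, and that it carries the reference configuration $\Cyl_{R,F_0}$ to a set having the same relative volume as the new reference configuration $\Cyl_{R,F_s}$ — this last fact being precisely what the normalization $\int_{B_{R_0}\setminus B_1}V = |B_{R_0}\setminus B_1|$ buys us. Granting these, the constraint is transported unchanged.

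First I would record that $\Phi_s(x,z) = (x,\,z + sV(x))$ is, for every $s$, a smooth diffeomorphism of $\Cyl_R = (B_R\setminus\overline{B_1})\times\R$ onto itself (it is a vertical shear, translating each fiber $\{x\}\times\R$ by $sV(x)$), with differential
\[ D\Phi_s = \begin{pmatrix} I_{d-1} & 0 \\ s\,(\nabla_x V)^{\top} & 1 \end{pmatrix}, \qquad \det D\Phi_s \equiv 1, \]
so $\Phi_s$ is volume-preserving on $\Cyl_R$. Next, since $L_0$ is admissible for $F_0$, both $\vol(L_0\setminus\Cyl_{R,F_0})$ and $\vol(\Cyl_{R,F_0}\setminus L_0)$ are finite, hence $\one_{L_0} - \one_{\Cyl_{R,F_0}}\in L^1(\Cyl_R)$ and $\mathcal{C}_{F_0}[L_0] = \int_{\Cyl_R}(\one_{L_0} - \one_{\Cyl_{R,F_0}}) = 0$. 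Then I would split
\[ \one_{L_s} - \one_{\Cyl_{R,F_s}} = \big(\one_{L_s} - \one_{\Phi_s(\Cyl_{R,F_0})}\big) + \big(\one_{\Phi_s(\Cyl_{R,F_0})} - \one_{\Cyl_{R,F_s}}\big) \]
and integrate each bracket over $\Cyl_R$ separately.

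For the first bracket, $\one_{L_s} - \one_{\Phi_s(\Cyl_{R,F_0})} = (\one_{L_0} - \one_{\Cyl_{R,F_0}})\circ \Phi_s^{-1}$, so the change of variables with unit Jacobian shows this density is $L^1$ and integrates to $\mathcal{C}_{F_0}[L_0] = 0$. For the second bracket, outside $B_{R_0}$ we have $V\equiv 1$, so $\Phi_s(\Cyl_{R,F_0})$ and $\Cyl_{R,F_s}$ agree there; inside $B_{R_0}\setminus\overline{B_1}$ the two sets are the regions below the graphs of $x\mapsto F_0 + sV(x)$ and $x\mapsto F_0+s$, so this density is supported in a bounded slab over $B_{R_0}\setminus\overline{B_1}$ and, by Fubini,
\[ \int_{\Cyl_R}\big(\one_{\Phi_s(\Cyl_{R,F_0})} - \one_{\Cyl_{R,F_s}}\big) = \int_{B_{R_0}\setminus B_1} s\big(V(x)-1\big)\,dx = s\Big(\int_{B_{R_0}\setminus B_1}V - |B_{R_0}\setminus B_1|\Big) = 0. \]
Summing the two contributions gives $\mathcal{C}_{F_s}[L_s] = 0$, with both defining integrals finite, so the constraint holds in the strong sense demanded by admissibility. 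There is no real obstacle here; the only points to watch are the exactness of $\det D\Phi_s = 1$ (so precomposition with $\Phi_s^{-1}$ is an $L^1$-isometry) and the bookkeeping that legitimizes splitting $\mathcal{C}_{F_s}[L_s]$ into two separately convergent integrals — the first term being integrable by admissibility of $L_0$, the second by compact support.
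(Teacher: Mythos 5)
Your proof is correct, but it takes a genuinely different route from the paper's. The paper differentiates the constraint in $s$: it computes $\frac{d}{ds}\mathcal{C}_{F_0+s}[L_s]$ by combining $\frac{d}{dF}\mathcal{C}_F(L) = -|B_R\setminus B_1|$ with $\frac{d}{ds}\mathcal{C}_F(L_s) = \int_{\FS_{L_s}} Ve_z\cdot\nu^f_{L_s}\,dS$, and then evaluates the latter surface integral by an integration-by-parts trick (rewriting it as $\int_{B_R\setminus B_1}V\,dx$) that is reused later in the Taylor-expansion arguments. You instead use no calculus in $s$ at all: you observe that $\Phi_s$ is a measure-preserving shear ($\det D\Phi_s\equiv 1$) so precomposition by $\Phi_s^{-1}$ is an $L^1$-isometry, then split $\one_{L_s}-\one_{\Cyl_{R,F_s}}$ into $(\one_{L_0}-\one_{\Cyl_{R,F_0}})\circ\Phi_s^{-1}$, which integrates to $\mathcal{C}_{F_0}[L_0]=0$, plus the compactly supported discrepancy $\one_{\Phi_s(\Cyl_{R,F_0})}-\one_{\Cyl_{R,F_s}}$, which by Fubini integrates to $s\bigl(\int_{B_{R_0}\setminus B_1}V - |B_{R_0}\setminus B_1|\bigr)=0$. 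Your approach is more elementary and self-contained, avoids any smoothness or integrability hypotheses needed to justify differentiating under the integral and integrating by parts on $\FS_{L_s}$, and also transparently gives the separate finiteness of $\vol(L_s\setminus\Cyl_{R,F_s})$ and $\vol(\Cyl_{R,F_s}\setminus L_s)$ that the admissibility requirement demands. The paper's derivative form, on the other hand, dovetails with the surrounding calculations (the pressure and the Taylor remainder are computed via the same $\frac{d}{ds}$ machinery), which is presumably why they chose it. Both are valid.
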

\begin{proof}
First, with regards to the constraint, we have
\[\frac{d}{dF}\mathcal{C}_F(L) = - \int_{B_R\setminus B_1} 1dx\]
and
\[\frac{d}{ds}\mathcal{C}_{F}(L_s) = \int_{\FS_{L_s}} Ve_z\cdot \nu^f_{L_s} dS \]
An integration by parts which we will use several times in the following gives that
\begin{equation}\label{eq:ez_dot_nu_IBP_alt}
    \int_{\FS_{L_s}} Ve_z\cdot \nu^f_{L_s} dS = \int_{L_s\Delta \Cyl_{R,0}} \nabla\cdot(V(x)|z|e_z) +  \int_{B_R\setminus B_1} V(x) \ dx = \int_{B_R\setminus B_1} V(x)\,dx
\end{equation}
Note that this does not use any property of $\FS_{L_s}$ or $V$ except that $V$ only depends on the horizontal variable. Combining the above computations,
\begin{align*}
    \frac{d}{ds}\mathcal{C}_{F_0 + s}[L_s] = \int_{B_R\setminus B_1} V(x) - 1\, dx = 0.
\end{align*}
\end{proof}

Next we write the energy, which involves surface integrals on $\FS_{L_s}$, using the change of variables theorem as surface integrals over $\FS_{L_0}$.
\begin{lemma}
For all $s\in \R$, we have
\[ \mathcal{E}[L_s; F_s; \Cyl_R\setminus \Cyl_{R_0}] = \mathcal{E}[L_0; F_0; \Cyl_R\setminus \Cyl_{R_0}] \]
and
\begin{align}\label{eq:energy_expansion_for_Ls_alt}
    \mathcal{E}[L_s; F_s; \Cyl_{R_0}] &= \int_{\Sigma_{L_0}\cap \Cyl_{R_0}} |\nu^f_{L_0} - s(\nu^f_{L_0}\cdot e_z)(\nabla_x V,0)| 
    \\&\dots+\int_{\Sigma_{L_0}\cap \Cyl_{R_0}} g\frac{|z + sV-F_s|^2}{2}(e_z\cdot \nu^f_{L_0})d\mathcal{H}^{d-1} \notag
    \\&\dots- \cos\YP\relAreaP{\PS_{L_0}} \notag
\end{align}
\end{lemma}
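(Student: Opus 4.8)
\emph{Plan.} The plan is to verify the two identities term by term, using two structural features of the flow. First, $\Phi_s$ fixes the horizontal coordinate, hence maps each of $\Cyl_R$, $\Cyl_{R_0}$, $\Cyl_R\setminus\Cyl_{R_0}$, $\thinCyl_1$ and $\thinCyl_{R_0}$ onto itself. Second, $D\Phi_s = I + sD\mathcal V$ with $D\mathcal V$ nilpotent, so $\det D\Phi_s\equiv 1$, $(D\Phi_s)^{-1} = I - sD\mathcal V$, and $(D\Phi_s)^{-T}\nu = \nu - s(\nu\cdot e_z)(\nabla_x V,0)$.

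\emph{The exterior identity.} On $\Cyl_R\setminus\Cyl_{R_0}$ we have $V\equiv 1$, so there $\Phi_s$ is the vertical translation $(x,z)\mapsto(x,z+s)$, and $L_s\cap(\Cyl_R\setminus\Cyl_{R_0})$ is the $s$-translate of $L_0\cap(\Cyl_R\setminus\Cyl_{R_0})$. Since also $F_s = F_0+s$, each contribution to $\mathcal E[\,\cdot\,;\,\cdot\,;\Cyl_R\setminus\Cyl_{R_0}]$ is invariant: the free-surface area is translation invariant; $\relAreaC{\CS_L}$ is measured relative to $z=F$, and $\CS_L$ together with $\thinCyl_{R,F}$ shift by $s$; and the substitution $z\mapsto z-s$ leaves $\int g|z-F|$ unchanged. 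This gives the first displayed equality.

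\emph{The interior identity.} In $\Cyl_{R_0}$ there is no container term since $R_0<R$. Since $V\equiv 0$ near $\partial B_1$, $\Phi_s$ is the identity near $\thinCyl_1$, so $\PS_{L_s}=\PS_{L_0}$ and the plate term is $-\cos\YP\relAreaP{\PS_{L_0}}$, the last term on the right. For the free-surface term, $\Phi_s$ is an injective Lipschitz diffeomorphism of $\Cyl_{R_0}$ onto itself, so the area formula gives $|\FS_{L_s}\cap\Cyl_{R_0}| = \int_{\FS_{L_0}\cap\Cyl_{R_0}} |\det D\Phi_s|\,|(D\Phi_s)^{-T}\nu^f_{L_0}|\,d\mathcal H^{d-1}$, which by the formula above equals $\int_{\FS_{L_0}\cap\Cyl_{R_0}} |\nu^f_{L_0} - s(\nu^f_{L_0}\cdot e_z)(\nabla_x V,0)|\,d\mathcal H^{d-1}$, the first term. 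It remains to treat the gravity term. Changing variables by $w=\Phi_s(y)$ (unit Jacobian, and $\Phi_s(\Cyl_{R_0})=\Cyl_{R_0}$) rewrites $\int_{(L_s\Delta\Cyl_{R,F_s})\cap\Cyl_{R_0}} g|z-F_s|$ as $\int_{(L_0\Delta A_s)\cap\Cyl_{R_0}} g|z+sV(x)-F_s|\,dz\,dx$, where $A_s := \Phi_s^{-1}(\Cyl_{R,F_s}) = \Cyl_R\cap\{z+sV(x)\le F_s\}$. Since $g|z+sV(x)-F_s| = \nabla\cdot\vec H$ for $\vec H := \tfrac{g}{2} (z+sV(x)-F_s)\,|z+sV(x)-F_s|\,e_z$, and $\vec H$ is tangent to $\thinCyl_1$ and $\thinCyl_{R_0}$ and vanishes on $\partial A_s=\{z+sV(x)=F_s\}$, the divergence theorem for sets of finite perimeter reduces this integral to the flux of $\vec H$ through $\FS_{L_0}\cap\Cyl_{R_0}$; a sign check on the two regions $\{z+sV>F_s\}$ and $\{z+sV<F_s\}$ — on which $(z+sV-F_s)|z+sV-F_s| = \pm|z+sV-F_s|^2$ while the exterior normal of $L_0\Delta A_s$ along $\FS_{L_0}$ is $\pm\nu^f_{L_0}$ — shows both contribute with the same sign, yielding $\int_{\FS_{L_0}\cap\Cyl_{R_0}} \tfrac{g}{2}|z+sV(x)-F_s|^2\,(e_z\cdot\nu^f_{L_0})\,d\mathcal H^{d-1}$, the remaining term.

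\emph{Main obstacle.} The only step requiring care is this last one: justifying the integration by parts on the finite-perimeter set $L_0\Delta A_s$ and orienting the pieces of $\partial^*(L_0\Delta A_s)$ correctly so that the regions above and below $\{z+sV(x)=F_s\}$ recombine into a single flux integral over $\FS_{L_0}\cap\Cyl_{R_0}$. The vertical boundedness of $\FS_{L_0}$, a hypothesis of \pref{energy_taylor_exp_alt}, together with finiteness of the gravitational energy, confines $(L_0\Delta A_s)\cap\Cyl_{R_0}$ to a bounded slab, so there are no contributions from $z=\pm\infty$; everything else is bookkeeping with the change of variables.
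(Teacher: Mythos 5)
Your proof is correct and follows essentially the same approach as the paper: both use the vertical-translation structure on the exterior, the tangential Jacobian/area formula for the free surface term, and the invariance of the plate term, and both reduce the gravity term to a flux of a primitive of $g|\cdot|$ through $\FS_{L_0}$. The only cosmetic difference is the order of operations in the gravity computation — the paper integrates by parts on $L_s\Delta\Cyl_{R,F_s}$ first and then changes variables to $\FS_{L_0}$, whereas you pull back by $\Phi_s$ first and then integrate by parts on $L_0\Delta A_s$; your version spells out the orientation bookkeeping on the two sides of $\{z+sV=F_s\}$ more explicitly, but the content is the same.
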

\begin{proof}
The first claim is trivial, since in $\Cyl_R\setminus \Cyl_{R_0}$, $\Phi_s(x) = x + se_z$. Thus, the area of the free surface does not change, and the changes in gravity and contact surface area cancel exactly with the effect of forcing.

Restricting to inside $\Cyl_{R_0}$, we first compute
    \[ D\Phi_s = I + sD\mathcal{V} = \begin{pmatrix}
    1 & 0 & \dots & 0 \\
    0 & 1 & \ddots & \vdots \\
    \vdots & \ddots & \ddots & \vdots \\
    s\partial_{x_1}V & \dots & s\partial_{x_n} V & 1
    \end{pmatrix}  = I+se_z \otimes (\grad_xV,0)\]
We observe that $\Phi_s^{-1} = \Phi_{-s}$, and also that $\mathcal{V}\circ \Phi_s = \mathcal{V}$ for all $s \in \R$ and the same for $D\mathcal{V}$. Hence, using the computations in \cite{Maggi_2012}*{Proposition 17.1}, we have (using $M^T$ to denote the transpose of $M$):
\begin{equation}\label{eq:perturbed_normal_alt}
    \nu^f_{L_s}(\Phi_s(x)) = \frac{(D\Phi_{-s})(\Phi_s(x))^T\nu^f_{L_0}(x)}{|(D\Phi_{-s})(\Phi_s(x))^T\nu^f_{L_0}(x)|} = \frac{\nu^f_{L_0}(x) - s(\nu^f_{L_0}\cdot e_z)(\nabla_x V,0)}{|\nu^f_{L_0}(x) - s(\nu^f_{L_0}\cdot e_z)(\nabla_x V,0)|}
\end{equation}
The Jacobian factor for $\Phi_{-s}$ restricted to $\FS_{L_s}$ is then
\begin{equation}\label{eq:perturbation_jacobian_alt}
    |(J_{\FS_{L_s}}\Phi_{-s})(\Phi_s(x))| = |(D\Phi_{-s})(\Phi_s(x))^T\nu^f_{L_0}(x)| = |\nu^f_{L_0}(x) - s(\nu^f_{L_0}\cdot e_z)(\nabla_x V,0)|
\end{equation}
With this Jacobian, we can use the area formula to compute
\begin{equation}\label{eq:sigmaLs-formula_alt}
|\FS_{L_s}| = \int_{\FS_0} |\nu^f_{L_0}(x) - s(\nu^f_{L_0}\cdot e_z)(\nabla_x V,0)|
\end{equation}
For the contribution of gravity, we compute
\begin{align*}
    \int_{L_s\Delta \Cyl_{R,F_s}} |z - F_s| &= \int_{L_s\Delta \Cyl_{R,F_s}} \nabla\cdot\left(\frac{(z-F_s)|z-F_s|}{2}e_z\right)
    \\&= \int_{\FS_{L_s}} \frac{|z - F_s|^2}{2}(e_z\cdot  \nu^f_{L_s})
    \\&= \int_{\FS_{L_0}} \frac{|z + sV(x) - F_s|^2}{2}(e_z\cdot \nu^f_{L_0})
\end{align*}
where we use \eqref{eq:perturbed_normal_alt} and \eqref{eq:perturbation_jacobian_alt} in the last line.

Finally, we observe that $\relAreaP{\PS_{L_s}} = \relAreaP{\PS_{L_0}}$, since $V$ vanishes near $\partial B_1$. Thus, we obtain the formula for $\mathcal{E}[L_s; F_s]$ in each region as stated in the lemma.
\end{proof}

\begin{proof}[Proof of \pref{energy_taylor_exp_alt}]
It suffices to explicitly compute $\frac{d}{ds} \mathcal{E}[L_s;F_0+s]$ at $s = 0$, and to show that $\frac{d^2}{ds^2} \mathcal{E}[L_s;F_0+s]$ is bounded as in the statement at each $s$. Since the energy outside $\Cyl_{R_0}$ is constant in $s$, we restrict to the expansion \eqref{eq:energy_expansion_for_Ls_alt}. Differentiating \eqref{eq:energy_expansion_for_Ls_alt} twice in $s$, we get
\begin{align*}
    \frac{d^2}{ds^2}\mathcal{E}[L_s; F_s] &= \int_{\Sigma_{L_0}\cap \Cyl_{R_0}} \frac{|\nabla_x V|^2 - \left(\frac{\nu^f_{L_0} - s(\nu^f_{L_0}\cdot e_z)(\nabla_x V,0)}{|\nu^f_{L_0} - s(\nu^f_{L_0}\cdot e_z)(\nabla_x V,0)|}\cdot (\nabla_x V,0)\right)^2}{|\nu^f_{L_0} - s(\nu^f_{L_0}\cdot e_z)(\nabla_x V,0)|} (\nu^f_{L_0}\cdot e_z)^2
    \\&+ \int_{B_{R_0}\setminus B_1} g(V-1)^2
\end{align*}
Here, we have again used the trick in \eqref{eq:ez_dot_nu_IBP_alt} to replace the integral over $\FS_{L_0}\cap \Cyl_{R_0}$ with an integral over $B_{R_0}\setminus B_1$ in the second term. If we take $|s| \leq \frac{1}{\|\nabla_x V\|_{C^0}}$, we have 
\[|\nu^f_{L_0} - s(\nu^f_{L_0}\cdot e_z)(\nabla_x V,0)|\geq |\nu^f_{L_0}\cdot e_z|\vee (1 - |\nu^f_{L_0}\cdot e_z|) \geq \frac{1}{2}\]
using that $(\nabla_x V,0)$ is horizontal for the first option of maximum in the lower bound, and using reverse triangle inequality for the second option. Consequently, we get the second derivative estimate
\[ \left|\frac{d^2}{ds^2}\mathcal{E}[L_s; F_s]\right| \leq 2|\FS_{LV}\cap \Cyl_{R_0}|\|\nabla_x V\|_{C^0}^2 + g|B_{R_0}\setminus B_1|(1 + \|V\|_{C^0}^2) \]

On the other hand, we can compute directly (cf. Maggi eqns 17.23 and 17.27) that
\begin{align*}
        \left.\frac{d}{ds}\right|_{s=0}|\FS_{L_s}| &= \int_{\FS_{L_0}\cap \Cyl_{R_0}} (\nu^f_{L_0}\cdot e_z)(\nabla_x V,0) \cdot \nu^f_{L_0} 
        \\&= \int_{\FS_{L_0}\cap \Cyl_{R_0}} \nu^f_{L_0}\cdot D\mathcal{V} \nu^f_{L_0} \tag{$D\mathcal{V} = e_z\otimes (\nabla_x V,0)$}
        \\&= \int_{\FS_{L_0}\cap \Cyl_{R_0}} -H_{\FS_{L_0}}(\nu^f_{L_0}\cdot \mathcal{V}).
\end{align*}
As for the gravity term we compute
\begin{align*}
    \left.\frac{d}{ds}\right|_{s=0}\int_{\FS_{L_0}\cap \Cyl_{R_0}} \frac{|z + sV - F_s|^2}{2}(e_z\cdot \nu^f_{L_0}) &= \int_{\FS_{L_0}\cap \Cyl_{R_0}} (z-F_0)(V-1)(e_z\cdot\nu^f_{L_0})
\end{align*}
\end{proof}

To analyze local regularity of globally stable states, we will need an estimate for the energy cost of fixing volume in a specified cylindrical region, holding forcing constant. We restrict to the case $R < \infty$, and consider $V_0\in C^\infty_c(B_R\setminus \overline{B_1})$ with $\int_{B_R\setminus B_1} V_0 = 1$, and set $\mathcal{V}_0(x,z) = V_0(x)e_z$. Note the distinction that the previous $\mathcal{V}$ was identically 1 near $\partial B_R\times \R$, whereas $\mathcal{V}_0$ identically vanishes; note also that $V_0$ integrates to 1 rather than averages to 1 as did $V$, to be more convenient for computing change in volume. We will repurpose the previous notation to this new setting, with $\Phi_s = I + s\mathcal{V}_0$ and $L_s = \Phi_s(L_0)$. Let us stress that we will not require $L_s$ (including $L_0$) to solve the volume constraint.

\begin{corollary}[Volume-correcting perturbations]\label{c.volume_only_energy_taylor_exp}
    Suppose that $\mathcal{E}_R[L_0; F_0] < \infty$, and that $\FS_{L_0}$ is bounded above and below in the vertical direction: there exists $h_0 > 0$ such that $\FS_{L_0}\subset B_R\times [F_0 - h_0, F_0 + h_0]$. Then for $|s| < \frac{1}{\|\nabla_xV_0\|_{C^0}}$, $\mathcal{E}_R[L_s; F_0]$ is smooth as a function of $s$, with the constraint moving by:
    \begin{equation}\label{eq:volume_flow_derivative}
        \frac{d}{ds}\mathcal{C}_{F_0}[L_s] \equiv 1
    \end{equation}
    and the energy moving at $s = 0$ by:
    \begin{equation}\label{eq:volume_only_weak_pressure}
        \left.\frac{d}{ds}\right|_{s=0}\mathcal{E}_R[L_s; F_0] = \int_{\FS_{L_0}} \nu^f_{L_0}\cdot D\mathcal{V}_0 \nu^f_{L_0} + g(z-F_0)\mathcal{V}_0\cdot \nu^f_{L_0}
    \end{equation}
    If the distributional mean curvature of $\FS_{L_0}$ exists as a Radon measure, then we can instead write this as:
    \begin{equation}\label{eq:volume_only_strong_pressure}
        \left.\frac{d}{ds}\right|_{s=0}\mathcal{E}_R[L_s; F_0] = \int_{\FS_{L_0}} (-H_{\FS_{L_0}} + g(z-F_0))\mathcal{V}_0\cdot\nu^f_{L_0}
    \end{equation}
    In particular, in the case that $L_0$ is globally stable for $\mathcal{E}_R[\cdot, F_0]$ (or at least agrees with a stable profile on the support of $\mathcal{V}_0$), this simplifies to:
    \begin{equation}\label{eq:volume_only_lagrange_multiplier}
        \left.\frac{d}{ds}\right|_{s=0}\mathcal{E}_R[L_s; F_0] = \lambda_{L_0}
    \end{equation}
    where $\lambda_{L_0}$ is the Lagrange multiplier in \lref{initial_local_regularity}.

    In any of the above cases, we can Taylor-expand around $s = 0$, with the remainder estimate:
    \[ \left|\mathcal{E}_R[L_s; F_0] - \mathcal{E}_R[L_0; F_0] - s\left.\frac{d}{ds}\right|_{s=0}\mathcal{E}_R[L_s; F_0]\right| \leq C|\FS_{L_0}\cap \mathrm{spt}(\mathcal{V}_0)|s^2 \]
    which holds for all $|s| \leq \frac{1}{\|\nabla_x V_0\|_{C_0}}$ and for a $C = C(\|V_0\|_{C^1})(1+h_0)$.
\end{corollary}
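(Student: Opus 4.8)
The statement is a close variant of \pref{energy_taylor_exp_alt}, differing only in that the perturbation field is $\mathcal{V}_0$ (compactly supported in $\Cyl_R$) rather than $\mathcal{V}$, the forcing is held fixed at $F_0$ rather than transported along the flow, and there is no volume constraint imposed on $L_0$ itself. I would therefore prove it by rerunning, essentially line by line, the computations of \lref{volume-fix-flow-alt}, of the lemma that establishes \eqref{eq:energy_expansion_for_Ls_alt}, and of the proof of \pref{energy_taylor_exp_alt}, with $\mathrm{spt}(\mathcal{V}_0)$ now playing the role that $\Cyl_{R_0}$ played there. Two simplifications appear: both relative contact areas $\relAreaP{\PS_{L_s}}$ and $\relAreaC{\CS_{L_s}}$ are now constant along the flow, because $V_0$ vanishes near $\partial B_1$ and near $\partial B_R$; and the gravitational term produces no $-e_z$ correction, because $F$ does not move. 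Throughout, $\mathcal{E}_R[L_0;F_0]<\infty$ with \lref{energy_bdd_below} gives $|\FS_{L_0}|<\infty$ and $\int_{L_0\Delta\Cyl_{R,F_0}}g|z-F_0|<\infty$, which makes every integral below converge.

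For \eqref{eq:volume_flow_derivative}: since $F$ is fixed, $\frac{d}{ds}\mathcal{C}_{F_0}[L_s]=\int_{\FS_{L_s}}V_0\,e_z\cdot\nu^f_{L_s}\,d\mathcal{H}^{d-1}$, and the integration by parts \eqref{eq:ez_dot_nu_IBP_alt} — which, as observed there, uses only that the weight depends on the horizontal variable, and which applies since $\FS_{L_s}=\Phi_s(\FS_{L_0})$ stays vertically bounded — evaluates this to $\int_{B_R\setminus B_1}V_0=1$ for every $s$; integrating gives $\mathcal{C}_{F_0}[L_s]=\mathcal{C}_{F_0}[L_0]+s$. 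For the energy, on $\mathrm{spt}(\mathcal{V}_0)$ the transformed normal and tangential Jacobian of $\Phi_s$ are again given by \eqref{eq:perturbed_normal_alt}–\eqref{eq:perturbation_jacobian_alt} (the only structural input is $D\mathcal{V}_0=e_z\otimes(\nabla_x V_0,0)$), so the change-of-variables argument expresses $\mathcal{E}_R[L_s;F_0]$ as $\int_{\FS_{L_0}}|\nu^f_{L_0}-s(\nu^f_{L_0}\cdot e_z)(\nabla_x V_0,0)| + \int_{\FS_{L_0}}g\frac{|z+sV_0-F_0|^2}{2}(e_z\cdot\nu^f_{L_0}) - \cos\YP\relAreaP{\PS_{L_0}} - \cos\YC\relAreaC{\CS_{L_0}}$, which for $|s|\le\|\nabla_x V_0\|_{C^0}^{-1}$ is smooth in $s$ since the Jacobian factor stays $\ge\tfrac12$ (reverse triangle inequality, exactly as in \pref{energy_taylor_exp_alt}). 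Differentiating once at $s=0$ reproduces verbatim the corresponding step of the proof of \pref{energy_taylor_exp_alt} and yields \eqref{eq:volume_only_weak_pressure}.

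When $\FS_{L_0}$ has distributional mean curvature as a Radon measure, I would apply the identity in \lref{initial_local_regularity} with test field $\mathcal{V}_0\in C^\infty_c(\Cyl_R;\R^d)$, so that $\int_{\FS_{L_0}}(\nabla\cdot\mathcal{V}_0-\nu^f_{L_0}\cdot D\mathcal{V}_0\,\nu^f_{L_0})=\int_{\FS_{L_0}}(-H_{\FS_{L_0}})\,\mathcal{V}_0\cdot\nu^f_{L_0}$; combined with $\nabla\cdot\mathcal{V}_0=\partial_z V_0=0$ this turns \eqref{eq:volume_only_weak_pressure} into \eqref{eq:volume_only_strong_pressure}. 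If moreover $L_0$ is globally stable — or just agrees with a globally stable profile on $\mathrm{spt}(\mathcal{V}_0)$, the Euler–Lagrange equation being a local statement — I would substitute $-H_{\FS_{L_0}}+g(z-F_0)=\lambda_{L_0}$ and use $\int_{\FS_{L_0}}\mathcal{V}_0\cdot\nu^f_{L_0}=1$ (the first step) to obtain \eqref{eq:volume_only_lagrange_multiplier}.

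For the Taylor remainder I would differentiate the displayed formula for $\mathcal{E}_R[L_s;F_0]$ twice and bound the result uniformly on $|s|\le\|\nabla_x V_0\|_{C^0}^{-1}$: the second derivative of the free-surface integrand is controlled by $|\nabla_x V_0|^2/|\nu^f_{L_0}-s(\nu^f_{L_0}\cdot e_z)(\nabla_x V_0,0)|\le 2\|V_0\|_{C^1}^2$, and that of the gravitational integrand is exactly $V_0^2(e_z\cdot\nu^f_{L_0})$, of modulus at most $\|V_0\|_{C^0}^2$; since both vanish off $\FS_{L_0}\cap\mathrm{spt}(\mathcal{V}_0)$ we get $\bigl|\tfrac{d^2}{ds^2}\mathcal{E}_R[L_s;F_0]\bigr|\le C(\|V_0\|_{C^1})(1+h_0)\,|\FS_{L_0}\cap\mathrm{spt}(\mathcal{V}_0)|$, and Taylor's theorem with Lagrange remainder gives the claimed bound. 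I do not anticipate a genuine obstacle; the only slightly delicate points are the finiteness of the integrals defining $\mathcal{E}_R[L_s;F_0]$ (for which the vertical-boundedness hypothesis on $\FS_{L_0}$ and \lref{energy_bdd_below} are used) and the validity of \eqref{eq:ez_dot_nu_IBP_alt} for the merely locally Caccioppoli sets $L_s$, which is precisely the point already settled in \lref{volume-fix-flow-alt}.
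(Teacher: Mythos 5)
Your proposal is correct and follows essentially the same route the paper takes: mirror the computations of Lemma~\ref{l.volume-fix-flow-alt}, the change-of-variables expansion \eqref{eq:energy_expansion_for_Ls_alt}, and Proposition~\ref{p.energy_taylor_exp_alt} with $\mathcal{V}_0$ and fixed $F_0$; then obtain \eqref{eq:volume_only_strong_pressure} via the tangential (surface) divergence theorem using that $\mathcal{V}_0$ has compact horizontal support and $\nabla\cdot\mathcal{V}_0=0$, and \eqref{eq:volume_only_lagrange_multiplier} by inserting the Euler--Lagrange equation together with $\int_{\FS_{L_0}}\mathcal{V}_0\cdot\nu^f_{L_0}=1$. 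One small imprecision worth noting: in the step for \eqref{eq:volume_only_strong_pressure} you cite ``the identity in Lemma~\ref{l.initial_local_regularity},'' which is stated only for globally stable profiles, whereas the corollary at that point assumes merely that the distributional mean curvature exists as a Radon measure; what you then write down, however, is precisely the defining first-variation identity for distributional mean curvature (equivalently, the surface divergence theorem the paper invokes), so the computation is the right one even if the citation is not quite the right one.
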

\begin{proof}
    We omit the derivations of \eqref{eq:volume_flow_derivative} and \eqref{eq:volume_only_weak_pressure}, which are essentially the same as those presented in \lref{volume-fix-flow-alt} and \pref{energy_taylor_exp_alt} respectively. To obtain \eqref{eq:volume_only_strong_pressure}, note that the surface divergence theorem gives
    \begin{equation}
        \int_{\FS_{L_0}} \nu^f_{L_0}\cdot D\mathcal{V}_0 \nu^f_{L_0} = -\int_{\FS_{L_0}}  H_{\FS_{L_0}}(\mathcal{V}_0\cdot \nu^f_{L_0})
    \end{equation}
    where there is no boundary term since $\mathcal{V}_0$ is compactly supported in the horizontal variable. The final equation then follows from substituting the definition of $\lambda_{L_0}$, and using the same integration by parts as in \eqref{eq:ez_dot_nu_IBP_alt} with the fact that $\lambda_{L_0}$ is constant.
\end{proof}

As an immediate application of \cref{volume_only_energy_taylor_exp}, we establish a non-sharp bound on the vertical oscillations of the surface $\FS_L$ of a globally stable state. We will use this to justify integration by parts and comparison arguments for the free surface.

\begin{lemma}\label{l.qualitative_height_bound}
    Let $L$ be globally stable for $\mathcal{E}_R[\cdot, F]$, with $R\in (1, \infty]$. Then $\FS_L$ is bounded in the vertical direction: there exists $h$ such that $\FS_L\subset \Cyl_{R,F+h}\setminus \Cyl_{R,F-h}$.
\end{lemma}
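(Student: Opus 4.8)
The plan is to contradict global stability by comparing $L$ with a vertically truncated profile whose volume is corrected through the flow of \cref{volume_only_energy_taylor_exp}. Since $L$ is globally stable it has finite energy (compare with the reference configuration), so \lref{energy_bdd_below} gives $|\FS_L|<\infty$ and $G:=\int_{L\Delta\Cyl_{R,F}}g|z-F|<\infty$. For a.e.\ $h>0$ set
\[
\delta_h:=\vol\bigl(L\cap\{z>F+h\}\bigr),\qquad \epsilon_h:=\vol\bigl((\Cyl_R\setminus L)\cap\{z<F-h\}\bigr);
\]
these are finite (membership in $X^R_F$), tend to $0$, and satisfy $h\delta_h,h\epsilon_h\le G/g$. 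Suppose $\FS_L$ is contained in no slab $\{|z-F|\le h\}$; then $\delta_h>0$ for all $h$ or $\epsilon_h>0$ for all $h$, and it suffices to rule out each alternative. For large $h$ put
\[
\widehat L_h:=\bigl(L\cup(\Cyl_R\cap\{z<F-h\})\bigr)\cap\Cyl_{R,F+h},
\]
so $\FS_{\widehat L_h}\subset\{|z-F|\le h\}$, $\widehat L_h$ coincides with $L$ on $\{|z-F|<h\}$, and $\mathcal C_{R,F}[\widehat L_h]=\epsilon_h-\delta_h$; flowing along $\Phi_s$ with $s=\delta_h-\epsilon_h$ restores the volume and produces an admissible competitor $L_h':=\Phi_s(\widehat L_h)\in X^R_F$.

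Next I would estimate the energy change. Passing from $L$ to $\widehat L_h$ replaces the free surface above $F+h$ (of area $S_+:=|\FS_L\cap\{z\ge F+h\}|$) and below $F-h$ by two horizontal lids whose areas are at most $S_+$ and its lower analogue, by the divergence theorem applied to $e_z$; it lowers the gravitational energy by at least $gh(\delta_h+\epsilon_h)$; and it alters the contact energy and incurs dissipation in amounts bounded by the plate/container contact areas above $F+h$ and freed below $F-h$, which by \lref{bv_trace_ineq} — applied in tubular neighbourhoods of the walls at heights bounded away from $z=0$ and $z=F$, so that the additive constant of \lref{energy_bdd_below} drops out and the free-surface term enters with coefficient one — are in turn dominated by the free surface of $L$ near the walls in $\{|z-F|>h-r\}$ plus a term $O(G/(h-r))$. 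Finally \cref{volume_only_energy_taylor_exp}, together with the Euler–Lagrange equation for the globally stable $L$ (which $\widehat L_h$ matches on the support of the perturbation), gives $\mathcal E_R[L_h';F]=\mathcal E_R[\widehat L_h;F]+s\lambda_L+O\!\bigl(h(\delta_h+\epsilon_h)^2\bigr)$. Feeding all of this into $\mathcal E_R[L;F]\le\mathcal E_R[L_h';F]+\Diss(L,L_h')$, cancelling $\mathcal E_R[L;F]$, and using the ellipticity bounds $|\cos\YC|<1$ and $[\cos\YP-\mu_+,\cos\YP+\mu_-]\subset(-1,1)$ to absorb the free-surface-near-the-walls contributions into the nonnegative free-surface gain, one reaches an inequality of the shape
\[
gh(\delta_h+\epsilon_h)\le|\lambda_L|\,(\delta_h+\epsilon_h)+C\,h(\delta_h+\epsilon_h)^2+(\text{lid and strip terms near }F\pm h).
\]
Dividing by $\delta_h+\epsilon_h>0$ and using $h(\delta_h+\epsilon_h)\le 2G/g$ yields $gh\le|\lambda_L|+O(1)+(\text{controlled remainder})$, impossible for $h$ large; hence $\FS_L$ is bounded in $z$, and the lower bound follows by symmetry or from the same two-sided surgery.

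The main obstacle I expect is making the ``lid and strip terms'' genuinely negligible after dividing by $\delta_h+\epsilon_h$. The lid areas and the free-surface and volume contained in the thin layers $\{F+h-r<z<F+h\}$, $\{F-h<z<F-h+r\}$ all vanish as $h\to\infty$, but one must see they do so no more slowly than $\delta_h+\epsilon_h$. This is where I would exploit that $A(t):=\mathcal H^{d-1}(L\cap\{z=t\})$ and $\mathcal H^{d-1}\llcorner\FS_L$ are integrable in the height variable — so the cut levels can be chosen at ``quiet'' heights where a slice is $\lesssim$ the volume lying above it — together with the relative isoperimetric inequality, which forces $S_+\gtrsim\delta_h^{(d-1)/d}\gg\delta_h$ precisely when the volume above the cut is small, so that in the remaining ``flat'' configurations the free-surface gain dominates on its own.
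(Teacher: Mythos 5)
Your construction of the competitor --- the vertical truncation $\widehat L_h$ followed by the volume-correcting flow of \cref{volume_only_energy_taylor_exp} --- matches the paper's exactly, and your accounting of the free-surface, contact, dissipation, gravity, and flow-correction contributions is on the right track. The gap is in the final step, and you have already located it yourself: after dividing the stability inequality by $a(h):=\delta_h+\epsilon_h$, the argument requires the lid and near-wall surface terms to be $O(a(h))$, and this is not true in general. The natural quantity that appears on the cost side is the slice area $|S_h| = -a'(h)$, and by the relative isoperimetric inequality (together with the trace estimate controlling the contact area), one gets $\mathrm{Per}(L\Delta \widehat L_h)\gtrsim a(h)^{(d-1)/d}\gg a(h)$. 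So the ``leftover'' after cancellation lives at scale $a^{(d-1)/d}$ or $-a'$, not $a$. Your proposed fix via ``quiet heights'' where $-a'(h)\lesssim a(h)$ does not close this: such heights need not exist at all scales (for a profile with $a(h)\sim e^{-h^2}$ one has $-a'/a\sim h\to\infty$ everywhere), and even when they do, the isoperimetric bound goes the wrong direction for your purposes --- it says the free surface is \emph{large} compared to $a$, not that the ratio you need to control is small.

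The paper does not attempt a pointwise contradiction at all. Instead it treats $a(h)=\vol(L\Delta L_h)$ as a function and derives the differential inequality $a'(h)\leq -\tfrac{1}{C}\,a(h)^{(d-1)/d}$ for a.e.\ large $h$. The key maneuver is to add the nonnegative isoperimetric deficit
\[
\varepsilon\Bigl(|\partial(L\Delta L_h)| - \tfrac{1}{C}\,a(h)^{(d-1)/d}\Bigr)\ \geq\ 0
\]
to the right-hand side of the global-stability inequality. This inserts a $-\tfrac{\varepsilon}{C}a^{(d-1)/d}$ term while only mildly perturbing the coefficients of $|S_h|$, $|\FS_L\cap\{|z|>h\}|$, and the contact areas; after applying the trace inequality \lref{bv_trace_ineq} and absorbing, the nonpositive free-surface and gravity terms can simply be discarded, leaving $\tfrac{\varepsilon}{C}a^{(d-1)/d}\lesssim -a' + (\text{lower order})$. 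Since $a\geq 0$ and $(a^{1/d})'\leq -c$, the function $a$ must vanish in finite time, which is the desired boundedness. So what you are missing is not another estimate but a change in the \emph{shape} of the conclusion: replace ``$gh$ is bounded, contradiction for large $h$'' by an ODE comparison in $h$ that forces $a$ to hit zero.
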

We postpone the proof, which follows \cite{AlbertiDeSimone}*{Proposition 5.4ii}, to Appendix \ref{app.initial_height_bound}. We will state and prove sharper quantitative estimates in \lref{width-bound} and \lref{oscillation_bound}. Both of these later results depend qualitatively on \lref{qualitative_height_bound}, through requiring boundedness in the vertical direction \emph{a priori}.

\section{Viscosity solution properties and height bound}\label{s.viscosity}

In this section we introduce a notion of viscosity solution for the Euler-Lagrange equation satisfied by globally stable states
\begin{equation}\label{e.PDE-inc}
    \begin{cases}
        -H_{\FS_L}(x,z) + g(z-F) = \lambda_L & x\in \FS_L\\
        \cos \YP-\mu_+\leq \cos \THP{L}(x) \leq \cos \YP+\mu_- & x \in \PL_L \\
        \cos \THC{L}(x) = \cos \YC & x \in \CL_L.
    \end{cases}
\end{equation}
We show that globally stable states indeed satisfy \eref{PDE-inc} in the viscosity sense.  No higher regularity beyond the $BV$ bounds and qualitative height bound is needed to justify the viscosity solution properties. In fact the pinned contact angle condition appearing in \eref{PDE-inc}, by itself, likely does not admit $C^1$-estimates for the contact line.

With the viscosity theory, we gain access to comparison and barrier arguments to characterize globally stable profiles. As an application, we use barrier arguments to obtain a quantitative height bound for globally stable profiles. This height bound, in conjunction with the local length bound of the contact line, implies a quantitative global length bound for the contact line. The length bound will be a key estimate for our analysis of the Lagrange multiplier $\lambda$ in \eref{PDE-inc} in \sref{lagrange-decomp}.

\subsection{Viscosity solutions of the capillary problem}Let $U\subset \Cyl_R$ be a smooth bounded open set. Say that $\Phi\subset U$ is a test region if $\FS_{\Phi} := \partial \Phi\cap U$, $\PS_{\Phi} := \partial \Phi\cap \thinCyl_1$, and $\Sigma^U_\Phi := \partial \Phi \cap (\partial U\setminus \partial C_R)$ are (possibly empty) smooth manifolds with boundary. We write $\gamma_\Phi := \FS_\Phi\cap \PS_\Phi \cap (\partial C_1\times \R), \CL_\Phi := \FS_\Phi \cap \PS_\Phi \cap (\partial C_R\times \R)$. Write $\cos\theta^c_{\Phi} = \nu^f_{\Phi}\cdot e_z$ along $\PL_\Phi$ for the contact angle between the free surface and $\partial B_1\times \R$ (resp. for the outer boundary).

\begin{definition}
    Say that a set $\Phi$ is a test region in $B_r(0)$ if, after a rotation, $\Phi$ can be written as a smooth subgraph
    \[\Phi \cap B_r(0) = \{(x',x_d) : x_d < \varphi(x')\}.\]
    In these coordinates the mean curvature is defined on $\partial \Phi$
    \[H_{\Phi}(x) = \grad \cdot ( \frac{\grad \varphi}{\sqrt{1+|\grad \varphi|^2}}).\]
    If $S$ is another smooth region and $x \in \partial \Phi \cap \partial S$ define $\theta^c_\Phi(x)$ to be the angle of contact between $\partial \Phi$ and $\partial S$.
\end{definition}
System
\begin{equation}\label{e.PDE}
    \begin{cases}
        -H_{\FS_L}(x,z) + g(z-F) = \lambda_L & x\in \FS_L\\
        \THG{L}(x) = \alpha & x \in \gamma_L
    \end{cases}
\end{equation}
well defined for smooth test regions. Say that $\Phi$ is a strict supersolution if each equality is a $<$, and vice versa for strict subsolution.

Given a region $L \subset \R^d$ define the upper and lower semicontinuous envelopes
\[L^* = \{ x \in \R^d: \exists y_n \in L, y_n \to x\}\]
and
\[L_* = \R^d \setminus (\R^d \setminus L)^*.\]

$\Phi$ touches $L$ from outside at $x$ if $L^*\subset \Phi$ and $x\in \partial \Phi\cap \partial L^*$. The touching is strict if $\partial \Phi \cap L^* = \{ x \}$.

$\Phi$ touches $L$ from inside at $x$ if $\Phi \subset L_*$ and $x\in \partial \Phi \cap \partial L_*$. The touching is strict if $\partial \Phi \cap L_* = \{ x \}$.

Let us remark that since we will be dealing with globally stable $L$, the local regularity and characterization of the singular set from \lref{initial_local_regularity} ensures that, with appropriate normalization of measure 0 sets, $L^* = \overline{L}, L_* = \mathrm{int}(L)$. As such, we will ignore this detail and refer to the set simply as $L$ in the arguments below.

\begin{definition}\label{def:visc_subsln_with_test_region}
    Say that $L$ is a viscosity subsolution of \eref{PDE} if, whenever $\Phi$ is a smooth test region touching $L^*$ from the outside in $\overline{U \setminus S}$ at some $(x_0,z_0) \in \partial L^*$ either
    \[-H_{\Phi}(x_0,z_0) + g(z_0-F) \leq \lambda\]
    or $(x_0,z_0) \in \gamma_\Phi$ and
    \[\THG{\Phi}(x) \geq \alpha.\]
\end{definition}
\begin{definition}\label{def:visc_supsln_with_test_region}
    Say that $L$ is a viscosity supersolution of \eref{PDE} if, whenever $\Phi$ is a smooth test region touching $L_*$ from the inside in $\overline{U \setminus S}$ at some $(x_0,z_0) \in \partial L_*$ either
    \[-H_{\Phi}(x_0,z_0) + g(z_0-F) \geq \lambda\]
    or $(x_0,z_0) \in \gamma_\Phi$ and
    \[\THG{\Phi}(x) \leq \alpha.\]
\end{definition}
\begin{definition}
    Say $L$ is a viscosity solution of \eref{PDE} if $L^*$ is a subsolution of \eref{PDE} and $L_*$ is a supersolution of \eref{PDE}.
\end{definition}

We will prove that globally stable states are viscosity solutions of \eref{PDE-inc}.

\begin{theorem}\label{t.globally-stable-viscosity}
    If $L$ is globally stable then $L$ solves \eref{PDE-inc} in the viscosity sense.
\end{theorem}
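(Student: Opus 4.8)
The plan is to prove the subsolution and supersolution properties separately; by the symmetry between $L$ and its complement (the energy and dissipation have a natural structure under $L \mapsto \Cyl_R \setminus L$, up to relabeling $\mu_\pm$ and $\YP \mapsto \pi - \YP$), it suffices to treat one of them, say the supersolution property. So suppose $\Phi$ is a smooth test region touching $L_*$ from the inside at a point $p_0 = (x_0,z_0) \in \partial L_* \cap \partial \Phi$, lying in $\overline{U \setminus S}$. I would argue by contradiction: assume the test region inequality fails strictly, i.e. $-H_\Phi(p_0) + g(z_0 - F) < \lambda_L$ and (if $p_0 \in \gamma_\Phi$) also $\THG{\Phi}(p_0) > \alpha$, where here $\alpha$ should be read as the appropriate endpoint of the hysteresis interval ($\cos^{-1}(\cos\YP + \mu_-)$ for the interior plate equation, and $\YC$ for the container). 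Since these are strict inequalities and the quantities involved are continuous, I can perturb $\Phi$ slightly — pushing it outward by a normal deformation localized near $p_0$, and if necessary tilting the contact angle — to produce a slightly larger test region $\Phi'$ that still lies inside $L$ away from a small neighborhood of $p_0$, that strictly contains $\Phi$ near $p_0$, and that is a \emph{strict} classical supersolution of the relevant equation/inequality in that neighborhood while keeping the contact angle condition satisfied with the correct (strict) sign. The key geometric input is that $\Phi'$ now pokes out past $\partial L_*$ near $p_0$, so $\Phi' \not\subset L$ but $\Phi' \setminus B_\rho(p_0) \subset L$ for small $\rho$.

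The heart of the argument is then a local comparison/cut-and-paste using global stability. Because $\Phi'$ agrees with $L$ outside a small ball and $\Phi' \cap B_\rho(p_0)$ protrudes beyond $L$, I form the competitor $L' := L \cup (\Phi' \cap B_\rho(p_0))$ (appropriately, $L' := L \cup \Phi'$ restricted to the region where this makes sense), which is admissible — it lies in $X^R_{F}$, since the modification is compactly supported away from the container (in particular the volume constraint can be repaired at negligible cost, or one works directly with the first-variation inequality as in \lref{initial_local_regularity} / \cref{volume_only_energy_taylor_exp}). The global stability inequality $\mathcal{E}_R[L] \le \mathcal{E}_R[L'] + \Diss(L,L')$ then says the energy cost of adding the sliver $\Phi' \cap B_\rho(p_0)$, plus the dissipation cost $\mu_+ |\PS_{L'} \setminus \PS_L|$, is nonnegative. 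But the energy cost of adding a thin sliver bounded by a piece of $\partial\Phi'$ is, to leading order, $\int (\text{piece of }\partial L) - \int(\text{piece of }\partial \Phi') + \text{(gravity)} + \text{(contact terms)}$, and the Euler–Lagrange equation for $L$ together with the strict supersolution property of $\Phi'$ forces this to be strictly negative once $\rho$ is small — the mean curvature comparison wins at interior points, and at contact points the strict inequality $\THG{\Phi'} > \alpha$ beats the dissipation-adjusted contact coefficient $\cos\YP + \mu_-$ (this is exactly where the hysteresis interval $[\cos\YP - \mu_+, \cos\YP + \mu_-]$ enters, and why the endpoint, not the Young angle, appears in \eref{PDE-inc}). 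This contradicts global stability, so the strict failure is impossible, giving the supersolution inequality.

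To make the "energy cost of a thin sliver is controlled by a boundary integral" step precise I would invoke the perturbation machinery already set up: either a localized version of the Taylor expansion in \pref{energy_taylor_exp_alt}/\cref{volume_only_energy_taylor_exp}, or a direct divergence-theorem computation comparing $\mathrm{Per}(L; B_\rho)$ with $\mathrm{Per}(\Phi'; B_\rho)$ and tracking the gravity term $\int g|z-F|$ and the contact-surface terms $\cos\YP \relAreaP{\cdot}$, $\cos\YC\relAreaC{\cdot}$ on the sliver. The qualitative height bound \lref{qualitative_height_bound} guarantees all these integrals are finite and that integration by parts is legitimate. I expect the main obstacle to be the bookkeeping at the contact line $\gamma_\Phi$ (and at $\CL_\Phi$): one must carefully account for the change in $\relAreaP{\PS_L}$ and the dissipation term simultaneously, check the correct sign convention (using the relations in the "sign conventions" subsection, $\nu^f \cdot \eta^{\mathrm{co}}_{\PS} = -\sin\theta$, etc.), and verify that the competitor $L'$ genuinely satisfies the volume constraint or that the Lagrange-multiplier correction of size controlled by \cref{volume_only_energy_taylor_exp} is lower order than the gain. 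The interior (mean-curvature) comparison, by contrast, is the standard viscosity-solutions-meet-minimal-surfaces argument and should be routine given \lref{initial_local_regularity}.
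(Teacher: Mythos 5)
Your high-level strategy — build a competitor by cutting and pasting $L$ against the test region, invoke global stability, use a divergence theorem to compare perimeters, and track the Lagrange multiplier via the volume-fixing flow of \cref{volume_only_energy_taylor_exp} — is the right one and is the one the paper uses. But the organization and, more importantly, the key technical device differ from the paper's proof, and the latter is where your sketch leaves a genuine hole.

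On organization: the paper does not argue by contradiction and does not perturb $\Phi$ to a strict supersolution $\Phi'$ (at least not for the interior equation, \lref{viscosity-solution-interior}; for the contact-angle condition, \lref{viscosity-solution-angle}, it does perturb the \emph{interior} inequality to be strict so as to separate the curvature terms from the angle terms, but it never touches the angle itself). Instead it fixes the touching test region $\Phi$, forms the one-parameter family of truncations $L_\alpha = L \cap \{\,d_\Phi \geq \alpha\,\}$, uses global stability against the volume-repaired $\widetilde L_\alpha$, divides by $\vol(L \cap \{d_\Phi < \alpha\})$ (or, for the angle condition, by $|\PS_L \cap \{d_\Phi<\alpha\}|$), and sends $\alpha \to 0^+$. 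Strict touching of $\Phi$ is what guarantees that the sliver collapses to the touching point. Your ``perturb $\Phi$ outward and take a fixed-$\rho$ competitor'' route could in principle be made to work, but it introduces an extra perturbation step that the paper avoids, and it forces you to make a quantitative comparison at a fixed scale rather than passing to an infinitesimal limit — this is harder, not easier.

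The substantive gap is the perimeter comparison on the sliver. You write that ``the energy cost of adding a thin sliver bounded by a piece of $\partial\Phi'$ is, to leading order, $\int(\text{piece of }\partial L) - \int(\text{piece of }\partial\Phi')$,'' and you propose to justify this either by the Taylor expansion of \pref{energy_taylor_exp_alt} or by a ``direct divergence-theorem computation.'' The Taylor machinery does not apply here: \pref{energy_taylor_exp_alt} and \cref{volume_only_energy_taylor_exp} expand the energy along a \emph{flow} $\Phi_s = I + s\mathcal V$ of the whole profile, and are used in the paper's proof only to repair the volume constraint away from the touching point; they say nothing about the energy change due to a cut-and-paste modification of $L$ against $\Phi'$ in $B_\rho$. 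The divergence theorem does give the needed comparison, but the choice of vector field is the whole point and you do not identify it: the paper uses $X = \nabla d_\Phi$ (the gradient of the signed distance to $\partial\Phi$), which satisfies $|X|=1$, $X \cdot \nu = 1$ on level sets of $d_\Phi$, and $\Delta d_\Phi = \sum_i \kappa_i/(1-\kappa_i d_\Phi)$. The inequality
\[
\mathcal{H}^{d-1}(\{d_\Phi = \alpha\}\cap L) - \mathcal{H}^{d-1}(\partial L \cap \{d_\Phi < \alpha\}) \leq \int_{L \cap \{d_\Phi < \alpha\}} \Delta d_\Phi
\]
is the engine of the whole proof: it converts the perimeter loss into a mean-curvature integral that, after dividing by the sliver volume and sending $\alpha \to 0$, produces exactly $H_\Phi(x_0,z_0)$. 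This is the Caffarelli--Cordoba idea, also used by Savin for $\epsilon$-regularity of minimal surfaces, and the paper flags it explicitly. Without naming this vector field and the associated limit, your ``direct divergence-theorem computation'' is not actually carried out, and the claim that the strict supersolution inequality ``wins'' over the dissipation term remains an assertion rather than an argument. You should also note that for the contact-angle case the same $\nabla d_\Phi$ does double duty: its normal trace on $\PS_L \cap \{d_\Phi < \alpha\}$ tends to $\cos\THG{\Phi}$, which is how the angle appears in the limiting inequality against $\cos\YP + \mu_-$.

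One smaller point: you cite \lref{initial_local_regularity} as making the interior case ``routine,'' but that lemma gives the distributional (weak) Euler--Lagrange equation and does not, by itself, yield the viscosity inequality at a one-sided touching point. The step from the weak formulation to the pointwise comparison is precisely what the distance-function argument accomplishes, and it is not a formality.
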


Viscosity solution theory is commonly used to study the mean curvature flow \cites{EvansSonerSouganidis,BarlesSonerSouganidis,EvansSpruck}. It is less frequently used in the context of regularity theory of minimal surfaces, however comparison properties have appeared in a few places in the minimal surface literature. Our proofs will be based on an idea of Caffarelli and Cordoba \cite{CaffarelliCordoba}. Savin \cite{Savin} used this concept in an alternative proof of De Giorgi's $\ep$-regularity theorem for minimal surfaces. De Philippis, Fusco and Morini \cite{DePhilippisFuscoMorini} have proved similar properties in the context of capillary boundary conditions, however their approach is a bit different and they only prove a slightly weaker viscosity solution property which is, nonetheless, well suited to the proof of $\ep$-regularity.

\subsection{Viscosity solution properties of globally stable states}\label{s.viscosity-property-minimizers}
Now we proceed to prove \tref{globally-stable-viscosity} via two Lemmas, proving first the interior PDE and then the angle condition at the contact line.

Set $d_\Phi(x) = \mathrm{dist}(x, \partial\Phi)$. Note that $\Delta d_\Phi(x) = \sum_i\frac{\kappa_i}{1 - \kappa_i d_\Phi}$, where $\kappa_i$ is the $i$th curvature of the level surface $d_\Phi^{-1}(d_\Phi(x))$ at $x$.

\begin{lemma}\label{l.viscosity-solution-interior}
    If $L$ is globally stable for $F$, and $\Phi$ touches $L$ from above at $(x_0, z_0)\in \FS_\Phi$ (resp. from below), then $-H_\Phi + g(z - F) \leq \lambda_L$ (resp. $\geq$).
\end{lemma}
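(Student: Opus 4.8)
The plan is to run the standard "sliding barrier" argument for viscosity solutions, using global stability as a competitor test. Suppose for contradiction that $\Phi$ touches $L$ from above at $(x_0,z_0) \in \FS_\Phi$ but $-H_\Phi(x_0,z_0) + g(z_0 - F) > \lambda_L$. By continuity of $H_\Phi$, $z$, and the strict inequality, we may assume after shrinking the neighborhood that $-H_\Phi + g(z-F) > \lambda_L + \delta$ holds in a full neighborhood $B = B_\rho(x_0,z_0) \subset \Cyl_R$, and — by perturbing $\Phi$ slightly inward near $x_0$ (replacing the graph $\varphi$ by $\varphi - c(\rho^2 - |x-x_0|^2)_+$ for small $c$) — that the touching is strict, i.e. $\partial\Phi \cap L^* \cap \overline{B} = \{(x_0,z_0)\}$, while the strict curvature inequality is preserved. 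Then I would slide $\Phi$ downward: for small $\tau > 0$ set $\Phi_\tau = \Phi - \tau e_z$. For $\tau$ small, $\Phi_\tau$ still contains $L$ outside $B$ (by strictness of the touching on $\partial B$, which is compact and disjoint from $L$), but $\Phi_\tau$ now cuts into $L$ near $x_0$.

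The key computation is to compare the energy of $L$ with the competitor $L' := L \setminus \Phi_\tau$ (equivalently, $L \cap (\R^d \setminus \Phi_\tau)$), which agrees with $L$ outside $B$ and hence lies in $X^R_F$ after the volume is corrected. Actually, to handle the volume constraint cleanly I would work with a slightly different competitor or invoke \cref{volume_only_energy_taylor_exp}: the modification $L \setminus \Phi_\tau$ changes the volume by a small amount $v(\tau) = O(\tau^{(d+1)/2})$ or so, which can be restored by a volume-correcting perturbation supported away from $B$ at energy cost $\lambda_L \cdot v(\tau) + O(v(\tau)^2)$ using \eqref{eq:volume_only_lagrange_multiplier}. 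The main term: removing $L \cap \Phi_\tau$ replaces a piece of $\FS_L$ lying inside $\Phi_\tau$ by the corresponding piece of $\FS_{\Phi_\tau} = \partial\Phi_\tau \cap B$. Since $\Phi_\tau$ is a strict classical supersolution of the PMC equation in $B$ (mean curvature condition $-H + g(z-F) > \lambda_L + \delta$), the divergence-form comparison — integrating the vector field $\grad\varphi_\tau/\sqrt{1+|\grad\varphi_\tau|^2}$ against $\one_{L} - \one_{\Phi_\tau}$, exactly as in the distributional Euler–Lagrange identity of \lref{initial_local_regularity} — shows that the free-surface-plus-gravity energy strictly decreases by at least $\delta \cdot \vol(L \cap \Phi_\tau \cap B)$, beating the $\lambda_L$-cost of the volume correction. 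Hence $\mathcal{E}_R[L';F] + \Diss(L,L') < \mathcal{E}_R[L;F]$ (the dissipation term vanishes or only helps, since the modification is in the interior $\Cyl_R$, away from $\thinCyl_1$), contradicting global stability. The subsolution (touching from below) case is symmetric, sliding $\Phi$ upward and adding material.

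The step I expect to be the main obstacle is making the comparison inequality rigorous at the level of sets of finite perimeter rather than smooth graphs: one must argue that the touching set is genuinely an interface of $L$ (here \lref{initial_local_regularity}'s remark that $L^* = \overline L$, $L_* = \mathrm{int}(L)$ for globally stable $L$ is essential), that $\vol(L \cap \Phi_\tau) > 0$ for all small $\tau > 0$ (which uses $(x_0,z_0) \in \partial L^* = \partial L$ so $L$ has positive density there, combined with the graph structure of $\Phi$), and that the perimeter comparison $\mathrm{Per}(L'; B) \le \mathrm{Per}(L; B) - (\text{mean-curvature gain})$ holds — this is cleanest via the calibration/divergence-theorem form, testing the Euler–Lagrange identity of \lref{initial_local_regularity} with a vector field built from the distance function $d_{\Phi_\tau}$ (whose Laplacian encodes $H$, cf. the formula $\Delta d_\Phi = \sum_i \kappa_i/(1-\kappa_i d_\Phi)$ recorded just before the lemma). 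Getting the volume-correction bookkeeping to not overwhelm the $\delta$-gain, i.e. checking $v(\tau) = o(\vol(L\cap\Phi_\tau))$ is false in general so one instead absorbs it: note $\vol(L \cap \Phi_\tau) - \vol(\Phi_\tau \setminus L)$ is exactly the volume change, and the gain $\delta \vol(L\cap\Phi_\tau)$ must be compared against $|\lambda_L|$ times the *net* change; here one uses that $\Phi_\tau \setminus L$ has small volume too and chooses the sliding so the net removed volume is comparable to $\vol(L \cap \Phi_\tau)$, or simply takes $\delta$ large relative to $|\lambda_L|$ after noting the curvature gap can be taken as large as we like by strictness — wait, it cannot, so the correct route is the careful accounting that the $O(v(\tau)^2)$ and the sign of $\lambda_L v(\tau)$ are both dominated once $\tau$ is small because the main perimeter gain is *linear* in a quantity $\gtrsim \tau \cdot \mathcal{H}^{d-1}(\FS_L \cap B)$ while the volume defect $v(\tau)$ is higher order. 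I would lay this out with the divergence-theorem identity to keep the error terms transparent.
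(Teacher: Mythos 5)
Your approach is essentially the same as the paper's: cut $L$ near the touching point, restore the volume constraint by a perturbation supported away from the cut (at cost $\lambda_L \cdot(\text{removed volume}) + O((\text{removed volume})^2)$ via \cref{volume_only_energy_taylor_exp}), and control the change in free-surface area with a divergence-theorem computation against $\nabla d_\Phi$. The paper truncates along level sets of the distance function, $L_\alpha = L\cap\{d_\Phi\geq\alpha\}$, then divides the stability inequality by $s(\alpha)=\vol(L\cap\{d_\Phi<\alpha\})$ and passes to the limit $\alpha\to 0$, obtaining the inequality directly rather than by contradiction. Your version (vertical translate $\Phi_\tau=\Phi-\tau e_z$, contradiction under a strict $\delta$-gap) is a legitimate variant; the two cuts are morally equivalent once the touching is made strict.

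Two issues, one cosmetic and one conceptual. Cosmetic: your competitor should be $L'=L\cap\Phi_\tau$ (keep the part below the slid barrier), not $L\setminus\Phi_\tau$; this sign flip propagates through phrases like ``a piece of $\FS_L$ lying inside $\Phi_\tau$'' and ``$\vol(L\cap\Phi_\tau\cap B)$,'' where you mean $L\setminus\Phi_\tau$ everywhere. Conceptual, and this is the part worth fixing: the final paragraph asserts that ``the main perimeter gain is linear in a quantity $\gtrsim\tau\cdot\mathcal{H}^{d-1}(\FS_L\cap B)$ while the volume defect $v(\tau)$ is higher order.'' This is false, and luckily unnecessary. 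The divergence-theorem bound gives the perimeter change $\lesssim H_\Phi\cdot v(\tau)+o(v(\tau))$ --- the \emph{same} order as the removed volume $v(\tau)$ and as the $\lambda_L v(\tau)$ correction cost, not of lower order. The removed and added surfaces are both graphs over essentially the same base; their areas differ only at the order of the enclosed volume, which is why $\Delta d_\Phi$ (encoding curvature) controls the gap. The argument closes not because one term dominates the other in order but because you compare coefficients: the hypothesis $-H_\Phi+g(z-F)>\lambda_L+\delta$ makes the total change $\leq (H_\Phi-g(z_0-F)+\lambda_L)v+o(v)<-\delta v+o(v)<0$. Your middle paragraph already states this correctly (``$\delta\cdot\vol$, beating the $\lambda_L$-cost''), so the last paragraph's back-and-forth is a misconception layered on top of a correct argument; the paper's dividing-through normalization is precisely what avoids this pitfall.
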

\begin{proof}
The two cases are symmetric, so we will only treat the first one in the case of strict touching. Suppose that in some region $U$ avoiding the fixed boundary, we have a test region $\Psi$ which touches $L$ strictly from above at $(x_0, z_0)$. For $\alpha > 0$, set $L_\alpha = L\cap \{ d_\Phi \geq \alpha \}$. Due to the strict touching, we may set $\alpha$ sufficiently small and assume that $d_\Phi > \alpha$ outside $U$.

Subsequently, set $\widetilde{L}_\alpha = \Phi_{s}(L_\alpha)$, with deformation as in \cref{volume_only_energy_taylor_exp}, chosen such that $\mathrm{spt}(V)\cap U = \emptyset$ and with $s(\alpha) = |L\cap \{ d_\Phi < \alpha\}|$ chosen such that $\mathcal{C}_F[\widetilde{L}_\alpha] = \mathcal{C}_F[L] = 0$. Note that the density bound from \lref{initial_local_regularity} ensures that $s(\alpha) > 0$ for $\alpha$ sufficiently small.

Then we have
\begin{align*}
    0 &\leq \mathcal{E}_R[\widetilde{L}_\alpha; F] - \mathcal{E}_R[L; F] 
    \\&= \mathcal{E}_R[L_\alpha; F] - \mathcal{E}_R[L; F] +\lambda_L s + O(s^2)
    \\&= \mathcal{H}^{d-1}(\{ d = \alpha\} \cap L) - \mathcal{H}^{d-1}(\partial L \cap \{ d < \alpha\})
    \\&\quad \cdots-\int_{L\cap \{ d_\Phi < \alpha\}} g(z - F) +\lambda_L s + O(s^2)
\end{align*}
Here, the first line is the stability property of $L$, the second line comes from the Taylor expansion in \cref{volume_only_energy_taylor_exp}, and the last line follows from the particular truncation performed in the definition of $L_\alpha$. We note that the $g|z-F|$ term that usually appears in the relative gravitational energy is replaced by $g(z - F)$, assuming that for $\alpha$ sufficiently small $\{ d_\Phi < \alpha \}$ stays on one side of $\{ z = F\}$; in the case where $z_0 = F$, we can instead have that this term goes to 0.

To control the terms from the change of the free surface, we use:
\begin{align*}
    \mathcal{H}^{d-1}(\{ d_\Phi = \alpha\} \cap L) - \mathcal{H}^{d-1}(\partial L \cap \{ d_\Phi < \alpha\}) &\leq \int_{\partial(L\cap \{ d_\Phi < \alpha \})} \nabla d_\Phi\cdot \nu
    \\&= \int_{L\cap\{ d_\Phi < \alpha\}} \Delta d_\Phi
    \\&= \int_{L\cap \{ d_\Phi < \alpha\}} \sum_i \frac{\kappa_i}{1 - \kappa_i d_\Phi}
\end{align*}
Now, we divide the inequality by $s(\alpha) = |L\cap \{ d_\Phi < \alpha \}|$ and send $\alpha$ to 0. By strict touching, the sets $L\cap \{ d_\Phi < \alpha \}$ shrink to $(x_0, z_0)$, and the smoothness of $\Phi$ ensures that in the limit we obtain $0 \leq \lambda_L + H_\Phi - g(z_0 - F)$.
\end{proof}

\begin{lemma}\label{l.viscosity-solution-angle}
If $L$ is globally stable for $F$, and $\Phi$ touches $L$ from above (resp. from below) at $(x_0, z_0) \in \partial B_1\times \R$, then $\cos\THG{\Phi} \leq \cos\YP + \mu_-$ (resp. $\geq \cos\YP - \mu_+$). The same holds at $\thinCyl_R$ without friction terms and with $\cos\YC$ replacing $\cos\YP$.
\end{lemma}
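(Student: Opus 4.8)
The plan is to run the local comparison scheme of \lref{viscosity-solution-interior}, but now anchored at the plate, so that the divergence theorem leaves behind a contact-line term measuring the angle defect. The two cases are symmetric; I treat ``touching from above'' (i.e.\ from outside, $L^*\subset\Phi$) at the plate $\thinCyl_1$ and argue by contradiction, assuming $\cos\THG{\Phi}>\cos\YP+\mu_-$; put $\delta_0:=\cos\THG{\Phi}-(\cos\YP+\mu_-)>0$. As in \lref{viscosity-solution-interior} one reduces to strict touching $\partial\Phi\cap L^*=\{(x_0,z_0)\}$ in the test neighborhood by a small perturbation of $\Phi$, and translates so that $z_0\neq0$ and the relativizations are inert. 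Set $d_\Phi(x)=\mathrm{dist}(x,\partial\Phi)$, with $\partial\Phi$ the smooth graph of the test-region definition, extended through the plate.

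For small $\alpha>0$ form $L_\alpha:=L\cap\{d_\Phi\ge\alpha\}$ and, if $R<\infty$, restore the volume constraint by the flow of \cref{volume_only_energy_taylor_exp} with $\mathcal V_0$ supported near $\thinCyl_R$, away from the plate; this costs $\lambda_L s+O(s^2)$ with $s=|W|$, $W:=L\cap\{d_\Phi<\alpha\}$. Write $\widetilde L_\alpha$ for the corrected set. Passing from $L$ to $\widetilde L_\alpha$ deletes the plate-contact piece $\PS_L\cap\{d_\Phi<\alpha\}$ and the free piece $\FS_L\cap\{d_\Phi<\alpha\}$, creates the new free surface $L\cap\{d_\Phi=\alpha\}$, changes gravity by $O(|W|)$, changes the contact energy by $+\cos\YP\,|\PS_L\cap\{d_\Phi<\alpha\}|$ (as $\relAreaP{\PS_L}$ drops by that area), and — since no re-wetting occurs near $\thinCyl_1$ and the volume flow avoids it — costs dissipation exactly $\Diss(L,\widetilde L_\alpha)=\mu_-\,|\PS_L\cap\{d_\Phi<\alpha\}|$. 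Global stability with dissipation, $\mathcal E_R[L;F]\le\mathcal E_R[\widetilde L_\alpha;F]+\Diss(L,\widetilde L_\alpha)$, then gives
\[ 0 \le \big[\mathcal H^{d-1}(L\cap\{d_\Phi=\alpha\})-|\FS_L\cap\{d_\Phi<\alpha\}|\big] + (\cos\YP+\mu_-)\,|\PS_L\cap\{d_\Phi<\alpha\}| + O(|W|). \]
Bounding the bracket as in \lref{viscosity-solution-interior} by the divergence theorem for $\nabla d_\Phi$ on $W$ — which now also produces a boundary integral over the plate piece — and using $|\nabla d_\Phi|=1$, the right-hand side collapses to $\int_{\PS_L\cap\{d_\Phi<\alpha\}}\big(\cos\YP+\mu_--\nabla d_\Phi\cdot\nu^c\big)+O(|W|)$. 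Since the touching is strict, $W$ shrinks to $(x_0,z_0)$, so $\nabla d_\Phi\to-\nu^f_\Phi(x_0,z_0)$ on $\PS_L\cap\{d_\Phi<\alpha\}$ and, by the sign conventions, $\nabla d_\Phi\cdot\nu^c\to\cos\THG{\Phi}$; for $\alpha$ small this forces $|\PS_L\cap\{d_\Phi<\alpha\}|\le\tfrac{C}{\delta_0}\,|W|$.

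The remaining step — which I expect to be the main obstacle — is to contradict this by showing $|W|=o\big(|\PS_L\cap\{d_\Phi<\alpha\}|\big)$ as $\alpha\to0$. Here $W\subset\Phi\cap\{d_\Phi<\alpha\}$ is a one-sided $\alpha$-collar of the smooth hypersurface $\partial\Phi$ which, by strict touching, is confined to $B_{\rho(\alpha)}(x_0,z_0)$ with $\rho(\alpha)\to0$, so $|W|\lesssim\alpha\,\rho(\alpha)^{d-1}$; on the other hand the plate-contact piece — the trace of $L$ on $\thinCyl_1$ in the $\sim\alpha$-collar of $\gamma_\Phi$ — is bounded below using that a globally stable $L$ is a local almost-minimizer of the capillary energy with the plate as obstacle, and hence satisfies the usual density estimates at the contact line. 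Carefully matching these two scales at the contact point — the genuinely delicate point, where the ``wedge'' formed by $\FS_L$ against the smooth $\FS_\Phi$ must be quantified — yields $|W|=o\big(|\PS_L\cap\{d_\Phi<\alpha\}|\big)$ and thus the contradiction. (In the degenerate case where $\PS_L$ is $\mathcal H^{d-1}$-null near $(x_0,z_0)$ there is no dissipation, and dividing by $|W|$ exactly as in \lref{viscosity-solution-interior} instead yields the interior inequality $-H_\Phi+g(z_0-F)\le\lambda_L$ at $(x_0,z_0)$, which is the other viscosity alternative.) The ``touching from below'' case is identical after replacing the inward truncation by the matching outward sliver — equivalently, by applying the above to the complement $L^c$ — with the roles of $\mu_-$ and $\mu_+$ exchanged. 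Finally, the assertion at $\thinCyl_R$ is the same argument with $\mu_\pm=0$, no dissipation term, $\cos\YC$ in place of $\cos\YP$, and the volume-restoring flow supported near $\thinCyl_1$.
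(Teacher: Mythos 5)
Your skeleton matches the paper's — truncate by the collar $\{d_\Phi < \alpha\}$, restore volume by \cref{volume_only_energy_taylor_exp}, invoke global stability, and integrate $\nabla d_\Phi$ by parts over $W = L\cap\{d_\Phi<\alpha\}$ to isolate a contact-line term $\int_{\PS_L\cap\{d_\Phi<\alpha\}}\nabla d_\Phi\cdot\nu$. But the way you close the argument has a genuine gap. You try to conclude by proving the scale separation $|W| = o\big(|\PS_L\cap\{d_\Phi<\alpha\}|\big)$, which you rightly flag as the main obstacle and then do not prove. The density estimates you invoke are for balls (they give $\vol(L\cap B_r)\gtrsim r^d$ and, via the contact-line regularity in Appendix~D, bounds in balls), not for the anisotropic $\alpha$-collar $\{d_\Phi<\alpha\}$; they do not yield a lower bound on $|\PS_L\cap\{d_\Phi<\alpha\}|$ of the order you would need when $(x_0,z_0)$ is a low-density point of $\PS_L$ along $\gamma_\Phi$. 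As stated, the two scales need not match, and nothing in the rest of the paper provides the missing estimate.

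The paper sidesteps this entirely by a standard viscosity reduction that you actually brush against in your remark about the degenerate case. The subsolution property in \dref{visc_subsln_with_test_region} is a disjunction: either the interior inequality $-H_\Phi + g(z_0-F)\leq\lambda_L$ holds at the touching point, or the angle inequality does. So one may assume WLOG (after shrinking $U$, using smoothness of $\Phi$ and continuity of $H_\Phi$) that $\Phi$ is a \emph{strict} supersolution of the interior PDE throughout $U$, i.e.\ $-H_\Phi + g(z-F) > \lambda_L$ on $U$, since otherwise the interior alternative already holds. Under this assumption the collected interior terms $\int_W\big(\Delta d_\Phi - g(z-F) + \lambda_L\big) + O(|W|^2)$ are strictly negative for $\alpha$ small, so they can simply be discarded from the stability inequality. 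One is then left with $0\leq\int_{\PS_L\cap\{d_\Phi<\alpha\}}\big(\cos\YP+\mu_- - \nabla d_\Phi\cdot\nu\big)$, and dividing by $|\PS_L\cap\{d_\Phi<\alpha\}|$ and letting $\alpha\to 0$ gives the angle inequality directly — no comparison between $|W|$ and $|\PS_L\cap\{d_\Phi<\alpha\}|$ is ever needed. I recommend you replace the proposed scaling argument with this reduction; the rest of your computation then carries through unchanged.
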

\begin{proof}
Again the four cases are similar, so we only treat the first. Let $U$ be a region such that $\overline{U}$ has relatively open intersection with $\thinCyl_1$, and suppose that $\Phi$ touches $L$ from above at $(x_0, z_0)\in \thinCyl_1$. Once again, we assume the touching is strict, and we additionally assume that $\Phi$ is a strict supersolution away from the fixed boundary: $H_\Phi + g(z - F) < \lambda_L$ on $U$.

Then, as in the previous lemma, we create $L_\alpha$ by first intersecting with $\{ d_\Phi \leq \alpha \}$ inside $U$, and subsequently perturbing $L_\alpha$ to $\widetilde{L}_\alpha$, in some region disjoint from $U$, to fix the volume constraint. As before, applying \cref{volume_only_energy_taylor_exp}:
\begin{align*}
    0 &\leq \mathcal{E}_R[\widetilde{L}_\alpha; F] + \Diss[L ,\widetilde{L}_\alpha] - \mathcal{E}_R[L;F]\\
    &=\mathcal{E}_R[L_\alpha; F] + \Diss[L ,L_\alpha] - \mathcal{E}_R[L;F]+\lambda_L s + O(s^2)
    \\&= \mathcal{H}^{d-1}(\{ d = \alpha\} \cap L) - \mathcal{H}^{d-1}(\partial L \cap \{ d < \alpha\})
    \\& \quad \cdots-\int_{L\cap \{ d_\Phi < \alpha\}} g|z - F|
    \\& \quad \cdots+(\cos\YP + \mu_-)|\PS_L\cap \{ d < \alpha\}|-\lambda_L s + O(s^2)
\end{align*}
Now, when we use $d_\Phi$, we instead get
\begin{align*}
    \mathcal{H}^{d-1}(L\cap \{ d_\Phi = \alpha \}) &- \mathcal{H}^{d-1}(\FS_L\cap \{ d_\Phi < \alpha\}) 
    \\&\leq \int_{\partial (L\cap \{ d_\Phi < \alpha\})\setminus (\PS_L\cap \{d_\Phi < \alpha\})} \nabla d_\Phi \cdot \nu
    \\&=\int_{L\cap \{ d_\Phi < \alpha\}} \Delta d_{\Phi} -\int_{\PS_L\cap \{d_\Phi < \alpha\}} \nabla d_\Phi\cdot \nu
\end{align*}
The rest of the computation proceeds similarly to before. Since we assume that $-H_\Phi + g(z - F) > \lambda_L$, we can group those terms together even before sending $\alpha$ all the way to 0 and have that their contribution to the right-hand side of the inequality is negative (absorbing the $O(s^2)$ into the other terms). Then, we can divide by $|\PS_L\cap \{ d_\Phi < \alpha \}|$ and send $\alpha$ to 0. As $\alpha\to 0$, we have $\nabla d_\Phi\cdot \nu\to \cos\THG{\Phi}$ on $\PS_L\cap \{ d_\Phi < \alpha \}$. The limiting inequality we obtain is then
\[ 0 \leq -\cos\THG{\Phi} + \cos\YP + \mu_- \]

\end{proof}

\subsection{Vertical bound of the the free surface}
Now we prove a much more precise bound on the width of the free surface, and the flatness caused by gravity, by making use of the barrier properties established in \sref{viscosity-property-minimizers}.
\begin{lemma}\label{l.width-bound}
If $L$ is globally stable with height $F$ and multiplier $\lambda$ then for $R \geq R_0 \geq 1$
\[ |z - (F+\tfrac{\lambda}{g})| \leq Ce^{-cgr}+Ce^{-cg(R-r)} \ \hbox{ for all } \ (x,z) \in \partial L\]
for $C,R_0 \geq 1 \geq c>0$ depending only on the dimension, $1-|\cos \YP \mp\mu_\pm|$, and $1-|\cos \YC|$.
\end{lemma}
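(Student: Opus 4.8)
The plan is to prove the bound by sliding explicit rotationally symmetric barriers against $L$, using that globally stable states are viscosity solutions of \eref{PDE-inc} (\tref{globally-stable-viscosity}).

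\emph{Reduction.} Since $\FS_L$ satisfies $-H_{\FS_L}(x,z)+g(z-F)=\lambda$, pass to the shifted vertical coordinate $v:=z-\bigl(F+\tfrac{\lambda}{g}\bigr)$. In this coordinate $\FS_L$ solves the $F$- and $\lambda$-free system $-H+gv=0$ together with the contact-angle inequalities of \eref{PDE-inc} at $\thinCyl_1$ and $\thinCyl_R$, so it is enough to produce $C,R_0,c$ with the stated dependencies such that $|v|\le Ce^{-cgr}+Ce^{-cg(R-r)}$ on $\overline{\FS_L}$. By \lref{qualitative_height_bound} we know a priori that $\overline{\FS_L}\subset\{|v|\le h\}$ for some (possibly large) $h$; this is used only to start the sliding.

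\emph{Comparison by sliding.} Suppose $v^{+}\colon[1,R]\to\R$ is a bounded smooth function which is a strict supersolution of $-H+gv=0$ on $\{1<|x|<R\}$ and whose contact angles with the two walls satisfy $\cos\theta>\cos\YP+\mu_{-}$ at $\{|x|=1\}$ and $\cos\theta>\cos\YC$ at $\{|x|=R\}$. Then $v\le v^{+}(|x|)$ on $\overline{\FS_L}$: the number $t^{*}:=\sup_{(x,v)\in\overline{\FS_L}}\bigl(v-v^{+}(|x|)\bigr)$ is finite (by the a priori bound and boundedness of $v^{+}$) and attained at some $(x_{0},v_{0})$ (by compactness of $\overline{\FS_L}$), and if $t^{*}>0$ then the vertically translated barrier region $\Phi:=\{1\le|x|\le R,\ v<v^{+}(|x|)+t^{*}\}$ contains $L^{*}$ and touches it from outside at $(x_{0},v_{0})$. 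Vertical translation preserves $H$ and all contact angles, so $\Phi$ is still a strict supersolution of the interior equation and of the angle conditions there; hence the three possibilities $|x_{0}|\in(1,R)$, $|x_{0}|=1$, $|x_{0}|=R$ all contradict \lref{viscosity-solution-interior} and \lref{viscosity-solution-angle}. Thus $t^{*}\le0$. Applied to a strict subsolution $v^{-}$ with $\cos\theta<\cos\YP-\mu_{+}$ and $\cos\theta<\cos\YC$ at the walls, slid upward and using the supersolution half of \tref{globally-stable-viscosity}, the same reasoning gives $v\ge v^{-}(|x|)$ on $\overline{\FS_L}$.

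\emph{The barriers.} It remains to exhibit such $v^{\pm}$ with $|v^{\pm}(r)|\le Ce^{-cgr}+Ce^{-cg(R-r)}$. Take
\[
v^{+}(r)=B\bigl(e^{-\kappa(r-1)}+e^{-\kappa(R-r)}\bigr),\qquad v^{-}=-v^{+}.
\]
For a radial graph $-H=-\dfrac{v''}{(1+(v')^{2})^{3/2}}-\dfrac{d-2}{r}\dfrac{v'}{\sqrt{1+(v')^{2}}}$; since $v^{+}$ obeys $(v^{+})''=\kappa^{2}v^{+}$ and $|(v^{+})'|\le\kappa v^{+}$, one gets $-H+gv^{+}\ge\bigl(g-\kappa^{2}-(d-2)\kappa\bigr)v^{+}>0$ on $r\ge1$ once $\kappa$ is small enough, and then $v^{-}=-v^{+}$ is a strict subsolution. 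The slope of $v^{+}$ at $r=1$ is $-\kappa B\bigl(1-e^{-\kappa(R-1)}\bigr)$, of magnitude $\ge\tfrac12\kappa B$ provided $R\ge R_{0}$ is large enough that $e^{-\kappa(R-1)}\le\tfrac12$; hence the cosine of the contact angle of $\{v=v^{+}\}$ with the plate, namely $|(v^{+})'(1)|/\sqrt{1+(v^{+})'(1)^{2}}$, tends to $1$ as $\kappa B\to\infty$ and so exceeds $\cos\YP+\mu_{-}$ once $\kappa B$ is large — which is possible precisely because $\cos\YP+\mu_{-}<1$ (the ellipticity hypothesis). The angle at $r=R$ is handled the same way using $|\cos\YC|<1$, and for $v^{-}$ the inequalities are reversed using $\cos\YP-\mu_{+}>-1$. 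Fixing $B$ by the angle requirements and $\kappa$ by the supersolution requirement — both depending only on $d$, the gaps $1-|\cos\YP\mp\mu_{\pm}|$, $1-|\cos\YC|$, and (as befits the assumption that $g$ is of order one) a small multiple of $g$ — yields the claim with $c=\kappa/g$ and $C=Be^{\kappa}$.

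\emph{Main difficulty.} Granting \tref{globally-stable-viscosity}, the sliding scheme is routine; the real work is the barrier construction, i.e. producing a single radial profile that at once (i) makes the contact angle strictly steeper than the hysteresis window allows at \emph{both} walls, (ii) remains a strict sub-/super-solution of the nonlinear capillary operator across the whole annulus $1\le|x|\le R$ — where the sign of the radial drift term $\tfrac{d-2}{r}(v')/\sqrt{1+(v')^{2}}$ must be controlled — and (iii) decays exponentially into the bulk. The hypothesis $R\ge R_{0}$ is exactly what lets the plate- and container-side corrections coexist with a uniform rate. This is the quantitative refinement of the qualitative vertical bound \lref{qualitative_height_bound}, in the spirit of \cite{AlbertiDeSimone}.
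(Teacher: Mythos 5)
Your proposal is correct and follows essentially the same strategy as the paper: both prove the bound by sliding an explicit, radial, exponentially-decaying sub/supersolution of \eqref{e.PDE-inc} against $L$ and invoking the viscosity properties of globally stable states from \lref{viscosity-solution-interior} and \lref{viscosity-solution-angle}. The only differences are cosmetic — you normalize out $F+\lambda/g$ at the start and use a single rate $\kappa$ and amplitude $B$ for both walls, whereas the paper keeps $z-F$ and slides by $t \geq \lambda/g$ with separately-tuned $(a,A)$ for the plate and $(b,B)$ for the container; the underlying tension (tuning the exponential rate against the curvature operator while keeping the wall slopes steeper than the hysteresis window) is identical.
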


Note that the only dependence on the Lagrange multiplier $\lambda$ here is additive and so we will actually be able to use this estimate to obtain much sharper bounds on $\lambda$ and on the energy.

We will argue for the radially symmetric case in $\Cyl_R = (B_R \setminus B_1) \times \R$. This is not essential, but some geometric assumptions are indeed needed here. The same argument below will work, with minor modifications, in the case that $C_R = (RO \setminus K) \times \R$ as long as both $O$ and $K$ are strictly star-shaped with respect to the origin.
\begin{proof}
The argument is by a sliding comparison with explicit sub / supersolution barriers. We will just argue for the upper bound, the lower bound argument is symmetric.

For a subgraph
\[\cos \theta(x) = \frac{\partial_\nu u}{\sqrt{1+|\grad u|^2}} \ \hbox{ where $\nu$ is outer normal from $U$ into $S$}\]

We will choose a sliding family of axisymmetric subgraphs as our barriers, for $t \in \R$ call
\[\Phi_t = \{(x,z): \ z \leq \varphi(x)\} \ \hbox{ with }  \ \varphi(x) = F+t+\psi(|x|).\]
So, for $t \geq \lambda/g$
\[-H_{\Phi}+g(z-F) = -H(\psi)+g\psi+gt \geq [-H(\psi)+g\psi]+\lambda \]
If $\psi$ is chosen smooth and so that 
\[-H(\psi) + g \psi > 0\]
and
\[\frac{\partial_r\psi}{\sqrt{1+(\partial_r\psi)^2}}(1) \leq  - \cos \YP -\mu_-\ \hbox{ and } \ \frac{\partial_r\psi}{\sqrt{1+(\partial_r\psi)^2}}(R) \geq \cos \YC\]
then $\Phi_t$ will be a strict supersolution of \eref{PDE-inc} for all $t \geq \lambda$.  Since $z$ is bounded from above on $\partial L$ then $\Phi_t \supset L$ for sufficiently large $t>0$. Sliding $t$ downwards, and using \lref{viscosity-solution-interior} and \lref{viscosity-solution-angle}, we see that $L \subset \Phi_t$ for all $t \geq \lambda$.

We will choose $\psi$ of the form
\[\psi(r) = Ae^{-ar/A}+Be^{-b(R-r)/B}.\]
The goal is to choose $a$ large depending only on $1-\cos \YP - \mu_->0$ to achieve a supersolution condition at the contact line, and then $A$ larger depending on $a$ and on $g$ to achieve a supersolution condition on the interior.  We will do similarly for $b$ and $B$ depending on $1-\cos \YC >0$ and on $g$. Note that the (almost symmetric) subsolution bound would depend on $1+\cos \YP - \mu_+>0$ and $1+\cos \YC>0$.

Notice that
\[\psi(1) = -a +be^{-bR/B} \ \hbox{ and } \ \psi(R) = b - ae^{-aR/A}.\]
For $R$ sufficiently large, depending on $a$, $A$, $b$, and $B$ to be specified, then $\psi(1) \leq a/2$ and $\psi(R) \geq b/2$. Then call
\[  c(s) = \frac{s}{\sqrt{1+s^2}} \]
note that $c: \R \to \R$ is increasing with
\[\textup{range}(c) = (-1,1)\]
and
\[c(\partial_r\psi(1)) \leq c(-a/2) \ \hbox{ and } \ c(\partial_r\psi(R)) \geq c(b/2).\]
So we can choose $a$ and $b$ sufficiently large and positive, depending only on $1-\cos \YP>0$ and on $1-\cos \YC>0$ respectively, so that
\[-c(-a/2) >  \cos \YP + \mu_- \ \hbox{ and } \ c(b/2) > \cos \YC.\]
The graphical mean curvature operator in radial coordinates has the form
\begin{align*}
    \grad \cdot (\frac{\grad \varphi}{\sqrt{1+|\grad \varphi|^2}}) &= \partial_r(\frac{\partial_r \psi}{\sqrt{1+(\partial_r \psi)^2}})+\frac{n-1}{r}\frac{\partial_r \psi}{\sqrt{1+(\partial_r \psi)^2}}\\
    &=\frac{1}{\sqrt{1+(\partial_r\psi)^2}}\left[\left(1-\frac{\partial_r \psi^2}{1+\partial_r\psi^2}\right)\partial^2_{r}\psi+\frac{n-1}{r}\partial_r \psi\right].
\end{align*}
Since $\partial_r^2 \psi >0$ we can bound from above, on $r \geq 1$,
\begin{align*}
    H_{\Phi} &\leq \partial^2_r\psi + \frac{n-1}{r}|\frac{\partial_r\psi}{\sqrt{1+(\partial_r \psi)^2}}| \\
    &\leq (A^{-1}a^2+(n-1)a) e^{-ar/A}+(B^{-1}b^2 + (n-1)b ) e^{-b(R-r)/B} \\
    &\leq g\psi 
\end{align*}
as long as we choose $A$ and $B$ sufficiently large so that
\[(A^{-1}a^2 + (n-1)a ) \leq gA \ \hbox{ and } \ (B^{-1}b^2 + (n-1)b ) \leq gB .\]
\end{proof}

\subsection{Lagrange multiplier and energy bound}
As a corollary of the height bound \lref{width-bound}, we obtain bounds on the Lagrange multiplier and the energy of globally stable states. Note the bound on $\lambda_L$ will be used to obtain improved regularity of $L$, and is itself subsequently improved to $O(R^{-1})$ later in \lref{lambda_decomposition}.

\begin{lemma}\label{l.lambda_and_energy_bound}
    If $L$ is globally stable with height $F$ then
    \[|\lambda_L| \leq C_0g \ \hbox{ and } \ \mathcal{E}_R[L;F] \leq |B_R| + C_1(\mu_+\vee \mu_-)\]
    Moreover, for any open $U\Subset B_R\setminus \overline{B_1}$, we have the local perimeter bound
    \[ |\FS_L\cap (U\times \R)| \leq C_2(|U| + \mathcal{H}^{d-2}(\partial U)) \]
    Here, $C_0,C_1, C_2$ both depend only on $d, 1 - |\cos\YP \mp \mu_\pm|, 1 - |\cos\YC|$.
\end{lemma}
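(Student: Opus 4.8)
The plan is to deduce all three estimates from the sharp vertical confinement of \lref{width-bound}, which pins $\partial L$ into a band of exponentially small width about the slice $\{z=F+\lambda_L/g\}$, together with global stability tested against the flat reference profile $\Cyl_{R,F}$ (admissible, since $\mathcal{C}_{R,F}(\Cyl_{R,F})=0$ and its relative contact areas are finite). First I would bound $\lambda_L$: by \lref{width-bound}, $L$ coincides with the flat profile of height $F+\lambda_L/g$ outside the set $\{\,|z-(F+\lambda_L/g)|\le\varepsilon(x)\,\}$, where $\varepsilon(x)\le C(e^{-cg|x|}+e^{-cg(R-|x|)})$; in particular $\PS_L$ and $\CS_L$ each lie in a band of bounded width about $\thinCyl_{1,\,F+\lambda_L/g}$ and $\thinCyl_{R,\,F+\lambda_L/g}$. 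Inserting this sandwich into the volume constraint $\mathcal{C}_{R,F}(L)=0$: the bulk contributes $(\lambda_L/g)\,|B_R\setminus B_1|$, while the two deviation regions contribute, in absolute value, at most $\int_{B_R\setminus B_1}\varepsilon\le C(1+R^{d-2})$ by the exponential decay. Dividing by $|B_R\setminus B_1|$, which is $\gtrsim R^{d-1}$ for $R\ge R_0$, gives $|\lambda_L|\le C_0g$; the sharp rate is extracted later in \lref{lambda_decomposition}.

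For the energy bound, global stability against $\Cyl_{R,F}$ reads $\mathcal{E}_R[L;F]\le\mathcal{E}_R[\Cyl_{R,F};F]+\Diss(L,\Cyl_{R,F})$. By the previous step $\PS_L$ differs from $\PS_{\Cyl_{R,F}}=\thinCyl_{1,F}$ only within a band of bounded width, so $\Diss(L,\Cyl_{R,F})\le C(\mu_+\vee\mu_-)\,\mathcal{H}^{d-2}(\partial B_1)$. It then remains to see that $\mathcal{E}_R[\Cyl_{R,F};F]$ contributes only the leading term $|B_R\setminus B_1|\le|B_R|$ up to a correction bounded in terms of $d$ and the contact angles alone. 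The delicate point is that the container-meniscus contributions to $|\FS_L|$, to $-\cos\YC\relAreaC{\CS_L}$, and to the gravitational term are each individually of order $R^{d-2}$, yet they cancel in the difference $\mathcal{E}_R[L;F]-\mathcal{E}_R[\Cyl_{R,F};F]$ because the Euler--Lagrange equation of \lref{initial_local_regularity} forces $L$ to carry the energetically optimal meniscus near $\partial B_R$. This is a crude precursor of the energy asymptotics in \lref{energy_asymptotics}, where here one only needs that the remainder stays bounded, not its explicit form. Collecting the pieces yields $\mathcal{E}_R[L;F]\le|B_R|+C_1(\mu_+\vee\mu_-)$.

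For the local perimeter bound, once $|\lambda_L|\le C_0g$ is known the relation $-H_{\FS_L}+g(z-F)=\lambda_L$ and \lref{width-bound} give $|H_{\FS_L}|\le Cg$ on $\FS_L$, so $L$ is a perimeter almost-minimizer with bounded Lagrangian. Given $U\Subset B_R\setminus\overline{B_1}$, I would compare $L$ with the competitor obtained by flattening $L$ to the slice $\{z=F+\lambda_L/g\}$ inside $U\times\R$ (transitioning back to $L$ in a slightly larger set), then restoring the volume with the compactly supported vertical flow $\mathcal{V}_0$ of \cref{volume_only_energy_taylor_exp}, supported away from $U$ and from both walls; since the local volume change is $O(|U|)$ its shift parameter $s$ is small and its cost is $\lambda_L s+O(s^2)$, which is harmless. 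As this competitor does not alter $\PS_L$, the dissipation vanishes, and global stability gives $|\FS_L\cap(U\times\R)|\le C_2(|U|+\mathcal{H}^{d-2}(\partial U))$, the two terms accounting for the flat top over $U$ and for the vertical sides over $\partial U$, whose height is controlled by the band width.

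The step I expect to be the main obstacle is the energy bound: isolating the cancellation of the three $O(R^{d-2})$ container-meniscus contributions in $\mathcal{E}_R[L;F]-\mathcal{E}_R[\Cyl_{R,F};F]$ is exactly where the sharp form of \lref{width-bound} and the fine structure of the capillary energy are used. By contrast, the multiplier bound and the local perimeter bound are essentially routine once \lref{width-bound} and the almost-minimality are in hand.
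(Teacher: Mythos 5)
Your approach to the first estimate is a valid alternative route: you bound $\lambda_L$ by inserting the width bound directly into the signed volume constraint, while the paper argues more coarsely that $\partial L$ must intersect both $\{z\geq F\}$ and $\{z\leq F\}$ and hence $F+\lambda_L/g$ is within $O(1)$ of $F$. Your version, if made precise (one must first note that by the qualitative height bound $L$ is full below and empty above the band, so $L\Delta\Cyl_{R,F+\lambda_L/g}$ lies inside it), in fact yields the sharper $|\lambda_L|\lesssim g/R$; but since the paper only needs $|\lambda_L|\lesssim g$, its cruder argument is simpler.

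For the energy bound, the first half of your paragraph is exactly the paper's proof: test global stability against the flat competitor $\{z\leq F\}$, bound the dissipation via the width bound, and observe $\mathcal{E}_R[\Cyl_{R,F};F]=|B_R\setminus B_1|$ (up to an $F\cos\YP\mathcal{H}^{d-2}(\partial B_1)$ term which both you and the paper silently normalize away). The second half about a ``delicate cancellation'' of three $O(R^{d-2})$ container-meniscus contributions is a misconception: the competitor $\Cyl_{R,F}$ is flat and carries no meniscus at all, and global stability hands you the inequality $\mathcal{E}_R[L;F]-\mathcal{E}_R[\Cyl_{R,F};F]\leq\Diss(L,\Cyl_{R,F})$ as a hypothesis, so nothing about the internal structure of $\mathcal{E}_R[L;F]$ needs to be analyzed. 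There is no cancellation to verify, and \lref{energy_asymptotics} plays no role here.

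The local perimeter bound is where there is a genuine gap. You restore volume with the flow $\mathcal{V}_0$ of \cref{volume_only_energy_taylor_exp}, and claim the shift parameter $s$ and its cost $\lambda_L s+O(s^2)$ are harmless because the volume change is $O(|U|)$. But $|U|$ can be as large as $|B_R\setminus B_1|\sim R^{d-1}$, so $s$ is not small, and the quadratic remainder $C(\|V_0\|_{C^1})(1+h_0)\,|\FS_L\cap\mathrm{spt}(\mathcal{V}_0)|\,s^2$ scales like $|U|^2$ rather than $|U|$, which is too large to be absorbed into $C_2|U|$. Worse, the factor $|\FS_L\cap\mathrm{spt}(\mathcal{V}_0)|$ is precisely the kind of local perimeter quantity the lemma is trying to bound, so invoking the Taylor remainder here is circular at this stage of the paper (almost-minimality and the local bound come \emph{after} this lemma). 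The paper sidesteps both problems at once by not using the flow: it takes the competitor $L':=(L\setminus(U\times\R))\cup(U\times(-\infty,a])$ with the slab height $a$ chosen so that $\mathcal{C}_F[L']=0$ exactly, and uses the width bound together with $|\lambda_L|\lesssim g$ to conclude $|a-F|\leq C$, so the extra gravity cost $g|U|(a-F)^2/2$ is $O(|U|)$ and the lateral traces over $\partial U$ are $O(\mathcal{H}^{d-2}(\partial U))$. Replacing your flow by this exact slab-height adjustment repairs the argument.
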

\begin{proof}
First we bound the Lagrange multiplier. The idea is that the width bound \lref{width-bound} shows that $L$ stays constant order distance from height $F + \frac{\lambda}{g}$, this will contradict the volume constraint, which centers $L$ around height $F$, if $\lambda$ is too large. Since $L\in X^R_F$, we have $\mathcal{C}_{R,F}(L) = 0$, which is to say:
\[|L \setminus \Cyl_{R,F}| = |\Cyl_{R,F} \setminus L|\]
in particular there exists $z_+ \in \partial L$ with $z_+ \geq F$ and there exists $z_- \in \partial L$ with $z_- \leq F$. Then the width bound \lref{width-bound} implies that
\[ F \geq z_- \geq F+\frac{\lambda}{g}-C \ \hbox{ and } \ F \leq z_+ \leq F + \frac{\lambda}{g} + C\]
and rearranging these we get $-gC \leq \lambda \leq gC$. Here $C \geq 1$ depends only on $d$, $1 - |\cos\YP \mp \mu_\pm|$, and $1 - |\cos\YC|$.

To prove the global energy bound, take $L':= \{z \leq F\}$ as a competitor. Due to \lref{width-bound} and using the Lagrange multiplier bound from above
\[\textup{Diss}(L,L') \leq (C+\frac{|\lambda|}{g})(\mu_+\vee \mu_-)\mathcal{H}^{d-2}(\partial B_1) \leq C(\mu_+\vee \mu_-)\]
for some $C \geq 1$ with the same dependence as before as long as $R\geq 2$.  Therefore
 \[\mathcal{E}_R[L; F] \leq \mathcal{E}_R[L'; F] + \Diss(L,L') \leq |B_R \setminus B_1| + C(\mu_+\vee \mu_-).\]
To prove the local energy bound, take
\[ L' := (L\setminus (U\times \R)) \cup (U\times (-\infty,a]) \]
where $a\in \R$ is chosen so that $\mathcal{C}_F[L'] = 0$. Due to \lref{width-bound} and the Lagrange multiplier bound, we have $|z - F| \leq C + \frac{|\lambda|}{g} \leq C$ for $(x,z)\in \FS_L$, so we have $|a - F| \leq C$. Then global stability gives
\begin{align*}
    0 &\leq \mathcal{E}_R[L'; F] - \mathcal{E}_R[L; F]
    \\&\leq |U| + |\mathrm{tr}_{\partial U \times (a,\infty)}(L)| + |\mathrm{tr}_{\partial U\times (-\infty,a]}(L^c)| - |\FS_L\cap (U\times \R)| 
    \\&\dots+ g|U|\frac{(a-F)^2}{2} - \int_{(L \cap (U\times\R))\Delta(U\times (-\infty, F])} g|z-F|
    \\&\leq |U| + 2C\mathcal{H}^{d-2}(\partial U) - |\FS_L\cap (U\times \R)|
    \\&\dots + g|U|\frac{(a-F)^2}{2}
\end{align*}
We remark that the second line is a strict inequality when $|\FS_L\cap (\partial K \times \R)| > 0$.

We bound the two trace terms using \lref{width-bound} and rearrange to obtain:
\begin{align*}
    &|\FS_L\cap (U\times \R)| + \int_{(L \cap (U\times\R))\Delta(U\times (-\infty, F])} g|z-F|
    \\&\leq \left(1 + g\frac{(a-F)^2}{2}\right)|U| + 2C\mathcal{H}^{d-2}(\partial U)
\end{align*}

\end{proof}
\subsection{Comparison principle for nonstrict super/subsolutions}

We finish the section with an application of the width bound to compare solutions of \eqref{e.PDE-inc} away from the pinned contact line.  Since we will now compare two viscosity solutions instead of a viscosity solution with a smooth test function, this requires justification. For this, it is standard that this type of comparison principle can be obtained by sliding arguments. We recall the viscosity theory for time-evolving level set functions in full detail in Appendix \ref{app.sliding_comparison}, based on results by Barles \cite{Barles} and Ishii and Sato \cite{IshiiSato}. The application of this theory will be in the following result:

\begin{proposition}[Sliding comparison]\label{p.sliding-comparison}
Fix $R_0\in (1, R)$. Let $L_0, L_1$ be viscosity solutions of
\begin{equation}
    \begin{cases}
        -H_{L} + g(z - F - \tfrac{\lambda_L}{g}) = 0 &\hbox{on }\FS_L\cap (\Cyl_R\setminus \Cyl_{R_0}) \\
        \cos\theta_{\CL_L} = \cos\YC &\hbox{on } \CL_L \\
        \thinCyl_{R_0,F + \lambda_L/g - h_0} \subset  \mathrm{tr}_{\thinCyl_{R_0}}(L) \subset \thinCyl_{R_0,F + \lambda_L/g + h_0}
    \end{cases}
\end{equation}
for a constant $h_0 > 0$ (chosen, for example, using \lref{width-bound}). Abbreviating $\tilde{L}_i = L_i\cap (\Cyl_R\setminus \Cyl_{R_0})$, we also assume that $\tilde{L}_i\Delta (\Cyl_{R,F}\setminus \Cyl_{R_0,F})$ is a bounded set for each $i=0,1$.

We then have
\[ (\tilde{L}_1 - te_z)^* \subset (\tilde{L}_0)_* \subset (\tilde{L}_0)^* \subset (\tilde{L}_1 + te_z)_*\]
for any $t > \max(\frac{|\lambda_{L_0} - \lambda_{L_1}|}{g}, 2h_0)$.
\end{proposition}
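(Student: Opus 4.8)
The plan is to run a sliding–barrier argument in which the two viscosity solutions are compared directly against vertical translates of one another, rather than against smooth test regions; the tool that legitimizes comparing two (non‑smooth) solutions is the level‑set comparison principle recalled in Appendix \ref{app.sliding_comparison}, itself an adaptation to the present capillary setting of the viscosity theory of Barles \cite{Barles} and Ishii–Sato \cite{IshiiSato}. By interchanging $L_0$ and $L_1$ the leftmost inclusion follows from the rightmost (the threshold on $t$ is symmetric in $\lambda_{L_0},\lambda_{L_1}$), and the middle inclusion $(\tilde L_0)_*\subset(\tilde L_0)^*$ is trivial, so it suffices to prove $(\tilde L_0)^* \subset (\tilde L_1 + te_z)_*$ for every $t > t_0 := \max\big(\tfrac{|\lambda_{L_0}-\lambda_{L_1}|}{g},\, 2h_0\big)$.

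The mechanism driving the argument is that vertical translation is one‑signed for the operator because of gravity. Write $M_t := \tilde L_1 + te_z$. Since $\tilde L_1$ solves $-H + g(z - F - \tfrac{\lambda_{L_1}}{g}) = 0$ on its free surface, $M_t$ satisfies $-H_{M_t} + g(z - F - \tfrac{\lambda_{L_1}}{g} - t) = 0$, hence on $\FS_{M_t}$
\[ -H_{M_t} + g\Big(z - F - \tfrac{\lambda_{L_0}}{g}\Big) = gt + \lambda_{L_1} - \lambda_{L_0} > 0 \qquad\text{as soon as } t > \tfrac{\lambda_{L_0}-\lambda_{L_1}}{g}, \]
so $M_t$ is a strict interior supersolution of the equation solved by $\tilde L_0$, with strictness uniform up to $\CL_{M_t}$. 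The Young condition $\cos\theta_{\CL}=\cos\YC$ is invariant under vertical translation, so $M_t$ retains it on $\CL_{M_t}$; and on the artificial inner boundary the trace sandwich hypothesis gives $\mathrm{tr}_{\thinCyl_{R_0}}(M_t) \supseteq \thinCyl_{R_0,\,F + \lambda_{L_1}/g - h_0 + t}$, which strictly contains $\mathrm{tr}_{\thinCyl_{R_0}}(\tilde L_0) \subseteq \thinCyl_{R_0,\,F + \lambda_{L_0}/g + h_0}$ once $t > \tfrac{\lambda_{L_0}-\lambda_{L_1}}{g} + 2h_0$, which is implied by $t > t_0$.

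With these ingredients I would argue by contradiction on the first touching translate. The boundedness of $\tilde L_i \Delta (\Cyl_{R,F}\setminus\Cyl_{R_0,F})$ together with the vertical confinement of $\FS_{L_i}$ from \lref{qualitative_height_bound} (quantitatively \lref{width-bound}, which also furnishes $h_0$) shows that outside a compact subset of $\Cyl_R\setminus\Cyl_{R_0}$ both $\tilde L_0$ and $M_t$ coincide with the flat reference configuration, with $M_t$ strictly above; hence $(\tilde L_0)^* \subset (M_t)_*$ for all large $t$, and any failure of this inclusion for smaller $t$ must originate at a touching point in a compact set. Let $t_\ast$ be the infimum of the $T$ such that $(\tilde L_0)^*\subset(M_t)_*$ for all $t\ge T$; if $t_\ast\le t_0$ we are done, so suppose $t_\ast>t_0$ and pick a touching point $p\in\partial(\tilde L_0)^*\cap\partial(M_{t_\ast})^*$. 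By the strict ordering of traces above, $p\notin\thinCyl_{R_0}$. If $p$ lies on the interior free surface, then $M_{t_\ast}$ is a strict supersolution touching the subsolution $\tilde L_0$ from outside at $p$, which violates the comparison principle of Appendix \ref{app.sliding_comparison} (equivalently, one inserts smooth test regions between the two solutions near $p$ and contradicts \eref{PDE}); the signs work out because $M_{t_\ast}$'s interior value $\lambda_{L_1}+gt_\ast$ exceeds $\lambda_{L_0}$ precisely when $t_\ast>\tfrac{\lambda_{L_0}-\lambda_{L_1}}{g}$. If $p$ lies on the outer contact line $\CL$, one invokes the oblique–derivative (capillary) part of that comparison principle: the strict interior inequality propagates to the wall by a Hopf‑type argument, and since both surfaces meet $\thinCyl_R$ at the same Young angle $\YC$, touching at $p$ is again impossible. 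Hence $t_\ast\le t_0$, proving the claim.

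The main obstacle is the outer–contact–line case: vertical translation does \emph{not} make the angle condition strict, so one cannot simply quote a strict‑supersolution comparison at the wall, and it is exactly here that Appendix \ref{app.sliding_comparison} is needed — it recasts the problem as a level‑set parabolic equation with a capillary oblique boundary condition on $\thinCyl_R$, to which the Ishii–Sato theory applies, and shows that the interior strictness supplied by gravity is enough to forbid boundary touching. Two further points must be checked but should be routine given the excerpt: that the level‑set reformulation is legitimate here, using the axisymmetry of the annular domain $\Cyl_R\setminus\Cyl_{R_0}$, the height bound to control the geometry, and the normalization $L^*=\overline{L}$, $L_*=\mathrm{int}\,L$ for globally stable $L$ noted after \eref{PDE}, so that the set inclusions translate into an ordering of level‑set functions; and that the localization above is compatible with the (open, artificial) inner boundary $\thinCyl_{R_0}$, where the trace sandwich hypothesis plays the role of ordered boundary data.
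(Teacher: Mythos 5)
Your plan is essentially the paper's: reduce by symmetry to one inclusion, observe that $M_t := \tilde L_1 + te_z$ is a strict interior supersolution of the equation solved by $\tilde L_0$ for $t$ past the threshold (since gravity shifts the zeroth–order term), note that the capillary condition on $\thinCyl_R$ is translation-invariant, order the traces on the artificial inner boundary $\thinCyl_{R_0}$ using the sandwich hypothesis, and invoke the level-set comparison machinery of Appendix~\ref{app.sliding_comparison}. Where you part ways with the paper is the execution: you argue by contradiction at a ``first touching translate'' $t_\ast$, picking a point $p\in\partial(\tilde L_0)^*\cap\partial(M_{t_\ast})^*$ and claiming the viscosity definitions forbid touching. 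This step does not go through as written, because $M_{t_\ast}$ is not a smooth test region — it is itself a non-smooth viscosity solution. Definitions~\ref{def:visc_subsln_with_test_region} and~\ref{def:visc_supsln_with_test_region} only permit testing against smooth $\Phi$, and your parenthetical fix (``one inserts smooth test regions between the two solutions near $p$'') is not available in general: two level sets of Lipschitz regularity can touch along a segment or with matching corners, and there is then no smooth region separating them. The doubling-of-variables machinery behind \lref{singular_degenerate_parabolic_comparison} (Ishii–Sato, Barles) is precisely what replaces this missing ``insert a smooth barrier'' step, but their theorem is a \emph{global} preservation-of-ordering statement (from ordered initial and Dirichlet data, with the capillary boundary built in) rather than a local touching principle. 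Accordingly, the paper does not set up a first-touching argument. Instead it converts the stationary indicator $u_0(t,x,z)=\one_{L_0^*}(x,z)$ and the \emph{sliding} indicator $u_1(t,x,z)=\one_{L_1^*}(x,z-f(t))$, for a smooth increasing $f$, into a parabolic supersolution and subsolution via \lref{sliding-viscosity-soln}; it orders them at $t=0$ by choosing $f(0)\ll 0$ (using the boundedness of $\tilde L_i\Delta(\Cyl_{R,F}\setminus\Cyl_{R_0,F})$ exactly as you observed); it orders the Dirichlet data on $\thinCyl_{R_0}$ from the trace sandwich; and it then applies \lref{singular_degenerate_parabolic_comparison} directly, tracking how large $f$ may grow before either the interior subsolution inequality or the Dirichlet ordering fails. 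This sidesteps both the touching-point problem and your separate worry about the outer contact line: there is no Hopf-type argument needed because the oblique (capillary) boundary is handled internally by the Ishii–Sato theorem, and vertical sliding leaves that boundary condition unchanged.

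A secondary issue: your own bookkeeping at $\thinCyl_{R_0}$ produces the constraint $t > \tfrac{\lambda_{L_0}-\lambda_{L_1}}{g}+2h_0$, and you then assert this ``is implied by $t > t_0 = \max\big(\tfrac{|\lambda_{L_0}-\lambda_{L_1}|}{g}, 2h_0\big)$.'' That implication is false in general (take $\tfrac{\lambda_{L_0}-\lambda_{L_1}}{g}=2h_0$: the maximum is $2h_0$ but the sum is $4h_0$), so your derivation actually contradicts the threshold you claim. For the downstream use in \lref{oscillation_bound} this is harmless, since both $\tfrac{|\lambda_{L_0}-\lambda_{L_1}|}{g}$ and $h_0$ are $O(R^{1-d}+e^{-cR_0})$ and thus the max and the sum are interchangeable up to constants; but you should not paper over an inequality that your own computation shows is in the wrong direction.
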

\begin{proof}
    From \lref{sliding-viscosity-soln}, the stationary indicator $u_0(t,x,z) = \one_{L_0^*}(x,z)$ is a viscosity solution of
    \[\begin{cases}
    \partial_t u_0 -\textup{tr}\left((I-\frac{\grad u_0 \otimes \grad u_0}{|\grad u_0|^2})D^2u_0\right) +g(z-F-\tfrac{\lambda_{L_0}}{g})|\grad u_0| \geq 0 & \hbox{on }\Cyl_R \\
    \partial_r u_0 = \cos \YC |\grad  u_0| & \hbox{on }\thinCyl_R \\
    \one_{\thinCyl_{R_0,F + \lambda_L/g - h_0}} \leq u_0 &\hbox{on }\thinCyl_{R_0}
    \end{cases}\]
    and the sliding indicator $u_1(t,x,z) = \one_{L_1^*}(x,z - f(t))$ with $f$ a smooth increasing function is a viscosity solution of
    \[\begin{cases}
    \partial_t u_1 -\textup{tr}\left((I-\frac{\grad u_1 \otimes \grad u_1}{|\grad u_1|^2})D^2u_1\right) +g(z-F-\tfrac{\lambda_{L_1} + gf(t) + f'(t)}{g})|\grad u_1| \leq 0 & \hbox{on }\Cyl_R \\
    \partial_r u_1 = \cos \YC |\grad  u_1| & \hbox{on }\thinCyl_R \\
    u_1 \leq \one_{\thinCyl_{R_0,F + \lambda_L/g + h_0 + f(t)}} &\hbox{on }\thinCyl_{R_0}.
    \end{cases}\]

    We apply \lref{width-bound}, and choose $f(0) \ll 0$ so that $L_1 - f(0)e_z \subset L_0$. We can achieve this, and reduce to a comparison in a bounded domain, by the hypothesis that $\tilde{L}_i\Delta (\Cyl_{R,F}\setminus \Cyl_{R_0,F})$ are bounded.  Subsequently, we would like to increase $f$ as much as possible while maintaining the comparison principle given by \lref{singular_degenerate_parabolic_comparison}, which preserves $u_1(t) \leq u_0(t)$ from the ordering of initial data. The capillary boundary condition on $\thinCyl_{R}$ is invariant under sliding, so this is a question of the interior equation and the Dirichlet ordering on $\thinCyl_{R_0}$. For the interior equation, we have
    \[ -\textup{tr}\left((I-\frac{\grad u_1 \otimes \grad u_1}{|\grad u_1|^2})D^2u_1\right) +g(z -F-\tfrac{\lambda_{L_0}}{g})|\grad u_1| \leq (\lambda_{L_1} - \lambda_{L_0} + gf(t) + f'(t))|\nabla u_1| \]
    so the right side is nonpositive as long as
    \[ gf(t) < \lambda_{L_0} - \lambda_{L_1} \]
    and $f'$ is taken sufficiently small.
    
    On $\thinCyl_{R_0}$, we have inclusion of boundary data as long as
    \[ f(t) + h_0 \leq - h_0 \]
    Combining the two estimates, we conclude the comparison can be extended to any
    \[ f(t) < \min(-2h_0, \tfrac{\lambda_{L_0} - \lambda_{L_1}}{g})  \]
    The reverse bound is similar, so we conclude.
\end{proof}

\section{Regularity of globally stable profiles}\label{s.higher_regularity_glbl_stbl}

In this section we establish basic regularity of globally stable profiles. As the main tool in doing so, we show that these profiles satisfy an \emph{almost-minimality property} involving the dissipation. We use this property to obtain nondegeneracy of globally stable profiles near their contact sets, and consequently we get length bounds on the contact lines $\PL, \CL$.  Together these regularity estimates imply a pre-compactness property of globally stable states under which many important quantities, including the energy and the pressure, converge.

Throughout this section, we focus on the local regularity of globally stable states, and write for a profile $L$ and a bounded set $U$ relatively open in $\overline{\Cyl_R}$,
\[\mathcal{E}[L; U] := |\FS_L\cap U| - \cos\YP|\PS_L\cap U| - \cos\YC|\CS_{L}\cap U| + \int_{(L\Delta \Cyl_{R,F})\cap U} g|z-F| \]
Note that we are suppressing the dependence on $R$ and $F$ from our notation, but both are still present in the energy. Throughout, we allow $R\in (1 + \varepsilon_0, \infty]$, and our universal results depend on $\varepsilon_0$.

\subsection{Almost-minimality and length bound}

\begin{definition}\label{def:almost_minimizer}
    We say that $L$ is an almost-minimizer if there exist parameters $\Lambda, r_0$ satisfying $\Lambda r_0 \leq 1$ and such that for any competitor $L'$ such that $L' \Delta L \Subset B_r(x_0)\cap \overline{\Cyl_{R_0}}$, for some $x_0\in \overline{C_{R}}, r < r_0$, we have
    \begin{equation}
        \mathcal{E}[L; B_r(x_0)] \leq \mathcal{E}[L'; B_r(x_0)] + \Diss[L, L'] + \Lambda\vol(L\Delta L')
    \end{equation}
\end{definition}
Note that this is equivalent to inwards and outwards almost-minimality for the functional $\mathcal{E}$ with the replacements $\cos\YP{} \mapsto \cos\YP{}+\mu_-$ (inwards) and $\cos\YP{} \mapsto \cos\YP{}-\mu_+$ (outwards); see \cite{DePhilippisMaggi}*{Definition 2.3}.

\begin{lemma}\label{l.almost_minimality}
    Let $L$ be globally stable for $\mathcal{E}_R[\cdot, F]$. Then $L$ is an almost-minimizer for constants $\Lambda, r_0$ which depend only on $d$, $1 - |\cos\YP \mp \mu_\pm|, 1 - |\cos\YC|$.
\end{lemma}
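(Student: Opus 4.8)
I want to show that a globally stable profile $L$ satisfies the almost-minimality inequality of \dref{almost_minimizer}. The natural strategy is to exploit global stability against a carefully chosen competitor, but the subtlety is that global stability compares $L$ against \emph{admissible} competitors in $X^R_F$, i.e. competitors satisfying the volume constraint, whereas the almost-minimality definition allows arbitrary local perturbations $L'$ with $L'\Delta L\Subset B_r(x_0)\cap\overline{\Cyl_{R_0}}$ which in general violate the volume constraint. So the first step is a volume-repair step: given a local competitor $L'$ with $\vol(L\Delta L')$ small and supported in a small ball $B_r(x_0)$, I use the volume-correcting perturbations of \cref{volume_only_energy_taylor_exp} (with a fixed cutoff vector field $\mathcal{V}_0$ supported away from $B_r(x_0)$, which is possible once $r_0$ is small relative to $R_0-1$) to produce $\widetilde{L}' = \Phi_s(L')$ with $\mathcal{C}_F[\widetilde{L}']=0$ and $s = -\mathcal{C}_F[L'] = O(\vol(L\Delta L'))$. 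Since $|s|$ is controlled by $\vol(L\Delta L')$, which in turn is $\le \vol(B_r(x_0)) \le \Lambda^{-1}$-ish, the Taylor estimate in \cref{volume_only_energy_taylor_exp} gives
\[
\mathcal{E}_R[\widetilde{L}';F] \le \mathcal{E}_R[L';F] + \lambda_{L}\,s + C s^2 \le \mathcal{E}_R[L';F] + C'\,\vol(L\Delta L'),
\]
using the Lagrange multiplier bound $|\lambda_L|\le C_0 g$ from \lref{lambda_and_energy_bound} and $|s|\lesssim \vol(L\Delta L')$ so that $Cs^2\lesssim \vol(L\Delta L')$ as well (here I also need $\FS_L$ to be vertically bounded to invoke \cref{volume_only_energy_taylor_exp}, which holds by \lref{qualitative_height_bound}). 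The Diss terms behave correctly because $\Phi_s$ is supported away from $\thinCyl_1$, so $\PS_{\widetilde{L}'} = \PS_{L'}$ and $\Diss[L,\widetilde{L}'] = \Diss[L,L']$.

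**Main step.** With the repaired competitor in hand, I apply global stability of $L$:
\[
\mathcal{E}_R[L;F] \le \mathcal{E}_R[\widetilde{L}';F] + \Diss[L,\widetilde{L}'] \le \mathcal{E}_R[L';F] + \Diss[L,L'] + C'\vol(L\Delta L').
\]
Now I localize: since $L$ and $L'$ (and hence $\mathcal{E}_R[\cdot;F]$ evaluated on them) agree outside $B_r(x_0)$ — note $\widetilde{L}'$ may differ from $L'$ outside $B_r(x_0)$ but $\mathcal{E}_R[\widetilde{L}';F]-\mathcal{E}_R[L';F]$ has already been absorbed into the $C'\vol(L\Delta L')$ error — all the contributions to the energy from $\overline{\Cyl_R}\setminus B_r(x_0)$ cancel between the two sides, leaving exactly
\[
\mathcal{E}[L;B_r(x_0)] \le \mathcal{E}[L';B_r(x_0)] + \Diss[L,L'] + C'\vol(L\Delta L'),
\]
which is the asserted almost-minimality with $\Lambda = C'$ and $r_0$ chosen small enough that $\Lambda r_0 \le 1$ and that a fixed volume-correcting field fits in $\Cyl_{R_0}$ away from any ball $B_{r_0}(x_0)$. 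One must be slightly careful that the cancellation of energy outside $B_r(x_0)$ is legitimate given that the relativized energy has infinite pieces; but since $L\Delta L'\Subset B_r(x_0)$ is bounded, all the differences are of integrals over a bounded region and the cancellation is of genuinely finite quantities. Finally, the dependence of the constants: $C'$ comes from $|\lambda_L|\le C_0 g$, the $\|V_0\|_{C^1}$-dependent Taylor constant, and $h_0$ from \lref{width-bound}, all of which depend only on $d$, $1-|\cos\YP\mp\mu_\pm|$, $1-|\cos\YC|$ (and the fixed $R_0$, or rather $\varepsilon_0$), as claimed.

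**Expected main obstacle.** The delicate point is the volume-repair estimate: I need the error $\mathcal{E}_R[\widetilde{L}';F]-\mathcal{E}_R[L';F]$ to be bounded linearly by $\vol(L\Delta L')$ uniformly, and the Taylor expansion of \cref{volume_only_energy_taylor_exp} is stated for $L_0$ with $\FS_{L_0}$ vertically bounded and the remainder constant depends on $h_0$ and on $|\FS_{L_0}\cap\operatorname{spt}(\mathcal{V}_0)|$. So I must apply it to $L_0 = L'$, not $L$ — and while $L'$ is an arbitrary competitor, $L'$ agrees with $L$ on $\operatorname{spt}(\mathcal{V}_0)$ (which is disjoint from $B_r(x_0)$), so $|\FS_{L'}\cap\operatorname{spt}(\mathcal{V}_0)| = |\FS_{L}\cap\operatorname{spt}(\mathcal{V}_0)|$ is controlled by the local perimeter bound in \lref{lambda_and_energy_bound}, and similarly the vertical bound needed on $\FS_{L'}$ over $\operatorname{spt}(\mathcal{V}_0)$ is inherited from \lref{width-bound} for $L$. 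This is exactly why I route the volume correction through a region where $L'$ coincides with the stable profile $L$ — as the corollary statement itself anticipates with the parenthetical "or at least agrees with a stable profile on the support of $\mathcal{V}_0$." Once that bookkeeping is set up correctly the rest is routine.
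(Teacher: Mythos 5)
Your argument reproduces the paper's proof: global stability against a volume-repaired local competitor, volume repair via the fixed flow of \cref{volume_only_energy_taylor_exp} supported away from the competitor ball, the linear term controlled by the Lagrange multiplier bound of \lref{lambda_and_energy_bound}, and the quadratic remainder absorbed into $\Lambda\vol(L\Delta L')$ by taking $r_0$ small. You even correctly flag the bookkeeping point that the Taylor expansion must be applied at $L'$ but its constants are inherited from $L$ since they agree on $\mathrm{spt}(\mathcal{V}_0)$, which is exactly what the parenthetical in \cref{volume_only_energy_taylor_exp} anticipates.
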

\begin{proof}
    We take $r_0 < 1$, to be determined later. Then given any $L'$ such that $L\Delta L' \Subset B_r(x_0)$ for some $x_0$ and $r < r_0$, we use \cref{volume_only_energy_taylor_exp} to correct the volume of $L'$, choosing $V_0$ for the perturbation specified in that result such that $(\mathrm{spt}(V_0)\times \R)\cap B_r(x_0) = \emptyset$ (which is possible since $r_0 < 1 < R$).

    Then global stability gives
    \begin{align*}
        \mathcal{E}_R[L; F] &\leq \mathcal{E}_R[\widetilde{L}; F] + \Diss[L, \widetilde{L}]
        \\&= \mathcal{E}_R[\widetilde{L}; F] + \Diss[L, L']
        \\&\leq \mathcal{E}_R[L', F]+ \Diss[L, L'] + \lambda_L(\vol(L\setminus L') - \vol(L'\setminus L))
        \\&\cdots + C|\FS_{L_0}\cap \mathrm{spt}(\mathcal{V}_0)|)(|\vol(L\setminus L') - \vol(L'\setminus L)|^2)
    \end{align*}
    We will choose $V_0$ so that $|\mathrm{spt}(V_0)| \leq C$, $\mathcal{H}^{d-2}(\mathrm{spt}(V_0)) \leq C$, and $\|V_0\|_{C^1} \leq C$, with $C$ independent of all parameters. Then by \lref{lambda_and_energy_bound}, we have $|\lambda_L| \leq C_0$ and $|\FS_{L_0}\cap \mathrm{spt}(\mathcal{V}_0)| \leq C_1$, for some constants $C_0, C_1$ which only depend on the parameters in the current lemma's statement.

    We now take $\Lambda = C_0 + 1$ and
    \[ r_0 = \min\left(\frac{1}{C_0 + 1}, \frac{1}{(CC_1 \omega_d)^{1/d}} \right) \]
    where $\omega_d$ is the volume of the $d$-dimensional unit ball. This ensures that $\Lambda r_0 \leq 1$, and that the quadratic remainder of the above Taylor expansion is bounded by $\vol(L\Delta L').$
\end{proof}

Next we combine local Hausdorff dimension estimates of the contact line for capillary almost minimizers, discussed in Appendix \ref{a.local-contact-line-reg}, with the width bound to obtain a global length bound on the contact line. This length bound plays an essential role in establishing compactness properties of our evolutionary solutions. 

\begin{theorem}[Length bound of the contact line]\label{t.length_bound}
    Let $L$ be globally stable for $\mathcal{E}_R[\cdot; F]$, for $R\in [2, \infty]$. Then 
    \[\mathcal{H}^{d-2}(\PL_L) \leq C\]
    for a constant $C$ depending only on $d$, $g$, $1-|\cos\YP\mp \mu_\pm|$, and $1-|\cos\YC|$. 
\end{theorem}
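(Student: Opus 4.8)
## Proof Plan for Theorem (Length bound of the contact line)

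The plan is to combine two ingredients: a local density/dimension estimate for the contact line of capillary almost-minimizers (the content of Appendix \ref{a.local-contact-line-reg}), and the global vertical confinement of $\FS_L$ coming from \lref{width-bound}. Since \lref{almost_minimality} shows that a globally stable $L$ is a capillary almost-minimizer with constants $\Lambda, r_0$ depending only on $d$, $1-|\cos\YP\mp\mu_\pm|$, $1-|\cos\YC|$, the local regularity theory applies at a uniform scale. The first step is therefore to record the consequence of the local theory that we need: there exist $c>0$ and $\rho_0>0$ (uniform in $R, F, L$) such that for every point $p \in \PL_L$ and every $r \le \rho_0$ one has the Ahlfors-type lower bound
\[ \mathcal{H}^{d-2}\big(\PL_L \cap B_r(p)\big) \ge c\, r^{d-2}, \]
or equivalently an upper perimeter/volume bound on $\FS_L$ near the plate that lets us cover $\PL_L$ efficiently. (Either formulation works; I would use whichever is cleanest from the appendix.)

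Next I would exploit the geometry of the plate. The contact line $\PL_L = \PS_L \cap \FS_L$ lies on the cylinder $\thinCyl_1 = \partial B_1 \times \R$, which is a fixed $(d-1)$-dimensional surface with bounded geometry independent of $R$. By \lref{width-bound} together with the Lagrange multiplier bound $|\lambda_L| \le C_0 g$ from \lref{lambda_and_energy_bound}, every point $(x,z) \in \FS_L$ — in particular every point of $\PL_L$ — satisfies $|z - F| \le C$ for a constant $C$ depending only on the allowed parameters. Hence $\PL_L$ is contained in the compact region $\partial B_1 \times [F-C, F+C]$, whose $(d-2)$-dimensional "size" (it is diffeomorphic, with uniformly bounded distortion, to $S^{d-2} \times [-C,C]$, a set of dimension $d-1$) is bounded: it can be covered by at most $N \le C'$ balls of radius $\rho_0$, with $N$ and $\rho_0$ depending only on the listed parameters.

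The conclusion then follows by a standard covering/packing argument. Take a maximal $\rho_0$-separated subset $\{p_1, \dots, p_M\}$ of $\PL_L$; the balls $B_{\rho_0/2}(p_i)$ are disjoint, and the $B_{\rho_0}(p_i)$ cover $\PL_L$. On one hand $M \le N \le C'$ by the previous paragraph (each separated point lies in one of the $N$ covering balls, and two points in the same ball are at distance $< \rho_0$, forcing at most one per ball after refining the cover if needed — more carefully, one covers $\partial B_1\times[F-C,F+C]$ by $N$ balls of radius $\rho_0/2$ and notes each contains at most one $p_i$). On the other hand, by the upper-regularity bound for $\mathcal{H}^{d-2}$ of the contact line of an almost-minimizer (again from the appendix, or by monotonicity/perimeter bounds), $\mathcal{H}^{d-2}(\PL_L \cap B_{\rho_0}(p_i)) \le C'' \rho_0^{d-2}$ for each $i$, so
\[ \mathcal{H}^{d-2}(\PL_L) \le \sum_{i=1}^{M} \mathcal{H}^{d-2}\big(\PL_L \cap B_{\rho_0}(p_i)\big) \le M\, C'' \rho_0^{d-2} \le C' C'' \rho_0^{d-2} =: C, \]
which is the desired bound with $C$ depending only on $d$, $g$, $1-|\cos\YP\mp\mu_\pm|$, $1-|\cos\YC|$ (the $g$-dependence enters through the constant $C$ in the height confinement).

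The main obstacle I anticipate is purely bookkeeping at the interface between the local regularity theory and the global statement: one must be careful that the local contact-line estimates from Appendix \ref{a.local-contact-line-reg} (density bounds, $\mathcal{H}^{d-2}$ upper bounds) hold at a scale $\rho_0$ that is \emph{uniform} over all $R \in [2,\infty]$ and all $F$, which is exactly why \lref{almost_minimality} was stated with parameters independent of $R$ and $F$. A secondary technical point is handling the case $d = 2$, where $\mathcal{H}^{d-2}$ is counting measure and $\PL_L$ is a finite set of points: there the "covering" bound degenerates to a bound on the number of contact points, which follows from the nondegeneracy (each contact point carries a definite amount of free surface nearby, and the total free surface near the plate within the slab $\{|z-F|\le C\}$ is bounded by \lref{lambda_and_energy_bound}). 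I would treat $d\ge 3$ by the covering argument above and note the $d=2$ case separately.
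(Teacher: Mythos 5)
Your proposal is correct and follows the same route as the paper's proof: invoke \lref{almost_minimality} to get uniform almost-minimality, apply the local $\mathcal{H}^{d-2}$ upper bound on the contact line from Appendix~\ref{a.local-contact-line-reg} (the paper cites \lref{local_length_bound}), and control the number of balls in the cover by confining $\PL_L$ to $\partial B_1\times[F-C_0,F+C_0]$ via \lref{width-bound}. One small note: the ``Ahlfors-type lower bound'' you mention at the start is not the estimate you actually need (nor is it equivalent to the upper perimeter bound), and you correctly switch to the local $\mathcal{H}^{d-2}$ \emph{upper} bound in the packing step; similarly the separate treatment of $d=2$ is unnecessary, since the coarea-based argument of \lref{local_length_bound} already yields a bound on the counting measure of the contact points in that case.
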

\begin{proof}
    By \lref{local_length_bound}, we can estimate $\mathcal{H}^{d-2}(\PL_L)$ by covering and using the local bound. The size of the balls in the cover depends only on the parameters listed, and consequently the number of balls in the cover is controlled by using the width bound \lref{width-bound} to trap $\PL_L$ in a region $\partial B_1\times [F - C_0, F + C_0]$, where $C_0$ depends on the same parameters.
\end{proof}

\subsection{Compactness and closure of almost-minimizers}

One of the benefits of working with almost-minimizers is that they enjoy better compactness properties than mere Caccioppoli sets. Most importantly, we have convergence of contact sets and convergence of energy for convergent sequences of uniform almost-minimizers. This is the content of the next result, which closely follows the proof of \cite{Maggi_2012}*{Theorem 21.14} for perimeter almost-minimizers. See also \cite{DePhilippisMaggi} which treats almost-minimizers of general capillary energies, but of a different form than our energy augmented by dissipation.

\begin{lemma}\label{l.almost-minimizer-cpt-closure}
If $(L_n)$ are almost-minimizers in a bounded, relatively open set $U\subset \overline{\Cyl_R}$, uniformly in $n$, then there exists a subsequence $L_{n_k}$ and a profile $L_\infty$ such that $L_\infty$ is an almost-minimizer in $U$ for the same uniform constants as the $L_n$. Moreover, inside any $V\subset U$ with $\mathrm{dist}(\partial V\cap C_R, \partial U\cap C_R) > 0$ and $\mathcal{H}^{d-1}(\FS_{L_\infty}\cap \partial V) = 0$, we have the following convergence results as $k\to\infty$:
\begin{enumerate}
    \item Local convergence of sets: $\vol((L_{n_k}\Delta L_\infty)\cap V)\to 0$
    \item Local convergence of contact sets: $|(\CS_{L_{n_k}}\Delta \CS_{L_\infty})\cap V|\to 0, |(\PS_{L_{n_k}}\Delta \PS_{L_\infty})\cap V|\to 0$
    \item Local convergence of free surface area: $|\FS_{L_{n_k}}\cap G|\to |\FS_{L_\infty}\cap G|$.
    \item Convergence of local energy: $\mathcal{E}_R[L_{n_k}; V] \to \mathcal{E}_R[L_\infty; V]$
\end{enumerate}
\end{lemma}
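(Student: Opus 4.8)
The plan is to adapt the classical compactness-and-closure argument for perimeter almost-minimizers, \cite{Maggi_2012}*{Theorem 21.14}, and its capillary counterpart in \cite{DePhilippisMaggi}, to the present energy $\mathcal{E}_R$ together with the dissipation. The two genuinely new points are the presence of $\Diss$ and the interaction with the pinned plate boundary $\thinCyl_1$, and I expect the latter to be the main obstacle.

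First I would reduce to a one-sided capillary setting. As noted after Definition~\ref{def:almost_minimizer}, each $L_n$ is an inward almost-minimizer of the capillary energy with contact coefficient $\cos\YP+\mu_-$ and simultaneously an outward almost-minimizer with coefficient $\cos\YP-\mu_+$; by the ellipticity hypotheses of \sref{ellipticity-hypothesis} these coefficients (and $\cos\YC$) lie strictly inside $(-1,1)$. Comparing $L_n$ with $L_n\cup B_r(x)$ and $L_n\setminus B_r(x)$ and using $\Lambda r_0\le 1$ yields the uniform upper perimeter bound $|\FS_{L_n}\cap B_r(x)|\le Cr^{d-1}$ for balls centered in $\overline{\Cyl_R}$, together with the standard two-sided volume density estimates; these, along with the boundary regularity estimates near $\thinCyl_1$ that we will need, are recorded in Appendix~\ref{a.local-contact-line-reg}. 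In particular $\sup_n|\FS_{L_n}\cap V'|<\infty$ for every $V'\Subset U$, so by compactness of sets of finite perimeter there is a subsequence $L_{n_k}\to L_\infty$ in $L^1_{\mathrm{loc}}(U)$ and pointwise a.e.; this is conclusion (1), and the density estimates pass to $L_\infty$.

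Next I would show $L_\infty$ is an almost-minimizer in $U$ for the same $\Lambda,r_0$. Given a competitor $L'$ with $L'\Delta L_\infty\Subset B_r(x_0)\cap\overline{\Cyl_R}$, $r<r_0$, fix auxiliary radii $r<s<s'<r_0$; after passing to a further subsequence we may choose $s$ so that $\mathcal{H}^{d-1}(\partial B_s(x_0)\cap\FS_{L_{n_k}})=\mathcal{H}^{d-1}(\partial B_s(x_0)\cap\FS_{L_\infty})=0$ for all $k$ and so that the mismatch $\varepsilon_k:=\mathcal{H}^{d-1}(\partial B_s(x_0)\cap(L_{n_k}\Delta L_\infty))\to0$ — both possible for a.e. $s$, the latter because $\int\mathcal{H}^{d-1}(\partial B_\sigma(x_0)\cap(L_{n_k}\Delta L_\infty))\,d\sigma=\vol((L_{n_k}\Delta L_\infty)\cap(B_{s'}\setminus B_r))\to0$ since $L'=L_\infty$ outside $B_r(x_0)$. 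Set $L'_k:=(L'\cap B_s(x_0))\cup(L_{n_k}\setminus B_s(x_0))$, so that $L'_k\Delta L_{n_k}\Subset B_{s'}(x_0)$. Applying the almost-minimality of $L_{n_k}$ on $B_{s'}(x_0)$, the annular contribution of $L_{n_k}$ on $B_{s'}(x_0)\setminus\overline{B_s(x_0)}$ cancels on both sides (using the vanishing slice) and the surface $L'_k$ creates on $\partial B_s(x_0)$ has area at most $\varepsilon_k$, leaving
\[ \mathcal{E}_R[L_{n_k};B_s(x_0)] \le \mathcal{E}_R[L';B_s(x_0)] + \varepsilon_k + \Diss[L_{n_k},L'_k] + \Lambda\vol(L_{n_k}\Delta L'_k). \]
Sending $k\to\infty$: on the left, lower semicontinuity of $\mathcal{E}_R[\cdot;B_s(x_0)]$ from \lref{local_lower_semicontinuity}; on the right, $\varepsilon_k\to0$, $\vol(L_{n_k}\Delta L'_k)\to\vol(L_\infty\Delta L')$, and $\Diss[L_{n_k},L'_k]\to\Diss[L_\infty,L']$, the last using \lref{bv_trace_ineq} and \lref{conv_of_contact_sets} together with the boundary perimeter estimates of Appendix~\ref{a.local-contact-line-reg} to see that $\mathrm{tr}_{\thinCyl_1}(L_{n_k}\Delta L'_k)$ converges. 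Since $\mathcal{E}_R[L';B_s(x_0)]=\mathcal{E}_R[L';B_r(x_0)]+\mathcal{E}_R[L_\infty;B_s(x_0)\setminus\overline{B_r(x_0)}]$, and likewise for $L_\infty$ on the left, subtracting the common annular term and letting $s\downarrow r$ (with $r$ chosen so $\mathcal{H}^{d-1}(\partial B_r(x_0)\cap\FS_{L_\infty})=0$) gives the almost-minimality inequality for $L_\infty$ against $L'$; the asymmetric dissipation coefficients are tracked by the inward/outward decomposition of a general competitor (pass from $L_\infty$ to $L_\infty\cup L'$ outward, then to $L'$ inward) exactly as after Definition~\ref{def:almost_minimizer}.

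Finally, with closure established, the convergence statements (2)--(4) are routine. For a.e. $s$ with $\overline{B_s(x_0)}\Subset U$ and $\mathcal{H}^{d-1}(\partial B_s(x_0)\cap\FS_{L_\infty})=0$, using $L_\infty$ glued along $\partial B_s(x_0)$ as a competitor for $L_{n_k}$ and, symmetrically, $L_{n_k}$ as a competitor for $L_\infty$ (now legitimate since $L_\infty$ is itself an almost-minimizer), combined with lower semicontinuity, gives $\mathcal{E}_R[L_{n_k};B_s(x_0)]\to\mathcal{E}_R[L_\infty;B_s(x_0)]$; the strict ellipticity lets one separate this into convergence of the free-surface, contact-area, and gravity contributions, hence into weak-$*$ convergence on $U$ of the perimeter measures carried by $\FS_{L_{n_k}}$, the contact measures carried by $\PS_{L_{n_k}}$ and $\CS_{L_{n_k}}$, and the gravity densities $g|z-F|\one_{L_{n_k}\Delta\Cyl_{R,F}}\,dx$. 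Evaluating these limits on an admissible $V$ — using $\mathcal{H}^{d-1}(\FS_{L_\infty}\cap\partial V)=0$ to rule out loss of mass on $\partial V$, and obtaining (2) either directly from weak-$*$ convergence of the trace measures or from (3) via \lref{conv_of_contact_sets}, since convergence of $|\FS_{L_{n_k}}\cap V|$ forbids concentration of free-surface area near $\thinCyl_1$ and $\thinCyl_R$ — yields (2), (3), (4). The delicate part throughout is the behavior of the trace terms entering both the energy and the dissipation near the pinned boundary $\thinCyl_1$, which is precisely where the boundary density and perimeter estimates of Appendix~\ref{a.local-contact-line-reg} are essential, guaranteeing that no free-surface area collapses onto $\thinCyl_1$ in the limit.
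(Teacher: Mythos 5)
Your proposal follows essentially the same route as the paper: compactness from the uniform perimeter bound, the Maggi-style cut-and-paste closure argument (the paper glues along a finite union of balls $G\Subset B_r(x_0)$ with generic radii while you glue along a single shell $B_s(x_0)$ with $r<s<r_0$, which is equivalent and in fact somewhat cleaner since the competitor is already supported in a single ball), lower semicontinuity from \lref{local_lower_semicontinuity} on the left, convergence of the volume and dissipation terms on the right via \lref{conv_of_contact_sets} and \lref{perimeter_bound_near_contact_line}, and then the double-sided comparison plus weak-$*$ convergence of perimeter measures for (2)--(4). Two small remarks: the paper actually establishes the contact-set convergence (2) first, directly from \lref{conv_of_contact_sets} together with the near-boundary perimeter estimate, rather than deriving it from (3) as one of your alternatives suggests; and the ``inward/outward decomposition'' aside at the end of your closure step is unnecessary here, since the asymmetric $\Diss[L_{n_k},L_k']$ already converges directly to $\Diss[L_\infty,L']$ once the plate traces converge.
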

\begin{proof}
    From uniform almost-minimality and the boundedness of $U$, we have a bound on perimeter $\mathrm{Per}(L_n; U)$. It follows that the $L_n$ are compact in $L^1_{\mathrm{loc}}(U)$ by compactness of sets of finite perimeter. We pass to a locally convergent subsequence $L_{n_k}\to L_\infty$, using the notation from the statement. Using \lref{conv_of_contact_sets} along with the near-boundary perimeter estimate \lref{perimeter_bound_near_contact_line}, we also have local convergence of contact sets $\PS_{L_{n_k}}\to \PS_{L_\infty}$ and $\CS_{L_{n_k}}\to \CS_{L_\infty}$.

    To check that $L_\infty$ is an almost-minimizer, we follow the approach in \cite{Maggi_2012}. Consider a competitor $L'$ with $L_\infty\Delta L'\Subset B_r(x)\cap U$ for some $r < r_0$. Our goal is to show
    \[ \mathcal{E}[L_\infty; B_r(x)\cap U] \leq \mathcal{E}[L'; B_r(x)\cap U] + \Diss[L_\infty, L'] \]
    It suffices to show \begin{equation}\label{e.closure_of_almost_minimizer_goal}
        \mathcal{E}[L_\infty; G] \leq \mathcal{E}[L'; G] + \Diss[L_\infty, L']
    \end{equation}
    for a $G$ chosen so that $L_\infty\Delta L' \Subset G \Subset B_r(x)\cap U$. Note that we can find such a $G$ as a finite union of open balls with radii less than $r$ by compactness. Moreover, we can vary the radii of the balls to guarantee the following genericity conditions:
    \begin{enumerate}
        \item $|\partial B\cap (\partial^*L'\cup \partial^*L_n)|$ for each ball $B$ and each $n$ 
        \item $\liminf_{n\to\infty} |\partial B\cap $
    \end{enumerate}
    
    Given this $G$, we form a competitor $L_n'$ to $L_n$ by
    \[ L_n' := (L'\cap G)\cup (L_n\setminus G) \]
    Then almost-minimality of $L_n$ yields:
    \begin{align*}
        \mathcal{E}[L_n; B_r(x)\cap U] &\leq \mathcal{E}[L_n'; B_r(x)\cap U] + \Lambda\vol(L_n\Delta L_n') + \Diss[L_n, L_n']
    \end{align*}
    We observe that the energy and dissipation satisfy:
    \begin{align*}
        &\mathcal{E}[L_n'; B_r(x)\cap \overline{\Cyl_R}] + \Diss[L_n, L_n'] 
        \\&= \mathcal{E}[L'; G] + \mathcal{E}[L_n; (B_r(x)\cap \overline{\Cyl_R})\setminus G] + |\mathrm{tr}_{\partial G}(L_n\Delta L')| + \Diss[L_n, L'; G]
    \end{align*}
    Thus, the almost-minimality inequality reduces to
    \begin{align*}
        \mathcal{E}[L_n; G] &\leq \mathcal{E}[L'; G] + \Lambda\vol(L_\infty\Delta L') + \Diss[L_\infty, L'] 
        \\&\dots+\Lambda\left(\vol((L_n\Delta L_\infty)\cap G) - \vol(L_\infty\Delta L')\right)
        \\&\dots+\Diss[L_n,L'] - \Diss[L_\infty, L']
        \\&\dots + |\mathrm{tr}_{\partial G}(L_n\Delta L')|
    \end{align*}
    Now we send $n\to\infty$. The second line goes to 0 since $L_n\to L_\infty$. The third line goes to 0 by the convergence of contact sets $\PS_{L_n}\to \PS_{L_\infty}$. The fourth line goes to 0 by the choice of $G$. Using lower semicontinuity of the energy by \lref{local_lower_semicontinuity} to handle the left side, we obtain \eref{closure_of_almost_minimizer_goal} in the limit.
    
    It remains only to discuss the convergence of free surface area and of energy. Proceeding as in \cite{Maggi_2012}*{Theorem 21.14}, one can show that $|\nabla\one_{L_n}|\overset{*}{\rightharpoonup} |\nabla \one_{L_\infty}|$ inside $U$. This implies convergence of area of the free surface, again using the estimate from \lref{perimeter_bound_near_contact_line} to prevent accumulation of the perimeter near the fixed boundary.
\end{proof}

As an application of \lref{almost-minimizer-cpt-closure}, we state an improved compactness result for sequences of globally stable profiles. Here, the main point is the stability of the limiting profile.

\begin{lemma}\label{l.glbl_stbl_cpt}
    Suppose we have a sequence of profiles $(L_n)$ and forcing $F_n\to F_\infty$ such that the $L_n$ are globally stable for $\mathcal{E}_R[\cdot; F_n]$ for some $R\in (1, \infty]$ and such that the plate contact sets of the $L_n$ converge: i.e. there exists $\PS_\infty$ such that
    \[ \lim_{n\to\infty} |\PS_{L_n}\Delta \PS_{\infty}| = 0 \]
    Then the $L_n$ are precompact in $L^1_{\mathrm{loc}}$. All subsequential limits $L_\infty$ are in $X^R_{F_\infty}$ with $\PS_{L_\infty} = \PS_\infty$, are globally stable (with volume constraint for $F_\infty$, if $R < \infty$), and have the same energy $\mathcal{E}_R[\cdot; F]$.
\end{lemma}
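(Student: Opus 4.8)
The plan is to extract a subsequential limit using the almost-minimality theory of \sref{higher_regularity_glbl_stbl}, read off the local structure of the limit from \lref{almost-minimizer-cpt-closure}, and then promote this local information to global stability by transferring competitors between the evolution with forcing $F_n$ and the one with forcing $F_\infty$. First I would establish precompactness: by \lref{almost_minimality} the $L_n$ are almost-minimizers with constants depending only on $d$ and the ellipticity parameters, hence independent of $n$ and $F_n$, so by the local perimeter bound in \lref{lambda_and_energy_bound} they have uniformly bounded perimeter on bounded subsets of $\overline{\Cyl_R}$, and compactness of sets of finite perimeter produces a subsequence (not relabeled) converging to some $L_\infty$ in $L^1_{\mathrm{loc}}$. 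Since $F_n$ is bounded, \lref{lambda_and_energy_bound} also gives $|\lambda_{L_n}|\leq C_0 g$ uniformly, so \lref{width-bound} confines every $\FS_{L_n}$, and hence $\FS_{L_\infty}$, to a single slab $\{\,|z-F_\infty|\leq C\,\}$, with $L_\infty$ filling $\Cyl_R$ below the slab and empty above it. Because replacing $F_n$ by $F_\infty$ perturbs the almost-minimality defect only by a term $\lesssim|F_n-F_\infty|\vol(L\Delta L')$, the $L_n$ are also uniform almost-minimizers of $\mathcal{E}_R[\,\cdot\,;F_\infty]$ on bounded sets, so applying \lref{almost-minimizer-cpt-closure} along an exhaustion of $\overline{\Cyl_R}$ shows that $L_\infty$ is an almost-minimizer and that $\PS_{L_n}\to\PS_{L_\infty}$, $\CS_{L_n}\to\CS_{L_\infty}$ locally with convergence of local energies; comparing $\PS_{L_n}\to\PS_{L_\infty}$ with the hypothesis $|\PS_{L_n}\Delta\PS_\infty|\to0$ forces $\PS_{L_\infty}=\PS_\infty$.

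Admissibility of $L_\infty$ is then routine. For $R=\infty$ the slab bound makes $\PS_{L_\infty}\Delta\thinCyl_{1,0}$ bounded, so $\relAreaP{\PS_{L_\infty}}$ is finite and $L_\infty\in X^\infty$. For $R<\infty$ the symmetric differences $L_n\Delta\Cyl_{R,F_n}$ all lie in a fixed bounded set, so $L_n\to L_\infty$ in $L^1$ and $\mathcal{C}_{R,F_n}[L_n]=0$ passes to $\mathcal{C}_{R,F_\infty}[L_\infty]=0$ (using $F_n\to F_\infty$); the relative contact areas are finite by the slab bound exactly as before, so $L_\infty\in X^R_{F_\infty}$.

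The heart of the argument is global stability of $L_\infty$. Given a competitor $L'\in X^R_{F_\infty}$ I may assume $\mathcal{E}_R[L';F_\infty]<\infty$ and $\Diss(L_\infty,L')<\infty$; for $R<\infty$ this forces, via the coercivity in \lref{energy_bdd_below}, that $\mathrm{Per}(L';\Cyl_R)<\infty$ and $|L'\Delta\Cyl_{R,F_\infty}|<\infty$, and for $R=\infty$ that $|\PS_{L'}\Delta\thinCyl_{1,0}|<\infty$ (else $\Diss(L_\infty,L')=\infty$ since $\PS_\infty$ is a bounded modification of $\thinCyl_{1,0}$). I would transfer $L'$ to a competitor $L_n'$ for $\mathcal{E}_R[\,\cdot\,;F_n]$: for $R<\infty$ take $L_n'=\Phi_{s_n}(L')$ using the volume-correcting flow of \cref{volume_only_energy_taylor_exp} with $s_n=|B_R\setminus B_1|(F_n-F_\infty)\to0$, restoring $\mathcal{C}_{R,F_n}[L_n']=0$; for $R=\infty$ take $L_n'=L'+(F_n-F_\infty)e_z$, where no volume correction is needed. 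In both cases $\PS_{L_n'}$ differs from $\PS_{L'}$ by a set of area $o(1)$, so using $\PS_{L_n}\to\PS_\infty=\PS_{L_\infty}$ one gets $\Diss(L_n,L_n')\to\Diss(L_\infty,L')$, while a direct estimate of the free-surface, contact, and gravity terms — exploiting finiteness of $\mathrm{Per}(L';\,\cdot\,)$ and $|L'\Delta\Cyl_{R,F_\infty}|$ for $R<\infty$, and exact translation identities for $R=\infty$, together with $F_n\to F_\infty$ — gives $\mathcal{E}_R[L_n';F_n]\to\mathcal{E}_R[L';F_\infty]$. Plugging $L_n'$ into the global stability of $L_n$, $\mathcal{E}_R[L_n;F_n]\leq\mathcal{E}_R[L_n';F_n]+\Diss(L_n,L_n')$, and passing to the limit using lower semicontinuity of the energy (\lref{lower_semicontinuity} together with \lref{local_lower_semicontinuity}, noting that the change $F_n\to F_\infty$ costs only $o(1)$ because $\FS_{L_n}$ stays in a fixed slab) yields $\mathcal{E}_R[L_\infty;F_\infty]\leq\mathcal{E}_R[L';F_\infty]+\Diss(L_\infty,L')$.

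Finally, equality of energies follows once stability is known: any two subsequential limits $L_\infty,L_\infty'$ are globally stable with the same plate contact set $\PS_\infty$, hence $\Diss(L_\infty,L_\infty')=\Diss(L_\infty',L_\infty)=0$, and testing each against the other gives $\mathcal{E}_R[L_\infty;F_\infty]=\mathcal{E}_R[L_\infty';F_\infty]$; running the transfer argument with the competitor $L_\infty$ itself moreover shows $\mathcal{E}_R[L_n;F_n]$ converges to this common value. I expect the main obstacle to be the global stability step: the competitor $L'$ may differ from $L_\infty$ on an unbounded region (and for $R=\infty$ the domain is horizontally unbounded too), so the stability inequality only survives the limit thanks to the a priori control from the width bound and coercivity, and because the volume- or reference-correction can be arranged to cost only $o(1)$ in energy and in dissipation.
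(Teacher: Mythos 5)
Your overall architecture matches the paper's: extract a subsequential limit via almost-minimality compactness (\lref{almost_minimality} and \lref{almost-minimizer-cpt-closure}), read off $\PS_{L_\infty}=\PS_\infty$ and admissibility, and then transfer stability by constructing for each competitor $L'\in X^R_{F_\infty}$ a competitor for the $F_n$-problem to plug into the stability of $L_n$. The precompactness, admissibility, and equal-energy conclusions are handled essentially as in the paper.

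The deviation is in the competitor construction for $R<\infty$, and it introduces a genuine gap. You propose to restore the volume constraint with the flow $\Phi_{s_n}$ of \cref{volume_only_energy_taylor_exp} with $s_n\sim(F_n-F_\infty)|B_R\setminus B_1|$. But that corollary is stated \emph{only} for profiles whose free surface is bounded above and below in the vertical direction, and an arbitrary admissible competitor $L'\in X^R_{F_\infty}$ need not satisfy this — finite relative perimeter, finite gravitational energy, and finite relative contact areas are all compatible with $\FS_{L'}$ reaching to arbitrary heights via thin tentacles. So your conclusion $\mathcal{E}_R[L_n';F_n]\to\mathcal{E}_R[L';F_\infty]$ is not justified as written; you would need an additional truncation step (e.g.\ in the spirit of Appendix A) to reduce to competitors with vertically bounded free surface. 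More to the point, the flow is not needed: the plain vertical translation $L_n':=L'+(F_n-F_\infty)e_z$, which you already use for $R=\infty$, already satisfies the $R<\infty$ volume constraint, since $\Cyl_{R,F_n}=\Cyl_{R,F_\infty}+(F_n-F_\infty)e_z$ and the constraint $\mathcal{C}_{R,F}$ compares $L$ to $\Cyl_{R,F}$, so $\mathcal{C}_{R,F_n}[L'+(F_n-F_\infty)e_z]=\mathcal{C}_{R,F_\infty}[L']=0$ by translation invariance. This is exactly what the paper does, for both finite and infinite $R$. It also yields an exact energy identity rather than an asymptotic one: $\mathcal{E}_R[L_n';F_n]=\mathcal{E}_R[L';F_\infty]-\cos\YP\,(F_n-F_\infty)\mathcal{H}^{d-2}(\partial B_1)$, because the free-surface area, gravity term, and outer-contact term are all invariant under the joint shift, and only the plate contact term (referenced to $z=0$) picks up a linear correction. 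The resulting dissipation term $\Diss(L',L_n')$ is controlled by the dissipation triangle inequality and $L^1$-continuity of translation of $\one_{\PS_{L'}\Delta\thinCyl_{1,0}}$ (finite because $\relAreaP{\PS_{L'}}$ is finite for admissible $L'$), after which you conclude with lower semicontinuity or, as the paper does, with the already-established energy convergence $\mathcal{E}_R[L_n;F_n]\to\mathcal{E}_R[L_\infty;F_\infty]$.
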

\begin{proof}
    By \lref{almost_minimality}, the $L_n$ are uniform almost minimizers. Thus, we may apply \lref{almost-minimizer-cpt-closure} to extract a subsequence $L_{n_k}$ and a subsequential limit $L_\infty$. This lemma gives local convergence of the $\PS_{L_{n_k}}$ to $\PS_{L_\infty}$, so we conclude that $\PS_\infty = \PS_{L_\infty}$. Also, in the case $R < \infty$, we can combine with \lref{width-bound} to conclude that the $L_{n_k}$ converge in volume, and hence $L_\infty$ solves the volume constraint for $F_\infty$.
    
    The lemma also gives local convergence of energy. If $R < \infty$, this implies global convergence, by using \lref{width-bound} and the boundedness of the $F_n$ to trap the free surfaces in a compact region. In the case $R = \infty$, a modified proof of \lref{width-bound} shows that the free surfaces can be trapped in a region whose height shrinks exponentially as $|x|\to \infty$. Combining this with local estimates from almost-minimality, we can interchange limits by dominated convergence.

    Finally, it remains to show stability of $L_\infty$. As soon as we have stability, the fact that all limiting profiles have the same energy follows from the fact that they share $\PS_\infty$. Let us consider a competitor $L'\in X^R_{F_\infty}$. Then we can take 
    \[L_n' :=L' + (F_n - F_\infty)e_z\in X^R_{F_n}\]
    as a competitor to $L_n$ (satisfying the volume constraint if $R < \infty$). From stability of $L_n$, we have
    \begin{align*}
        \mathcal{E}_R[L_n; F_n] &\leq \mathcal{E}_R[L_n';F_n] + \Diss[L_n, L_n']
        \\&= \mathcal{E}_R[L'; F_\infty] -\cos\YP (F_n - F_\infty)\mathcal{H}^{d-2}(\partial B_1) + \Diss[L_n, L_n']
        \\&\leq \mathcal{E}_R[L'; F_\infty] + \Diss[L_\infty, L'] 
        \\&\dots+ C|F_n - F_\infty| + \Diss[L_n, L_\infty]
    \end{align*}
    Here, in the last step we have used the dissipation triangle inequality \eref{diss_triangle_inequality} twice to replace $\Diss[L_n, L_n']$, absorbing the resulting $\Diss[L', L_n']$ into the $|F_n - F_\infty|$ term. Both terms in the last line disappear as $n\to\infty$. On the other hand, the left side converges to $\mathcal{E}_R[L_\infty; F_\infty]$ as already shown, so we obtain the stability inequality for $L_\infty$ in the limit.
\end{proof}

\subsection{Continuity of the pressure}\label{s.pressure_continuous}

\begin{lemma}\label{l.pressure_continuous}

Suppose $(L_n)$ are globally stable for $\mathcal{E}_{R_n}[\cdot; F_n]$ with $R_n\in (1,\infty]$, such that $R_n\to R_\infty \in (1,\infty]$ and $F_n\to F_\infty$. If $L_n\to L_\infty$ in $L^1_{\mathrm{loc}}$, then $P^*[L_n; F_n]\to P^*[L_\infty; F_\infty]$, where $P^*$ is the pressure from \pref{energy_taylor_exp_alt} for a fixed parameter $R_0$ and vector field $\mathcal{V}$.
\end{lemma}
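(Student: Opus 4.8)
The plan is to express $P^*$ via the explicit formula \eqref{e.pressure_with_r0} and pass to the limit by combining varifold compactness of uniform almost-minimizers with Reshetnyak's continuity theorem. With the single fixed vector field $\mathcal{V}=V(x)e_z$ (admissible for every $R_n>R_0$ since $V\equiv1$ off $B_{R_0}$), the integrand in \eqref{e.pressure_with_r0} is
\[
\Psi_F\big((x,z),\nu\big):=\nu\cdot D\mathcal{V}\,\nu+g(z-F)(\mathcal{V}-e_z)\cdot\nu=(\nu\cdot e_z)\big((\nabla_xV,0)\cdot\nu\big)+g(z-F)(V-1)(e_z\cdot\nu),
\]
which is independent of $R_n$ and supported in $\{1\le|x|\le R_0\}\times\R$, so $P^*[L;F]=\int_{\FS_L\cap\Cyl_{R_0}}\Psi_F(\cdot,\nu^f_L)\,d\mathcal{H}^{d-1}$. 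By \lref{width-bound} and \lref{lambda_and_energy_bound}, whose constants are uniform in $n$, together with $F_n\to F_\infty$, there is $M$ with $\FS_{L_n}\subset\{|z|\le M\}$ for all $n$; lower semicontinuity of perimeter forces $\FS_{L_\infty}\subset\{|z|\le M\}$, and lower semicontinuity of the energy \lref{lower_semicontinuity} with \lref{lambda_and_energy_bound} makes $\mathcal{E}[L_\infty]$ finite, so $P^*[L_\infty;F_\infty]$ is a well-defined real number. In particular all the free surfaces involved, and the support of every $\Psi_{F_n}$, lie in a fixed bounded open set $W$ whose closure meets $\partial B_1\times\R$ but, for $n$ large, not $\partial B_{R_n}\times\R$, and which we may choose with $\mathcal{H}^{d-1}(\FS_{L_\infty}\cap\partial W)=0$ (this uses $R_0<R_\infty$, which holds in the intended application where $R_n\to\infty$).

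Next I would bring in the closure theory for almost-minimizers. By \lref{almost_minimality} the $L_n$ are almost-minimizers in $W$ uniformly in $n$, so \lref{almost-minimizer-cpt-closure} applies, and since $L_n\to L_\infty$ in $L^1_{\mathrm{loc}}$ pins down the limit, its conclusions hold along the whole sequence: the varifold convergence $|\nabla\one_{L_n}|\overset{*}{\rightharpoonup}|\nabla\one_{L_\infty}|$ on $W$ and the convergence of total masses $|\FS_{L_n}\cap W|\to|\FS_{L_\infty}\cap W|$. The near-boundary perimeter estimate \lref{perimeter_bound_near_contact_line}, which enters the proof of \lref{almost-minimizer-cpt-closure}, is what prevents accumulation of free surface area on $\partial B_1\times\R$, where $\Psi$ is not compactly supported. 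Together with the weak-$*$ convergence $\nabla\one_{L_n}\overset{*}{\rightharpoonup}\nabla\one_{L_\infty}$ of the associated $\R^d$-valued measures, which is immediate from $L^1_{\mathrm{loc}}$-convergence, the restrictions $\mu_n$ of $\nabla\one_{L_n}$ to $W$ satisfy exactly the hypotheses of Reshetnyak's continuity theorem: $\mu_n\overset{*}{\rightharpoonup}\mu_\infty$, $|\mu_n|(W)\to|\mu_\infty|(W)<\infty$, with $d\mu_n/d|\mu_n|=\nu^f_{L_n}$.

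Finally I would conclude by writing $\Psi_{F_n}=\Psi_{F_\infty}+g(F_\infty-F_n)(V-1)(e_z\cdot\nu)$. The function $\Psi_{F_\infty}$ is fixed, continuous, and bounded on $\overline{W}\times S^{d-1}$, so Reshetnyak's theorem gives
\[
\int_{\FS_{L_n}\cap\Cyl_{R_0}}\Psi_{F_\infty}(\cdot,\nu^f_{L_n})\,d\mathcal{H}^{d-1}\ \longrightarrow\ \int_{\FS_{L_\infty}\cap\Cyl_{R_0}}\Psi_{F_\infty}(\cdot,\nu^f_{L_\infty})\,d\mathcal{H}^{d-1},
\]
while the remaining term is at most $g\,\|V-1\|_{C^0}\,|F_n-F_\infty|\,|\FS_{L_n}\cap\Cyl_{R_0}|\to0$ since $F_n\to F_\infty$ and the masses $|\mu_n|(W)$ are bounded. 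Adding the two gives $P^*[L_n;F_n]\to P^*[L_\infty;F_\infty]$. The one substantive obstacle is upgrading $L^1_{\mathrm{loc}}$-convergence to varifold convergence of the free boundaries: the part of $\Psi$ linear in $\nu$ would already pass to the limit from set convergence alone, but the quadratic term $(\nu\cdot e_z)\big((\nabla_xV,0)\cdot\nu\big)$ genuinely needs the uniform almost-minimality and the Reshetnyak step, so \lref{almost-minimizer-cpt-closure} is the crux.
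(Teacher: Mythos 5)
Your proof is correct and follows essentially the same route as the paper: reduce to a fixed bounded region using \lref{width-bound} and \lref{lambda_and_energy_bound}, upgrade $L^1_{\mathrm{loc}}$-convergence to convergence of the associated vector measures with total masses via uniform almost-minimality, then invoke Reshetnyak's continuity theorem for the $\nu$-nonlinear term. One small difference: the paper's proof appeals directly to Maggi's Theorem 21.14 for \emph{interior} perimeter almost-minimizers (observing $\mathrm{spt}(\mathcal{V})$ avoids $\thinCyl_1$), whereas you invoke the paper's own capillary closure result \lref{almost-minimizer-cpt-closure}, which is actually the cleaner choice here, since $\mathrm{spt}(\mathcal{V}-e_z)$ does reach $\thinCyl_1$ and the near-boundary perimeter estimate \lref{perimeter_bound_near_contact_line} is genuinely needed to rule out escape of mass at the plate. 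Your explicit decomposition $\Psi_{F_n}=\Psi_{F_\infty}+g(F_\infty-F_n)(V-1)(e_z\cdot\nu)$ is a sound way to absorb the varying forcing; the paper leaves this step implicit. The only small imprecision is the closing remark that "the part of $\Psi$ linear in $\nu$ would already pass to the limit from set convergence alone": because that test field is not compactly supported in the open cylinder $\Cyl_R$ (it equals $-g(z-F)e_z$ at the plate), the linear term also requires the near-boundary perimeter control and not just $L^1_{\mathrm{loc}}$ convergence. Since your actual argument does supply that control via \lref{almost-minimizer-cpt-closure}, this does not affect the validity of the proof.
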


\begin{proof}
    We recall
    \[ P^*[L; F]  = \int_{\FS_{L}\cap C_{R_0}} \nu^f_{L}\cdot D\mathcal{V}\nu^f_{L} + g(z-F)(\mathcal{V} - e_z)\cdot \nu^f_{L} \]
    where the support of $\mathcal{V}$ avoids $\thinCyl_1$. Let us observe that the integral is uniformly bounded given a bound for $F$, by using \lref{lambda_and_energy_bound} to control the measure of the free surface and \lref{width-bound} to ensure that the integrand is bounded.

    Write $\mu_n$ (resp. $\mu_\infty$) for the vector measure $\one_{\FS_{L_n}\cap \mathrm{spt}(\mathcal{V}-e_z)}\nu^f_{\FS_{L_n}} d\mathcal{H}^{d-1}$. By \lref{almost_minimality} and \lref{width-bound}, these measures have support trapped in a compact region, and the free surfaces are uniform perimeter almost-minimizers in the standard sense. Therefore, we can apply standard closure properties for $L^1$-convergent almost-minimizers \cite{Maggi_2012}*{Theorem 21.14} to have $\mu_n\overset{*}\rightharpoonup \mu_\infty$ and $|\mu_n|\overset{*}\rightharpoonup |\mu_\infty|$.

    The convergence of the gravity term in the pressure follows immediately from the weak* convergence of vector measures. The convergence of the $\nu^f_{L_0}\cdot D\mathcal{V}\nu^f_{L_0}$ term comes from a theorem of Reshetnyak (\cite{Reshetnyak1968}, see also the appendix in \cite{LuckhausModica}), which relies on the fact that we also have $|\mu_n|\overset{*}\rightharpoonup |\mu_\infty|$ and thus $|\mu_n|(U)\to |\mu_\infty|(U)$ for any $U$ with $|\partial U\cap \FS_{L_\infty}| = 0$.
\end{proof}

\section{Limits of rate-independent systems}\label{s.rate_independent_energy_solutions}

In this section we introduce some of the foundational elements which are used in studying the limits of rate independent systems. We recall for the convenience of the reader that we work with energetic rate-independent systems which satisfy the properties of Definition \ref{def:constrained_energy_sln} ($R < \infty$) or Definition \ref{def:limiting_energy_sln} ($R = \infty$). The results in this section will form the backbone of the arguments for later sections. In particular, in \sref{discrete_scheme_and_convergence} we will discuss precompactness of discrete minimizing movements schemes which approximate energy solutions to the evolution. In \sref{container_limit}, we will discuss the precompactness as $R \to \infty$ of energy solutions.

This section is mainly recalling results from the literature, especially \cite{AlbertiDeSimone}, and adapting them to our setting.  We summarize the framework in \cite{AlbertiDeSimone} condensing it into a clear list of criteria for convergence in \lref{energy_diss_lower_bound}. The ideas presented here should also be closely related to general evolutionary $\Gamma$ convergence type frameworks for limits of rate independent systems, see the \cite{mielke2015book} for further description and citations. However due to our nonlinear geometric setting where the forcing acts through a constraint we are unable to conveniently apply those abstract results set in linear spaces.

This section is organized as follows. First, we discuss the energy dissipation upper bound, the opposite inequality to \eref{constrained_edi}. This is a fact which depends only on global stability, and plays a key role in proving convergence results for the energy dissipation lower bound. Next, we discuss the time compactness of evolutions given by dissipation bounds. Hand-in-hand with this, we address lack of evolutionary compactness for the nondissipative part of the evolution, and the measurable selection techniques that we employ to work around this. Finally, the main result of this section is a proof of the energy dissipation lower bound for a limit of approximate solutions satisfying appropriate assumptions.

\subsection{Energy dissipation upper bound}\label{s.energy_diss_upper_bound}
Here we show that general paths of globally stable states satisfy an upper bound on the energy dissipated.

\begin{lemma}[Reverse EDI]\label{l.reverse_edi}
Fix $R\in (1, \infty]$, and suppose $L(t)$ is a Borel measurable map $[0,T]\to L^1(\R^d)$, such that $L(t)$ is $\mathcal{E}_R[\cdot; F(t)]$-globally stable for each $t$. Then we have the energy-dissipation upper bound:
\begin{equation}
    \mathcal{E}_R[L(0); F(0)] - \mathcal{E}_R[L(0); F(T)] + \int_{t_0}^{t_1} P^*_R[L(t); F(t)]\dot{F}(t)\,dt \leq \overline{\Diss}[L; [0, T]]
\end{equation}
where we recall that $\overline{\Diss}$ refers to the total variation of the specified path with respect to the dissipation distance. Here, $P^*_R$ is the pressure defined in \pref{energy_taylor_exp_alt}.
\end{lemma}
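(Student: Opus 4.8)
The plan is to run the standard argument that global stability by itself implies the reverse energy--dissipation inequality, adapted to the fact that here the forcing acts through the volume constraint, using the admissible vertical perturbation flow $\Phi_s$ from \pref{energy_taylor_exp_alt}. Fix the reference radius $R_0\in(1,R)$ and the vector field $V$ used to define $P^*_R$. The heart of the matter will be a one-interval estimate: for $0\le a<b\le T$ with $|F(b)-F(a)|\le\|\nabla_xV\|_{C^0}^{-1}$,
\[ \mathcal{E}_R[L(a);F(a)]-\mathcal{E}_R[L(b);F(b)]\ \le\ -P^*_R[L(b);F(b)]\,(F(b)-F(a))\;+\;C\,|F(b)-F(a)|^2\;+\;\Diss[L(a),L(b)]. \]
To obtain it I would test global stability of $L(a)$ against the competitor $\Phi_{-(F(b)-F(a))}(L(b))$: by \lref{volume-fix-flow-alt} this set is admissible for forcing $F(a)$, and because $V\equiv 0$ near $\partial B_1$ the flow leaves the plate contact set unchanged, so its dissipation distance to $L(a)$ equals $\Diss[L(a),L(b)]$. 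Expanding $\mathcal{E}_R[\Phi_{-(F(b)-F(a))}(L(b));F(a)]$ by the Taylor formula of \pref{energy_taylor_exp_alt} then gives the estimate; the remainder constant $C$ is uniform over all globally stable $L(b)$ and bounded $F$ by the remark after \pref{energy_taylor_exp_alt}, which relies on the uniform vertical bound \lref{width-bound} and the uniform local perimeter bound \lref{lambda_and_energy_bound}. This works verbatim for $R=\infty$, the volume constraint being vacuous there, and $L(a)$, $L(b)$ have finite energy and vertically bounded free surfaces by \lref{qualitative_height_bound} and \lref{lambda_and_energy_bound}, so all the quoted results apply.

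Next I would sum the one-interval estimate over a partition $0=t_0<\dots<t_N=T$ with mesh small enough that every increment lies in the admissible range. The left side telescopes to $\mathcal{E}_R[L(0);F(0)]-\mathcal{E}_R[L(T);F(T)]$; the dissipation terms sum to at most $\overline{\Diss}[L;[0,T]]$ by definition of the total dissipation; and the quadratic errors are bounded by $C\bigl(\max_i|F(t_{i+1})-F(t_i)|\bigr)$ times the total variation of $F$, hence vanish as the mesh shrinks, using the regularity of $F$. What remains is to pass to the limit in the sum $\sum_i P^*_R[L(t_{i+1});F(t_{i+1})]\,(F(t_{i+1})-F(t_i))$ and recognize it as $\int_0^T P^*_R[L(t);F(t)]\dot F(t)\,dt$. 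This last step is the one requiring genuine care, and I expect it to be the main obstacle: $t\mapsto P^*_R[L(t);F(t)]$ is only bounded and measurable (bounded by the uniform pressure estimate; measurable because $t\mapsto L(t)$ is Borel and $P^*_R$ is continuous along $L^1_{\mathrm{loc}}$-convergent sequences of globally stable profiles by \lref{pressure_continuous}), so right-endpoint Riemann--Stieltjes sums need not converge along an arbitrary sequence of partitions.

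The way around this is that it is enough to exhibit \emph{one} sequence of partitions with mesh tending to $0$ along which the sum does converge to the integral, since every partition sum of the dissipation is already at most $\overline{\Diss}[L;[0,T]]$. Writing $F(t_{i+1})-F(t_i)=\int_{t_i}^{t_{i+1}}\dot F$ and putting $\phi(t):=P^*_R[L(t);F(t)]$, the discrepancy between the partition sum and the integral is controlled by $\sum_i\int_{t_i}^{t_{i+1}}|\phi(t_{i+1})-\phi(\tau)|\,|\dot F(\tau)|\,d\tau$; approximating $\phi$ in $L^1([0,T])$ by a continuous function and averaging over translates of a uniform grid (a routine Lebesgue-point / maximal-function argument) shows this can be driven to $0$ along a suitable sequence of shifted uniform partitions, which is all that is needed to conclude. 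This kind of measure-theoretic bookkeeping is standard in the rate-independent systems literature; compare the corresponding step in \cite{AlbertiDeSimone} and \cite{mielke2015book}.
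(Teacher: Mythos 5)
Your proposal is correct and follows the same route as the paper's proof: test global stability of $L(t_i)$ against the vertically-flowed competitor $\Phi_{F(t_i)-F(t_{i+1})}(L(t_{i+1}))$, expand the energy via Proposition~\ref{p.energy_taylor_exp_alt}, telescope over a partition, and handle the convergence of the right-endpoint Riemann sum $\sum_i P^*_R[L(t_{i+1});F(t_{i+1})](F(t_{i+1})-F(t_i))$ to the Lebesgue integral by choosing a suitable sequence of partitions. The only cosmetic difference is that you keep the increment $F(t_{i+1})-F(t_i)$ explicit in the Taylor remainder (bounding the quadratic error by $\max_i|F(t_{i+1})-F(t_i)|\cdot[F]_{BV}$), whereas the paper linearizes $F$ once more and uses a $\delta/2$-separated, $\delta$-fine partition to bound the remainder as $N\cdot O(\delta^2)$; both yield $O(\delta)$. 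Your slightly more detailed discussion of the measure-theoretic partition-selection step (averaging over shifted uniform grids) is a fleshing out of what the paper elides with the phrase ``the theory of Lebesgue integration provides,'' and is sound.
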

\begin{proof}
We follow the approach in \cite{AlbertiDeSimone}*{Lemma 4.4}.

Let $0 = t_0 < \dots < t_N = T$ be a $\delta$-fine and $\delta/2$-separated partition of $[0,T]$, where $\delta$ is sufficiently small to apply \pref{energy_taylor_exp_alt} for each pair of adjacent times. Let $\Phi_s$ be the volume altering flow as in the proof of \pref{energy_taylor_exp_alt}.

Then using global stability of $L(t_n)$ and the Taylor expansion about $L(t_{n+1})$, we have
\begin{align*}
    0 &\leq \mathcal{E}_R[\Phi_{F(t_n) - F(t_{n+1})}(L(t_{n+1})); F(t_n)] + \Diss[L(t_{n}), L(t_{n+1})] - \mathcal{E}_R[L(t_n); F(t_n)]
    \\&= \mathcal{E}_R[L(t_{n+1}); F(t_{n+1})] - \mathcal{E}_R[L(t_n); F(t_n)]
    \\& \quad \dots+ P^*_R[L(t_{n+1}); F(t_{n+1})]\dot{F}(t_{n+1})(t_n - t_{n+1})  + O(|t_n - t_{n+1}|^2)
    \\& \quad \dots+ \Diss[L(t_n), L(t_{n+1})]
\end{align*}
We note that the quadratic remainder term is controlled independent of $L(t)$ using the perimeter bound from \lref{lambda_and_energy_bound} and the height bound from \lref{width-bound}. Summing, we get
\begin{align*}
    \overline{\Diss}[L; [0,T]] &\geq \sum_{n=0}^{N-1} \Diss[L(t_n), L(t_{n+1})] 
    \\&\geq \mathcal{E}_R[L(0); F(0)] - \mathcal{E}_R[L(T); F(T)] 
    \\&\quad\dots+ \sum_{n=0}^{N-1} P^*_R[L(t_{n+1}); F(t_{n+1})]\dot{F}(t_{n+1})(t_{n+1} - t_{n}) 
    \\&\quad\dots+ NO(\delta^2)
\end{align*}
Note that only the last two lines depend on the choice of partition. The term $P^*_R[L(t); F(t)]\dot{F}$ is a measurable function of $t$, which can be bounded pointwise by another application of \lref{lambda_and_energy_bound}, so the theory of Lebesgue integration provides that we can choose a family of partitions parametrized by $\delta$ such that
\[ \lim_{\delta\to 0^+}\sum_{n=0}^{N-1} P^*_R[L(t_{n+1}); F(t_{n+1})]\dot{F}(t_{n+1})(t_{n+1} - t_{n})\to \int_{0}^T P^*[L(t); F(t)]\dot{F}dt \]
On the other hand, the quadratic remainder term vanishes as $\delta\to 0$ since $N \leq 2/\delta$ by assumption. Thus, we obtain the inequality in the limit.
\end{proof}

\subsection{Time compactness of contact sets and measurable selection of profiles for contact sets}\label{s.time_compactness_and_msb_sel}

We recall a version of Helly's selection principle, the basic compactness result for bounded variation paths into a metric space, adapted to our setting:

\begin{lemma}[Time compactness of contact sets / Helly's selection]\label{l.helly}
    Let $\PS_n(t)$ be a sequence of paths of contact sets with corresponding contact lines $\PL_n(t)$. If 
    \[\sup_n\overline{\Diss}(\PS_n; [t_0, t_1]) < \infty \]
    and
    \[  \sup_n\sup_t \sup_{(x,z)\in \PL_n(t)} |z| < +\infty\ \hbox{ and } \ \sup_n\sup_t \mathcal{H}^{d-2}(\PL_n(t)) <+\infty \]
    then there exists a subsequence along which $\PS_{n_k}(t)$ converges pointwise on $[t_0, t_1]$ in the $\Diss$ quasimetric.
\end{lemma}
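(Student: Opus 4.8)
The plan is to run the classical Helly selection argument, working with the space of contact sets under the symmetric distance $d(\PS,\PS') := |\PS\Delta\PS'|$. This is bi-Lipschitz equivalent to the dissipation quasimetric, since $\min(\mu_+,\mu_-)\,d(\PS,\PS') \le \Diss(\PS,\PS') \le \max(\mu_+,\mu_-)\,d(\PS,\PS')$; in particular $d$-convergence is equivalent to $\Diss$-convergence in either order of the arguments, so it suffices to prove pointwise $d$-convergence along a subsequence. The first step is to convert the geometric hypotheses into compactness. The height bound traps each $\PL_n(t)$ in a fixed slab $\partial B_1\times[-M,M]$, so the set $E_n(t) := \PS_n(t)\Delta\thinCyl_{1,0}$ is contained in this fixed compact slab; moreover the uniform bound on $\mathcal{H}^{d-2}(\PL_n(t))$ controls $\mathrm{Per}(\PS_n(t);\thinCyl_1)$, hence $\mathrm{Per}(E_n(t);\thinCyl_1) \le \mathcal{H}^{d-2}(\PL_n(t)) + \mathcal{H}^{d-2}(\partial B_1)$ is uniformly bounded. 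Thus $\{\one_{E_n(t)}\}_{n,t}$ is bounded in $BV$ with supports in a fixed compact set, so by the compactness theorem for sets of finite perimeter it is precompact in $L^1(\thinCyl_1)$, and its closure $\mathcal{K}$ (a family of sets of finite perimeter still trapped in the slab, by lower semicontinuity of perimeter) is a compact metric space under $d$. Translating back, all the $\PS_n(t)$ take values in the compact metric space $\mathcal{K}' := \thinCyl_{1,0}\Delta\mathcal{K}$.

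Next I would run the monotone-function Helly step. Set $v_n(t) := \overline{\Diss}[\PS_n;[t_0,t]]$; these are nondecreasing, uniformly bounded by $\sup_n\overline{\Diss}(\PS_n;[t_0,t_1]) < \infty$, and appending $t$ to a partition of $[t_0,s]$ gives $\Diss(\PS_n(s),\PS_n(t)) \le v_n(t)-v_n(s)$ for $s \le t$. By the classical Helly selection theorem for bounded monotone real functions, pass to a subsequence along which $v_n \to v$ pointwise on $[t_0,t_1]$ with $v$ nondecreasing; let $S$ be its (at most countable) discontinuity set, and let $D\subset[t_0,t_1]$ be a countable set containing $S$, the endpoints, and a countable dense subset. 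Using precompactness of $\{\PS_n(s)\}_n$ in $\mathcal{K}'$ for each $s\in D$ together with a diagonal argument, pass to a further subsequence (not relabeled) so that $\PS_n(s)$ converges in $\mathcal{K}'$ for every $s\in D$.

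Finally I would propagate convergence to every $t\in[t_0,t_1]$. For $t\in D$ this is done. For $t\notin D$, the point $t$ is a continuity point of $v$, so given $\varepsilon>0$ choose $s\in D$ with $s<t$ and $v(t)-v(s)<\varepsilon$; then for indices $m,n$ in the subsequence,
\[ d(\PS_n(t),\PS_m(t)) \le d(\PS_n(t),\PS_n(s)) + d(\PS_n(s),\PS_m(s)) + d(\PS_m(s),\PS_m(t)), \]
where $d(\PS_n(t),\PS_n(s)) \le \tfrac{1}{\min(\mu_+,\mu_-)}(v_n(t)-v_n(s)) \to \tfrac{1}{\min(\mu_+,\mu_-)}(v(t)-v(s))$, similarly for $m$, while the middle term tends to $0$ since $s\in D$. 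Hence $\limsup_{m,n} d(\PS_n(t),\PS_m(t)) \le \tfrac{2}{\min(\mu_+,\mu_-)}\varepsilon$, so $(\PS_n(t))$ is Cauchy in the complete space $\mathcal{K}'$ and converges; denoting the limit $\PS_\infty(t)$ gives pointwise $d$-convergence on all of $[t_0,t_1]$, hence convergence in the $\Diss$ quasimetric. The main obstacle is the compactness step: one must correctly translate the contact-line hypotheses into a uniform perimeter-and-support bound for the contact sets \emph{relative to} the reference configuration $\thinCyl_{1,0}$ (the contact sets themselves being unbounded below), after which BV compactness applies and the rest is the standard Helly argument adapted to the quasimetric.
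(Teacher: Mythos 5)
Your proof is correct and takes essentially the same route the paper has in mind: the paper states the lemma without proof and remarks that the height and length hypotheses give $L^1$-precompactness of sets of bounded perimeter in a bounded region, after which the standard metric-space Helly argument (Helly for the monotone total-variation functions $v_n$, diagonal extraction on a countable dense set plus the discontinuity set, then Cauchy propagation at continuity points) applies. Your step of passing to $E_n(t) = \PS_n(t)\Delta\thinCyl_{1,0}$ to obtain a genuinely compact support and bounded perimeter, and the observation that $\Diss$ is bi-Lipschitz equivalent to the symmetric set distance (using, implicitly, $\mu_\pm>0$), fills in exactly the details the paper leaves to the reader.
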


Here, the notation $\overline{\Diss}$ refers to the total dissipation of a path
\begin{equation}
    \overline{\Diss}(\PS(t); [t_0, t_1]) = \sup_{\tau_0 < \tau_1 < \dots < \tau_n} \sum_{i=0}^{n-1} \Diss(\PS(\tau_{i}),  \PS(\tau_{i+1}))
\end{equation}
where the supremum is over all partitions of $[t_0, t_1]$. The assumed height and length bounds restrict us to a space which is compact with respect to the dissipation distance, by $L^1$ pre-compactness of sets of bounded perimeter in a bounded region. We will almost always verify these bounds using \lref{width-bound} and \tref{length_bound} respectively.

We will use Helly's selection principle to get evolutionary compactness of energy solutions, or discrete approximations thereof. However, this leads to an important technical issue which must be addressed:

\begin{remark}\label{rem:msb_sel}
The natural mode of convergence of energy solutions is pointwise in time with respect to the dissipation distance. As a result, this gives an evolutionary notion of convergence for the contact set, but not for the profile itself. This is possible due to the lack of unique extension of a given contact set to a globally stable free surface, and the lack of frictional dissipation associated with motions of the free surface. We can identify a limiting profile by choosing a convergent subsequence at any given time, but it is not necessarily possible to choose a subsequence for which the profile converges for every time. This kind of issue has been discussed before in the literature, in particular it also appears in \cite{AlbertiDeSimone} and we can follow a similar approach. We ensure that the selection of profiles along a time-dependent subsequence is measurable, and so we will treat this selection as the limiting evolution.
\end{remark}
The measurable-selection result we quote to justify Remark \ref{rem:msb_sel} is:
\begin{lemma}[Measurable selection, (\cite{srivastava1998course} Srivastava 5.2.5)]\label{lem:msb_sel}
Let $T$ be a measurable space, and $Y$ be a separable metric space. If $F:T\to Y$ is a measurable, compact-valued multifunction, then there exists a measurable selection of $F$: a measurable function $f:T\to Y$ such that $f(t)\in F(t)$ for each $t\in T$.

Here, the preimage of the multifunction, for purposes of measurability, is 
\[ F^{-1}(A) := \{ t\in T : F(t)\cap A\neq \emptyset \} \]
\end{lemma}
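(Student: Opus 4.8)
This is the classical Kuratowski--Ryll-Nardzewski measurable selection theorem specialized to compact-valued multifunctions, so the honest "proof" is the citation to \cite{srivastava1998course}*{Theorem 5.2.5}; what follows is the plan one would use to reconstruct it. First I would reduce to the case that $Y$ is a \emph{complete} separable metric space: replacing $Y$ by its completion $\widehat{Y}$ changes nothing, since each $F(t)$ is compact, hence closed in $\widehat{Y}$, and any selection valued in $F(t)$ automatically lands in $Y$. Fix a complete compatible metric $d$ and a countable dense set $\{y_k\}_{k\in\N}\subset Y$. The only structural input is that measurability of $F$ means $F^{-1}(B(y_k,r)) = \{t : F(t)\cap B(y_k,r)\neq\emptyset\}$ is a measurable subset of $T$ for every $k$ and every rational $r>0$.

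The core of the argument is a successive-approximation scheme: by induction on $n$ I would build measurable maps $f_n : T\to Y$, each taking at most countably many values among the $y_k$, such that $d(f_n(t),F(t)) < 2^{-n}$ and $d(f_{n+1}(t),f_n(t)) < 2^{-n}+2^{-(n+1)}$ for every $t$. Starting from a constant $f_0$, the inductive step works piecewise: on each measurable set where $f_n\equiv y_k$, one enumerates the $y_j$ lying within distance $2^{-n}+2^{-(n+1)}$ of $y_k$ and partitions that set into the measurable pieces $E_j := F^{-1}(B(y_j,2^{-(n+1)}))$, made disjoint by subtracting off the earlier ones, declaring $f_{n+1}\equiv y_j$ on $E_j$. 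Density of $\{y_k\}$ together with $d(f_n(t),F(t))<2^{-n}$ guarantees that these pieces cover, and measurability of each $E_j$ is exactly the hypothesis on $F$.

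Finally, $(f_n(t))_n$ is Cauchy for each $t$, so $f(t):=\lim_n f_n(t)$ exists by completeness, is measurable as a pointwise limit of measurable maps, and satisfies $f(t)\in F(t)$ because $d(f_n(t),F(t))\to 0$ and $F(t)$ is closed. I expect the only delicate point to be the bookkeeping in the inductive step --- arranging the countable partition so that every piece is measurable while the pieces still cover $T$ --- which is precisely where the measurability hypothesis on $F$ is consumed; compactness of the values is not essential (closedness would suffice) but is harmless, and is all that the application in Remark~\ref{rem:msb_sel} requires. In the paper I would present this only as a sketch and refer to \cite{srivastava1998course} for the complete details.
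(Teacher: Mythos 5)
The paper supplies no proof for this lemma; it is stated as a citation to \cite{srivastava1998course}*{5.2.5}, and you correctly recognize this and defer to the reference, so in that sense you and the paper agree. Your reconstruction is the standard Kuratowski--Ryll-Nardzewski successive-approximation argument and the inductive step and limit passage are sound, including the observation that closedness (not compactness) of the values is what is actually used. One small imprecision worth fixing if you were to include the sketch: $f_0$ cannot in general be taken \emph{constant}, since no single point need lie within distance $2^0$ of every $F(t)$; instead $f_0$ must be built exactly as in your inductive step, i.e.\ piecewise on the measurable partition $E_k := F^{-1}(B(y_k,1))\setminus\bigcup_{j<k}F^{-1}(B(y_j,1))$ with $f_0\equiv y_k$ on $E_k$, which covers $T$ by density of $\{y_k\}$ and nonemptiness of $F(t)$. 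With that correction the base case and the inductive step have the same form, and the rest of the argument (Cauchy in $n$, completeness, closedness of $F(t)$) goes through as you describe.
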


We present a corollary of Lemma \ref{lem:msb_sel} in the appendix as \lref{subseq_msb_sel}, to justify the claim that measurable selection of subsequential limits (or alternatively of subsequential limits extremizing a measurable functional) is possible.

Thus, it is possible in principle to select an evolution of profiles, corresponding to the evolution of contact sets. The final and most involved proof in this section will be to show that the selection can be made to satisfy the energy dissipation balance.

\subsection{Energy dissipation lower bound}\label{s.energy_diss_lower_bound}
Here we show that limits of sequences of paths which satisfy an approximately optimal energy dissipation lower bound (EDI) will also satisfy the EDI (and therefore the full dissipation balance as well). This is technically complicated by the fact that motions of the free surface carry no dissipation and therefore there is very little time compactness. As discussed already in \sref{time_compactness_and_msb_sel} the set of globally stable states $\mathcal{M}(\PS)$ with a given contact set $\PS$ may be nontrivial and so a measurable selection needs to occur at the limit level. This is additionally complicated by the fact that the pressure may not be constant on the set $\mathcal{M}(\PS)$, so it is not clear that all measurable selections of the path actually do satisfy the EDI. Instead we show that, by performing a selection of a restricted subset of the minimizers attained along good subsequences, there is at least one selection which does satisfy the EDI.

We state the assumptions of the next lemma in a very general setting, to allow us to apply it both to the discrete scheme and to limits of continuous-time solutions.

\begin{lemma}[Stability of the energy-dissipation balance]\label{l.energy_diss_lower_bound}
Let $[0, T] \ni t \mapsto L_n(t)\in  X^{R_n}_{F_n(t)}$ be a family of Borel-measurable paths, such that
\begin{enumerate}[label=(\roman*)]
    \item\label{hyp:edlb_r}  (Convergence of $R_n$) $R_n, R_\infty\in (1,\infty]$ with $\lim_{n\to\infty}R_n = R_\infty$.
    \item\label{hyp:edlb_f}  (Convergence of $F_n$) The $F_n(t)$ are bounded in $BV[t_0, t_1]$ and converge uniformly to a $C^1$ limit $F_\infty(t)$ (but $F_n$ may be discontinuous).
    \item\label{hyp:edlb_contact}  (Convergence of $\PS_{L_n}$) The family of paths $\Sigma_{L_n}^c(t)$ converges pointwise in time as $n\to\infty$ and has uniformly bounded total dissipation \[ \sup_n \overline{\Diss}(L_n; [0, T]) < \infty.\]
    \item\label{hyp:edlb_profile} (Pointwise compactness of $L_n$) For each $t$, the sequence $(L_n(t))$ is compact in $L^1_{\mathrm{loc}}$, with all subsequential limits contained in $X^{R_\infty}_{F_\infty(t)}$ and globally stable for $\mathcal{E}_{R_\infty}[\cdot, F_\infty(t)]$.
    
    \item\label{hyp:edlb_energy} (Relative energy convergence) For any $t_0, t_1$, and any subsequential limits $L_\infty(t_i)$ of the $L_n(t_i)$ for $i=0,1$, it holds
    \begin{align*}
        &\lim_{n\to\infty} \mathcal{E}_{R_{n}}[L_{n}(t_0); F_{n}(t_0)] - \mathcal{E}_{R_{n}}[L_{n}(t_1); F_{n}(t_1)] 
        \\&= \mathcal{E}_{R_{\infty}}[L_{\infty}(t_0); F_{\infty}(t_0)] - \mathcal{E}_{R_{\infty}}[L_{\infty}(t_1); F_{\infty}(t_1)]
    \end{align*}
    Note that the right-hand-side is independent of the choice of subsequential limit, since all subsequential limits are globally stable for the same contact set, hence have the same energy.
    
    \item\label{hyp:edlb_pressure}(Dominated approximation of pressure-forcing term) There exist functions $Q_n$ on $[0,T]$ which are dominated by an integrable $\hat{Q}$ and satisfy the next two properties. For any $t$ and for any subsequence $L_{n_k}(t)$ which converges in $L^1_\mathrm{loc}$ to some $L_\infty(t)$, it holds that
    \[ \lim_{k\to\infty }Q_{n_k}(t) = P^*_{R_\infty}[L_\infty(t), F_\infty(t)]\dot{F}_\infty(t) \]
    Moreover, the energy dissipation inequality holds in a limiting sense with the $Q_n$ replacing the pressure-forcing term: for any $(t_0, t_1)\subset [0, T]$, it holds that
    \begin{align*}
        \liminf_{k\to \infty} &\left[\mathcal{E}_{R_{n_k}}[L_{n_k}(t_0); F_{n_k}(t_0)] - \mathcal{E}_{R_{n_k}}[L_{n_k}(t_1); F_{n_k}(t_1)] + \int_{t_0}^{t_1} Q_{n_k}(t)dt \right]
        \\&\geq \liminf_{k\to\infty} \overline{\Diss}(L_{n_k}; [t_0, t_1])
    \end{align*}
\end{enumerate}
Under these assumptions, there exists a Borel-measurable selection $L(t)$ of subsequential limits of the $L_n(t)$ such that $L(t)$ is an energy solution as per Definition \ref{def:constrained_energy_sln}.
\end{lemma}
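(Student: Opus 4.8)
By Definition~\ref{def:constrained_energy_sln} the only genuine content is to produce a Borel-measurable path of globally stable profiles along which the energy--dissipation inequality holds; global stability will be essentially free, and the obstacle is that the pressure appearing in the EDI is not determined by the contact set. Here is the plan. For each $t$ let $\mathcal{M}(t)\subset L^1_{\mathrm{loc}}$ denote the set of $L^1_{\mathrm{loc}}$-subsequential limits of $(L_n(t))_n$; by \ref{hyp:edlb_profile} it is nonempty, compact, and every $L\in\mathcal{M}(t)$ lies in $X^{R_\infty}_{F_\infty(t)}$ and is globally stable for $\mathcal{E}_{R_\infty}[\cdot;F_\infty(t)]$. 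Combining \ref{hyp:edlb_contact} with convergence of contact sets of (uniform) almost-minimizers (\lref{conv_of_contact_sets}, \lref{glbl_stbl_cpt}; recall that in all our applications the $L_n(t)$ are globally stable, hence uniform almost-minimizers by \lref{almost_minimality}), every $L\in\mathcal{M}(t)$ has the same plate contact set $\PS_\infty(t):=\lim_n\PS_{L_n}(t)$; and since global stability, together with the dependence of $\Diss$ only on the contact set, forces all globally stable profiles with a given contact set to share one energy, every $L\in\mathcal{M}(t)$ also has the same (finite, by \lref{energy_bdd_below} and \lref{lambda_and_energy_bound}) energy $\mathcal{E}_\infty(t)$ — this is exactly the consistency already noted in \ref{hyp:edlb_energy}. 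Consequently, any path $L(t)$ with $L(t)\in\mathcal{M}(t)$ automatically satisfies Definition~\ref{def:constrained_energy_sln}(i) (the Euler--Lagrange equation coming from \lref{initial_local_regularity}), with $\PS_{L(t)}=\PS_\infty(t)$ and $\mathcal{E}_{R_\infty}[L(t);F_\infty(t)]=\mathcal{E}_\infty(t)$.

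Next I identify the limit of the pressure-forcing functions. Set $\phi(t,L):=P^*_{R_\infty}[L;F_\infty(t)]\,\dot F_\infty(t)$; for fixed $t$ this is continuous in $L$ on the globally stable profiles by \lref{pressure_continuous}, and continuous in $t$ since $P^*_{R_\infty}[L;\cdot]$ is affine and $F_\infty\in C^1$, so $\phi$ is Carath\'eodory. I claim that for \emph{every} $t$,
\[ \limsup_{n\to\infty}Q_n(t)=\max_{L\in\mathcal{M}(t)}\phi(t,L)=:p(t). \]
For ``$\le$'', pick $n_k$ with $Q_{n_k}(t)\to\limsup_n Q_n(t)$, then by \ref{hyp:edlb_profile} a further subsequence with $L_{n_{k_j}}(t)\to L_*\in\mathcal{M}(t)$, whence $Q_{n_{k_j}}(t)\to\phi(t,L_*)\le p(t)$ by \ref{hyp:edlb_pressure}; for ``$\ge$'', choose $L_{\max}\in\mathcal{M}(t)$ attaining $p(t)$ and a subsequence $L_{n_k}(t)\to L_{\max}$, along which $Q_{n_k}(t)\to\phi(t,L_{\max})=p(t)$ by \ref{hyp:edlb_pressure}. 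Since $-\hat Q\le Q_n\le\hat Q$ we get $|p|\le\hat Q$, so $p\in L^1$, and $p=\limsup_n Q_n$ is measurable.

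Now apply \lref{subseq_msb_sel} (measurable selection of subsequential limits extremizing a measurable functional) to the multifunction $\mathcal{M}(\cdot)$ and the Carath\'eodory functional $\phi$: this yields a Borel-measurable $L:[0,T]\to L^1_{\mathrm{loc}}$ with $L(t)\in\mathcal{M}(t)$ and $\phi(t,L(t))=p(t)$ for all $t$; this is the candidate energy solution, and global stability holds as above. To verify the EDI, fix $0\le t_0<t_1\le T$. By \ref{hyp:edlb_energy}, $\mathcal{E}_{R_n}[L_n(t_0);F_n(t_0)]-\mathcal{E}_{R_n}[L_n(t_1);F_n(t_1)]\to\mathcal{E}_\infty(t_0)-\mathcal{E}_\infty(t_1)$; by \ref{hyp:edlb_contact} and continuity of $\Diss$ in $L^1$, $\Diss(\PS_{L_n}(t_0),\PS_{L_n}(t_1))\to\Diss(\PS_\infty(t_0),\PS_\infty(t_1))=\Diss(L(t_0),L(t_1))$, hence $\liminf_n\overline{\Diss}(L_n;[t_0,t_1])\ge\Diss(L(t_0),L(t_1))$ using $\overline{\Diss}(L_n;[t_0,t_1])\ge\Diss(\PS_{L_n}(t_0),\PS_{L_n}(t_1))$. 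Feeding these into the limiting EDI of \ref{hyp:edlb_pressure} and using that the energy difference converges,
\[ \mathcal{E}_\infty(t_0)-\mathcal{E}_\infty(t_1)+\liminf_{n\to\infty}\int_{t_0}^{t_1}Q_n\,dt\ \ge\ \Diss(L(t_0),L(t_1)). \]
By the reverse Fatou lemma ($Q_n\le\hat Q\in L^1$) and the previous paragraph,
\[ \liminf_{n\to\infty}\int_{t_0}^{t_1}Q_n\,dt\ \le\ \int_{t_0}^{t_1}\limsup_{n\to\infty}Q_n\,dt\ =\ \int_{t_0}^{t_1}p(t)\,dt\ =\ \int_{t_0}^{t_1}P^*_{R_\infty}[L(t);F_\infty(t)]\,\dot F_\infty(t)\,dt. \]
Combining the last two displays and recalling $\mathcal{E}_\infty(t_i)=\mathcal{E}_{R_\infty}[L(t_i);F_\infty(t_i)]$ gives exactly \eqref{e.constrained_edi} with the weak pressure $P^*_{R_\infty}$ (and, when $R_\infty=\infty$, the analogue for Definition~\ref{def:limiting_energy_sln}), so $L(t)$ is an energy solution.

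The main obstacle, as flagged in Remark~\ref{rem:msb_sel}, is the selection itself: $\mathcal{M}(t)$ may be genuinely multivalued and $P^*$ is \emph{not} constant on it, so a generic measurable selection need not satisfy the EDI. The point is that the \emph{pressure-maximizing} selection is the right one — it is precisely what makes $\int_{t_0}^{t_1}P^*_{R_\infty}[L(t);F_\infty(t)]\dot F_\infty$ dominate $\liminf_n\int_{t_0}^{t_1}Q_n$ via reverse Fatou — provided one first establishes the pointwise identity $\limsup_n Q_n(t)=\max_{L\in\mathcal{M}(t)}\phi(t,L)$ (from \ref{hyp:edlb_profile} and \ref{hyp:edlb_pressure}) and then verifies that such an extremizing selection can be made Borel-measurable, which is the content of \lref{subseq_msb_sel}.
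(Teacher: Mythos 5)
Your proof is correct, and the central idea — select at each time the subsequential limit that \emph{maximizes} the pressure-forcing term, then show that this makes the $Q_n$-approximation of the pressure-forcing integral dominated by the limiting one — is exactly the idea of the paper's proof. Your characterization $\limsup_n Q_n(t) = \max_{L\in\mathcal{M}(t)}\phi(t,L)$ is an explicit way of packaging what the paper obtains by restricting to a subsequence achieving $\limsup_n Q_n(t)$ and invoking compactness to extract $\overline L(t)$; the ``$\ge$'' half of your claim (which shows the max is genuinely attained along a subsequence of $Q_n$) is not strictly needed in the paper's version but does make the selection step more transparent.

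Where the two arguments part company is the final verification. You go directly for the energy--dissipation inequality \eqref{e.constrained_edi}: you combine hypothesis \ref{hyp:edlb_pressure} with $\liminf_n\overline{\Diss}(L_n;[t_0,t_1])\ge\Diss(L(t_0),L(t_1))$ and then bound $\liminf_n\int Q_n$ from above by $\int\limsup_n Q_n$ via reverse Fatou. This suffices since Definition~\ref{def:constrained_energy_sln} only requires the EDI, and it avoids two of the paper's ingredients. The paper instead establishes the stronger energy--dissipation \emph{balance}: it also invokes the reverse EDI, \lref{reverse_edi}, to bound the limiting path's EDI from the other side, obtaining the sharper inequality $\int_{t_0}^{t_1}\limsup_n Q_n\le\liminf_n\int_{t_0}^{t_1} Q_n$ (note the direction, which is opposite to what Fatou alone gives); that sharper inequality feeds into \lref{limsup_liminf_integral} to deduce convergence in measure of $Q_n$ to $\limsup_n Q_n$, whence dominated convergence turns the entire chain of inequalities into equalities. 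What the paper's route buys is the energy-dissipation balance for the selected path, not just the inequality, though the latter is all the lemma's conclusion demands. So your proof is sound and achieves the stated goal by a tighter path; it simply leaves the balance (which the paper records en route) to be recovered afterwards via \lref{reverse_edi} as in the Remark following Definition~1.1.

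One small point worth making explicit for measurability: \lref{subseq_msb_sel} as stated fixes a \emph{constant} level $a$, while you need $\phi(t,L)=p(t)$ with $p$ depending on $t$. This is handled by applying the lemma with the measurable functional $\tilde F(t,L):=\phi(t,L)-p(t)$ and $a=0$ (noting $p=\limsup_n Q_n$ is measurable and $\phi$ is Carath\'eodory); the paper's proof implicitly does the same. This is not a gap, but it is worth stating.
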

\begin{proof}
For any selection of subsequential limits $L$, we have that $L(t)\in X^{R_\infty}_{F_\infty(t)}$ and $L(t)$ is globally stable for $\mathcal{E}_{R_\infty}[\cdot, F_\infty(t)]$ by assumption. Thus, our task is to show that a measurable selection exists which satisfies the energy dissipation balance
\[ \mathcal{E}_R[L(t_0); F(t_0)] - \mathcal{E}_R[L(t_1); F(t_1)] + \int_{t_0}^{t_1} P^*_{R}[L(t); F(t)]\dot{F} = \overline{\Diss}(L; [t_0, t_1]). \]
We check this following the proof of \cite{AlbertiDeSimone}*{Theorem 4.3(viii)}. For each time $t$, we restrict to a subsequence along which $\limsup_n Q_n(t)$ is achieved and use the compactness assumption to identify a corresponding limiting profile $\overline{L}(t)$. By Lemma \ref{lem:msb_sel} and \lref{subseq_msb_sel}, the path $\overline{L}(t)$ can be selected to be Borel measurable. From assumption \ref{hyp:edlb_pressure}, we conclude $\limsup_n Q_n(t) = P^*_{R_\infty}[\overline{L}(t); F_\infty(t)]\dot{F}_\infty$.

By \lref{reverse_edi}, the reverse energy dissipation inequality for measurable paths of globally stable states, we obtain
    \[ \mathcal{E}_R[\overline{L}(t_0); F(t_0)] - \mathcal{E}_R[\overline{L}(t_1); F(t_1)] + \int_{t_0}^{t_1} P^*_{R}[\overline{L}(t); F(t)]\dot{F} \leq \overline{\Diss}(\overline{L}; [t_0, t_1]). \]
We deduce
    \begin{align}\label{eq:edlb_computation}
        \mathcal{E}_{R_\infty}&[\overline{L}(t_0); F_\infty(t_0)] - \mathcal{E}_{R_\infty}[\overline{L}(t_1); F_\infty(t_1)] + \int_{t_0}^{t_1} P^*_{R_\infty}[\overline{L}(t); F_\infty(t)]\dot{F}_\infty \notag
        \\&\leq \overline{\Diss}(\overline{L}; [t_0, t_1]) \notag
        \\&\leq \liminf_n \overline{\Diss}(L_n, [t_0, t_1]) \notag
        \\&\leq \liminf_n\left[\mathcal{E}_{R_n}[L_n(t_0); F_n(t_0)] - \mathcal{E}_{R_n}[L_n(t_1); F_n(t_1)] + \int_{t_0}^{t_1} Q_n(t)dt\right] \notag
        \\&= \mathcal{E}_{R_\infty}[\overline{L}(t_0); F_\infty(t_0)] - \mathcal{E}_{R_\infty}[\overline{L}(t_1); F_\infty(t_1)] + \liminf_n\int_{t_0}^{t_1} Q_n(t)
    \end{align}
Here, the first inequality comes from the previous step, the second inequality uses lower semicontinuity of the total dissipation and assumption \ref{hyp:edlb_contact}, the third inequality uses assumption \ref{hyp:edlb_pressure}, and the fourth inequality uses assumption \ref{hyp:edlb_energy}. We obtain
\[ \int_{t_0}^{t_1} P^*_{R_\infty}[\overline{L}(t); F_\infty(t)]\dot{F}_\infty = \int_{t_0}^{t_1}\limsup_n Q_n(t) \leq \liminf_n \int_{t_0}^{t_1} Q_n(t) \]
Consequently, we can apply the elementary result \lref{limsup_liminf_integral} to get that $Q_N\to \limsup_n Q_n$ in measure as $N\to\infty$, since assumption \ref{hyp:edlb_pressure} provides that the $Q_N$ are dominated. The energy dissipation balance for $\overline{L}(t)$ then follows from \eqref{eq:edlb_computation}, after replacing $\liminf_n \int_{t_0}^{t_1} Q_n$ by its value $\int_{t_0}^{t_1} P^*_{R_\infty}[\overline{L}(t); F_\infty(t)]\dot{F}_\infty$ by dominated convergence theorem.

\end{proof}

\section{Existence of minimizing movements energy solutions}\label{s.discrete_scheme_and_convergence}~

We now consider a discrete-time scheme for the evolution. Fix $\delta > 0$, and let $P_\delta = \{ t_0,\dots, t_N\}$ be a partition of times with $0 = t_0 < \dots < t_N = T$ and $0 < t_{i+1} - t_i < \delta$ for each $i$. Given the initial shape $L(t_0)$, we choose the subsequent elements of the discrete scheme for partition $P_\delta$ by:
\begin{equation}\label{e.stepwise-min}
L_\delta(t_{i+1})\in \operatorname*{argmin}_{L\in X^R_{F(t_{i+1})}} \bigg\{ \mathcal{E}_R[L; F(t_{i+1})] + \Diss[L_\delta(t_i), L] \bigg\}
\end{equation}
Recall that we have shown existence of minimizers for this functional in \lref{existence_of_minimizer} and that minimizers are automatically globally stable profiles. We extend this to a function of time via discontinuous interpolation:
\begin{equation}\label{eq:discrete_interp}
L_\delta(t) = L_\delta(t_{i}) \hbox{ for }t\in [t_i,t_{i+1})
\end{equation}
We will derive some basic estimates for the discrete scheme which show that we can send $\delta\to 0$ and obtain a limiting evolution which satisfies Definition \ref{def:constrained_energy_sln}. Our main result is:

\begin{corollary}\label{c.compactness_of_discrete_schemes}
Up to a subsequence $\delta_n\to 0$, we have that $\PS_{L_{\delta_n}(t)}$ converges in $L^1(d\mathcal{H}^{d-1})$ for each $t$, to a limit which we denote $\PS_{L(t)}$. For $L(t)$ the minimal energy profile associated to the contact set $\PS_{L(t)}$, we have that $L(t)$ is an energy solution as in Definition \ref{def:constrained_energy_sln}.
\end{corollary}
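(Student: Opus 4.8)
The strategy is to verify the hypotheses \ref{hyp:edlb_r}--\ref{hyp:edlb_pressure} of \lref{energy_diss_lower_bound} for the family of piecewise-constant interpolants $L_\delta(t)$ from \eqref{e.stepwise-min}--\eqref{eq:discrete_interp}, and then quote that lemma. Hypotheses \ref{hyp:edlb_r} and \ref{hyp:edlb_f} are automatic here, since $R$ and $F$ do not depend on $\delta$ and $F\in C^1([0,T])$ by standing assumption. The heart of the argument is a discrete energy--dissipation estimate, which simultaneously supplies the uniform dissipation bound behind \ref{hyp:edlb_contact} and the limiting inequality behind \ref{hyp:edlb_pressure}.

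To obtain the discrete estimate, at step $i+1$ I would use $\Phi_{F(t_{i+1})-F(t_i)}(L_\delta(t_i))$ as a competitor in \eqref{e.stepwise-min}, where $\Phi_s=I+s\mathcal{V}$ is the volume-tracking flow of \pref{energy_taylor_exp_alt}. By \lref{volume-fix-flow-alt} this competitor lies in $X^R_{F(t_{i+1})}$, and since $\mathcal{V}$ vanishes near $\partial B_1$ the flow does not move the plate contact set, so $\Diss[L_\delta(t_i),\Phi_{F(t_{i+1})-F(t_i)}(L_\delta(t_i))]=0$. Combining this with the Taylor expansion of \pref{energy_taylor_exp_alt} (valid because globally stable profiles are vertically bounded, with $h_0$ uniform, per the remark following that proposition) and $F(t_{i+1})-F(t_i)=\dot F(t_i)(t_{i+1}-t_i)+O((t_{i+1}-t_i)^2)$ yields
\[\mathcal{E}_R[L_\delta(t_{i+1});F(t_{i+1})]+\Diss[L_\delta(t_i),L_\delta(t_{i+1})]\leq\mathcal{E}_R[L_\delta(t_i);F(t_i)]+P^*_R[L_\delta(t_i);F(t_i)]\dot F(t_i)(t_{i+1}-t_i)+O((t_{i+1}-t_i)^2).\]
The quadratic remainders are uniform in $\delta$ and $i$: the local perimeter bound \lref{lambda_and_energy_bound} bounds $|\FS_{L_\delta(t_i)}\cap\Cyl_{R_0}|$, the height bound \lref{width-bound} bounds $h_0$, and $\dot F$ is bounded, so over the at most $T/\delta$ steps their sum is $O(\delta)$. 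Telescoping and rearranging gives, for partition points $s_0\le s_1$,
\[\mathcal{E}_R[L_\delta(s_0);F(s_0)]-\mathcal{E}_R[L_\delta(s_1);F(s_1)]+\sum_{s_0\le t_i<s_1}P^*_R[L_\delta(t_i);F(t_i)]\dot F(t_i)(t_{i+1}-t_i)+O(\delta)\geq\overline{\Diss}(L_\delta;[s_0,s_1]).\]
With $s_0=0$, $s_1=T$, the initial energy being finite by assumption on the data, the energy lower bound (\lref{energy_bdd_below}), and the uniform pressure bound from the remark after \pref{energy_taylor_exp_alt}, we get $\sup_\delta\overline{\Diss}(L_\delta;[0,T])<\infty$. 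Since each $L_\delta(t_i)$ is globally stable, \lref{width-bound} and \tref{length_bound} give the uniform vertical and length bounds on the contact lines, so \lref{helly} produces a subsequence $\delta_n\to0$ along which the plate contact sets $\PS_{L_{\delta_n}(t)}$ converge pointwise in time; this is \ref{hyp:edlb_contact}.

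Hypotheses \ref{hyp:edlb_profile} and \ref{hyp:edlb_energy} follow by applying \lref{glbl_stbl_cpt} at each fixed $t$ to the globally stable profiles $L_{\delta_n}(t)$, whose plate contact sets converge: this yields $L^1_{\mathrm{loc}}$-precompactness, and membership in $X^R_{F(t)}$, global stability, and a common (relative) energy for every subsequential limit. For \ref{hyp:edlb_pressure} I would take $Q_n(t)=P^*_R[L_{\delta_n}(t);F(\tau^n(t))]\dot F(\tau^n(t))$, where $\tau^n(t)$ is the largest point of $P_{\delta_n}$ not exceeding $t$; these are uniformly bounded (pressure bound from the remark after \pref{energy_taylor_exp_alt}, times $\|\dot F\|_\infty$), hence dominated by a constant. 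Along any subsequence with $L_{\delta_{n_k}}(t)\to L_\infty(t)$ in $L^1_{\mathrm{loc}}$, continuity of $F,\dot F$ gives $F(\tau^{n_k}(t))\to F(t)$, $\dot F(\tau^{n_k}(t))\to\dot F(t)$, while \lref{pressure_continuous} gives $P^*_R[L_{\delta_{n_k}}(t);F(\tau^{n_k}(t))]\to P^*_R[L_\infty(t);F(t)]$, so $Q_{n_k}(t)\to P^*_R[L_\infty(t);F(t)]\dot F(t)$. The limiting energy--dissipation inequality required by \ref{hyp:edlb_pressure} follows from the telescoped estimate once one checks that for arbitrary $t_0\le t_1$ the integral $\int_{t_0}^{t_1}Q_n$ differs from the partition sum $\sum P^*\dot F(t_{i+1}-t_i)$ by $O(\delta_n)$ (only the two boundary cells contribute), the energies at the partition points nearest $t_0,t_1$ differ from those at $t_0,t_1$ by $O(\delta_n)$ (using the uniform Lipschitz dependence of $\mathcal{E}_R$ on $F$ along the scheme, itself a consequence of \lref{width-bound} and \tref{length_bound}), and $\overline{\Diss}(L_{\delta_n};[t_0,t_1])$ equals the dissipation over the enclosed partition steps because $L_{\delta_n}$ jumps only at partition points. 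Then \lref{energy_diss_lower_bound} delivers a Borel-measurable selection $L(t)$ of subsequential limits satisfying Definition \ref{def:constrained_energy_sln}; since any two globally stable profiles sharing a plate contact set have mutually vanishing dissipation, global stability forces them to have equal energy, so this $L(t)$ is (a representative of) the minimal-energy profile associated with $\PS_{L(t)}$, as claimed.

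The main obstacle is \ref{hyp:edlb_pressure}: the discretized forcing term $Q_n$ must be chosen so as to both converge to the right pressure along $L^1_{\mathrm{loc}}$-convergent subsequences and reproduce the discrete energy--dissipation inequality up to $o(1)$ errors, and the deeper reason one cannot simply select subsequential limits pointwise in time is that $P^*_R$ need not be constant on the fiber of globally stable profiles over a fixed contact set — this is exactly why the $Q_n$-extremizing selection is built into \lref{energy_diss_lower_bound}. A secondary, purely book-keeping difficulty is controlling the $O(\delta_n)$ errors from partition points misaligned with $t_0,t_1$, which is where the uniform geometric bounds \lref{width-bound} and \tref{length_bound} enter a second time. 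For $R=\infty$ the same scheme works, replacing the height bound and the energy-integrability arguments by the exponentially-decaying-height and dominated-convergence versions used in the proof of \lref{glbl_stbl_cpt}.
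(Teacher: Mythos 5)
Your proof is correct and follows essentially the same structure as the paper's: establish a discrete energy--dissipation estimate via the flowed competitor $\Phi_{F(t_{i+1})-F(t_i)}(L_\delta(t_i))$ in \eqref{e.stepwise-min}, extract dissipation and contact-set compactness via \lref{helly}, use \lref{glbl_stbl_cpt} for pointwise profile compactness and energy convergence, and feed everything into \lref{energy_diss_lower_bound} with a suitable $Q_n$. The one place you deviate is worth flagging: the paper's \lref{discrete_scheme_energy_bound} runs a discrete Gr\"onwall argument because it bounds the Taylor remainder constant and the pressure by the energy itself (via the coercivity of \lref{energy_bdd_below}), whereas you observe that for globally stable states the local perimeter bound and width bound of \lref{lambda_and_energy_bound} and \lref{width-bound} already give a uniform-in-$L$ control of both, so the telescoped remainder is directly $O(\delta)$ and Gr\"onwall is unnecessary. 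This is a genuine simplification and is fully justified by the remark following \pref{energy_taylor_exp_alt}. Two small imprecisions that don't affect the conclusion: (a) the substitution $F(t_{i+1})-F(t_i)=\dot F(t_i)(t_{i+1}-t_i)+O((t_{i+1}-t_i)^2)$ requires more than $F\in C^1$; for $C^1$ forcing the error is only $o(t_{i+1}-t_i)$ uniformly, which still sums to $o(1)$, and the paper sidesteps this entirely by keeping the raw difference quotient in its $Q_\delta$; (b) the quantity genuinely controlled by the local perimeter bound is $|\FS_{L}\cap(\mathrm{spt}(\nabla_x V)\times\R)|$ (compactly contained in the open annulus), not the literal $|\FS_{L}\cap\Cyl_{R_0}|$ written in the Taylor remainder, though this is how the paper reads the estimate as well.
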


To prove this, we use the results of the previous section to verify global stability and energy dissipation balance for the limit. This requires the Taylor expansion of \pref{energy_taylor_exp_alt} to show that the discrete scheme obeys a discrete version of the energy-dissipation balance, which converges to the expected balance as $\delta\to 0$.

\begin{lemma}\label{l.discrete_scheme_energy_bound}
We have the total dissipation bound
\begin{align}\label{e.discrete_scheme_energy_diss_gronwall}
    \mathcal{E}_R&[L(t_{N_1}); F(t_{N_1})] - \mathcal{E}_R[L(t_0); F(t_0)] + \sum_{i=0}^{N_1} \Diss[L_\delta(t_i), L_\delta(t_{i+1})] \notag
    \\&\leq A_0[F]_{BV[t_0, t_{N_1}]}e^{C[F]_{BV[t_0, t_{N_1}]}}
\end{align}
for constants $A_0, C$ depending on $L_\delta(t_0)$ and $\osc F$.

In particular, we conclude the energy bound
\begin{equation}\label{e.discrete_scheme_energy_only_gronwall}
    \sup_{0 \leq i \leq N_1} (C_0 + \mathcal{E}_R[L(t_i); F(t_i)]) \leq A_0 e^{C[F]_{BV[t_0, t_{N_1}]}}
\end{equation}
where $C_0$ is an additive constant similar to the one appearing in \lref{energy_bdd_below}, such that $\mathcal{E}_R + C_0 > 0$.

\end{lemma}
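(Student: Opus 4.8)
\emph{Proof plan.} The idea is a discrete Gronwall estimate on the quantities $\mathcal{G}_i := 1 + C_0 + \mathcal{E}_R[L_\delta(t_i); F(t_i)]$, where $C_0$ is the additive constant of \lref{energy_bdd_below} (so $\mathcal{G}_i \geq 1$); write $\Delta_i F := F(t_{i+1}) - F(t_i)$ and $\Diss_i := \Diss[L_\delta(t_i), L_\delta(t_{i+1})]$. The core of the proof is the one-step inequality
\[ \mathcal{G}_{i+1} + \Diss_i \;\leq\; \mathcal{G}_i\, e^{C|\Delta_i F|}, \]
with $C$ depending only on the parameters of \sref{ellipticity-hypothesis}, on $R_0$, on $g$, and on $\osc F$. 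Granting this, iteration finishes everything: dropping $\Diss_i \geq 0$ gives $\mathcal{G}_k \leq \mathcal{G}_0 \exp\!\big(C\sum_{i<k}|\Delta_i F|\big) \leq \mathcal{G}_0\, e^{C[F]_{BV[t_0,t_{N_1}]}}$ for every $k \leq N_1$, which is \eqref{e.discrete_scheme_energy_only_gronwall} with $A_0 = \mathcal{G}_0$, a quantity depending only on $L_\delta(t_0)$; while summing the one-step inequality over $i$ and telescoping the $\mathcal{G}_{i+1}$ terms gives
\[ \mathcal{E}_R[L(t_{N_1}); F(t_{N_1})] - \mathcal{E}_R[L(t_0); F(t_0)] + \sum_{i} \Diss_i = \mathcal{G}_{N_1} - \mathcal{G}_0 + \sum_i \Diss_i \;\leq\; \sum_i \mathcal{G}_i\big(e^{C|\Delta_i F|}-1\big), \]
and bounding $\mathcal{G}_i \leq \mathcal{G}_0 e^{C[F]_{BV}}$, $e^{x}-1 \leq x e^x$, $|\Delta_i F| \leq \osc F$, and $\sum_i |\Delta_i F| \leq [F]_{BV}$ puts the right-hand side at most $C\mathcal{G}_0 [F]_{BV} e^{C(1+\osc F)[F]_{BV}}$, which, since $\osc F \leq [F]_{BV}$, is of the claimed form $A_0 [F]_{BV} e^{C[F]_{BV}}$.

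To prove the one-step inequality I would test the minimality of $L_\delta(t_{i+1})$ against a transported competitor built from $L_\delta(t_i)$. Let $\Phi_s = I + s\mathcal{V}$ be the volume-preserving vertical flow of \pref{energy_taylor_exp_alt}, chosen with $V \equiv 1$ outside $B_{R_0}$ so that it is admissible for every $R \in (R_0, \infty]$, and set $\widetilde{L}_i := \Phi_{\Delta_i F}(L_\delta(t_i))$. By \lref{volume-fix-flow-alt}, $\widetilde{L}_i \in X^R_{F(t_{i+1})}$; and since $\mathcal{V}$ vanishes near $\partial B_1$ the flow fixes the liquid-plate contact set, so $\PS_{\widetilde{L}_i} = \PS_{L_\delta(t_i)}$ and hence $\Diss[L_\delta(t_i), \widetilde{L}_i] = 0$. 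Then \eqref{e.stepwise-min} together with \eqref{e.diss_triangle_inequality} gives
\[ \mathcal{E}_R[L_\delta(t_{i+1}); F(t_{i+1})] + \Diss_i \;\leq\; \mathcal{E}_R[\widetilde{L}_i; F(t_{i+1})]. \]
Since $\{\Phi_s\}$ is a one-parameter group (as $\mathcal{V}\circ\Phi_s = \mathcal{V}$) and $L_\delta(t_i)$ is globally stable — hence vertically bounded by \lref{width-bound} — \pref{energy_taylor_exp_alt} applies along the whole transport and gives $\tfrac{d}{ds}\mathcal{E}_R[\Phi_s(L_\delta(t_i)); F(t_i)+s] = P^*_R[\Phi_s(L_\delta(t_i)); F(t_i)+s]$. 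From the pressure formula \eqref{e.pressure_with_r0}, the height bound \lref{width-bound} (valid for $L_\delta(t_i)$, and for $\Phi_s(L_\delta(t_i))$ with a loss of size $|s|\,\|V\|_{C^0}$), and the coercivity \lref{energy_bdd_below} (which bounds the local free-surface area $|\FS\cap\Cyl_{R_0}|$ by $C(\mathcal{E}_R + C_0)$), one obtains $|P^*_R[\Phi_s(L_\delta(t_i)); F(t_i)+s]| \leq C\big(1 + C_0 + \mathcal{E}_R[\Phi_s(L_\delta(t_i)); F(t_i)+s]\big)$. Integrating this differential inequality over $s$ between $0$ and $\Delta_i F$ yields $1 + C_0 + \mathcal{E}_R[\widetilde{L}_i; F(t_{i+1})] \leq \mathcal{G}_i e^{C|\Delta_i F|}$, which combined with the displayed minimality inequality is exactly the one-step bound. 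If $|\Delta_i F|$ exceeds the range of validity $|s| \leq 1/\|\nabla_x V\|_{C^0}$ in \pref{energy_taylor_exp_alt} — possible only at the finitely many large jumps of $F$ — one decomposes the transport into finitely many sub-steps of controlled length and chains the estimate, with no change to the conclusion.

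The step I expect to be the main obstacle is the uniform control of the pressure along the transport: bounding $|P^*_R[\Phi_s(L_\delta(t_i)); F(t_i)+s]|$ by the energy of the current state $\Phi_s(L_\delta(t_i))$, which in general is \emph{not} globally stable. This requires combining the pressure formula \eqref{e.pressure_with_r0} with the height bound of \lref{width-bound} — crucially using its stability under the bounded perturbation $\Phi_s$ — and with the coercivity of \lref{energy_bdd_below} to convert the local free-surface area appearing in the pressure into a bound by $\mathcal{E}_R + C_0$. Everything else (the telescoping, the subdivision needed to handle jumps of $F$, and the passage from the one-step bound to both displayed conclusions) is routine.
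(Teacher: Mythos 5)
Your proof is correct, and it takes essentially the same approach as the paper: test the minimality of $L_\delta(t_{i+1})$ against the vertically transported competitor $\widetilde{L}_i = \Phi_{\Delta_i F}(L_\delta(t_i))$ (which carries zero dissipation from $L_\delta(t_i)$ since $\mathcal{V}$ vanishes near the plate), bound the pressure by the energy via \lref{energy_bdd_below} and \lref{width-bound}, and close with a discrete Gronwall. The minor variation is in how the per-step estimate is closed: the paper applies the Taylor expansion of \pref{energy_taylor_exp_alt} centered at $L_\delta(t_i)$ — first-order pressure term plus a quadratic remainder bounded by $C(C_0 + \mathcal{E}_R)(\Delta_i F)^2$ — and then runs the Gronwall on the resulting sum, whereas you integrate the first-order differential inequality $|\tfrac{d}{ds}\mathcal{E}_R[\Phi_s L; F+s]| \leq C(C_0 + \mathcal{E}_R[\Phi_s L; F+s])$ along the full transport to obtain the multiplicative factor $e^{C|\Delta_i F|}$ directly. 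Your route requires bounding the pressure at the intermediate states $\Phi_s L_\delta(t_i)$, which are admissible and finite-energy but not globally stable; you correctly identify this as the main point and address it by observing that coercivity (\lref{energy_bdd_below}) holds for any admissible state and that the height bound transfers from $L_\delta(t_i)$ to $\Phi_s L_\delta(t_i)$ with a loss $O(|s|\|V\|_{C^0})$, which is absorbed into $\osc F$. Two cosmetic points: the reference to the dissipation triangle inequality \eqref{e.diss_triangle_inequality} is not needed — direct minimality against $\widetilde{L}_i$ with $\Diss[L_\delta(t_i),\widetilde{L}_i]=0$ already gives the displayed inequality; and the remark about subdividing when $|\Delta_i F| > 1/\|\nabla_x V\|_{C^0}$ is unnecessary in your formulation, since the differential-inequality route uses the group property $\Phi_{s'}\circ\Phi_s = \Phi_{s+s'}$ to express the derivative at each $s$ as the pressure at $\Phi_s L_\delta(t_i)$, with no size restriction on $s$.
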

\begin{proof} 
For $L_\delta(t)$ as in \eqref{e.stepwise-min}, \eqref{eq:discrete_interp}, write
\[ d_i = \mathcal{E}_R[L_\delta(t_{i+1}); F(t_{i+1})] + \Diss[L_\delta(t_i), L_\delta(t_{i+1})] - \mathcal{E}_R[L_\delta(t_i); F(t_i)] \]
We use \pref{energy_taylor_exp_alt} to obtain
\begin{align*}
    \sum_{i=0}^{N-1} \Diss[L_\delta(t_i), L_\delta(t_{i+1})] &= \mathcal{E}_R[L_\delta(t_0); F(t_0)] - \mathcal{E}_R[L_\delta(t_N); F(t_N)] + \sum_{i=0}^{N-1} d_i
    \\&\leq \mathcal{E}_R[L_\delta(t_0); F(t_0)] - \mathcal{E}_R[L_\delta(t_N); F(t_N)] 
    \\&\quad+ \sum_{i=0}^{N-1} P_R^*[L_\delta(t_i); F(t_i)](F(t_{i+1}) - F(t_i)) 
    \\&+ C(C_0 + \mathcal{E}_R[L_\delta(t_i); F(t_i)])(F(t_{i+1}) - F(t_i))^2
\end{align*}
Here, $C$ is the constant in front of the remainder term in the proposition, which is uniformly bounded as explained in the remark after the proposition. The energy is being used with an additive constant $C_0$ from the coercivity result \lref{energy_bdd_below} to control the term $|\FS_{L_0}\cap C_{R_0}|$ from the proposition. For conciseness, we write $\widetilde{\mathcal{E}}_R[L; F] := C_0 + \mathcal{E}_R[L;F]$ for the rest of the proof.

The pressure $P^*_R[L_\delta(t_i); F(t_i)]$ can also be bounded by $C\widetilde{\mathcal{E}}_R[L_\delta(t_i); F(t_i)])$ for a different $C$, so we merge together to get
\begin{align}
     &\widetilde{\mathcal{E}}_R[L_\delta(t_N); F(t_N)]-\widetilde{\mathcal{E}}_R[L_\delta(t_0); F(t_0)]+\sum_{i=0}^{N-1} \Diss[L_\delta(t_i), L_\delta(t_{i+1})] \notag\\
     & \qquad \leq   \sum_{i=0}^{N-1} C(1+\osc F)\widetilde{\mathcal{E}}_R[L_\delta(t_i); F(t_i)]|F(t_{i+1}) - F(t_i)| \label{e.pre-gronwall}
\end{align}
From here, we use Gronwall-type arguments to bound the energy and the total dissipation separately. First, for the energy, we have
\begin{align*}
    \widetilde{\mathcal{E}}_R[L_\delta(t_N); F(t_N)] &\leq \widetilde{\mathcal{E}}_R[L_\delta(t_0); F(t_0)] \\&\quad+ \sum_{i=0}^{N-1} C(1+\osc F)\widetilde{\mathcal{E}}_R[L_\delta(t_i); F(t_i)](F(t_{i+1}) - F(t_i))
\end{align*}
Let $f(n) = \widetilde{\mathcal{E}}_R[L_\delta(t_n); F(t_n)]$. We have shown $f > 0$ with
\[ f(n) \leq f(0) + C(1+\osc F)\sum_{i=0}^{n-1} f(i)|F(t_{i+1}) - F(t_i)| \]
We recall the discrete-time version of Gr\"onwall's inequality: we have $f(0) \leq f(0)$, and inductively, if 
\[ f(n) \leq f(0)\prod_{i=0}^{n-1} \left(1 + C(1+\osc F)|F(t_{i+1}) - F(t_i)|\right) \]
then
\begin{align*}
    &f(n+1) 
    \\&\leq f(0) + C(1+\osc F)\sum_{k=0}^{n} f(k)|F(t_{k+1}) - F(t_k)|
    \\&\leq f(0)\left(1 + \sum_{k=0}^n |F(t_{k+1})-F(t_k)|\prod_{i=0}^{k-1}  \left(1 + C(1 + \osc F)|F(t_{i+1}) - F(t_i)|\right)\right)
    \\&\leq f(0)\left(\prod_{k=0}^n \left(1 + C(1 + \osc F)|F(t_{k+1}) - F(t_k)|\right)\right)
\end{align*}
where the final inequality follows directly from matching terms. Using that $\prod (1 + a_k) \leq e^{\sum a_k}$ for $a_k \geq 0$, we conclude
\begin{align*}
    f(n) &\leq f(0)\exp\left(C(1+\osc F)\sum_{i=0}^{n-1}|F(t_{i+1}) - F(t_i)|\right)
    \\&\leq f(0)\exp\left(C(1+\osc F)[F]_{BV[t_0, t_{n}]}\right)
\end{align*}
This proves \eref{discrete_scheme_energy_only_gronwall}. To prove \eref{discrete_scheme_energy_diss_gronwall}, we then substitute the energy bound into the right-hand-side of \eref{pre-gronwall} to get
\begin{align*}
     &\mathcal{E}_R[L_\delta(t_N); F(t_N)]-\mathcal{E}_R[L_\delta(t_0); F(t_0)]+\sum_{i=0}^{N-1} \Diss[L_\delta(t_i), L_\delta(t_{i+1})] \notag\\
     & \quad \leq (C_0 + \mathcal{E}_R[L_\delta(t_0); F(t_0)])\exp\left(C(1+\osc F)[F]_{BV[t_0, t_{N}]}\right)
     \\&\quad\quad\cdot\sum_{i=0}^{N-1} C(1+\osc F)|F(t_{i+1}) - F(t_i)| 
\end{align*}
which is in the desired form.

\end{proof}

In particular, as a refinement of the Taylor expansions in the previous lemma, we have:
\begin{lemma}[Discrete approximation of pressure-forcing]\label{l.disc_pressure_forcing}
For any $0 < m < n < N$,
\begin{align*}
    &\mathcal{E}_R[L_\delta(t_m); F(t_m)] - \mathcal{E}_R[L_\delta(t_n); F(t_n)] + \sum_{i=m}^{n-1} P^*_R[L_\delta(t_i); F(t_i)](F(t_{i+1}) - F(t_i)) 
    \\&\geq \sum_{i=m}^{n-1} \Diss[L_\delta(t_i), L_\delta(t_{i+1})] - C\sum_{i=m}^{n-1} (F(t_{i+1}) - F(t_i))^2
\end{align*}
In particular, setting
\[ Q_\delta(t) = \sum_{i=0}^{N-1} P^*_R[L_\delta(t_i); F(t_i)]\frac{F(t_{i+1}) - F(t_i)}{t_{i+1} - t_i}\one_{[t_{i}, t_{i+1})}(t) \]
we have for any fixed $t_0, t_1$ that
\begin{align*}
    &\liminf_{\delta\to 0^+} \left[\mathcal{E}_R[L_\delta(t_0); F(t_0)] - \mathcal{E}_R[L_\delta(t_1); F(t_1)] + \int_{t_0}^{t_1} Q_\delta(t)dt\right] 
    \\&\quad\geq \liminf_{\delta\to 0^+}\overline{\Diss}(L_\delta, [t_0, t_1])
\end{align*}
Moreover, for any $t$, along any subsequence $\delta_k\to 0$ such that $L_{\delta_k}(t)$ converges in $L^1_{\mathrm{loc}}$ to some $L$, we have $Q_{\delta_k}(t) \to P^*_R[L; F(t)]\dot{F}(t)$.
\end{lemma}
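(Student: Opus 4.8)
The plan is to read all three assertions off a single per-step estimate, obtained by feeding the volume-correcting flow of \pref{energy_taylor_exp_alt} into the minimality defining the scheme, and then letting $\delta\to 0$.

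\emph{The discrete inequality.} Fix $i$ with $m\le i\le n-1$, set $s_i=F(t_{i+1})-F(t_i)$, and apply \pref{energy_taylor_exp_alt} with $L_0=L_\delta(t_i)$, $F_0=F(t_i)$, a fixed admissible pair $(R_0,\mathcal{V})$, and $L':=\Phi_{s_i}(L_\delta(t_i))$. Since $\mathcal{V}$ vanishes near $\partial B_1$, $L'$ has the same liquid--plate contact set as $L_\delta(t_i)$, whence $\Diss[L_\delta(t_i),L']=0$; and by \lref{volume-fix-flow-alt} it obeys the volume constraint at level $F(t_{i+1})=F(t_i)+s_i$, so $L'\in X^R_{F(t_{i+1})}$. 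Testing \eqref{e.stepwise-min} for $L_\delta(t_{i+1})$ against $L'$ and inserting the Taylor bound (valid once $\delta$ is small),
\[
\mathcal{E}_R[L_\delta(t_{i+1});F(t_{i+1})]+\Diss[L_\delta(t_i),L_\delta(t_{i+1})]\le\mathcal{E}_R[L';F(t_{i+1})]\le\mathcal{E}_R[L_\delta(t_i);F(t_i)]+P^*_R[L_\delta(t_i);F(t_i)]\,s_i+\bar C\,s_i^2 ,
\]
where the remainder $\bar C|\FS_{L_\delta(t_i)}\cap\Cyl_{R_0}|\,s_i^2$ is bounded by $\bar C\,s_i^2$ with $\bar C$ \emph{independent of $\delta$ and $i$}: the restriction $m\ge 1$ makes each $L_\delta(t_i)$, $i\ge1$, a minimizer of \eqref{e.stepwise-min}, hence globally stable (\lref{existence_of_minimizer}), so the local perimeter bound of \lref{lambda_and_energy_bound} and the height bound of \lref{width-bound} apply uniformly. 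Rearranging and summing over $m\le i\le n-1$ telescopes the energy and gives the first display. This per-step estimate is the crux; I expect producing the uniform constant $\bar C$ to be the only point that genuinely requires the \emph{a priori} theory from the earlier sections.

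\emph{The $\liminf$ inequality.} Fix $0\le t_0<t_1\le T$; assume $t_0>0$ and $t_1<T$ (the endpoints need only cosmetic changes: at $t=0$ the initial datum may not be globally stable, so test the first step against a vertical translate of it, and the step into $t_N$ is covered by the same per-step estimate). For each $\delta$ let $t_m\le t_0<t_{m+1}$ and $t_n\le t_1<t_{n+1}$ be the nodes of $P_\delta$ at or just below $t_0,t_1$. As $L_\delta$ is the right-continuous piecewise-constant interpolant, $L_\delta(t_0)=L_\delta(t_m)$, $L_\delta(t_1)=L_\delta(t_n)$, and, by the triangle inequality \eqref{e.diss_triangle_inequality}, $\overline{\Diss}(L_\delta;[t_0,t_1])=\sum_{i=m}^{n-1}\Diss[L_\delta(t_i),L_\delta(t_{i+1})]$. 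Apply the first display with this $m,n$; its two sides differ from those of the target inequality only by $O(\delta)$ quantities, namely: the energy discrepancy $\mathcal{E}_R[L_\delta(t_0);F(t_m)]-\mathcal{E}_R[L_\delta(t_0);F(t_0)]$ and its analogue at $t_1$ (controlled because $\mathcal{E}_R[L;\cdot]$ is Lipschitz in $F$ with modulus uniform over globally stable $L$ --- the $F$-dependence enters at most two terms, both tamed by \lref{width-bound} and \lref{lambda_and_energy_bound} --- and $|F(t_m)-F(t_0)|\le\|\dot F\|_\infty\delta$); the two boundary slivers of $\int_{t_0}^{t_1}Q_\delta$ against $\sum_{i=m}^{n-1}P^*_R[\,\cdot\,](F(t_{i+1})-F(t_i))$ (size $O(\delta)$ by the uniform bound on $Q_\delta$); and the summed remainder $\bar C\sum_{i=m}^{n-1}s_i^2\le\bar C\|\dot F\|_\infty^2\,\delta\,(t_1-t_0)$. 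Taking $\liminf_{\delta\to0}$ annihilates every $O(\delta)$ term and leaves the second display.

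\emph{Pointwise convergence of $Q_\delta$.} Fix $t$ and $\delta_k\to0$ with $L_{\delta_k}(t)\to L$ in $L^1_{\mathrm{loc}}$; let $t_i\le t<t_{i+1}$ in $P_{\delta_k}$, so $L_{\delta_k}(t)=L_{\delta_k}(t_i)$ and $Q_{\delta_k}(t)=P^*_R[L_{\delta_k}(t_i);F(t_i)]\cdot\frac{F(t_{i+1})-F(t_i)}{t_{i+1}-t_i}$. Since $t_i,t_{i+1}\to t$, continuity gives $F(t_i)\to F(t)$, and the mean value theorem with continuity of $\dot F$ gives $\frac{F(t_{i+1})-F(t_i)}{t_{i+1}-t_i}\to\dot F(t)$. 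For $t>0$ and $k$ large the profiles $L_{\delta_k}(t_i)$ are globally stable for $\mathcal{E}_R[\cdot;F(t_i)]$, $F(t_i)\to F(t)$, $R$ fixed, so \lref{pressure_continuous} yields $P^*_R[L_{\delta_k}(t_i);F(t_i)]\to P^*_R[L;F(t)]$; for $t=0$ the iterate is the fixed initial datum and the limit is immediate. Multiplying, $Q_{\delta_k}(t)\to P^*_R[L;F(t)]\dot F(t)$. The only items left, all routine, are the endpoint adjustments above and the identification of $\overline{\Diss}$ with that finite sum for the piecewise-constant interpolant.
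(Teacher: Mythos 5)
Your proof is correct and follows essentially the same approach as the paper's: feed the volume-correcting flow of Proposition~\ref{p.energy_taylor_exp_alt} into the minimality defining the scheme, telescope, observe $\sum s_i^2 \to 0$, and invoke Lemma~\ref{l.pressure_continuous} for the pointwise convergence. The only substantive difference is that you obtain uniformity of the remainder constant by invoking global stability of $L_\delta(t_i)$ for $i\geq m\geq 1$ directly (via Lemmas~\ref{l.lambda_and_energy_bound} and~\ref{l.width-bound}) rather than via the Gr\"onwall energy bound from Lemma~\ref{l.discrete_scheme_energy_bound}, and you spell out the endpoint and node-versus-arbitrary-time adjustments which the paper leaves implicit; these are sound refinements of the same argument, not a different route.
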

\begin{proof}
    The first inequality follows directly from the computation in \lref{discrete_scheme_energy_bound}. The inequality of liminfs then follows from the fact that $\sum (F(t_{i+1}) - F(t_i))^2\to 0$ as partition fineness goes to 0 for any continuous BV function. 
    
    Fix a time $t$ and restrict to a subsequence along which $L_{\delta_k}(t)$ converges in $L^1_{\mathrm{loc}}$ to some $L$. We assume enough regularity for $F$ that $F(t_k)\to F(t)$, where $t_k$ is the actual time step of $L_{\delta_k}(t)$. Thus, \lref{pressure_continuous} gives
    \[ P^*_R[L_{\delta_k}(t); F(t_k)] \to P^*_R[L; F(t)] \]
    The convergence $Q_{\delta_k}(t)\to P^*_R[L; F(t)]\dot{F}(t)$ then follows.
\end{proof}

Finally, we prove the main result of this section: compactness of the discrete scheme as $\delta\to 0^+$, with all subsequential limits satisfying the properties of an energy solution as per Definition \ref{def:constrained_energy_sln}.
\begin{proof}[Proof of \cref{compactness_of_discrete_schemes}]
We use the version of Helly's section principle stated in \lref{helly} to obtain compactness. To apply this, we require \lref{discrete_scheme_energy_bound} to bound the energy and total dissipation of the discrete scheme independently of $\delta$. Then, with the energy bound and global stability of the discrete scheme profiles, we can apply \tref{length_bound} and \lref{width-bound} to conclude that the profiles satisfy contact line bounds and height bounds which are also independent of $\delta$. This verifies the hypotheses of the lemma, so given $(\PS_{L_\delta(t)})$, we can find a subsequence $\delta_n\to 0$ and a limiting flow of contact sets $\Sigma^{c}_\infty(t)$ such that $\lim_{n\to\infty}|\PS_{L_{\delta_n}(t)}\Delta \Sigma^{c}_\infty(t)| = 0$ for each $t$. 

This selects the contact sets of the limiting path, but not the profiles $L(t)$ themselves. We recall that by \lref{glbl_stbl_cpt}, the $L_{\delta_n}(t)$ are precompact in $L^1_{\mathrm{loc}}$ as $n\to\infty$ for each $t$, and all subsequential limits are globally stable for $\mathcal{E}_R[\cdot, F(t)]$ with the same energy $\lim_{n \to \infty}\mathcal{E}_R[L_{\delta_n}(t), F_{\delta_n}(t)]$ (and this limit exists along the full sequence). Here $F_\delta(t)$ is the piecewise constant interpolation of $F$ on the partition $P_\delta$ defined similarly to \eqref{e.stepwise-min}.

The existence of an energy solution $L(t)$ with contact set $\PS_{L(t)} = \Sigma^{c}_\infty(t)$ will follows from \lref{energy_diss_lower_bound} after we check its assumptions. We are working with fixed $R$, so \ref{hyp:edlb_r} is trivial. $F$ is sufficently regular for \ref{hyp:edlb_f} to hold. We have already established the convergence of $\PS_{L_{\delta_n}(t)}$ for \ref{hyp:edlb_contact}, the compactness of $L_{\delta_n}(t)$ for \ref{hyp:edlb_profile} and the energy convergence for \ref{hyp:edlb_energy}. Finally, we use \lref{disc_pressure_forcing} and its definition of $Q_{\delta_n}(t)$ to satisfy \ref{hyp:edlb_pressure}.

\end{proof}

\section{Limit as \texorpdfstring{$R \to \infty$}{R goes to infinity}}\label{s.container_limit}

We now turn toward establishing compactness of energy solutions as the outer container radius $R$ is allowed to increase to $\infty$. This is challenging due to the fact that the absolute energy \eqref{eq:energy} diverges with $R$, so an appropriate relativization must be found. To deal with this issue, we first show in Section \ref{s.lagrange-decomp} that the Lagrange multiplier $\lambda$ for a globally stable state has a decomposition $\lambda = a_0R^{-1} + O(R^{1-d})$, where crucially the coefficient of the first term is an explicit constant. In Section \ref{s.L-outer-approx} this allows us to use our viscosity theory to trap the globally stable state between two barriers that are very close, with the large term in the Lagrange multiplier only acting as a constant vertical shift.

Finally in Section \ref{s.limit-R} we apply this to establish asymptotics for the diverging part of the energy with respect to $R$. These are accurate to high enough order to allow us to relativize the energy, send $R\to\infty$, and obtain a limiting evolution with finite relative energy. The last part of the section then verifies that the limiting object is an energy solution for the expected limiting energy \eref{limiting_energy} in our main result \cref{compactness_of_energy_solutions_in_r}.

\subsection{An important decomposition of the Lagrange multiplier}\label{s.lagrange-decomp}

The decomposition of the Lagrange multiplier can be understood through the following formal computation:
\begin{align*}
    \lambda_L &= \frac{1}{|B_R\setminus B_1|}\int_{\FS_L} \lambda_L e_z\cdot \nu^f_L
    \\&= \frac{1}{|B_R\setminus B_1|}\int_{\FS_L} (-H_{\FS_L} + g(z-F))e_z\cdot \nu^f_L
    \\&= \frac{1}{|B_R\setminus B_1|}\left(\int_{\PL_L} \eta^p\cdot e_z + \int_{\CL_L} \eta^o\cdot e_z\right)
\end{align*}
where $\eta^p, \eta^o$ are conormal vectors, normal to the contact lines and tangent to the free surface. These integrals can then be estimated in terms of the contact angles, and in particular the integral over $\CL_L$ can be computed explicitly, using Stokes' theorem along with the fact that the contact angle is constant on $\CL_L$.

To avoid the technical issues that arise with integrating by parts near the contact line, we instead we use an energy perturbation argument to capture the same idea.

\begin{lemma}\label{l.lambda_decomposition}
 Let $L$ be globally stable for $\mathcal{E}_R[\cdot, F]$, $R\in (1,\infty)$. We have the following formula for the Lagrange multiplier $\lambda_{L} = -H_{\FS_{L}} + g(z-F)$:
\begin{equation}\label{eqn:lambda_decomposition}
    \lambda_{L} = -\frac{\cos\YC\mathcal{H}^{d-2}(\partial B_R)}{|B_R\setminus B_1|} + \frac{\tilde{\lambda}_{L}}{|B_R \setminus B_1|}
\end{equation}
    where 
\begin{equation}\label{eqn:lambda_decomposition_remainder}
    \tilde{\lambda}_{L} \in -\cos\YP\mathcal{H}^{d-2}(\partial B_1) + (\mu_-\vee \mu_+)\mathrm{Per}_{\thinCyl_1}(\PS_{L})[-1, 1]
\end{equation}
By \tref{length_bound}, we conclude that $\lambda_{L} = a_0 R^{-1} + a_1(R,L)R^{1-d}$, where $a_0 = \cos\YC \frac{\omega_{d-2}}{\omega_{d-1}}$ is a constant independent of $L$ and $R$, and $a_1$ is uniformly bounded in $L, R$.
\end{lemma}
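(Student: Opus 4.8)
The plan is to make the formal computation displayed just before the statement rigorous by comparing $L$ with a \emph{rigid vertical translation} of itself (corrected back to the volume constraint by an interior flow), and then invoking global stability in both the upward and downward directions. This avoids integrating by parts along $\PL_L$, for which only H\"older regularity is available, and it cleanly separates the two contact lines because sliding the container contact set costs no dissipation while sliding $\PS_L$ costs dissipation that is computable from $BV$ theory.

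\textbf{The competitor and the first variation of the energy.} I would fix the compactly supported volume-adding flow $\Psi_{s'} = I + s'\mathcal{V}_0$ with $\mathcal{V}_0 = V_0(x)e_z$, $V_0\in C^\infty_c(B_R\setminus\overline{B_1})$ and $\int_{B_R\setminus B_1}V_0 = 1$, exactly as in \cref{volume_only_energy_taylor_exp}: by \eqref{eq:volume_flow_derivative} it moves the constraint at unit rate, and by \eqref{eq:volume_only_lagrange_multiplier} it moves the energy at rate $\lambda_L$ near $s'=0$ (using global stability of $L$). The competitor is then $L_s := \Psi_{-s|B_R\setminus B_1|}(L + se_z)$, which lies in $X^R_F$ because $\mathcal{C}_{R,F}(L+se_z) = s|B_R\setminus B_1|$ is cancelled exactly by $\Psi$; since $\mathcal{V}_0$ is supported away from $\thinCyl_1$ we have $\PS_{L_s} = \PS_L + se_z$, so the plate contact set merely slides rigidly. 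Differentiating $\mathcal{E}_R[L_s;F]$ at $s=0$ term by term: the free-surface area is unchanged by the translation (using the vertical boundedness of $\FS_L$ from \lref{width-bound}); a short slice computation from the definitions of $\relAreaP{\cdot}$ and $\relAreaC{\cdot}$ gives first-variation contributions $-\cos\YP\,\mathcal{H}^{d-2}(\partial B_1)$ and $-\cos\YC\,\mathcal{H}^{d-2}(\partial B_R)$ from the two contact terms; and --- crucially --- translating $L$ upward while holding $F$ fixed is the same as lowering the reference height, so the gravitational first variation is $\int_{L\setminus\Cyl_{R,F}}g - \int_{\Cyl_{R,F}\setminus L}g = 0$ by the volume constraint \eqref{eq:volume_constraint}. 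Adding the $-|B_R\setminus B_1|\lambda_L$ coming from the correction flow yields
\[ \left.\tfrac{d}{ds}\right|_{s=0}\mathcal{E}_R[L_s;F] = -\cos\YP\,\mathcal{H}^{d-2}(\partial B_1) - \cos\YC\,\mathcal{H}^{d-2}(\partial B_R) - |B_R\setminus B_1|\,\lambda_L. \]

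\textbf{Dissipation asymptotics, stability, and conclusion.} It remains to control $\Diss(L,L_s) = \mu_+|(\PS_L+se_z)\setminus\PS_L| + \mu_-|\PS_L\setminus(\PS_L+se_z)|$ as $s\to 0^\pm$. Decomposing the distributional vertical derivative $D_z\one_{\PS_L}$ on $\thinCyl_1$ into its positive and negative parts $D^+$ and $D^-$ (the ``upward-'' and ``downward-facing'' portions of $\PL_L$), the standard $BV$ formula for the symmetric difference of a set and its translate gives $\tfrac1{|s|}\Diss(L,L_s)\to \mu_+D^-+\mu_-D^+$ as $s\to0^+$ and $\to \mu_+D^++\mu_-D^-$ as $s\to0^-$, with $D^--D^+ = \mathcal{H}^{d-2}(\partial B_1)$ (the net vertical flux) and $D^++D^-\le\mathrm{Per}_{\thinCyl_1}(\PS_L)$. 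Feeding these one-sided limits into $\mathcal{E}_R[L_s;F]+\Diss(L,L_s)\ge\mathcal{E}_R[L;F]$ traps $\left.\tfrac{d}{ds}\right|_{0}\mathcal{E}_R[L_s;F]$ between $-\mu_+\mathcal{H}^{d-2}(\partial B_1)-(\mu_++\mu_-)D^+$ and $\mu_-\mathcal{H}^{d-2}(\partial B_1)+(\mu_++\mu_-)D^+$; combined with the displayed identity, and using $2D^++\mathcal{H}^{d-2}(\partial B_1)\le\mathrm{Per}_{\thinCyl_1}(\PS_L)$ together with $\mu_\pm\le\mu_+\vee\mu_-$, this is exactly \eqref{eqn:lambda_decomposition}--\eqref{eqn:lambda_decomposition_remainder}. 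The decomposition $\lambda_L = a_0R^{-1} + a_1(R,L)R^{1-d}$ then follows by Taylor-expanding $\mathcal{H}^{d-2}(\partial B_R)/|B_R\setminus B_1|$ in $1/R$ and invoking the uniform bound $\mathrm{Per}_{\thinCyl_1}(\PS_L) = \mathcal{H}^{d-2}(\PL_L)\le C$ of \tref{length_bound}.

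\textbf{The main obstacle.} This is mostly careful bookkeeping rather than a deep difficulty, but two points deserve attention. First, the dissipation produced by sliding $\PS_L$ is \emph{not} proportional to $\mathcal{H}^{d-2}(\partial B_1)$ but to the full vertical total variation of $\one_{\PS_L}$, so one must keep the ``rising'' and ``falling'' contributions $D^\pm$ separate --- it is precisely this that forces $\mathrm{Per}_{\thinCyl_1}(\PS_L)$, rather than the bare $\mathcal{H}^{d-2}(\partial B_1)$, into the error term. Second, the vanishing of the gravitational first variation uses the volume constraint essentially, consistent with the expectation that the volume-unconstrained $R=\infty$ problem carries a genuinely different multiplier structure; one should also verify that composing the translation with the interior correction flow $\Psi$ introduces no lower-order cross terms, which holds since $\Psi$ acts in the smooth annular interior where $L+se_z$ is merely a shift of $L$ and the quadratic remainder in \cref{volume_only_energy_taylor_exp} is uniform as $s\to 0$.
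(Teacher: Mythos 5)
Your proposal is correct and follows essentially the same approach as the paper: construct a competitor by rigidly translating $L$ vertically and correcting the volume constraint via the interior flow $\mathcal{V}_0$, then compute the first variation of the energy and bound the dissipation of the slide by the perimeter of $\PS_L$ on $\thinCyl_1$. The paper combines the translation and the correction into a single flow $\mathcal{V} = -\tfrac{e_z}{|B_R\setminus B_1|} + \mathcal{V}_0$ and bounds $\Diss(L, L_s)$ directly by $|s|\,(\mu_-\vee\mu_+)\mathrm{Per}_{\thinCyl_1}(\PS_L)/|B_R\setminus B_1|$ (invoking an argument analogous to Maggi's Lemma 17.9 for the translate-symmetric-difference estimate), whereas you split the translation and the correction into two steps and refine the dissipation by separating the positive and negative parts $D^\pm$ of $D_z\mathbf{1}_{\PS_L}$ to obtain exact one-sided limits. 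Your refinement gives a marginally sharper interval before weakening to $(\mu_-\vee\mu_+)\mathrm{Per}_{\thinCyl_1}(\PS_L)[-1,1]$, but both arguments land on the identical final statement.
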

\begin{proof}
We consider a test velocity $\mathcal{V} = -\frac{e_z}{|B_R\setminus B_1|} + \mathcal{V}_0$, for $\mathcal{V}_0$ as in \cref{volume_only_energy_taylor_exp}. Let $L_s$ be the result of flowing $L_0 := L$ under this velocity for time $s$.

We note that $\mathcal{C}_F[L_s] = \mathcal{C}_F[L_0] = 0$ by repeating the argument of \lref{volume-fix-flow-alt}. The energy change due to flowing by $\mathcal{V}_0$ for time $s$ is $\lambda_{L_0}s + O(s^2)$ by \cref{volume_only_energy_taylor_exp}. We subsequently must compute the energy change due to the constant vertical shift explicitly. The area of the free surface does not change, while the relative areas of the contact surfaces change by exactly $-s\frac{\mathcal{H}^{d-2}(\partial B_r)}{|B_R\setminus B_1|}$, for $r=1,R$ respectively. The change in gravity can also be computed (we assume $s > 0$ for demonstration, and write $c_R = |B_R\setminus B_1|^{-1}$):
\begin{align*}
    &\int_{L_s\Delta \Cyl_{R,F}} |z-F| - \int_{(L_s + c_Rse_z)\Delta \Cyl_{R,F}} |z-F|
    \\&= \int_{L_s\cap \{z \geq F\}} -c_Rs +\int_{\{z \leq F - c_Rs\}\setminus L_s} c_Rs 
    \\&\dots-\int_{L_s\cap \{F - c_Rs < z < F\}} z + c_R s 
    - F -\int_{\{F - c_R s < z < F\} \setminus L_s} z - F
    \\&= -c_R s\int_{L_s\Delta \Cyl_{R,F}} \mathrm{sign}(z-F)-\int_{\Cyl_{R,F}\setminus \Cyl_{R,F-c_R s}} z + c_R s 
    - F
\end{align*}
Here, the first term in the last line disappears since $\mathcal{C}_F[L_s] = 0$, while the second term is clearly $O(s^2)$. Thus, we absorb this into the other quadratic remainder term to obtain:
\begin{align}\label{e.energy_perturbation_for_lambda_decomp}
        \mathcal{E}[L_s; F] &= \mathcal{E}[L_0; F] + \lambda_L s + O(s^2) 
        \\&\dots+ s\frac{\cos\YP \mathcal{H}^{d-2}(\partial B_1)}{|B_R\setminus B_1|} + s\frac{\cos\YC \mathcal{H}^{d-2}(\partial B_R)}{|B_R\setminus B_1|}.\notag
\end{align}

To estimate the dissipation for this flow, we will use the perimeter bound on $\PS_L$. With an argument essentially identical to that of \cite{Maggi_2012}*{Lemma 17.9}, one can show with sharp constant that
\begin{align}\label{e.perimeter_diss_estimate_for_lambda_decomp}
    \Diss(L_0, L_s) &\leq (\mu_-\vee \mu_+)|(\PS_{L_0} - |B_R\setminus B_1|^{-1}s e_z)\Delta \PS_{L_0}| \notag
    \\&\leq |s|\frac{(\mu_-\vee \mu_+)\mathrm{Per}_{\thinCyl}(\PS_{L_0})}{|B_R\setminus B_1|}
\end{align}
We combine \eqref{e.energy_perturbation_for_lambda_decomp} and \eqref{e.perimeter_diss_estimate_for_lambda_decomp} with the global stability of $L_0$ to obtain:
\begin{align*}
    0 &\leq \lambda_{L_0} s + s\frac{\cos\YP \mathcal{H}^{d-2}(\partial B_1)}{|B_R\setminus B_1|} + s\frac{\cos\YC \mathcal{H}^{d-2}(\partial B_R)}{|B_R\setminus B_1|}
    \\&\quad\dots+|s|\frac{(\mu_-\vee\mu_+)\mathrm{Per}_{\thinCyl_1}(\PS_{L_0})}{|B_R\setminus B_1|} + O(s^2)
\end{align*}
Sending $s\to 0$, we obtain \eqref{eqn:lambda_decomposition} and \eqref{eqn:lambda_decomposition_remainder}.
\end{proof}

\subsection{Outer approximation of globally stable profiles}\label{s.L-outer-approx}

In this section, we develop approximations of the energy in the region $\Cyl_R\setminus \Cyl_{R_0}$, where $R_0\in (1,R)$ is a parameter. To do so, we show that globally stable profiles can generally be trapped between translates of a fixed $R$-dependent reference configuration using our viscosity theory, particularly \pref{sliding-comparison}. Subsequently, we show that approximating with the reference configuration yields good estimates for the energy.

Write $\mathsf{L}_{R}^0$ for the unconstrained minimizer of $\mathcal{E}_R[\cdot; 0]$ in $\Cyl_R$. We have the Euler-Lagrange equation:
\begin{equation}
\begin{cases}
    -H_{\FS_{\mathsf{L}_{R}^0}} + gz = 0 & \hbox{ on }\FS_{\mathsf{L}^0_{ R}} \\
    \nu^f_{\mathsf{L}_{R}^0}\cdot \nu^c_{\mathsf{L}_{R}^0} = -\cos\YC & \hbox{ on }\CL_{\mathsf{L}_{R}^0} \\
    \nu^f_{\mathsf{L}_{R}^0}\cdot e_r = -\cos\YP & \hbox{ on }\thinCyl_{1}\cap \FS_{\mathsf{L}_{R}^0}
\end{cases}
\end{equation}
If we consider vertical translations $\mathsf{L}^0_{ R} + t e_z$, the right-hand side of the first equation is replaced by $gt$, while the other two equations are unchanged.

We show that the decomposition for the Lagrange multiplier \eqref{eqn:lambda_decomposition} implies that shifts of $\mathsf{L}_{R}^0$ closely approximate general globally stable profiles far from the plate:

\begin{lemma}\label{l.oscillation_bound}
Let $L$ be globally stable for $\mathcal{E}_R[\cdot; F]$. Then there exist constants $C, c$ independent of $L, R, r$ such that
\begin{equation}
    L \Delta \left(\mathsf{L}_{R}^0 +(F+ \frac{a_0}{gR})e_z\right) \subset \FS_{\mathsf{L}^0_R + (F+ \frac{a_0}{gR})e_z} + [-h(r), h(r)]\,e_z \ \hbox{ in } \Cyl_{R} \setminus \Cyl_{r}
\end{equation}
for $h(r) = C\left(R^{1-d} + e^{-cr} + e^{-c(R - r)}\right)$. Here $a_0 = \cos\YC \frac{\omega_{d-2}}{\omega_{d-1}}$ is the same constant from Lemma~\ref{l.lambda_decomposition}.
\end{lemma}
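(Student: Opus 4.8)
The plan is to trap $L$ between vertical translates of the shifted reference configuration $\widetilde{\mathsf{L}}_R := \mathsf{L}^0_R + (F + \tfrac{a_0}{gR})e_z$ by means of the sliding comparison principle \pref{sliding-comparison}, and then read off the oscillation bound from the size of the admissible translation. The point of the shift by $F + \tfrac{a_0}{gR}$ is that $\widetilde{\mathsf{L}}_R$ then solves $-H + g(z - F - \tfrac{a_0}{gR}) = 0$ on its free surface together with the Young contact-angle condition $\cos\YC$ at the container, i.e.\ it is a viscosity solution of the system appearing in \pref{sliding-comparison} with effective Lagrange multiplier $\lambda_{\widetilde{\mathsf{L}}_R} = a_0/R$; by \lref{lambda_decomposition} this differs from the multiplier $\lambda_L$ of $L$ by at most $C R^{1-d}$, which is what produces the $R^{1-d}$ term in $h(r)$.

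Fix $r \in (1,R)$ and apply \pref{sliding-comparison} with $R_0 = r$, $L_0 = L$ and $L_1 = \widetilde{\mathsf{L}}_R$. First I would check the hypotheses. Both $L$ and $\widetilde{\mathsf{L}}_R$ obey the width bound \lref{width-bound}: for $L$ this is direct, and for $\widetilde{\mathsf{L}}_R$ it follows by applying \lref{width-bound} to the globally stable unconstrained minimizer $\mathsf{L}^0_R$ (forcing $0$, multiplier $0$) and translating. Consequently, at radius $r$ the $\thinCyl_r$-traces of $L$ and $\widetilde{\mathsf{L}}_R$ lie within $h_0 := C(e^{-cgr} + e^{-cg(R-r)})$ of their respective equilibrium heights $F + \lambda_L/g$ and $F + \tfrac{a_0}{gR}$, which is precisely the Dirichlet hypothesis of \pref{sliding-comparison}; the boundedness hypothesis holds for $L$ from the volume constraint together with the width bound, and for $\widetilde{\mathsf{L}}_R$ from the width bound alone (its free surface sits in a bounded vertical band, outside which it coincides with a bounded modification of $\Cyl_{R,F}$). \pref{sliding-comparison} then gives, writing $\widetilde L_0 = L \cap (\Cyl_R \setminus \Cyl_r)$ and $\widetilde L_1 = \widetilde{\mathsf{L}}_R \cap (\Cyl_R \setminus \Cyl_r)$,
\[ (\widetilde L_1 - t e_z)^* \subset (\widetilde L_0)_* \subset (\widetilde L_0)^* \subset (\widetilde L_1 + t e_z)_* \]
for every $t > \max\!\big(\tfrac{1}{g}|\lambda_L - a_0/R|,\, 2h_0\big)$; invoking \lref{lambda_decomposition} we may take $t = h(r) = C(R^{1-d} + e^{-cr} + e^{-c(R-r)})$ after renaming constants (the factor $g$ is absorbed into $c$).

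It remains to convert this sandwiching into the stated inclusion of the symmetric difference in a vertical $h(r)$-neighborhood of $\FS_{\widetilde{\mathsf{L}}_R}$. If $p \in (L \setminus \widetilde{\mathsf{L}}_R) \cap (\Cyl_R\setminus \Cyl_r)$, then $p \in \widetilde L_1 + t e_z$ while $p \notin \widetilde L_1$, so the vertical segment from $p$ to $p - t e_z$ crosses $\partial \widetilde L_1$, which inside the open region $\Cyl_R \setminus \Cyl_r$ is exactly $\FS_{\widetilde{\mathsf{L}}_R}$ (using the convention, recalled in \sref{viscosity-property-minimizers}, that null sets are normalized so that $L^* = \overline L$, $L_* = \mathrm{int}(L)$); hence $p \in \FS_{\widetilde{\mathsf{L}}_R} + [-t,0]e_z$, and the case $p \in \widetilde{\mathsf{L}}_R \setminus L$ is symmetric. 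For the degenerate range $r \le 1$ (where $\Cyl_R \setminus \Cyl_r = \Cyl_R$ and $h(r) \gtrsim 1$) the statement instead follows directly from \lref{width-bound} and \lref{lambda_and_energy_bound}, which confine both $\FS_L$ and $\FS_{\widetilde{\mathsf{L}}_R}$ to a fixed bounded vertical band around height $F$. I expect the main obstacle to be the bookkeeping in applying \pref{sliding-comparison}: correctly identifying $a_0/R$ as the effective multiplier of the shifted reference, verifying the Dirichlet condition at $\thinCyl_r$ for both profiles with a common $h_0$, and checking the boundedness hypotheses — after which the oscillation estimate is just the quantitative content of the sliding inclusion combined with \lref{lambda_decomposition}.
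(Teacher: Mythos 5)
Your proposal is correct and takes essentially the same approach as the paper: apply \pref{sliding-comparison} with $R_0 = r$, $L_0 = L$, $L_1 = \mathsf{L}_R^0 + (F + \tfrac{a_0}{gR})e_z$, then bound the multiplier gap $|\lambda_L - a_0/R|$ by $O(R^{1-d})$ via \lref{lambda_decomposition} and the $h_0$ term via \lref{width-bound}. You supply somewhat more detail than the paper does — explicitly verifying the Dirichlet and boundedness hypotheses of the comparison proposition, spelling out the segment-crossing argument that converts the sliding sandwich into the symmetric-difference inclusion, and treating the degenerate range $r\le 1$ — but these are elaborations of the same argument, not a different route.
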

\begin{proof}
    We apply \pref{sliding-comparison} with $r$ serving as the $R_0$ in the statement of that result. This yields that the inclusion holds for $h > \max(\frac{|\lambda_L - \lambda_{\mathsf{L}_{R}^0 + (F + \frac{a_0}{gR})e_z}|}{g}, 2h_0)$ where $h_0$ bounds the oscillation in height of $\mathrm{tr}_{\thinCyl_r}(\FS_{L})$ and $\mathrm{tr}_{\thinCyl_r}(\FS_{\mathsf{L}_{R}^0})$. We have $\lambda_{\mathsf{L}_{R}^0 + (F + \frac{a_0}{gR})e_z} = \frac{a_0}{R}$, so the first term in the maximum is $O(R^{1-d})$ by \lref{lambda_decomposition}. We bound the second term in the maximum by exponentials as a direct application of \lref{width-bound}.
\end{proof}

We will subsequently write 
\begin{equation}\label{e.reference_configuration}
    \mathsf{L}_{R} := \mathsf{L}_{R}^0 + (F+\frac{a_0}{gR})e_z,
\end{equation}
where the dependence on $F$ is not displayed explicitly to reduce notation.

\begin{lemma}\label{l.energy_asymptotics}
    Let $L$ be globally stable for $\mathcal{E}_R[\cdot; F]$, and $\mathsf{L}_{R}$ be as in \eref{reference_configuration}. Fix $R_0\in (1, R)$. We have estimates comparing the two profiles in the region $\Cyl_R\setminus \Cyl_{R_0}$, as follows. All constants $C, c$ will be independent of $R, R_0, L, F$, but may depend on all other parameters.

    The difference in volume is bounded as:
    \begin{equation}\label{eq:energy_asymptotics_volume}
        \vol((L\Delta \mathsf{L}_R)\cap (\Cyl_R\setminus \Cyl_{R_0})) \leq C
    \end{equation}
    The difference in container contact surface area is bounded as:
    \begin{equation}\label{eq:energy_asymptotics_container}
        \left|\relAreaC{\CS_{L}} - \relAreaC{\CS_{\mathsf{L}_{R}}}\right| \leq \frac{C}{R}
    \end{equation}
    The difference in gravitational energy is bounded as:
    \begin{align}\label{eq:energy_asymptotics_gravity}
        &\left|\int_{L\Delta (\Cyl_{R,F}\setminus C_{R_0,F})}|z-F| - \int_{\mathsf{L}_{R}\Delta (\Cyl_{R,F}\setminus C_{R_0,F})} |z-F|\right| \notag 
        \\&\quad\leq C\left(\frac{\log R}{R} + e^{-c R_0}\right)
    \end{align}
    The difference in free surface area is bounded as:
    \begin{equation}\label{eq:energy_asymptotics_perimeter}
        |\mathrm{Per}(L, \Cyl_R\setminus \Cyl_{R_0}) - \mathrm{Per}(L_{R,R_0}, \Cyl_R\setminus \Cyl_{R_0})| \leq\; C\left(\frac{\log R}{R} + e^{-c R_0}\right)
    \end{equation}
\end{lemma}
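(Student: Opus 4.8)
Everything flows from the sandwiching of \lref{oscillation_bound}: for each $r\in(1,R)$, inside $\Cyl_R\setminus\Cyl_r$ the set $L\Delta\mathsf{L}_R$ lies in the vertical tube $\FS_{\mathsf{L}_R}+[-h(r),h(r)]e_z$ with $h(r)=C(R^{1-d}+e^{-cr}+e^{-c(R-r)})$; since $\mathsf{L}_R$ is the subgraph of a radial profile $u_{\mathsf{L}_R}$ (a vertical translate of the radial minimizer $\mathsf{L}^0_R$), this also gives $\{z<u_{\mathsf{L}_R}-h(r)\}\subset L\subset\{z<u_{\mathsf{L}_R}+h(r)\}$ in that region. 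The plan is to slice $\Cyl_R\setminus\Cyl_{R_0}$ by the cylinders $\thinCyl_\rho$, bound each slice of $L\Delta\mathsf{L}_R$ by applying the sandwich with a well-chosen $r<\rho$, and integrate in $\rho$. Concretely, with $\ell:=\tfrac{N}{c}\log R$ ($N$ large, fixed), one takes $r=\rho/2$ for $\rho\in[2R_0,R-\ell]$ (slice thickness $\lesssim R^{1-d}+e^{-c\rho/2}$), $r=R-\ell$ for $\rho\in(R-\ell,R]$ (so that $e^{-c\ell}\le R^{-N}$ and the thickness is $\lesssim R^{1-d}$ up to $\partial B_R$), and $r$ just below $R_0$ for $\rho\in(R_0,2R_0)$; when $R_0=O(1)$ this last case is crude but then all target bounds are $\gtrsim e^{-cR_0}\gtrsim1$ anyway, so one may assume $R_0$ large and $R-R_0\ge\ell$ (the complementary thin annulus is handled directly). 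Writing $h_*(\rho)$ for the resulting thickness, $\int_{R_0}^R\rho^{d-2}h_*(\rho)\,d\rho\le C$, the outer-layer piece $\ell\,R^{d-2}R^{1-d}=\tfrac{\log R}{R}$ being the source of the logarithm.

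\textbf{The first three estimates.} Estimate \eqref{eq:energy_asymptotics_volume} is then Fubini in $\rho$: $\vol((L\Delta\mathsf{L}_R)\cap(\Cyl_R\setminus\Cyl_{R_0}))\le\int_{R_0}^R\mathcal H^{d-1}\big((L\Delta\mathsf{L}_R)\cap\thinCyl_\rho\big)\,d\rho\le C\int_{R_0}^R\rho^{d-2}h_*(\rho)\,d\rho\le C$. For \eqref{eq:energy_asymptotics_container}, $\CS_L\Delta\CS_{\mathsf{L}_R}=\mathrm{tr}_{\thinCyl_R}(L\Delta\mathsf{L}_R)$, which by letting $\rho\to R^-$ in the sandwich with $r=R-\ell$ lies in a vertical strip of height $\lesssim R^{1-d}$, so $|\CS_L\Delta\CS_{\mathsf{L}_R}|\le CR^{d-2}R^{1-d}=C/R$ and $|\relAreaC{\CS_L}-\relAreaC{\CS_{\mathsf{L}_R}}|\le 2|\CS_L\Delta\CS_{\mathsf{L}_R}|$. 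For \eqref{eq:energy_asymptotics_gravity}, the fixed set $\Cyl_{R,F}\setminus\Cyl_{R_0,F}$ cancels out of the symmetric difference, leaving $\le\int_{(L\Delta\mathsf{L}_R)\cap(\Cyl_R\setminus\Cyl_{R_0})}|z-F|$; on $\thinCyl_\rho$ one has $|z-F|\le C(e^{-c\rho}+e^{-c(R-\rho)})+\tfrac CR+h_*(\rho)$ by \lref{width-bound} applied to $\mathsf{L}_R$ together with the tube width, and integrating against $\mathcal H^{d-2}(\partial B_\rho)\,h_*(\rho)$ reproduces, after the same bookkeeping (the extra factor being $\lesssim R^{-1}$ in the bulk and $\lesssim e^{-c(R-\rho)}$ near $\partial B_R$), the bound $C(\tfrac{\log R}{R}+e^{-cR_0})$.

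\textbf{Free surface area --- the crux.} Away from the plate, $\FS_L$ and $\FS_{\mathsf{L}_R}$ are subgraphs $z=u_L(x)$, $z=u_{\mathsf{L}_R}(x)$ over $B_R\setminus B_{R_0}$: both are trapped in thin slabs by \lref{width-bound} while carrying bounded mean curvature $|g(z-F)-\lambda|\le C$, which precludes vertical tangencies, and they meet $\partial B_R$ transversally at the fixed angle; this graphicality is moreover encoded in the sandwich above. I then argue by calibration: the $z$-independent unit field $\xi(x,z)=\tfrac{(-\nabla u_{\mathsf{L}_R}(x),\,1)}{\sqrt{1+|\nabla u_{\mathsf{L}_R}(x)|^2}}$ equals $\nu^f_{\mathsf{L}_R}$ on $\FS_{\mathsf{L}_R}$ and has $\mathrm{div}\,\xi=\lambda_{\mathsf{L}_R}-g(u_{\mathsf{L}_R}-F)$ by the PMC equation, with $\lambda_{\mathsf{L}_R}=a_0/R$. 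Since $|\xi|\le1$, the divergence theorem gives $\mathrm{Per}(L;\Cyl_R\setminus\Cyl_{R_0})\ge\int_{\FS_L}\xi\cdot\nu^f_L=\mathrm{Per}(\mathsf{L}_R;\Cyl_R\setminus\Cyl_{R_0})+\int_{L\setminus\mathsf{L}_R}\mathrm{div}\,\xi-\int_{\mathsf{L}_R\setminus L}\mathrm{div}\,\xi+(\text{boundary terms on }(\partial B_{R_0}\cup\partial B_R)\times\R)$, hence $\mathrm{Per}(L)-\mathrm{Per}(\mathsf{L}_R)\ge-\int_{(L\Delta\mathsf{L}_R)\cap(\Cyl_R\setminus\Cyl_{R_0})}|\mathrm{div}\,\xi|-|\text{boundary}|$; the reverse inequality follows from the analogous field built from $u_L$. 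The bulk term obeys the same bound as the gravity integral, since $|\mathrm{div}\,\xi|\le|\lambda_{\mathsf{L}_R}|+g|u_{\mathsf{L}_R}-F|\le C(e^{-c\rho}+e^{-c(R-\rho)}+R^{-1})$; on $\partial B_R$ the factor $\xi\cdot e_r$ is the \emph{constant} $-\cos\YC$ (shared capillary condition), so that contribution is $\le|\cos\YC|\,\mathcal H^{d-2}(\partial B_R)\,\|u_L-u_{\mathsf{L}_R}\|_{L^\infty(\partial B_R)}\le C/R$; on $\partial B_{R_0}$ the factor is merely bounded by $1$, giving $\le C\,\mathcal H^{d-2}(\partial B_{R_0})\,h_*(R_0)\le C(R^{-1}+e^{-cR_0})$. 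Combining yields \eqref{eq:energy_asymptotics_perimeter}.

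\textbf{Main obstacle.} The difficulty is concentrated in the last step. First one must secure graphicality of both free surfaces over the annular base, which cannot be read off from the $L^1$-type control alone and needs the thin-slab trapping together with the curvature bound (and a standard transversality/barrier argument near the wall). Second, because $\mathrm{Per}$ is not continuous under the merely $O(1)$-in-volume closeness available in the shell, the comparison has to be squeezed out of the divergence structure; the delicate point there is that the boundary term on $\partial B_R$, whose naive size is of order $R^{d-2}$, collapses to $O(1/R)$ precisely because $L$ and $\mathsf{L}_R$ share the Young angle $\YC$ on $\partial B_R$ --- this cancellation, rather than any estimate on $u_L-u_{\mathsf{L}_R}$ by itself, is what makes the relativized energy converge.
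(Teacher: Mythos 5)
Your treatment of the first three estimates (volume, container contact area, gravity) tracks the paper's argument essentially step for step: both slice the annulus, apply the oscillation bound \lref{oscillation_bound} at a scale adapted to the slice, and integrate, with the $\log R$ coming from the outer layer of thickness $\sim\log R$ where the tube width bottoms out at $R^{1-d}$.

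The perimeter estimate is where you genuinely depart from the paper, and where there is a real gap. The paper proves \eqref{eq:energy_asymptotics_perimeter} entirely by stability: the unconstrained minimality of $\mathsf{L}_R$ gives one inequality, and global stability of $L$ against the competitor $(L\setminus\Cyl_{R_0})$ replaced by $\mathsf{L}_R$ plus a volume-fixing flow gives the other. Both steps are pure BV arguments and need no regularity of either free surface beyond what is already established. Your calibration route, by contrast, builds the $z$-independent field $\xi$ from $u_{\mathsf{L}_R}$; this gives $\mathrm{Per}(L;U)\ge\mathrm{Per}(\mathsf{L}_R;U)-(\text{errors})$ cleanly, since the inequality $\int_{\FS_L}\xi\cdot\nu^f_L\le\mathrm{Per}(L;U)$ asks nothing of $\FS_L$ and needs only that $\mathsf{L}_R$ is a smooth radial graph (plausible, but not established in the paper). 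The gap is the reverse inequality: your proposal to "build the analogous field from $u_L$" presupposes that $\FS_L$ is a graph over the annulus with $u_L$ regular enough to take a distributional divergence of $\nu^f_L$ extended vertically. This does not follow from the $L^\infty$ sandwiching alone; one would need to run an $\varepsilon$-regularity/flatness-improvement argument for the almost-minimizer $\FS_L$ trapped in a thin slab, quantified uniformly in $R$ and in the radial variable, and including behavior up to $\thinCyl_R$. You flag this as an "obstacle," but the step you describe as "standard transversality/barrier" is in fact the crux and is not supplied. This is precisely what the paper's stability argument sidesteps.

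A smaller point: you attribute the $O(1/R)$ size of the $\partial B_R$ boundary term to the cancellation coming from $\xi\cdot e_r\equiv-\cos\YC$ being constant. In fact any bounded $\xi$ gives the same estimate, since the term is controlled by $|\CS_L\Delta\CS_{\mathsf{L}_R}|$ (which you already bounded by $C/R$ in \eqref{eq:energy_asymptotics_container}); the constancy is a nicety, not the decisive cancellation. This does not invalidate the computation, but the emphasis in the "main obstacle" paragraph is misplaced.

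In summary: the calibration idea is attractive and gives a clean lower bound on $\mathrm{Per}(L;U)$, but as written it does not close the upper bound without a uniform graphicality/regularity input for $\FS_L$ that you have not proved, whereas the paper's double stability argument closes both bounds without invoking any such regularity.
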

\begin{proof}
    First, we estimate the volume difference. We use \lref{oscillation_bound}, noting that the vertical oscillation of the symmetric difference $L\Delta \mathsf{L}_R$ inside the thin shell $\thinCyl_{r}$ can be bounded by 
    \[\min_{s \leq r} h(s) = \begin{cases}
        h(R/2) & r\geq R/2 \\ h(r) & r < R/2
    \end{cases} \]
    It follows that
    \begin{align*}
        &\vol((L\Delta \mathsf{L}_R)\cap (\Cyl_R\setminus \Cyl_{R_0})) 
        \\&\leq C \int_{R_0}^{R} (R^{1-d} + e^{-c(r\wedge R/2)})r^{d-2}dr
        \\&= C\frac{R^{d-1} - R_0^{d-1}}{R^{d-1}} + Ce^{-cR/2}\frac{R^{d-1} - (R_0\vee R/2)^{d-1}}{R^{d-1}} 
        \\&\quad\dots+ C\mathrm{sgn}_+(R/2 - R_0)\int_{R_0}^{R/2} e^{-cr}r^{d-2}dr \numberthis\label{eq:pf_of_energy_asym_volume}
    \end{align*}
    The last integral is bounded by $\Gamma(d-1)$ up to a constant factor, so the entire expression is bounded uniformly in $R_0, R$, giving \eqref{eq:energy_asymptotics_volume}.

    Next, we estimate the container contact surface. Once again, we use that the vertical oscillation of the symmetric difference $L\Delta \mathsf{L}_R$ inside $\thinCyl_R$ is bounded by $h(R/2) = R^{1-d} + e^{-cR/2}$. Combining with $\mathcal{H}^{d-2}(\partial B_R)\sim R^{d-2}$ gives \eqref{eq:energy_asymptotics_container}.

    Next, we estimate the difference in gravity. To improve on the estimate for the difference in volume, we must take advantage of the fact that the gravitational potential, $|z-F|$, is small far from the container. Thus, we split into cases. Let $R_1 = R_0\vee (R - c^{-1}\log R)$, where $c$ is the same constant appearing in the term $e^{-c(R - R_1)}$ in the width bound from \lref{width-bound}.
    
    In the outer region $\Cyl_R\setminus \Cyl_{R_1}$, we use the width bound to have $|z - F| = O(1)$. Since $R - R_1 = O(\log R)$, we are able to improve the estimate of the volume difference in \eqref{eq:pf_of_energy_asym_volume} to give $\vol((L\Delta \mathsf{L}_R)\cap (\Cyl_R\setminus \Cyl_{R_1})) =O(\tfrac{\log R}{R})$. Thus, the gravity difference in the outer region is $O(\tfrac{\log R}{R})$. In the inner region $\Cyl_{R_1}\setminus \Cyl_{R_0}$, we use the width bound to get $|z - F| = O(R^{-1} + e^{-cR_0})$. We combine this with the $O(1)$ estimate for the volume of $L\Delta \mathsf{L}_R$ to estimate the gravity difference in the inner region as $O(R^{-1} + e^{-cR_0})$. Combining the estimates for the two regions, we get the $O(\tfrac{\log R}{R} + e^{-cR_0})$ estimate for the difference of gravitational energies.

    Finally, to derive the free surface estimate, we use the stability properties of each profile. First, the unconstrained minimality of $\mathsf{L}_R$ for $\mathcal{E}_R[\cdot; F + \frac{a_0}{gR}]$ implies:
    \[ \mathcal{E}_R[\mathsf{L}_R; F + \frac{a_0}{gR}] \leq \mathcal{E}_R[((L\cap (\Cyl_R\setminus \Cyl_{R_0}))\cup (\mathsf{L}_R\cap \Cyl_{R_0})); F + \frac{a_0}{gR}] \]
    Expanding the energies and canceling common terms yields:
    \begin{align*}
        &|\FS_{\mathsf{L}_R}\cap (\Cyl_R\setminus \Cyl_{R_0})| + \int_{((\mathsf{L}_R\setminus \Cyl_{R_0}) \setminus \Cyl_{R,F}))\cup ((\Cyl_{R,F}\setminus \Cyl_{R_0})\setminus \mathsf{L}_R)} g|z - F - \frac{a_0}{gR}| 
        \\&\quad\dots- \cos\YC \relAreaC{\CS_{\mathsf{L}_R}}
        \\&\leq |\FS_{L}\cap (\Cyl_R\setminus \Cyl_{R_0})| + \int_{((L\setminus \Cyl_{R_0}) \setminus \Cyl_{R,F}))\cup ((\Cyl_{R,F}\setminus \Cyl_{R_0})\setminus L)} g|z - F - \frac{a_0}{gR}| 
        \\&\quad\dots- \cos\YC \relAreaC{\CS_{L}}+|\mathrm{tr}_{\thinCyl_{R_0}}(L\Delta \mathsf{L}_R)|
    \end{align*}
    Let us remark that the error term $|\mathrm{tr}_{\thinCyl_{R_0}}(L\Delta \mathsf{L}_R)|$ is $O(R_0^{d-2}R^{1-d}) \leq O(R^{-1})$, by the same argument used to obtain \eqref{eq:energy_asymptotics_container}. We also reorganize the gravity terms in the same step to obtain:
    \begin{align*}
        &|\FS_{\mathsf{L}_R}\cap (\Cyl_R\setminus \Cyl_{R_0})| + \int_{((\mathsf{L}_R\setminus \Cyl_{R_0}) \setminus \Cyl_{R,F}))\cup ((\Cyl_{R,F}\setminus \Cyl_{R_0})\setminus \mathsf{L}_R)} g|z - F|
        \\&\quad\dots- \cos\YC \relAreaC{\CS_{\mathsf{L}_R}}+ a_0R^{-1}\int \one_{((\mathsf{L}_R\setminus \Cyl_{R_0}) \setminus \Cyl_{R,F}))} - \one_{((\Cyl_{R,F}\setminus \Cyl_{R_0})\setminus \mathsf{L}_R)}
        \\&\leq |\FS_{L}\cap (\Cyl_R\setminus \Cyl_{R_0})| + \int_{((L\setminus \Cyl_{R_0}) \setminus \Cyl_{R,F}))\cup ((\Cyl_{R,F}\setminus \Cyl_{R_0})\setminus L)} g|z - F| 
        \\&\quad\dots- \cos\YC \relAreaC{\CS_{L}} - a_0R^{-1}\int\one_{((L\setminus \Cyl_{R_0}) \setminus \Cyl_{R,F}))} - \one_{((\Cyl_{R,F}\setminus \Cyl_{R_0})\setminus L)}
        \\&\quad\dots+O(R^{-1}) \numberthis\label{eq:pf_of_energy_asym_minimality_of_lr}
    \end{align*}
    We estimate the difference of the $a_0R^{-1}$ terms as $O(R^{-1})$, bounding the difference of integrals as $O(1)$ using \eqref{eq:energy_asymptotics_volume}. We estimate the difference of contact energy and gravitational energy terms using \eqref{eq:energy_asymptotics_container} and \eqref{eq:energy_asymptotics_gravity} respectively. Altogether, this yields
    \[ |\FS_{\mathsf{L}_R}\cap (\Cyl_R\setminus \Cyl_{R_0})| \leq |\FS_{L}\cap (\Cyl_R\setminus \Cyl_{R_0})|  + C\left(\frac{\log R}{R} + e^{-cR_0}\right). \]
    
    On the other hand, we can apply the global stability of $L$ against a competitor formed by (i) replacing $L$ by $\mathsf{L}_R$ inside $\Cyl_R\setminus \Cyl_{R_0}$ and (ii) flowing as in \cref{volume_only_energy_taylor_exp} to fix the volume constraint, with a velocity supported in $\Cyl_R\setminus \Cyl_{R_0}$. Once again, we obtain an $O(R^{-1})$ error term from the area of the free surface in $\thinCyl_{R_0}$. We recall that $\lambda_{\mathsf{L}_R} = O(R^{-1})$ and absorb the remainder term of the Taylor expansion. This yields
    \begin{align*}
        &|\FS_{L}\cap (\Cyl_R\setminus \Cyl_{R_0})| + \int_{((L\setminus \Cyl_{R_0}) \setminus \Cyl_{R,F}))\cup ((\Cyl_{R,F}\setminus \Cyl_{R_0})\setminus L)} g|z - F| 
        \\&\quad\dots- \cos\YC \relAreaC{\CS_{L}} 
        \\&\leq |\FS_{\mathsf{L}_R}\cap (\Cyl_R\setminus \Cyl_{R_0})| + \int_{((\mathsf{L}_R\setminus \Cyl_{R_0}) \setminus \Cyl_{R,F}))\cup ((\Cyl_{R,F}\setminus \Cyl_{R_0})\setminus \mathsf{L}_R)} g|z - F|
        \\&\quad\dots- \cos\YC \relAreaC{\CS_{\mathsf{L}_R}} + O(R^{-1})(1 + \vol((L\Delta \mathsf{L}_R)\cap (\Cyl_R\setminus \Cyl_{R_0}))). \numberthis\label{eq:pf_of_energy_asym-stability_of_l}
    \end{align*}
    Again combining the estimates \eqref{eq:energy_asymptotics_volume}, \eqref{eq:energy_asymptotics_container}, and \eqref{eq:energy_asymptotics_gravity}, we get
    \[ |\FS_{L}\cap (\Cyl_R\setminus \Cyl_{R_0})| \leq |\FS_{\mathsf{L}_R}\cap (\Cyl_R\setminus \Cyl_{R_0})| + C\left(\frac{\log R}{R} + e^{-cR_0}\right) \]
\end{proof}

\subsection{Relative energy and the $R \to \infty$ limit}\label{s.limit-R}

We continue to use $R_0$ to refer to a parameter in $(1, R)$ and $\mathsf{L}_R$ to refer to the reference configuration defined in \eref{reference_configuration}. We introduce the relative energy
\begin{equation}
    \widetilde{\mathcal{E}}_{R, R_0}[L; F] := \mathcal{E}_R[L; F] - \mathcal{E}[{\mathsf{L}_R}; \Cyl_R\setminus \Cyl_{R_0}] - |B_{R_0}\setminus B_1|
\end{equation}
To understand the relative energy, let us observe that the issues with sending $R\to\infty$ come in the following parts: (i) the energy of $L$ near the container, where one expects the area of the free surface, the gravitational energy, and the area of the container contact surface to diverge with $R$ and (ii) the energy of $L$ away from the container, where all terms have already been relativized in the definition of $\mathcal{E}_R[\cdot; F]$ except for the area of the free surface. Subtracting $|B_{R_0}\setminus B_1|$ handles (ii) and in a manner consistent with our definition for the $R=\infty$ energy \eref{limiting_energy}. To handle (i), we subtract $\mathcal{E}[{\mathsf{L}_R}; \Cyl_R\setminus \Cyl_{R_0}]$, relying on our proof in \lref{energy_asymptotics} that this closely approximates the energy of $L$ in the same region.

With this definition, we now show that we can send $R\to\infty$ and $R_0\to\infty$ in that order and obtain a meaningful limit for $\widetilde{\mathcal{E}}_{R, R_0}$.

\begin{lemma}\label{l.limiting_global_stability}
    Suppose for a sequence $R_n\to \infty$, we have profiles $L_{R_n}$ which are globally stable for $\mathcal{E}_{R_n}[\cdot, F]$ and which converge in $L^1_{\mathrm{loc}}$ to a set $L_\infty$. Then $L_\infty$ is globally stable for $\mathcal{E}_\infty[\cdot, F]$, with finite energy satisfying
    \begin{align*}
        \mathcal{E}_\infty[L_\infty; F] &= \lim_{R_0\to \infty}\limsup_{n\to\infty} \widetilde{\mathcal{E}}_{R_n, R_0}[L_{R_n}; F]
        \\&= \lim_{R_0\to \infty}\liminf_{n\to\infty} \widetilde{\mathcal{E}}_{R_n, R_0}[L_{R_n}; F].
    \end{align*}
    Moreover, if $L_{R_n}'$ is another sequence, globally stable for $\mathcal{E}_{R_n}[\cdot, F']$ and converging to $L_\infty'$, then we have
    \[ \lim_{n\to\infty} \mathcal{E}_{R_n}[L_{R_n}; F] - \mathcal{E}_{R_n}[L_{R_n}'; F'] = \mathcal{E}_\infty[L_\infty; F] - \mathcal{E}_\infty[L_\infty'; F'] \]
\end{lemma}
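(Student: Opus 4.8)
The plan is to combine the energy asymptotics of \lref{energy_asymptotics} with the glueing-and-volume-correcting trick already used several times (e.g.\ in \cref{volume_only_energy_taylor_exp}, \lref{glbl_stbl_cpt}) to pass global stability and the energy identity to the limit. First I would establish the two displayed limits for $\widetilde{\mathcal{E}}_{R_n, R_0}$. Writing $\mathcal{E}_{R_n}[L_{R_n};F] = \mathcal{E}[L_{R_n}; \Cyl_{R_n}\setminus \Cyl_{R_0}] + \mathcal{E}[L_{R_n}; \Cyl_{R_0}]$ (with the appropriate additive constant absorbed), \lref{energy_asymptotics} gives $\mathcal{E}[L_{R_n};\Cyl_{R_n}\setminus \Cyl_{R_0}] = \mathcal{E}[\mathsf{L}_{R_n}; \Cyl_{R_n}\setminus \Cyl_{R_0}] + O(\tfrac{\log R_n}{R_n} + e^{-cR_0})$; the first reference configuration term is exactly what is subtracted in $\widetilde{\mathcal{E}}_{R_n,R_0}$, so $\widetilde{\mathcal{E}}_{R_n,R_0}[L_{R_n};F] = \mathcal{E}[L_{R_n};\Cyl_{R_0}] - |B_{R_0}\setminus B_1| + O(\tfrac{\log R_n}{R_n} + e^{-cR_0})$. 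Sending $n\to\infty$: the $O(\tfrac{\log R_n}{R_n})$ error vanishes, and $\mathcal{E}[L_{R_n};\Cyl_{R_0}]\to \mathcal{E}[L_\infty;\Cyl_{R_0}]$ by \lref{glbl_stbl_cpt} (local energy convergence for globally stable profiles; note $\Cyl_{R_0}$ is bounded and we may choose $R_0$ so that $\mathcal{H}^{d-1}(\FS_{L_\infty}\cap \partial \Cyl_{R_0}) = 0$). Hence both the $\limsup_n$ and the $\liminf_n$ equal $\mathcal{E}[L_\infty;\Cyl_{R_0}] - |B_{R_0}\setminus B_1| + O(e^{-cR_0})$. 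Then send $R_0\to\infty$: by monotone/dominated convergence (using the width bound \lref{width-bound} for $L_\infty$, which holds with exponential decay in the $R=\infty$ case as noted in \lref{glbl_stbl_cpt}) the quantity $\mathcal{E}[L_\infty;\Cyl_{R_0}] - |B_{R_0}\setminus B_1|$ converges to $\mathcal{E}_\infty[L_\infty;F]$, matching the definition \eref{limiting_energy}; this simultaneously shows that limit is finite.

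Next I would prove global stability of $L_\infty$ for $\mathcal{E}_\infty[\cdot;F]$. Fix a competitor $L'\in X^\infty$; by truncating $L'$ far out and replacing it with the flat configuration $\Cyl_{\infty,F}\setminus \Cyl_{R_0'}$ outside a large ball, and using lower semicontinuity (\lref{lower_semicontinuity}) together with the definition of $\mathcal{E}_\infty$, I may assume $L'$ agrees with $\Cyl_{\infty,F}$ outside some bounded set. For each large $R_n$ form a competitor $L'_n$ to $L_{R_n}$: take $L'$ inside $\Cyl_{R_n}\setminus \Cyl_{R_n/2}$ replaced by (a shift of) the reference configuration $\mathsf{L}_{R_n}$, i.e.\ mimic the construction $\widetilde{L}_R$ of \tref{main_quantitative}, then apply the volume-correcting flow of \cref{volume_only_energy_taylor_exp} with a vector field supported near the container so that $L'_n\in X^{R_n}_F$; since $L'$ already coincides with the flat profile at radius $R_n/2$ for large $n$, the glueing error and the Lagrange-multiplier cost are $O(\tfrac{\log R_n}{R_n})$ by \tref{main_quantitative}/\lref{energy_asymptotics} and the bound $\lambda_{\mathsf{L}_{R_n}} = O(R_n^{-1})$. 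Global stability of $L_{R_n}$ gives $\mathcal{E}_{R_n}[L_{R_n};F] \le \mathcal{E}_{R_n}[L'_n;F] + \Diss(L_{R_n}, L'_n)$. Subtract the diverging reference quantity $\mathcal{E}[\mathsf{L}_{R_n};\Cyl_{R_n}\setminus\Cyl_{R_0}] + |B_{R_0}\setminus B_1|$ from both sides (with $R_0$ fixed and large), rewrite in terms of $\widetilde{\mathcal{E}}_{R_n,R_0}$, note $\PS_{L'_n} = \PS_{L'}$ for $n$ large so $\Diss(L_{R_n}, L'_n)\to \Diss(L_\infty, L')$ (using $\PS_{L_{R_n}}\to\PS_{L_\infty}$ from \lref{glbl_stbl_cpt}), and pass to the limit using the first part: the left side tends to $\mathcal{E}_\infty[L_\infty;F] - (\text{lower order in }R_0)$ and the right side to $\mathcal{E}_\infty[L';F] - (\text{same lower order}) + \Diss(L_\infty, L') + O(e^{-cR_0})$. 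Letting $R_0\to\infty$ yields the global stability inequality $\mathcal{E}_\infty[L_\infty;F]\le \mathcal{E}_\infty[L';F] + \Diss(L_\infty,L')$.

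Finally, the relative-energy identity for two sequences follows immediately: by the first part applied to each sequence,
\[
\lim_{n\to\infty}\big(\mathcal{E}_{R_n}[L_{R_n};F] - \mathcal{E}_{R_n}[L'_{R_n};F']\big)
= \lim_{R_0\to\infty}\lim_{n\to\infty}\big(\widetilde{\mathcal{E}}_{R_n,R_0}[L_{R_n};F] - \widetilde{\mathcal{E}}_{R_n,R_0}[L'_{R_n};F']\big),
\]
since the subtracted container quantities $\mathcal{E}[\mathsf{L}_{R_n};\Cyl_{R_n}\setminus\Cyl_{R_0}]$ depend only on $R_n$ and $R_0$ (not on $F$ versus $F'$ — up to the $F$-dependence of $\mathsf{L}_R$, which enters only through the constant vertical shift and cancels after relativization, exactly as in the proof of \tref{main_quantitative} where $E(R)$ is independent of $F$), hence cancel in the difference; each relative energy converges by the first part to $\mathcal{E}_\infty[L_\infty;F]$ resp.\ $\mathcal{E}_\infty[L'_\infty;F']$. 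I expect the main obstacle to be the bookkeeping in the global-stability step: ensuring that the competitor $L'_n$ genuinely lies in $X^{R_n}_F$ (the volume correction), that the truncation of the arbitrary competitor $L'$ to a ``flat at infinity'' one does not lose stability in the limit, and that all glueing/flow errors are uniformly $O(\tfrac{\log R_n}{R_n} + e^{-cR_0})$ so that they vanish in the iterated limit — these are all in the spirit of arguments already carried out in \sref{container_limit}, but must be assembled carefully here.
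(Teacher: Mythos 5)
Your overall plan (use \lref{energy\_asymptotics} to control the outer region, local energy convergence via almost-minimality for the inner region, then pass stability to the limit by building $R_n$-admissible competitors) is the right framework, and your treatment of the first displayed identity and of the relative-energy convergence in the last part are essentially the paper's arguments. The gap is in the global-stability step, and it is not just bookkeeping.

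The key difficulty is the $\tfrac{a_0}{gR_n}$ height offset between the reference configuration and the $R=\infty$ profiles. You propose gluing the truncated competitor $L'$ (flat at height $F$ beyond some fixed $R_0'$) to $\mathsf{L}_{R_n}$ (flat at height $F+\tfrac{a_0}{gR_n}$) across $\thinCyl_{R_n/2}$. The trace mismatch there scales like $\tfrac{a_0}{gR_n}\cdot\mathcal{H}^{d-2}(\partial B_{R_n/2}) \sim R_n^{d-3}$, which does not go to zero for $d\ge 3$; and if you instead shift $L'$ to remove the trace mismatch, you inject an $O(R_n^{d-2})$ volume imbalance (the constant offset integrated over the bulk annulus), which cannot be absorbed by the localized volume-correcting flow of \cref{volume_only_energy_taylor_exp} while keeping the Taylor remainder small. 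Your asserted $O(\tfrac{\log R_n}{R_n})$ total error for the glue-and-correct step is therefore not justified. The paper sidesteps this by (i) arguing by contradiction and replacing the arbitrary competitor with a \emph{globally stable} $L_\infty'$ (via the minimization in \lref{existence\_of\_minimizer}), which through \lref{width-bound} makes $\FS_{L_\infty'}$ decay exponentially in $|x|$; (ii) gluing $L_\infty'$ to $L_{R_n}$ only inside a \emph{bounded} annulus $\Cyl_{R_0}\setminus\Cyl_{(R_0-1)/2}$, keeping $L_{R_n}$ itself near the container so that the trace mismatch is governed by the oscillation bound $O(R_n^{1-d}+e^{-cR_0})$ and vanishes in the iterated limit; and (iii) fixing the constraint by a single vertical shift $h$, whose smallness $|h|=O(R_0^{1-d})$ is controlled precisely by the exponential decay of the two globally stable free surfaces. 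Your truncation of an arbitrary $L'$ does not produce this decay and your gluing scale $R_n/2$ is wrong; to make your route rigorous you would have to either pass first to a globally stable competitor (as the paper does) or prove a width/decay estimate for finite-energy competitors, and in either case glue in a bounded region rather than at $R_n/2$.
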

\begin{proof}
    First, we observe that for any $U\subset \R^{d-1}\setminus B_1$ bounded, the height bound on globally stable states ensures that $(L_{R_n}\cap (U\times \R))\Delta (U\times \{z \leq 0\})$ is contained in a compact set independent of $R_n$. Then by \lref{almost-minimizer-cpt-closure}, the uniform-in-$R$ almost-minimality of globally stable states implies that we have energy convergence $\mathcal{E}[L_{R_n}; U\times \R] \to \mathcal{E}[L_\infty; U\times \R]$ and convergence of contact sets in $U$ (under the assumption that the limiting free surface intersects $\partial U\times \R$ in a set of zero $\mathcal{H}^{d-1}$ measure, which holds for generic $U$).

    Now, considering $\widetilde{\mathcal{E}}_{R_n,R_0}[L_{R_n}; F]$, we split into the inner and outer parts of the energy
    \begin{align*}
        &\widetilde{\mathcal{E}}_{R_n,R_0}[L_{R_n}; F] 
        \\&\quad= \mathcal{E}[L_{R_n}; \Cyl_{R_0}] - |B_{R_0}\setminus B_1|\\
        &\quad \quad \cdots+\bigg(\mathcal{E}[L_{R_n}; \Cyl_{R_n}\setminus \Cyl_{R_0}] - \mathcal{E}[\mathsf{L}_{R_n}; \Cyl_{R_n}\setminus \Cyl_{R_0}]\bigg).
    \end{align*}
    For the inner part, the first term in parenthesis above, the above local energy convergence ensures that as $n\to\infty$, we have for a.e. $R_0$ that:
    \begin{align*}
        \lim_{n\to\infty} \mathcal{E}[L_{R_n}; \Cyl_{R_0}] - |B_{R_0}\setminus B_1| &= \mathcal{E}[L_\infty; \Cyl_{R_0}] - |B_{R_0}\setminus B_1|
    \end{align*}
    On the other hand for the outer energy, the energy asymptotics in \lref{energy_asymptotics} ensure 
    \begin{align*}
        &|\mathcal{E}[L_{R_n};\Cyl_{R}\setminus \Cyl_{R_0}] - \mathcal{E}[\mathsf{L}_{R_n};\Cyl_{R}\setminus \Cyl_{R_0}]| 
        \\&\quad\leq C\left( \frac{\log R_n}{R_n} + e^{-cR_0}\right)
    \end{align*}
    so
    \[\limsup_{n \to \infty} |\mathcal{E}[L_{R_n}; \Cyl_{R_n}\setminus \Cyl_{R_0}] - \mathcal{E}[\mathsf{L}_{R_n}; \Cyl_{R_n}\setminus \Cyl_{R_0}]| \leq Ce^{-cR_0}.\]
    Sending $R_0\to \infty$, this last error term disappears. Simultaneously, we have
    \[ \lim_{R_0\to\infty}\mathcal{E}[L_\infty; \Cyl_{R_0}] - |B_{R_0}\setminus B_1| = \mathcal{E}_\infty[L_\infty; F] \]
    since we defined $\mathcal{E}_\infty$ in \eqref{e.limiting_energy} by replacing the area of the free surface with the relativization $\int_{\FS_{L_\infty}} 1 - e_z\cdot\nu^f$. Thus, we conclude that
    \[ \mathcal{E}_\infty[L_\infty; F] = \lim_{R_0\to\infty}\limsup_{n\to\infty}\widetilde{\mathcal{E}}_{R_n,R_0}[L_{R_n}; F] \]
    The same computations hold with the limsup replaced by liminf.

    To verify that $\mathcal{E}_\infty[L_\infty; F]$ is indeed finite, we can use the width bound \lref{width-bound}. This implies that $\FS_{L_\infty}$ is exponentially close to $\{ z = F\}$ as $|x|\to \infty$. Consequently, we can use almost-minimality to show that the local energy in an $r$-ball centered at $(x,F)$ also decays exponentially as $|x|\to \infty$, from which we conclude that the energy is summable.

    Next, we show global stability. Supposing stability fails, there must be some globally stable $L_\infty'$ with 
    \[\mathcal{E}_\infty[L_\infty; F] - \mathcal{E}_\infty[L_\infty'; F] - \Diss[L_\infty, L_\infty'] =: \varepsilon > 0\]
    By a similar argument to that of the previous paragraph, we can show that $\mathcal{E}_\infty$ decays exponentially as $|x|\to\infty$ for $L_\infty'$ as well. In particular, this means that for all sufficiently large $R_0$, we have
    \[ \mathcal{E}[L_\infty; F, \Cyl_{R_0}] - \mathcal{E}[L_\infty'; F, \Cyl_{R_0}] - \Diss[L_\infty, L_\infty'] \geq \varepsilon/2 \]
    Then, from convergence of local energy and dissipation, we have
    \begin{equation}\label{eq:pf_of_limiting_glbl_stbl_with_eps}
        \liminf_{n\to\infty}\mathcal{E}[L_{R_n}; F, \Cyl_{R_0}] - \mathcal{E}[L_\infty'; F, \Cyl_{R_0}] - \Diss[L_n, L_\infty'] \geq \varepsilon/2
    \end{equation}
    
    We modify $L_\infty'$ to make a competitor to $L_{R_n}$. More precisely, we will keep the data of $L_\infty'$ up to a vertical shift on an annular region which scales with $R_0$ and avoids the container and plate, while replacing it elsewhere by $L_{R_n}$. Thus, we define:
    \[ L_{R_n}' := (L_{R_n}\cap ((\Cyl_{R_n}\setminus\Cyl_{R_0})\cup \Cyl_{\frac{R_0-1}{2}})\cup ((L_\infty'\cap (\Cyl_{R_0}\setminus \Cyl_{\frac{R_0-1}{2}}) + he_z) \]
    where $h$ is a parameter chosen to ensure the volume constraint $\mathcal{C}_{R_n}[L_{R_n}'] = 0$. Let us note that for fixed $R_0$, we can employ the local convergence of the $L_{R_n}$ to conclude that $|h|(|B_{R_0}| - |B_{\frac{R_0-1}{2}}|) \leq 1 + \vol((L_\infty\Delta L_\infty')\cap (\Cyl_{R_0}\setminus \Cyl_{\frac{R_0-1}{2}})$ for all sufficiently large $n$. This volume term is itself bounded independently of $R_0\gg 1$, again using the width bounds for $L_\infty$ and $L_\infty'$ which decay exponentially as $|x|\to \infty$, so we in fact have $|h| \leq C R_0^{1-d}$ for a uniform $C$ and all $n$ sufficiently large depending on $R_0$.

    Now, global stability of $L_{R_n}$ for $\mathcal{E}_{R_n}[\cdot; F]$ implies that
    \begin{align*}
        \mathcal{E}[L_{R_n}; \Cyl_{R_0}] 
        &\leq \mathcal{E}[L_{\infty}' + he_z; \Cyl_{R_0}] + \Diss(L_{R_n}, L_{\infty}' + he_z)
        \\&\quad\dots+|\mathrm{tr}_{\thinCyl_{R_0}}(L_{R_n}\Delta (L_\infty' + he_z))|
        \\&\leq \mathcal{E}[L_{\infty}'; \Cyl_{R_0}] + \Diss(L_{R_n}, L_{\infty}')
        \\&\quad\dots+\delta(R_n, R_0)
    \end{align*}
    where the remainder $\delta$ satisfies
    \begin{equation}\label{eq:pf_of_limiting_glbl_stbl_delta}
        \limsup_{n\to\infty} \delta(R_n, R_0) \leq CR_0^{-1}
    \end{equation}
    We achieve \eqref{eq:pf_of_limiting_glbl_stbl_delta} by straightforward estimates for the energy terms using the decay of $|h|$. The replacement produces free surface terms $|\mathrm{tr}_{\thinCyl_{r}}(L_{R_n}\Delta (L_\infty' + he_z))|$ for $r = R_0, \frac{R_0-1}{2}$. We bound these by using \lref{width-bound} to ensure that $\FS_{L_{R_n}}$ and $\FS_{L_\infty'}$ are within distance $C(R_n^{-1} +e^{-cR_0} + e^{-c(R_n - R_0)})$ of $\{ z = F\}$; combining then with $|h| \leq CR_0^{1-d}$ and $\mathcal{H}^{d-2}(\partial B_{R_0}) = CR_0^{d-2}$, we conclude that the areas of the free surface terms are at most $CR_0^{-1}$, provided $n$ is sufficiently large depending on $R_0$. For the change in gravity due to the shift, we estimate similarly to \lref{energy_asymptotics}, using that the change in volume due to the replacement is $O(1)$, while the gravitational potential $|z-F|$ in the modified region is controlled by $C(|h| + R_n^{-1} + e^{-cR_0} + e^{-c(R_n - R_0)}) \leq CR_0^{1-d}$ (again, for $n$ sufficiently large depending on $R_0$).

    Then from \eqref{eq:pf_of_limiting_glbl_stbl_delta}, it follows that
    \begin{align*}
        &\limsup_{R_0\to\infty}\limsup_{n\to\infty} \mathcal{E}[L_{R_n}; \Cyl_{R_0}] - \mathcal{E}[L_{\infty}'; \Cyl_{R_0}] - \Diss(L_{R_n}, L_{\infty}') = 0
    \end{align*}
    This contradicts \eqref{eq:pf_of_limiting_glbl_stbl_with_eps}, so we conclude that $L_\infty$ is globally stable.

    Now, we check the last part of the statement, regarding convergence of differences of nonrelativized energies. For any $R_0\in (1, R_n)$, we observe that
    \[ \mathcal{E}_{R_n}[L_{R_n}; F] - \mathcal{E}_{R_n}[L_{R_n}'; F'] = \widetilde{\mathcal{E}}_{R_n, R_0}[L_{R_n}; F] - \widetilde{\mathcal{E}}_{R_n, R_0}[L_{R_n}'; F'] \]
    since the relative energy simply subtracts a constant depending only on $R_n, R_0$. Thus, we have
    \begin{align*}
        &\limsup_{n\to\infty} \left(\mathcal{E}_{R_n}[L_{R_n}; F] - \mathcal{E}_{R_n}[L_{R_n}'; F'] \right)
        \\&\quad= \lim_{R_0\to\infty}\limsup_{n\to\infty} \left(\mathcal{E}_{R_n}[L_{R_n}; F] - \mathcal{E}_{R_n}[L_{R_n}'; F']\right)
        \\&\quad= \lim_{R_0\to\infty}\limsup_{n\to\infty} \left(\widetilde{\mathcal{E}}_{R_n, R_0}[L_{R_n}; F] - \widetilde{\mathcal{E}}_{R_n, R_0}[L_{R_n}'; F']\right)
        \\&\quad\leq \lim_{R_0\to\infty}\left(\limsup_{n\to\infty} \widetilde{\mathcal{E}}_{R_n, R_0}[L_{R_n}; F] - \liminf_{n\to\infty}\widetilde{\mathcal{E}}_{R_n, R_0}[L_{R_n}'; F']\right)
        \\&\quad= \mathcal{E}_\infty[L_\infty; F] - \mathcal{E}_\infty[L_\infty'; F']
    \end{align*}
    The $\liminf$ is bounded below in the same manner and by the same value, so the limit in $n$ exists.
\end{proof}

\begin{lemma}\label{l.uniform_in_r_total_diss_bound}
Let $L_R(t)$ be an energy solution in $\Cyl_R$ for forcing $F(t)$. Then it holds
\[ \overline{\Diss}(L_R(t); [0,T]) \leq C(1 + \|\dot{F}\|_{L^1[0,T]}) \]
for a constant $C$ independent of $L, R, F$.
\end{lemma}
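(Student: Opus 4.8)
The plan is to derive the bound from the energy-dissipation inequality, reducing $\overline{\Diss}(L_R;[0,T])$ to two quantities and controlling each uniformly in $R$: the pressure-forcing integral via the uniform pressure bound, and the net change in energy via a translation-covariance argument that makes the $R$-divergent parts of the energy cancel exactly.

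\emph{Step 1 (reduction).} For any partition $0=s_0<\dots<s_m=T$, I would sum the energy-dissipation inequality of Definition~\ref{def:constrained_energy_sln}(ii) (in the form with the weak pressure $P^*_R$ of \pref{energy_taylor_exp_alt}) over the consecutive pairs $(s_j,s_{j+1})$; the energy terms telescope, giving
\[ \sum_{j} \Diss\big(L_R(s_j),L_R(s_{j+1})\big) \leq \mathcal{E}_R[L_R(0);F(0)] - \mathcal{E}_R[L_R(T);F(T)] + \int_0^T P^*_R[L_R(t);F(t)]\,\dot F(t)\,dt. \]
Taking the supremum over partitions turns the left side into $\overline{\Diss}(L_R;[0,T])$, while the right side is partition-independent, so it remains to bound the two terms on the right. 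For the integral term, each $L_R(t)$ is globally stable for $\mathcal{E}_R[\cdot;F(t)]$, so the local perimeter bound \lref{lambda_and_energy_bound} and the height bound \lref{width-bound}, via the remark following \pref{energy_taylor_exp_alt}, give $|P^*_R[L_R(t);F(t)]|\leq C$ uniformly; hence this term is at most $C\|\dot F\|_{L^1[0,T]}$.

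\emph{Step 2 (the energy difference — the crux).} Set $c=F(0)-F(T)$, so $|c|\leq\|\dot F\|_{L^1[0,T]}$. The set $L_R(T)+c\,e_z$ lies in $X^R_{F(0)}$: the volume constraint is covariant under simultaneous vertical translation of profile and forcing, $\mathcal{C}_{R,F(0)}(L_R(T)+c\,e_z)=\mathcal{C}_{R,F(T)}(L_R(T))=0$ (and for $R=\infty$ there is nothing to check). Using it as a competitor in the global stability of $L_R(0)$ gives
\[ \mathcal{E}_R[L_R(0);F(0)] \leq \mathcal{E}_R[L_R(T)+c\,e_z;F(0)] + \Diss\big(L_R(0),\,L_R(T)+c\,e_z\big). \]
Here the point is that every term of $\mathcal{E}_R$ except the plate-relative contact area is invariant under this simultaneous shift (the free-surface area is translation invariant; the container-relative contact area and the gravitational integral are invariant since their reference height $z=F$ moves with the profile), and $\relAreaP{\cdot}$ changes by exactly $c\,\mathcal{H}^{d-2}(\partial B_1)$, so $\mathcal{E}_R[L_R(T)+c\,e_z;F(0)]=\mathcal{E}_R[L_R(T);F(T)]-\cos\YP\,c\,\mathcal{H}^{d-2}(\partial B_1)$. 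For the dissipation term, \lref{width-bound} together with the bound $|\lambda|\leq Cg$ from \lref{lambda_and_energy_bound} (and its $R=\infty$ analogue) confines $\partial L_R(0)\cap\thinCyl_1$ and $\partial(L_R(T)+c\,e_z)\cap\thinCyl_1$ to a band $\{|z-F(0)|\leq C\}$ of uniform width, so each of $\PS_{L_R(0)}$ and $\PS_{L_R(T)}+c\,e_z$ differs from the half-cylinder $\thinCyl_1\cap\{z\leq F(0)\}$ by a set of area $\leq C\,\mathcal{H}^{d-2}(\partial B_1)$; hence $\Diss(L_R(0),L_R(T)+c\,e_z)\leq C(\mu_+\vee\mu_-)$, uniformly. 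Combining,
\[ \mathcal{E}_R[L_R(0);F(0)]-\mathcal{E}_R[L_R(T);F(T)] \leq C\big(1+|c|\big) \leq C\big(1+\|\dot F\|_{L^1[0,T]}\big). \]

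Plugging this and the integral bound into Step 1 yields the stated estimate. The only delicate point — and the one I expect to be the main obstacle — is Step 2: bounding $\mathcal{E}_R[L_R(0);F(0)]$ and $\mathcal{E}_R[L_R(T);F(T)]$ separately via \lref{energy_bdd_below} and \lref{lambda_and_energy_bound} would introduce the divergent terms $|B_R|$ and $R^{d-2}$, so it is essential to compare $L_R(0)$ against a vertical translate of $L_R(T)$ itself (rather than against a fixed reference configuration) so that all $R$-divergent contributions to the energy cancel identically, before any estimate is made. The translation-covariance bookkeeping of the four energy terms is routine but must be done carefully because the plate reference height $z=0$ is fixed and does not move with the profile.
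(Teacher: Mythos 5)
Your argument is correct but takes a genuinely different and more elementary route than the paper's. The paper controls $\mathcal{E}_R[L(0);F(0)] - \mathcal{E}_R[L(T);F(T)]$ by invoking the energy asymptotics \lref{energy_asymptotics} with $R_0 = 2$: the outer part of the energy difference is handled by comparing each $L(t)$ to the reference configuration $\mathsf{L}_R$ of \eref{reference_configuration} (which relies on the oscillation bound \lref{oscillation_bound} and, behind it, the Lagrange-multiplier decomposition \lref{lambda_decomposition}), while the inner part is bounded directly via \lref{almost_minimality} and \lref{width-bound}. You instead exploit translation covariance directly: using $L(T) + (F(0)-F(T))e_z$ as a competitor in the global stability of $L(0)$, every $R$-divergent piece of the energy cancels identically, the plate-relative contact term $\relAreaP{\cdot}$ shifts by exactly $c\,\mathcal{H}^{d-2}(\partial B_1)$, and the dissipation to the shifted competitor is $O(1)$ by \lref{width-bound} and \lref{lambda_and_energy_bound}. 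Your bookkeeping of the four energy terms under the simultaneous shift of profile and forcing checks out: the free-surface area is translation invariant, $\relAreaC{\cdot}$ and the gravity integral are invariant under the simultaneous shift (their reference height $z=F$ moves with the profile), and only the plate term changes since $z=0$ is fixed. The upshot is that your route avoids the $R\to\infty$ asymptotics machinery entirely and is more self-contained for this particular lemma; the paper's route reuses \lref{energy_asymptotics}, which it needs anyway for the convergence result \cref{compactness_of_energy_solutions_in_r}.
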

\begin{proof}
    By the energy dissipation inequality, we have
    \[ \overline{\Diss}(L(t); [0,T]) \leq \mathcal{E}_R[L(0); F(0)] - \mathcal{E}_R[L(T); F(T)] + \int_0^T P^*_R[L(t); F(t)]\dot{F}(t)dt \]
    Using \lref{energy_asymptotics}, we take $R_0 = 2$ and bound the energy difference outside $\Cyl_{R_0}$ by a constant. Inside, we use the local perimeter bounds implied by \lref{almost_minimality} and the height bound from \lref{width-bound} to bound the energy. Similarly, $P^*_R[L(t); F(t)]$ is bounded independently of $R$ by combining the height and local perimeter bounds. Thus, we conclude that the entire right side is bounded as $C(1 + \|\dot{F}\|_{L^1})$.
\end{proof}

\begin{corollary}\label{c.compactness_of_energy_solutions_in_r}
If $L_{R_n}(t)$ are energy solutions in $\Cyl_{R_n}$ for forcing $F(t)$, with $R_n\to \infty$ as $n\to\infty$, then there exists a subsequence $(R_{n_k})$ and an energy solution $L_\infty(t)$ in $\Cyl_\infty$ such that $|\PS_{L_{R_{n_k}}(t)}\Delta \PS_{L_{\infty}(t)}|\to 0$ pointwise in $t$ as $k\to\infty$.
\end{corollary}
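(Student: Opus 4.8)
The plan is to reduce the statement to the abstract stability result \lref{energy_diss_lower_bound}, after first establishing uniform-in-$n$ compactness for the flows of contact sets. By \lref{uniform_in_r_total_diss_bound} the total dissipations $\overline{\Diss}(L_{R_n}; [0,T])$ are bounded by a constant independent of $n$. Together with \lref{width-bound} and the bound $|\lambda_{L_{R_n}(t)}| = O(R_n^{-1})$ from \lref{lambda_decomposition}, every contact line $\PL_{L_{R_n}(t)}$ lies in a fixed slab $\partial B_1 \times [-C, C]$, while \tref{length_bound} bounds $\mathcal{H}^{d-2}(\PL_{L_{R_n}(t)})$ uniformly in $n$ and $t$. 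These are exactly the hypotheses of Helly's selection \lref{helly}, so after passing to a subsequence (not relabeled) there is a path $t \mapsto \Sigma^c_\infty(t)$ with $|\PS_{L_{R_n}(t)}\,\Delta\,\Sigma^c_\infty(t)| \to 0$ for every $t \in [0,T]$.

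I would then verify the hypotheses of \lref{energy_diss_lower_bound} with $R_\infty = \infty$ and $F_n \equiv F$. Hypotheses \ref{hyp:edlb_r} and \ref{hyp:edlb_f} are immediate ($R_n \to \infty$; $F$ is the fixed $C^1$, $BV$ forcing), and \ref{hyp:edlb_contact} is the Helly step combined with \lref{uniform_in_r_total_diss_bound}. For \ref{hyp:edlb_profile}, fix $t$: by \lref{almost_minimality} the $L_{R_n}(t)$ are uniform almost-minimizers, hence have locally uniformly bounded perimeter by \lref{lambda_and_energy_bound}, and being confined to a uniformly bounded height region they are precompact in $L^1_{\mathrm{loc}}$; by \lref{limiting_global_stability} every subsequential limit lies in $X^\infty_{F(t)}$ and is globally stable for $\mathcal{E}_\infty[\cdot; F(t)]$, and the last assertion of that lemma (convergence of differences of absolute energies) gives \ref{hyp:edlb_energy}. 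For \ref{hyp:edlb_pressure} I would take $Q_n(t) := P^*_{R_n}[L_{R_n}(t); F(t)]\dot F(t)$, with $P^*$ the weak pressure of \pref{energy_taylor_exp_alt}: it is dominated by $C|\dot F| \in L^1[0,T]$ since \lref{lambda_and_energy_bound} and \lref{width-bound} make $|P^*_{R_n}[L_{R_n}(t); F(t)]|$ bounded independently of $n$ and $t$; the convergence $Q_{n_k}(t) \to P^*_\infty[L_\infty(t); F(t)]\dot F(t)$ along $L^1_{\mathrm{loc}}$-convergent subsequences is the pressure continuity \lref{pressure_continuous}; and, since each $L_{R_n}$ is an energy solution, it satisfies the energy-dissipation balance (Remark after Definition~\ref{def:constrained_energy_sln}), which in the weak pressure formulation, combined with \lref{reverse_edi}, reads
\[ \mathcal{E}_{R_n}[L_{R_n}(t_0); F(t_0)] - \mathcal{E}_{R_n}[L_{R_n}(t_1); F(t_1)] + \int_{t_0}^{t_1} Q_n(t)\,dt = \overline{\Diss}(L_{R_n}; [t_0, t_1]), \]
which is stronger than the required $\liminf$ inequality. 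Applying \lref{energy_diss_lower_bound} produces a Borel-measurable selection $L_\infty(t)$ of subsequential limits which is an energy solution for $\mathcal{E}_\infty$; by construction $\PS_{L_\infty(t)} = \Sigma^c_\infty(t)$, so $|\PS_{L_{R_{n_k}}(t)}\,\Delta\,\PS_{L_\infty(t)}| \to 0$ pointwise in $t$.

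The substantive content lies not in this assembly but in the inputs, above all \lref{limiting_global_stability}: the fact that a pointwise-in-time $L^1_{\mathrm{loc}}$ subsequential limit of globally stable profiles for $\mathcal{E}_{R_n}$ is globally stable for the \emph{honest} limiting energy $\mathcal{E}_\infty$ — not merely for its truncation to large balls — and that the relativized energies converge, is exactly where the decay estimates \lref{energy_asymptotics} and the explicit Lagrange-multiplier decomposition \lref{lambda_decomposition} are essential. Relative to that, the corollary is bookkeeping; the only mildly delicate points internal to this proof are that the weak pressure $P^*_{R_n}$ legitimately replaces the pressure of Definition~\ref{def:constrained_energy_sln} inside the energy-dissipation balance, and that the uniform height, length and dissipation bounds are stable as $R_n \to \infty$, both of which follow from the estimates already cited.
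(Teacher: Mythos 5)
Your proposal follows essentially the same route as the paper's proof: Helly's selection via the uniform dissipation, height, and length bounds, followed by applying \lref{energy_diss_lower_bound} and verifying its six hypotheses with the same sequence of lemmas (in particular \lref{limiting_global_stability} for the pointwise compactness, global stability, and relative-energy convergence, and \lref{pressure_continuous} with $Q_n(t) := P^*_{R_n}[L_{R_n}(t); F(t)]\dot F(t)$ for the pressure hypothesis). The only difference is cosmetic: you spell out the energy-dissipation balance step inside hypothesis \ref{hyp:edlb_pressure} more explicitly than the paper does, and you invoke \lref{lambda_decomposition} for the height bound where the paper's \lref{lambda_and_energy_bound} $O(1)$ estimate on $\lambda$ already suffices.
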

\begin{proof}

By the total dissipation bound in \lref{uniform_in_r_total_diss_bound}, the height bound in \lref{width-bound}, and the perimeter bound in \tref{length_bound}, we have the compactness to apply Helly's selection principle as in \lref{helly} for the $\PS_{L_n(t)}$. Without relabeling, we pass to a subsequence and identify a limit $\PS_\infty(t)$ such that
\[ \lim_{n\to\infty} |\PS_{L_n(t)}\Delta \PS_\infty| = 0 \hbox{ for all }t\in[0,T]. \]

Next, we select the limit profiles $L_\infty(t)$ for which $\PS_{L_\infty(t)} = \PS_\infty(t)$. As discussed in Remark \ref{rem:msb_sel}, the profiles need not converge in an evolutionary sense, so we instead apply \lref{energy_diss_lower_bound} to extract a measurable selection of subsequential limits which satisfy the properties of an energy solution. Thus, we must verify the six assumptions of this lemma.

The assumption \ref{hyp:edlb_r} that the $R_n$ converge is a hypothesis, while the forcing assumption \ref{hyp:edlb_f} is trivially satisfied since we do not vary $F$ with $n$. Our use of Helly's selection principle guarantees the dissipation bound and convergence of contact sets for assumption \ref{hyp:edlb_contact}. The compactness of the profiles, closure of global stability, and convergence of the relative energy needed for \ref{hyp:edlb_profile} and \ref{hyp:edlb_energy} has been verified in \lref{limiting_global_stability}. Finally, for assumption \ref{hyp:edlb_pressure} on the pressure-forcing term, we take $Q_{n}(t) = P^*_{R_{n}}[L_n(t); F(t)]\dot{F}(t)$. Note that $Q_n$ is uniformly bounded from the proof in \lref{uniform_in_r_total_diss_bound}, and $Q_n$ converges given convergence of profiles by \lref{pressure_continuous}. Therefore, we satisfiy all assumptions needed to conclude the result.
\end{proof}

\appendix
\section{Initial height bound}\label{app.initial_height_bound}

We present a proof that for globally stable $L$, it holds that $\FS_L$ is bounded in the vertical direction. The proof, based on \cite{AlbertiDeSimone}*{Proposition 5.4ii}, only relies on \lref{bv_trace_ineq} and \cref{volume_only_energy_taylor_exp} from the main paper. Thus, we can apply it to justify touching-from-above/below as we develop a viscosity theory for globally stable profiles in \sref{viscosity}.

\begin{proof}[Proof of \lref{qualitative_height_bound}]

We restrict to the case $R < \infty$. The proof for $R=\infty$ is similar, omitting the outer contact energy terms and the volume constraint.

Given $L$ globally stable for $\mathcal{E}_R[\cdot; F]$, we consider the truncation $L_h$ for $h > 0$ given by
\[ L_h = (L\cup\Cyl_{R,-h})\cap\Cyl_{R,h} \]
Let $a(h) = \vol(L\Delta L_h)$. We aim to show that $a(h) = 0$ for all $h$ sufficiently large by showing that $a$ satisfies a certain differential inequality. Note that
\[ a'(h) = -|S_h| \;\hbox{ where }\; S_h := (L\cap \{z = h\})\cup (\{z=-h\}\setminus L) \]
for a.e. $h$.

To make use of global stability, we must first fix the volume constraint for $L_h$, so that it is admissible as a competitor to $L$. Toward this end, we flow $L_h$ as in \cref{volume_only_energy_taylor_exp} to obtain a profile $\widetilde{L}_h$ satisfying $\mathcal{C}_F[\widetilde{L}_h] = 0$, $\Diss(L, \widetilde{L}_h) = \Diss(L,L_h)$, and
\begin{align*}
    \mathcal{E}_R[\widetilde{L}_h; F] &= \mathcal{E}_R[L_h; F] + \mathcal{C}_F[L_h]\int_{\FS_{L_h}} \nu^f\cdot D\mathcal{V}_0\nu^f + g(z-F)\mathcal{V}_0\cdot \nu^f\,d\mathcal{H}^{d-1} 
    \\&\quad\dots+ (1+h)|\FS_{L_h}\cap \mathrm{spt}(\mathcal{V}_0)|O(\mathcal{C}_F[L_h]^2)
    \\&\leq\mathcal{E}_R[L_h; F] + Ca(h)(1+h)(1 -a'(h))
\end{align*}
where in the second line, we bound $\mathcal{C}_F[L_h]$ by $a(h)$, $z-F$ by $h$, and all terms involving $\FS_{L_h} \subset \FS_{L}\cup S_h$ by $1 - a'$ (keeping in mind that anything purely in terms of $L$ or $F$ is $O(1)$ in $h$). We also assume that $h$ is chosen sufficiently large that $a(h) \ll 1$ in order to absorb the quadratic remainder into the linear term in the second line.

We now plug this into the global stability inequality, and expand $\mathcal{E}_R[L_h; F]$ in terms of $\mathcal{E}_R[L;F]$, using that $L_h$ is defined in terms of set operations on $L$. This yields:
\begin{align*}
    \mathcal{E}_R[L; F] &\leq \mathcal{E}_R[\widetilde{L}_h; F] + \Diss(L, \widetilde{L}_h)
    \\&\leq \mathcal{E}_R[L_h;F] + \Diss(L,L_h) + Ca(h)(1+h)(1 - a'(h))
    \\&\leq \mathcal{E}_R[L;F] + Ca(h)(1+h)(1 - a'(h))
    \\&\quad\dots+|S_h| - |\FS_L\cap \{ |z| > h\}| 
    \\&\quad\dots- g(h-F)\vol(L\cap \{ z > h\})
    \\&\quad\dots- g(h + F)\vol(\{ -z > h\}\setminus L)
    \\&\quad\dots+ (\cos\YP + \mu_-)|\PS_L\cap \{ z > h\}|
    \\&\quad\dots+ (- \cos\YP + \mu_+)|\{ -z > h\}\setminus \PS_L|
    \\&\quad\dots+ (\cos\YC - \mu_-)|\CS_L\cap \{ z > h\}|
    \\&\quad\dots+ (- \cos\YC + \mu_+)|\{ -z > h\}\setminus \CS_L|
\end{align*}
As a first step to deal with the proliferation of boundary terms on the right side, we use the isoperimetric inequality. Note that
\begin{align*}
    |\partial(L\Delta L_h)| &= |S_h| + |\FS_L\cap \{ |z| > h\}| 
    \\&\quad\dots+ |\PS_L\cap \{ z > h\}| + |\{ -z > h\}\setminus \PS_L|
    \\&\quad\dots+ |\CS_L\cap \{ z > h\}| + |\{ -z > h\}\setminus \CS_L|
    \\&\geq \frac{1}{C}\vol(L\Delta L_h)^{(d-1)/d}
\end{align*}
Let $\varepsilon > 0$ to be determined later. We add the nonnegative quantity $\varepsilon(|\partial(L\Delta L_h)| - \frac{1}{C}\vol(L\Delta L_h)^{(d-1)/d})$ to the right-hand side of the global stability inequality to obtain:
\begin{align*}
    0 &\leq Ca(h)(1+h)(1 -a'(h)) - \frac{\varepsilon}{C} a(h)^{(d-1)/d}
    \\&\quad\dots + (1 + \varepsilon)|S_h| - (1 - \varepsilon)|\FS_L\cap \{ |z| > h\}| 
    \\&\quad\dots- g(h-F)\vol(L\cap \{ z > h\})
    \\&\quad\dots- g(h + F)\vol(\{ -z > h\}\setminus L)
    \\&\quad\dots+ (\cos\YP + \mu_- + \varepsilon)|\PS_L\cap \{ z > h\}|
    \\&\quad\dots+ (- \cos\YP + \mu_+ + \varepsilon)|\{ -z > h\}\setminus \PS_L|
    \\&\quad\dots+ (\cos\YC - \mu_- + \varepsilon)|\CS_L\cap \{ z > h\}|
    \\&\quad\dots+ (- \cos\YC + \mu_+ + \varepsilon)|\{ -z > h\}\setminus \CS_L|
\end{align*}
Note that again $|S_h| = -a'(h)$ for a.e. $h$, so we can combine $|S_h|$ with the first term.

Next, we use \lref{bv_trace_ineq} to bound the four contact surface terms and absorb the result into the gravity and free surface coefficients. To ensure the sign of the free surface coeffcient does not change, we require that each of the coefficients $\cos\YP + \mu_- + \varepsilon$ and so on to be strictly smaller than $1-  \varepsilon$, which we can guarantee by choosing $\varepsilon$ sufficiently small.
In the application of the lemma, we of course must choose the parameter $r$ in the lemma sufficiently small that the inner and outer contact surface estimates do not interact. To ensure the sign of both gravity coefficients are negative, we require that $h$ is sufficiently large to absorb the constant from the lemma. We apply this, and discard the free surface and gravity terms with negative coefficients, obtaining as a result:
\begin{align*}
    0 &\leq -(1+\varepsilon)a(h) + Ca(h)(1+h)(1 -a'(h)) - \frac{\varepsilon}{C} a(h)^{(d-1)/d}
\end{align*}
which we rearrange to write as
\[ ((1+\varepsilon) + Ca(h)(1+h))a'(h) \leq - Ca(h)(1+h) - \frac{\varepsilon}{C} a(h)^{(d-1)/d} \]
Recall that by definition $a$ is decreasing in $h$, so $a'$ is nonpositive and all other variables are nonnegative. Since $L$ has finite energy, and \lref{energy_bdd_below} ensures that energy controls gravity up to an additive constant, it follows that $(1+h)a(h)\to 0$ as $h\to \infty$. Thus, by choosing $h$ sufficiently large and $\varepsilon$ sufficiently small, we may assume that the coefficient of $a'$ is at most $2$ and absorb the first term on the right into the second, giving
\[ a' \leq - \frac{1}{C} a(h)^{(d-1)/d} \]
Finally, we observe that solutions of this differential inequality with positive initial data become 0 in after finite time, and conclude that $\FS_L$ is vertically bounded.
\end{proof}

\section{Sliding comparison for general viscosity solutions}\label{app.sliding_comparison}
In this appendix we recall a comparison principle for level set viscosity solutions of the mean curvature flow with capillary boundary conditions due Barles \cite{Barles} and independently Ishii and Sato \cite{IshiiSato}. We then explain how to translate that result to our setting.
\subsubsection*{Comparison for curvature flows with capillary condition} In \cites{Barles,IshiiSato} they formulate the mean curvature flow with capillary contact angle condition in the level set form. For our purposes we just consider a level set PDE related, as we will explain below, to \eref{PDE-inc}. Let $\Omega$ be a $C^1$ bounded domain in $\R^n$, for us $n = d+1$. Split $\partial \Omega = \Gamma_N\cup \Gamma_D$ into two disjoint pieces, the Neumann boundary $\Gamma_N$ and the Dirichlet boundary $\Gamma_D$. We will take $\Gamma_N$ to be relatively open in $\partial \Omega$ and $\Gamma_D$ to be relatively closed. Then we will be considering level set PDE in the class
\begin{equation}\label{e.level-set-interior}
    u_t -\textup{tr}\left((I-\frac{\grad u \otimes \grad u}{|\grad u|^2})D^2u\right) +h(x,t)|\grad u| = 0 \ \hbox{ in } \ \Omega
\end{equation}
with a continuous forcing $h$ and the capillary boundary condition
\begin{equation}\label{e.level-set-bdry}
    \frac{\partial u}{\partial \nu} = \cos \YC |\grad  u| \ \hbox{ on } \ \Gamma_N.
\end{equation}

\begin{lemma}[\cite{IshiiSato}*{Theorem 2.1}, comparison principle for singular degenerate parabolic equations]\label{l.singular_degenerate_parabolic_comparison}
If $u$ and $v$ are respectively an upper semicontinuous subsolution and lower semicontinuous supersolution of \eref{level-set-interior} and \eref{level-set-bdry} on $(0,T) \times \Omega$ with $u(x,0) \leq v(x,0)$ on $\overline{\Omega}$ and $u(x,t) \leq v(x,t)$ on $\Gamma_D \times [0,T]$ then $u(x,t) \leq v(x,t)$ on $\overline{\Omega} \times (0,T)$.
\end{lemma}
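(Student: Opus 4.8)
The plan is to obtain this as a direct transcription of \cite{IshiiSato}*{Theorem~2.1} (see also \cite{Barles}), after recasting the interior operator and the capillary condition \eref{level-set-bdry} in the form treated there, and then accounting for the Dirichlet part $\Gamma_D$ of the boundary. Write the interior operator as
\[
F(x,t,p,X) = -\textup{tr}\!\left(\left(I - \tfrac{p\otimes p}{|p|^2}\right)X\right) + h(x,t)|p|
\]
and the boundary operator on $\Gamma_N$ as $B(x,p) = p\cdot\nu(x) - \cos\YC\,|p|$, with $\nu$ the outer unit normal. Within this lemma the forcing $h$ is a single fixed continuous function for both $u$ and $v$; the reconciliation of the two slightly different forcings that occur in \pref{sliding-comparison} is arranged there, upstream, and is not part of this argument.

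First I would check the structural hypotheses of \cite{IshiiSato}. The operator $F$ is degenerate elliptic, positively $1$-homogeneous in $(p,X)$ (hence geometric, i.e.\ invariant under composition with increasing functions), continuous on $\{p\neq 0\}$, and its $x$-dependence enters only through the continuous $h$ and the smooth domain; the singularity of $F$ at $p=0$ is treated by the standard convention for level-set equations (cf.\ \cite{EvansSpruck}), which is the convention built into the viscosity notion used here and in \cite{IshiiSato}. The decisive point for $B$ is \emph{strict obliqueness}: $\nabla_p B = \nu - \cos\YC\,\tfrac{p}{|p|}$, so
\[
(\nabla_p B)\cdot\nu = 1 - \cos\YC\,\tfrac{p\cdot\nu}{|p|} \;\geq\; 1 - |\cos\YC| \;>\; 0
\]
by the ellipticity hypothesis $|\cos\YC| < 1$ from \sref{ellipticity-hypothesis}. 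This uniform obliqueness, together with the regularity of $\Gamma_N$ (in the application $\Gamma_N$ is real-analytic), is exactly what \cite{IshiiSato} and \cite{Barles} use to build the near-boundary corrector in their doubling argument, so their comparison proof applies verbatim to $(F,B)$ when $\Gamma_D = \emptyset$.

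To include $\Gamma_D$, I would run the usual argument: supposing $\sup_{\overline\Omega\times[0,T]}(u-v) > 0$, double variables and maximize
\[
u(x,t) - v(y,s) - \frac{|x-y|^4}{\varepsilon} - \frac{|t-s|^2}{\varepsilon} - \frac{\sigma}{T-t} - \beta\,\chi(x,y)
\]
over $\overline\Omega\times\overline\Omega\times[0,T]^2$, where the quartic term forces the two test-function gradients to vanish together at coincidence points (as required for the singular operator), $\sigma,\beta > 0$ are small parameters, and $\chi$ is the Ishii--Sato boundary corrector localized near $\Gamma_N$. Letting $\varepsilon\to 0$ and then $\beta\to 0$, the limiting maximizer cannot sit at $t=T$ (by the $\sigma/(T-t)$ term and upper semicontinuity of $u$), it cannot sit at $t=0$ or on $\Gamma_D\times[0,T]$ (where $u-v\le 0$ by hypothesis), and if it sits in the interior or on $\Gamma_N\times(0,T)$ the Ishii--Sato/Barles computation delivers the usual contradiction; finally $\sigma\to 0$. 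One uses here that in the bounded domain produced by \pref{sliding-comparison} the Neumann and Dirichlet pieces are separated by a positive distance away from $t=0,T$, so the $\Gamma_N$-corrector and the $\Gamma_D$-data do not interact.

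The main obstacle is not a new analytic difficulty — the genuinely delicate ingredients (the singular level-set diffusion, the oblique boundary condition) are precisely what \cite{IshiiSato} and \cite{Barles} handle — but the careful bookkeeping of the translation: matching the viscosity sub/supersolution conventions for the singular operator, verifying that \eref{level-set-bdry} is an admissible oblique condition \emph{uniformly}, including near the junction $\overline{\Gamma_N}\cap\Gamma_D$, and justifying the reduction in \pref{sliding-comparison} to a bounded domain with well-separated Neumann and Dirichlet boundary. I expect the actual write-up to consist mostly of citing the exact hypotheses of \cite{IshiiSato}*{Theorem~2.1} and confirming each holds in our setting, with the Dirichlet extension disposed of in a short paragraph as above.
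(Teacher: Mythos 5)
Your proposal is correct and takes essentially the same approach as the paper: the paper simply cites \cite{IshiiSato}*{Theorem 2.1} and remarks that adding the Dirichlet boundary $\Gamma_D$ causes no difficulty because a positive maximum of $u-v$ there is ruled out directly by the ordering hypothesis, which is exactly the point you make inside the doubling argument. Your write-up supplies more detail than the paper (verifying geometricity and strict obliqueness, sketching the quartic penalization and the Ishii--Sato corrector), but the underlying argument and its decisive step are the same.
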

Note that the Dirichlet boundary is not present in \cite{IshiiSato}*{Theorem 2.1}, but it is standard and does not add any difficulty to the proof there, since a positive maximum of $u - v$ on the Dirichlet boundary can be immediately ruled out by the assumed boundary ordering.

We recall the standard definition of viscosity subsolution of \eref{level-set-interior}: whenever $\varphi$ a smooth test function touches $u$ from above at $(x_0,t_0) \in \Omega \times (0,T)$ then
\[\left[\varphi_t -\textup{tr}^*\left((I-\frac{\grad \varphi \otimes \grad \varphi}{|\grad \varphi|^2})D^2\varphi\right) +h(x,t)|\grad \varphi|\right](x_0,t_0) \leq 0\]
where we adopted the shorthand notation
\[\textup{tr}^*\left((I-\frac{p \otimes p}{|p|^2})M\right) := \limsup_{\substack{q \to p \\ q \neq 0}}\textup{tr}^*\left((I-\frac{q \otimes q}{|q|^2})M\right)\]
The supersolutions are defined analogously with $\textup{tr}_*$. See \cite{CrandallIshiiLions}*{Section 9} for further discussion about the case of touching with degenerate slope.

\subsubsection*{Viscosity property for sliding level sets} Now we explain that viscosity solutions of \eref{PDE}, in the sense of Definitions \ref{def:visc_subsln_with_test_region} and \ref{def:visc_supsln_with_test_region}, can be interpreted in level-set form. We also add in a sliding factor to take advantage of the parabolic comparison principle \lref{singular_degenerate_parabolic_comparison}.

\begin{lemma}\label{l.sliding-viscosity-soln}
Let $f$ be a smooth increasing function of $t$, and $L$ be a subsolution of \eref{PDE}. Then the sliding level set function
\[ u(t, x, z):={\bf 1}_{L^*}(x,z - f(t)) \]
is a subsolution on $\Cyl_R\times (0, T]$ of
\[\begin{cases}
    \partial_t u -\textup{tr}\left((I-\frac{\grad u \otimes \grad u}{|\grad u|^2})D^2u\right) +g\big(z -F-\tfrac{\lambda_L + gf(t) + f'(t)}{g}\big)|\grad u| \leq 0 & \hbox{on }\Cyl_R \\
    \frac{\partial u}{\partial \nu} = \cos \YC |\grad  u| & \hbox{on }\thinCyl_R
\end{cases}\]
Similarly, for $f$ decreasing and $L$ a supersolution, $\one_{L_*}(x, z - f(t))$ is a supersolution.
\end{lemma}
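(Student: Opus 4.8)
The plan is to translate the geometric viscosity solution notion from Definitions~\ref{def:visc_subsln_with_test_region} and~\ref{def:visc_supsln_with_test_region} into the level-set form in which the Barles / Ishii--Sato comparison principle (\lref{singular_degenerate_parabolic_comparison}) applies. First I would check the interior subsolution inequality: suppose a smooth test function $\varphi$ touches $u(t,x,z) = \one_{L^*}(x, z - f(t))$ from above at a point $(t_0, x_0, z_0)$ with $\grad_{x,z}\varphi \neq 0$ there. Then the zero level set $\{\varphi(t_0,\cdot) = \varphi(t_0,x_0,z_0)\}$, shifted by $f(t_0)e_z$, is a smooth hypersurface touching $\partial L^*$ from outside at $(x_0, z_0 - f(t_0))$, so it is (the boundary of) a smooth test region $\Phi$ in the sense of Definition~\ref{def:visc_subsln_with_test_region}; here one uses that at a non-degenerate-gradient touching point the level set is locally a smooth subgraph, and that touching from above for $u$ corresponds to touching from outside for $L^*$ because $u$ jumps down across $\partial L^*$. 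Applying the subsolution property of $L$ gives $-H_\Phi(x_0, z_0 - f(t_0)) + g(z_0 - f(t_0) - F) \leq \lambda_L$ (the contact-angle alternative cannot occur since $x_0 \notin \thinCyl_1$ for an interior point). The standard identity relating the geometric mean curvature of a level set to the degenerate-elliptic operator, $-H_\Phi |\grad\varphi| = -\textup{tr}^*\big((I - \tfrac{\grad\varphi\otimes\grad\varphi}{|\grad\varphi|^2})D^2\varphi\big)$ with the appropriate sign convention, together with $\partial_t u$ contributing $-f'(t_0)|\grad_{x,z}\varphi|$ from the sliding (since translating the indicator upward by $f$ means the level set moves up, so the time derivative of $\varphi$ along the touching is $-f'(t_0)\partial_z\varphi$-type, contributing the $f'(t)$ shift), yields exactly the claimed inequality $\partial_t u - \textup{tr}(\cdots) + g(z - F - \tfrac{\lambda_L + gf(t) + f'(t)}{g})|\grad u| \leq 0$ after absorbing $gf(t)$ from evaluating the forcing at the shifted height $z_0 - f(t_0)$ rather than $z_0$.

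Next I would handle the degenerate case $\grad_{x,z}\varphi(t_0,x_0,z_0) = 0$, where the operator is defined by the $\textup{tr}^*$ upper relaxation: here one argues as in \cite{CrandallIshiiLions}*{Section 9} that a test function touching an indicator function with vanishing spatial gradient forces $D^2_{x,z}\varphi(t_0,x_0,z_0) \le 0$ and $\varphi_t(t_0,x_0,z_0) = 0$ (one cannot touch a $\{0,1\}$-valued function from above at such a point unless the touching is trivial in the relevant directions), so the inequality holds automatically. Then I would treat the boundary: the Neumann capillary condition on $\thinCyl_R$ is handled by noting that it is translation-invariant in $z$, so sliding by $f(t)$ does not affect it, and the condition $\cos\theta^o_\Phi = \cos\YC$ at a contact point translates directly to $\partial_\nu u = \cos\YC|\grad u|$ in the viscosity sense for level sets via the same correspondence between contact angles and normal derivatives (this is essentially the content of how the capillary angle condition is encoded in \cites{Barles,IshiiSato}). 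The supersolution statement for decreasing $f$ and $L$ a supersolution is entirely symmetric: one touches $u = \one_{L_*}(x, z - f(t))$ from below, obtains a smooth test region touching $L_*$ from inside, applies Definition~\ref{def:visc_supsln_with_test_region}, and reverses all inequalities; the $f$ decreasing ensures the sliding shift has the correct sign.

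The main obstacle I anticipate is bookkeeping the sign conventions and the precise form of the forcing shift — in particular, reconciling the paper's convention that $H$ behaves like $+\Delta$ (so balls are negatively curved) with the standard level-set identity, and making sure the $+gf(t) + f'(t)$ correction to the effective Lagrange multiplier comes out with the right sign and is consistent with how it is used later in the proof of \pref{sliding-comparison} (where the comparison is run with $f$ increasing, $f(0) \ll 0$, and $gf(t) < \lambda_{L_0} - \lambda_{L_1}$). A secondary technical point is justifying that at a touching point the geometric definition via test regions (which requires $\Phi$ to be a smooth subgraph after rotation) is equivalent to the level-set definition for all non-degenerate test functions — this requires observing that any smooth function with non-vanishing gradient has smooth level sets which locally are such subgraphs, and conversely any smooth test region arises as a level set of its own signed distance function, so the two notions of touching coincide. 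Since the paper has already reduced to the regime where $L^* = \overline L$ and $L_* = \mathrm{int}(L)$ for globally stable $L$, I would not belabor the envelope subtleties. I expect the whole argument to be short modulo this sign-tracking, and I would present it in the standard ``whenever $\varphi$ touches $u$ from above, either the gradient degenerates (trivial) or we extract a test region and invoke Definition~\ref{def:visc_subsln_with_test_region}'' format.
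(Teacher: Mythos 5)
Your overall strategy is exactly the paper's: split into cases according to whether the touching point (after undoing the sliding shift) lies on $\partial L^*$ and/or on $\thinCyl_R$, extract a smooth test region $\Phi$ from the sublevel set of the test function in the nondegenerate case, invoke the geometric Definitions~\ref{def:visc_subsln_with_test_region}/\ref{def:visc_supsln_with_test_region}, and translate the resulting curvature and contact-angle inequalities into the level-set form via the usual identities; the $gf(t)$ contribution to the effective multiplier comes from evaluating the forcing at the shifted height, and the $f'(t)$ contribution from the time derivative combined with the bound $|\partial_z\varphi| \le |\grad\varphi|$ and $f' \ge 0$, just as in the paper.

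One small but real error: in the degenerate-gradient case you assert that touching the indicator from above with $\grad_{x,z}\varphi = 0$ forces $D^2_{x,z}\varphi \le 0$ and conclude "the inequality holds automatically". The sign is backwards and, if it were true, it would make the argument \emph{fail}: with $D^2\varphi \le 0$ one has $\textup{tr}^*\big((I-\nu\otimes\nu)D^2\varphi\big) \le 0$, so $-\textup{tr}^*(\cdots) \ge 0$, which is the wrong sign for the subsolution inequality. In the "trivial" subcase where $(x_0, z_0 - f(t_0)) \notin \partial L^*$ the correct constraint is $D^2\varphi \ge 0$ (since $u$ is locally constant and $u - \varphi$ has a max, so $\varphi$ has a local min), and this is what makes $-\textup{tr}^*(\cdots) \le 0$. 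In the genuinely degenerate subcase $(x_0, z_0 - f(t_0)) \in \partial L^*$ with $\grad\varphi = 0$ neither $D^2\varphi \ge 0$ nor $\le 0$ follows from the touching alone; this is precisely where one needs the standard tilting/perturbation argument of \cite{CrandallIshiiLions}*{Section 9}, which the paper cites but you cannot replace with the claimed pointwise constraint. The rest of your proof (nondegenerate interior, boundary angle condition, supersolution symmetry, sign of the $f'$ shift consistent with the later comparison in \pref{sliding-comparison}) is correct and matches the paper.
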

\begin{proof}
We verify the claim for the subsolution property. Suppose $\varphi$ is a smooth test function, such that $u - \varphi$ has a strict maximum at $(t_0, x_0, z_0)$ with $t_0 > 0$. Then $\one_{L^*} - \varphi(t, x, z + f(t))$ has the same at $(t_0, x_0, z_0 - f(t_0))$, and we may split into four cases, depending on whether $(x_0, z_0 - f(t_0))\in \partial L^*$ and whether $(x_0, z_0 - f(t_0))\in \thinCyl_R$.

Starting with the trivial case $(x_0, z_0 - f(t_0)) \notin \partial L^*\cup \thinCyl_R$, at $(t_0, x_0, z_0-f(t_0)$ we have
\[ \partial_t \varphi = f'\partial_z\varphi,\quad \nabla \varphi = 0, \quad D^2\varphi \geq 0\]
The curvature term in \eqref{e.level-set-interior} is interpreted in the sense of upper semicontinuous envelopes, as
\[ \sup_{|\nu| = 1}\mathrm{tr}\left((I - \nu\otimes \nu)D^2\varphi\right) \]
Therefore, we directly obtain
\begin{align*}
    &\partial_t \varphi - \mathrm{tr}\left((I - \frac{\nabla\varphi\otimes\nabla\varphi}{|\nabla\varphi|^2})D^2\varphi\right) + g(z_0 - f(t_0) - F - \tfrac{\lambda_L}{g})|\nabla \varphi| 
    \\&= - \mathrm{tr}\left((I - \frac{\nabla\varphi\otimes\nabla\varphi}{|\nabla\varphi|^2})D^2\varphi\right) 
    \\&\leq 0
\end{align*}
In the trivial boundary case, $(x_0, z_0) \in \thinCyl_R\setminus \partial L^*$, we similarly have that for $\one_{L^*} - \varphi(t,x,z - f(t))$ to have a boundary maximum, we must have
\[ \partial_r \varphi \leq 0, \quad \nabla_{\thinCyl_R}\varphi = 0,\quad\partial_t\varphi = 0 \]
Hence, from $\cos\alpha\in (-1,1)$ we conclude that
\[ \partial_r\varphi \leq \cos\alpha |\nabla\varphi| = \cos\alpha |\partial_r\varphi| \]

Proceeding to the nontrivial interior case $(x_0, z_0) \in \partial(L^* + f(t_0)e_z)$, we assume that $\nabla \varphi(t_0, x_0, z_0) \neq 0$. Letting $\Phi := \{ (x, z - f(t_0)) : \varphi(t_0, x, z) \geq 1 \}$, we note that $\Phi$ has a smooth parametrization near $(x_0, z_0)$, and can be redefined elsewhere if necessary to be a smooth test region which touches $L^*$ from outside at $(x_0, z_0 - f(t_0))$. Applying Definition \ref{def:visc_subsln_with_test_region}, we obtain that either $-H_\Phi(x_0, z_0 - f(t_0)) + g(z_0 - f(t_0) - F) \leq \lambda_{L}$ or $(x_0, z_0) \in \gamma_\Phi$ and $\THG{\Phi}(x_0, z_0 - f(t_0)) \geq \alpha$.

We recall that the outward normal to $\Phi$ is $-\frac{\nabla \varphi}{|\nabla \varphi|}$, and with our sign convention the mean curvature is negative where $\Phi$ is convex (e.g. where $\varphi$ is concave). Therefore, 
\begin{align*}
    -H_\Phi &= \nabla\cdot(-\tfrac{\nabla\varphi}{|\nabla\varphi|})
    \\&= \frac{-1}{|\nabla\varphi|}\left(I - \frac{\nabla\varphi \otimes \nabla\varphi}{|\nabla\varphi|^2}\right) : D^2\varphi
\end{align*}
from which we conclude that in the first case, we have at $(x_0, z_0)$ that
\[ -\textup{tr}\left((I-\frac{\grad \varphi \otimes \grad \varphi}{|\grad \varphi|^2})D^2\varphi\right) +g(z_0 - f(t_0)-F-\tfrac{\lambda_L}{g})|\grad \varphi| \leq 0 \]
Since $\one_{L^*}(x,z) - \varphi(t,x,z-f(t))$ has a local maximum at $(t_0, x_0, z_0)$, we also obtain that $\partial_t\varphi = f'\partial_z\varphi$ at $(t_0,x_0,z_0)$. Bounding $\partial_z \varphi \leq |\nabla \varphi|$, we conclude that
\[ \partial_t\varphi-\textup{tr}\left((I-\frac{\grad \varphi \otimes \grad \varphi}{|\grad \varphi|^2})D^2\varphi\right) +g(z_0 - f(t_0)-F-\tfrac{\lambda_L}{g})|\grad \varphi| \leq f'(t_0)|\nabla \varphi| \]

In the second case, we can compute the contact angle with our sign convention as \[ \cos\THG{\Phi}(x_0, z_0 - f(t_0)) = -(-\tfrac{\nabla\varphi}{|\nabla\varphi|}\cdot \tfrac{x_0}{|x_0|}) \leq \cos\alpha  \]
Therefore, we have the boundary inequality
\[ \partial_r\varphi(x_0, z_0) \leq |\nabla\varphi(x_0, z_0)|\cos\alpha \]
where $r$ is the cylindrical radius coordinate.

The degenerate case $\nabla \varphi(t_0, x_0,z_0) = 0$ can be handled by a standard tilting argument, since we only need to check, for interior touching,
\[\left[-\textup{tr}^*\left((I-\frac{\grad \varphi \otimes \grad \varphi}{|\grad \varphi|^2})D^2\varphi\right) +g(z-F-\tfrac{\lambda}{g})|\grad \varphi|\right]_{(x_0,z_0)} \leq 0,\]
and a similar condition for boundary touching. Thus, we conclude.

\end{proof}

\section{Some measure theory results}

\begin{lemma}\label{l.limsup_liminf_integral}
    Let $Q_n$ be a sequence of functions on a finite-measure set $I$ which are dominated by an integrable function $\hat{Q}$. If 
    \[ \int_I \limsup_n Q_n \leq  \liminf_n \int_I Q_n \]
    then the $Q_n$ converge in measure as $n\to \infty$ to $\limsup_n Q_n$.
\end{lemma}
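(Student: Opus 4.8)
Write $Q^\ast := \limsup_n Q_n$; since $|Q_n| \le \hat Q$ pointwise, also $|Q^\ast| \le \hat Q$, so $Q^\ast$ is integrable on the finite-measure set $I$. The plan is to upgrade the hypothesized inequality to genuine convergence of integrals, and then to an $L^1$ bound on $Q_n - Q^\ast$, from which convergence in measure is immediate.

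First I would invoke the reverse Fatou lemma (the domination-by-$\hat Q$ version of Fatou's lemma applied to $\hat Q - Q_n \ge 0$): this gives $\limsup_n \int_I Q_n \le \int_I \limsup_n Q_n = \int_I Q^\ast$. Combined with the hypothesis $\int_I Q^\ast \le \liminf_n \int_I Q_n$, we conclude $\int_I Q_n \to \int_I Q^\ast$, i.e. $\int_I (Q^\ast - Q_n) \to 0$. Next, decompose $Q^\ast - Q_n = (Q^\ast - Q_n)_+ - (Q_n - Q^\ast)_+$. I claim $\int_I (Q_n - Q^\ast)_+ \to 0$: indeed, $0 \le (Q_n - Q^\ast)_+ \le 2\hat Q$, and since $\limsup_n (Q_n - Q^\ast) = \limsup_n Q_n - Q^\ast = 0$ pointwise, for every $\varepsilon>0$ one has $(Q_n - Q^\ast)_+ < \varepsilon$ for all large $n$ at each point, hence $\limsup_n (Q_n - Q^\ast)_+ = 0$ pointwise; reverse Fatou again yields $\limsup_n \int_I (Q_n - Q^\ast)_+ \le \int_I 0 = 0$.

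Combining these two facts, $\int_I (Q^\ast - Q_n)_+ = \int_I (Q^\ast - Q_n) + \int_I (Q_n - Q^\ast)_+ \to 0$ as well. Therefore
\[
\int_I |Q^\ast - Q_n| = \int_I (Q^\ast - Q_n)_+ + \int_I (Q_n - Q^\ast)_+ \longrightarrow 0,
\]
so $Q_n \to Q^\ast$ in $L^1(I)$, and in particular $Q_n \to Q^\ast$ in measure by Chebyshev's inequality. The only point requiring any care is the correct application of reverse Fatou (which needs the uniform domination by an integrable function on a finite-measure set, both supplied by the hypotheses) and the elementary observation that $\limsup_n(Q_n - Q^\ast)_+ = 0$; I do not anticipate any genuine obstacle here.
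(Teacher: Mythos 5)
Your argument is correct and takes a genuinely different route from the one in the paper, and in fact yields a stronger conclusion: you prove $Q_n \to Q^*$ in $L^1(I)$, where $Q^* = \limsup_n Q_n$, while the lemma as stated only asks for convergence in measure. The paper's proof handles the two exceedance sets separately: the set $\{Q_N > \varepsilon + Q^*\}$ shrinks because the corresponding indicators vanish pointwise (by the definition of $\limsup$), which feeds into the bounded convergence theorem; the set $\{Q_N < -\varepsilon + Q^*\}$ is controlled through the pointwise inequality $Q_N + \varepsilon\one_{\{Q_N < -\varepsilon + Q^*\}} \le Q_N \vee Q^*$, followed by reverse Fatou and the integral hypothesis, then cancellation of $\liminf_N\int_I Q_N$. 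Your proof instead splits $Q^* - Q_n$ into its positive and negative parts, disposes of $(Q_n - Q^*)_+$ by reverse Fatou alone (since its pointwise $\limsup$ vanishes), and then uses $\int_I Q_n \to \int_I Q^*$ to dispose of $(Q^* - Q_n)_+$; the two contributions sum to $\|Q_n - Q^*\|_{L^1(I)}$, giving $L^1$ convergence and, via Chebyshev, convergence in measure. Both arguments rest on reverse Fatou with the integrable majorant $\hat Q$, but yours is shorter, avoids the separate bounded-convergence step, and delivers the more informative $L^1$ conclusion. There is no gap.
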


\begin{proof}
    First note that
    \[ \lim_N\one_{\{Q_N > \varepsilon + \limsup_n Q_n\}} = 0 \]
    pointwise in time for any $\varepsilon > 0$ by definition. Bounded convergence implies
    \[ |\{Q_N > \varepsilon + \limsup_n Q_n\}| \to 0 \]
    On the other hand, we have
    \[ Q_N + \varepsilon\one_{\{ Q_N < -\varepsilon + \limsup_n Q_n\}} \leq Q_N\vee\limsup_n Q_n \]
    We integrate both sides in time and take the limsup in $N$, to get
    \begin{align*}
        \liminf_N & \int_I Q_Ndt + \limsup_N \varepsilon|\{ Q_N < -\varepsilon + \limsup_n Q_n\}| \\&\leq \limsup_N\left(\int_I Q_Ndt +  \varepsilon|\{ Q_N < -\varepsilon + \limsup_n Q_n\}|\right)
        \\&\leq \limsup_N \int_I Q_N\vee \limsup_n Q_n\,dt
        \\&\leq \int_I \limsup_N(Q_N\vee \limsup_n Q_n) dt \tag{Reverse Fatou}
        \\&= \int_I \limsup_N Q_N dt
        \\&\leq \liminf_N\int_I Q_Ndt \tag{Hypothesis}
    \end{align*}
    The only step that requires justification is the reverse Fatou inequality. This holds since $Q_N\vee \limsup_n Q_n$ is dominated by $\hat{Q}$ (since $\hat{Q}$ dominates the $Q_N$).

    With this, we cancel $\liminf_N\int_I Q_Ndt$ from both sides of the inequality and conclude that
    \[ \limsup_N|\{ Q_N < -\varepsilon + \limsup_n Q_n\}| = 0 \]
    so the convergence in measure holds.
\end{proof}

\begin{lemma}\label{l.subseq_msb_sel}
    Let $Y$ be a separable metric space (equipped with the Borel $\sigma$-algebra), $T$ a measurable space, and $L_k(t)$ a sequence of measurable functions $T\to Y$. Let $\mathcal{M}: T \to 2^Y$ be such that $\mathcal{M}(t)$ is the set of subsequential limits of the $L_k(t)$, for each $t$. Then $L(t)$ is measurable in the sense of Lemma \ref{lem:msb_sel}.

    Let $F:T\times Y\to \R$ be measurable. Let $a\in \R$, and let $\mathcal{M}_{F,a}(t)$ denote the set of subsequential limits $L$ of the $L_k(t)$ such that $F(t,L) = a$. Then $\mathcal{M}_{F,a}$ is also measurable.
\end{lemma}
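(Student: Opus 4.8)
The statement is to be read as: $\mathcal{M}$ and $\mathcal{M}_{F,a}$ are measurable multifunctions with nonempty compact values, so that Lemma \ref{lem:msb_sel} applies and produces the asserted measurable selections $L(t)$ (the applications only ever use $\mathcal{M}_{F,a}$ in a regime where it is nonempty, namely after passing to a subsequence along which $Q_n(t)$ attains its limsup). The plan is to verify measurability — that preimages of open sets lie in the $\sigma$-algebra of $T$ — together with compactness of the values, using that for each $t$ the sequence $(L_k(t))$ is precompact in $Y$ (as holds in the applications, where $Y = L^1_{\mathrm{loc}}$). I would start from the standard description of the cluster set of a sequence in a metric space,
\[ \mathcal{M}(t) = \bigcap_{N\geq 1} C_N(t), \qquad C_N(t) := \overline{\{L_k(t) : k\geq N\}} . \]
Precompactness makes $C_1(t)$ compact, hence every $C_N(t)$ is compact and $\mathcal{M}(t)$, being a nested intersection of nonempty compact sets, is nonempty and compact.

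The first measurability step is routine: for $U\subseteq Y$ open one has $\overline{S}\cap U\neq\emptyset$ iff $S\cap U\neq\emptyset$, so $C_N^{-1}(U) = \bigcup_{k\geq N}L_k^{-1}(U)$ is measurable. The obstacle is that $\mathcal{M}^{-1}(U)=\bigcap_N C_N^{-1}(U)$ is \emph{not} a valid identity — measurability of multifunctions is not preserved under countable intersection in general — so one must exploit compactness of the sections. I would write $U=\bigcup_{\varepsilon\in\Q^+}U_\varepsilon$ with $U_\varepsilon:=\{y:\dist(y,Y\setminus U)>\varepsilon\}$ open and $\overline{U_\varepsilon}\subseteq U$, and check, using the finite‑intersection property of the nested nonempty compacts $C_N(t)\cap\overline{U_\varepsilon}$, that
\[ \mathcal{M}(t)\cap U\neq\emptyset \iff \exists\,\varepsilon\in\Q^+:\ C_N(t)\cap U_\varepsilon\neq\emptyset\ \text{ for all } N . \]
This gives $\mathcal{M}^{-1}(U)=\bigcup_{\varepsilon\in\Q^+}\bigcap_{N}C_N^{-1}(U_\varepsilon)$, which is measurable; together with compactness of the values, Lemma \ref{lem:msb_sel} applies.

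For $\mathcal{M}_{F,a}$, write $\Phi(t):=\{y:F(t,y)=a\}$, so $\mathcal{M}_{F,a}(t)=\mathcal{M}(t)\cap\Phi(t)$. I would use here that in the applications $F(t,\cdot)$ is continuous in $y$ — e.g.\ $F(t,L)=P^*[L;F_\infty(t)]\dot F_\infty(t)$, which is continuous along $L^1_{\mathrm{loc}}$‑convergent sequences of globally stable profiles by \lref{pressure_continuous} — so that $\Phi(t)$ is closed and $\mathcal{M}_{F,a}(t)$ is compact. The measurability proof repeats the previous one with a second exhaustion, of $\Phi(t)$ from outside by the open sets $\{y:|F(t,y)-a|<\delta\}$, $\delta\in\Q^+$: a compactness argument shows $\mathcal{M}_{F,a}(t)\cap U\neq\emptyset$ iff there is $\varepsilon\in\Q^+$ such that for all $\delta\in\Q^+$ and all $N$,
\[ C_N(t)\cap U_\varepsilon\cap\{y:|F(t,y)-a|<\delta\}\neq\emptyset . \]
Since $W_{t,\varepsilon,\delta}:=U_\varepsilon\cap\{y:|F(t,y)-a|<\delta\}$ is open in $Y$, the closure‑versus‑set equivalence still applies, so this condition reads ``$\exists\,k\geq N$ with $L_k(t)\in U_\varepsilon$ and $|F(t,L_k(t))-a|<\delta$'', which defines a measurable subset of $T$ because $t\mapsto L_k(t)$ and $t\mapsto F(t,L_k(t))$ are measurable; the countable Boolean combination over $\varepsilon,\delta,N$ then gives measurability of $\mathcal{M}_{F,a}^{-1}(U)$. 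When the lemma is invoked with $a$ a measurable function $a(t)$ rather than a constant, I would apply the above to $\widetilde{F}(t,y):=F(t,y)-a(t)$ at level $0$.

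The step I expect to be the main obstacle is precisely the passage from the $C_N$ to their intersection $\mathcal{M}$: countable intersections of measurable multifunctions need not be measurable, and the argument succeeds only because compactness of the sections $C_N(t)$ lets one trade that intersection for a countable union over the inner approximations $U_\varepsilon$ (and $\{|F(t,\cdot)-a|<\delta\}$) of the open test set. The cluster‑set identity, the weak measurability of each $C_N$, and the bookkeeping of countable unions and intersections are otherwise routine.
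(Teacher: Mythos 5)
Your proof takes a genuinely different route from the paper's, and in fact repairs a gap in the paper's argument. The paper writes $\mathcal{M}(t) = \bigcap_i\bigcap_j\bigcup_{k>j} E_{ijk}(t)$ with $E_{ijk}(t) = B_{1/i}(L_k(t))$, then claims
\[
\{ t: \bigcap_i\bigcap_j\bigcup_{k>j} (E_{ijk}(t) \cap A) \neq \emptyset\} = \bigcap_i\bigcap_j\bigcup_{k>j} \{ t: (E_{ijk}(t) \cap A) \neq \emptyset\} = \bigcap_i\bigcap_j\bigcup_{k>j} L_k^{-1}(B_{1/i}(A)).
\]
The first equality pulls a countable intersection outside a nonemptiness condition, which only gives $\subseteq$ in general: the right-hand set says that for every $\varepsilon$ infinitely many $L_k(t)$ are within $\varepsilon$ of $A$, which does not force a single cluster point of the sequence to land in $A$. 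Already the example $T=\{*\}$, $L_k(*)=1/k$, $A=(0,1)\subset\R$ breaks it — the right side is $\{*\}$ while $\mathcal{M}(*)=\{0\}$ so the left side is empty. Your decomposition $\mathcal{M}(t)=\bigcap_N C_N(t)$ with $C_N(t)=\overline{\{L_k(t):k\geq N\}}$, and the inner exhaustion $U=\bigcup_{\varepsilon\in\Q^+}U_\varepsilon$ with $\overline{U_\varepsilon}\subset U$, is exactly what avoids this: the finite-intersection property of the nested nonempty compacts $C_N(t)\cap\overline{U_\varepsilon}$ upgrades ``$C_N(t)\cap U_\varepsilon\neq\emptyset$ for all $N$'' to ``$\mathcal{M}(t)\cap U\neq\emptyset$'', and the countable union over $\varepsilon$ restores the equality. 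You correctly identify that this step uses compactness of the $C_N(t)$, i.e.\ precompactness of $(L_k(t))_k$ for each $t$ — an assumption the lemma's statement omits, but which is guaranteed everywhere the lemma is invoked (via the width bound, length bound, and almost-minimality). For the second part, the paper's composition argument with $G(t)=(\mathcal{M}(t),\{F(t,L):L\in\mathcal{M}(t)\})$ treated as a product set also does not give the claimed preimage identity (it allows the $L$ realizing $\mathcal{M}(t)\cap A\neq\emptyset$ and the $L'$ realizing $F(t,L')=a$ to differ; one would need $G$ to be the graph $\{(L,F(t,L)):L\in\mathcal{M}(t)\}$). Your version — imposing continuity of $F(t,\cdot)$ so that $\Phi(t)=\{F(t,\cdot)=a\}$ is closed and $\mathcal{M}_{F,a}(t)$ compact, then repeating the exhaustion with the open sets $\{|F(t,\cdot)-a|<\delta\}$ — is cleaner and makes explicit the continuity hypothesis that the applications do satisfy (via \lref{pressure_continuous}) but the lemma's statement of mere measurability of $F$ does not. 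The only caveat is that you are proving a slightly stronger/differently-hypothesized statement than the literal lemma; since those extra hypotheses hold in every use of the lemma in the paper, this is the right fix rather than a mismatch.
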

\begin{proof}
Note that
\[\mathcal{M}(t) = \bigcap_i\bigcap_j\bigcup_{k>j} \{L: d(L_k(t),L) < 1/i\} =: \bigcap_i\bigcap_j\bigcup_{k>j} E_{ijk}(t)\]
We then have for any $A$ that
\begin{align*}
    \{t: \mathcal{M}(t) \cap A \neq \emptyset\} &= \{ t: \bigcap_i\bigcap_j\bigcup_{k>j} (E_{ijk}(t) \cap A) \neq \emptyset\}
    \\&= \bigcap_i\bigcap_j\bigcup_{k>j} \{ t: (E_{ijk}(t) \cap A) \neq \emptyset\}
    \\&= \bigcap_i\bigcap_j\bigcup_{k>j} L_k^{-1}(B_{1/i}(A))
\end{align*}
where the last line is measurable as a countable union and intersection of measurable sets, using that $L_k$ is measurable and $B_{1/i}(A)$ is open.

For the second part, write $G:T\to 2^Y\times 2^\R$ as $G(t) = (\mathcal{M}(t), \{F(t, L): L\in \mathcal{M}(t)\})$. Note that the notions of measurability for multivalued functions compose, and thus $G$ is measurable. For any $A$, we have $\mathcal{M}_{F,a}^{-1}(A) = G^{-1}(A\times \{a\})$, where $A\times \{a\}$ is Borel in $Y\times \R$ if $A$ is Borel in $Y$. We conclude that $\mathcal{M}_{F,a}$ is measurable.
\end{proof}

\section{Local regularity of pinned contact lines}\label{a.local-contact-line-reg}

We work in an abstract local framework which encompasses almost-minimizers of the type in Definition \ref{def:almost_minimizer}. In contrast to the rest of the paper, in this section we use the notation $B_r(x)$ to refer to $d$-dimensional balls.

Let $\Gamma$ be a bounded $C^2$-embedded $(d-1)$-surface in $\R^d$, with unit normal $\nu$, and write $\Gamma_{s_0}$ for the one-sided tubular neighborhood $\{ y + s\nu : y\in \Gamma, s\in (0,s_0)\}$. We recall that the projection $\Gamma_r\ni p\mapsto (y,s)$ is well-defined when $s$ is sufficiently small, depending on the $C^2$ character of $\Gamma$, so we will assume this to be the case. Define the energy $E$ for $U\in \mathrm{Cacc}(\Gamma_{s_0})$:
\begin{equation}
    E[U] = \mathrm{Per}(U; \Gamma_{s_0}) - \cos\theta |\mathrm{tr}_\Gamma(U)|
\end{equation}
and consider $(\Lambda_0, \mu_\pm, r_0)$-minimizers of $E$, defined to satisfy (for any $V$ with $U\Delta V \Subset \{ |x - x_0| < r\}\cap (\Gamma_{s_0}\cup \Gamma)$ for some $p_0\in \R^d$),
\begin{equation}\label{e.abstract_almost_minimizer}
    E[U] \leq E[V] + \Lambda_0\vol(U\Delta V) + \Diss[U, V]
\end{equation}
where
\begin{equation}
    \Diss[U, V] = \mu_+|\mathrm{tr}_\Gamma(V\setminus U)| + \mu_- |\mathrm{tr}_\Gamma(U\setminus V)|
\end{equation}
subject to the assumption $[\cos\theta - \mu_+, \cos\theta + \mu_-]\subset (-1, 1)$.

We first present density bounds for almost-minimizers, as a modification of \cite{DePhilippisMaggi}*{Lemma 2.8}.

\begin{lemma}
    Let $U$ be a $(\Lambda_0, \mu_\pm, r_0)$-minimizer in $\Gamma_{s_0}$ in the sense of \eref{abstract_almost_minimizer}. Then for any $x$ in the measure-theoretic support of $U$, there exists $c = c(x) > 0$ and $r_x > 0$ such that for each $r\in (0,r_x)$, $\vol(U\cap B_r(x)) \geq c\vol(B_r(x))$. Here, the parameters are uniform for $x$ avoiding $\partial \Gamma_{s_0}\setminus \Gamma$.
\end{lemma}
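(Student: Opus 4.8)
The plan is to run the standard comparison argument for almost-minimizers: at a point $x$ in the measure-theoretic support of $U$, compare $U$ with the competitor $U\setminus B_r(x)$ and use the almost-minimality inequality \eqref{e.abstract_almost_minimizer} together with the relative isoperimetric inequality to derive a differential inequality for $m(r):=\vol(U\cap B_r(x))$ which forces $m(r)\gtrsim r^d$. First I would set $V=U\setminus B_r(x)$; since $x$ is interior to $\Gamma_{s_0}$ (away from $\partial\Gamma_{s_0}\setminus\Gamma$) we may take $r_x$ small enough that $B_r(x)$ either misses $\Gamma$ entirely, in which case there is no dissipation or trace term and the argument is the classical one from \cite{Maggi_2012}*{Theorem 21.11}, or $B_r(x)$ meets $\Gamma$, in which case the trace term $|\mathrm{tr}_\Gamma(U\cap B_r(x))|$ and the dissipation $\Diss[U,V]=\mu_-|\mathrm{tr}_\Gamma(U\cap B_r(x))|$ both appear with the right signs. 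Plugging into \eqref{e.abstract_almost_minimizer} and cancelling $\mathrm{Per}(U;\Gamma_{s_0}\setminus B_r(x))$ gives, for a.e. $r$,
\[
\mathrm{Per}(U;B_r(x)) \le m'(r) + (\cos\theta+\mu_-)|\mathrm{tr}_\Gamma(U\cap B_r(x))| + \Lambda_0\, m(r),
\]
where $m'(r)=\mathcal{H}^{d-1}(U^{(1)}\cap\partial B_r(x))$ by the coarea formula. The key point is that because $|\cos\theta+\mu_-|<1$ by the ellipticity hypothesis, the trace term can be absorbed: applying \lref{bv_trace_ineq} (in the one-sided neighborhood coordinates of $\Gamma$, whose $C^2$ character is fixed) we bound $|\mathrm{tr}_\Gamma(U\cap B_r(x))|$ by $\mathrm{Per}(U;B_r(x)\cap\Gamma_\beta) + C\beta^{-1}\vol(U\cap B_r(x)\cap\Gamma_\beta)$ and, choosing $\beta$ proportional to $r$ (legitimate once $r<r_x$ with $r_x$ depending on the $C^2$ bounds of $\Gamma$), obtain $|\mathrm{tr}_\Gamma(U\cap B_r(x))|\le \epsilon\,\mathrm{Per}(U;B_r(x)) + C_\epsilon m(r)/r$; since $(\cos\theta+\mu_-)$ has modulus bounded away from $1$ we may fix $\epsilon$ so that $(\cos\theta+\mu_-)\epsilon<1$ and move that portion of the trace term to the left-hand side.

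Next I would invoke the relative isoperimetric inequality in the ball: $\min\{m(r),\ \vol(B_r(x))-m(r)\}^{(d-1)/d}\le C_d\,\mathrm{Per}(U;B_r(x))$. Since $x$ lies in the support of $U$, for $r$ small we have $m(r)\le\tfrac12\vol(B_r(x))$ is \emph{not} automatic, but either $m(r)$ or its complement is the smaller quantity; in the case the complement is smaller one already has $m(r)\ge\tfrac12\omega_d r^d$ and there is nothing to prove, so we may assume $m(r)^{(d-1)/d}\le C_d\,\mathrm{Per}(U;B_r(x))$. Combining with the absorbed inequality above yields, after absorbing the lower-order terms $\Lambda_0 m(r)$ and $C_\epsilon m(r)/r$ into $m(r)^{(d-1)/d}$ for $r$ small (using $\Lambda_0 r_0\le 1$ and $m(r)\le\omega_d r^d$ so that $m(r)=o(m(r)^{(d-1)/d})$ as $r\to0$ along the support), the clean differential inequality
\[
m(r)^{(d-1)/d} \le C\, m'(r)\qquad\text{for a.e. } r\in(0,r_x).
\]
Since $x$ is in the support of $U$, $m(r)>0$ for every $r>0$; integrating $\tfrac{d}{dr}\,m(r)^{1/d}\ge c$ from $0$ to $r$ (with $m(0^+)=0$) gives $m(r)^{1/d}\ge c\,r$, i.e. $\vol(U\cap B_r(x))\ge c^d r^d = c'\vol(B_r(x))$, which is the claim.

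The main obstacle — and the only place requiring genuine care rather than bookkeeping — is the absorption of the boundary trace term: one must verify that the trace inequality \lref{bv_trace_ineq} can be applied \emph{inside the small ball} $B_r(x)$ with $\Gamma$ replaced by $\Gamma\cap B_r(x)$ (which is again a $C^2$ piece with uniformly controlled geometry once $r<r_x$), that the resulting constant is uniform for $x$ ranging away from $\partial\Gamma_{s_0}\setminus\Gamma$, and that the choice $\beta\sim r$ is compatible with the requirement $\beta<s_0$ from the hypothesis of that lemma. Once this is in place, the rest is the textbook density-estimate ODE argument; the nonsymmetric dissipation and the $\Lambda_0$ term only contribute terms of lower order in $r$ and cause no difficulty because $\Lambda_0 r_0\le 1$.
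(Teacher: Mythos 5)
Your overall architecture matches the paper's: compare with $U\setminus B_r(x)$, invoke the isoperimetric inequality, control the trace term via \lref{bv_trace_ineq}, use the strict ellipticity $|\cos\theta+\mu_-|<1$ to absorb, and integrate the resulting differential inequality. However, the step where you close the trace term has a genuine error.

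You claim that choosing $\beta\sim r$ in \lref{bv_trace_ineq} gives $|\mathrm{tr}_\Gamma(U\cap B_r(x))|\le \epsilon\,\mathrm{Per}(U;B_r(x)\cap\Gamma_{s_0}) + C_\epsilon\, m(r)/r$. The lemma does not yield a small multiple of the perimeter: it gives $|\mathrm{tr}_\Gamma(U\cap B_r(x))|\le \mathrm{Per}(U\cap B_r(x);\Gamma_\beta)+\tfrac{C}{\beta}\vol(U\cap B_r(x)\cap\Gamma_\beta)$, and $\mathrm{Per}(U\cap B_r(x);\Gamma_\beta)$ is bounded by the full perimeter with coefficient $1$, not $\epsilon$ — the perimeter could concentrate arbitrarily close to $\Gamma$, so shrinking $\beta$ does not shrink this piece. (You also dropped the spherical boundary contribution $|\mathrm{tr}_{\partial B_r(x)}(U)\cap\Gamma_\beta|\le m'(r)$ from $\partial^*(U\cap B_r(x))$, though that one is harmless.) Even setting that aside, your choice $\beta\sim r$ is fatal for the absorption: with $\beta\sim r$, the volume term becomes $m(r)/r$, and since $m(r)\le\omega_d r^d$ gives $m(r)/r = m(r)^{(d-1)/d}\cdot m(r)^{1/d}/r \le \omega_d^{1/d}\,m(r)^{(d-1)/d}$, this is \emph{comparable} to $m(r)^{(d-1)/d}$, not $o(m(r)^{(d-1)/d})$. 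The constant in front is $C_\epsilon/(1-(\cos\theta+\mu_-)\epsilon)$ which you cannot force below $1$, so it cannot be swallowed into the isoperimetric term.

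The fix — which is what the paper does — is to take $\beta$ at a \emph{fixed} $O(1)$ scale independent of $r$, so the trace inequality reads $T\le P+S+Cm$ with $C=C(s_0,\Gamma)$ uniform. Then the almost-minimality bound $P\le S+(\cos\theta+\mu_-)T+\Lambda_0 m$ combines with the trace bound (either by direct substitution since $1-(\cos\theta+\mu_-)>0$, or by the paper's device of adding $(1+\varepsilon)$ times one inequality to the other) to give $P+T\le C(S+m)$, whence $m^{(d-1)/d}\le C(m'+m)$. Now $Cm = Cm^{1/d}m^{(d-1)/d}\le C\omega_d^{1/d} r\, m^{(d-1)/d}$ \emph{is} genuinely lower order in $r$, so shrinking $r_x$ absorbs it and leaves $m^{(d-1)/d}\le Cm'$, which integrates to the density bound. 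In short: keep $\beta$ fixed, don't scale it with $r$, and don't claim an $\epsilon$-fraction of the perimeter from the trace lemma.
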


Recall that the measure theoretic support of $U$ is the set of points $x\in\R^d$ such that $\vol(U \cap B_r(x)) >0$ for all $r>0$.

\begin{proof}

For $x\in \Gamma_{s_0}$, set
\[ r_x = \min(r_0, \mathrm{dist}(x,\partial\Gamma_{s_0}\setminus \Gamma))  \]
and set
\[ m_x(r) = \vol(U\cap B_r(x)),\quad r\in (0,r_x) \]
Observe that by the isoperimetric inequality:
\begin{align*}
    m_x(r)^{(d-1)/d} &= \vol(U\cap B_r(x))^{(d-1)/d}
    \\&\leq C_{isop}\mathrm{Per}(U\cap B_r(x))
    \\&=C_{isop}\left(\mathrm{Per}(U; B_r(x)\cap\Gamma_{s_0}) + |\mathrm{tr}_\Gamma(U\cap B_r(x))| + |\mathrm{tr}_{\partial B_r(x)\setminus \Gamma}(U)| \right)
\end{align*}
Here, the terms are related by the almost-minimality inequality for competitor $U\setminus B_r(x)$:
\[ \mathrm{Per}(U; B_r(x)\cap\Gamma_{s_0}) \leq \Lambda_0\vol(U\cap B_r(x)) + (\mu_- + \cos\theta)|\mathrm{tr}_\Gamma(U\cap B_r(x))| \]
and the trace inequality from \lref{bv_trace_ineq}:
\[ |\mathrm{tr}_\Gamma(U\cap B_r(x))| \leq \mathrm{Per}(U; B_r(x)\cap \Gamma_{s_0}) + |\mathrm{tr}_{\partial B_r(x)\setminus \Gamma}(U)| + C\vol(U\cap B_r(x)) \]
We add $(1 + \varepsilon)$ times the first inequality to the second to get:
\begin{align*}
    &\varepsilon\mathrm{Per}(U; B_r(x)\cap\Gamma_{s_0}) + (1 - (1 + \varepsilon)(\mu_- + \cos\theta))|\mathrm{tr}_\Gamma(U\cap B_r(x))| 
    \\&\quad\leq \left((1 + \varepsilon)\Lambda_0 + C\right)\vol(U\cap B_r(x)) + |\mathrm{tr}_{\partial B_r(x)\setminus \Gamma}(U)|
\end{align*}
Here, we make use of the assumption $\mu_- + \cos\theta < 1$ to give room to choose $\varepsilon > 0$ so that both coefficients on the left are positive. Subsequently, we can return to the original computation to obtain
\begin{align*}
    m_x(r)^{(d-1)/d} &\leq Cm_x'(r) + Cm_x(r)
\end{align*}
using that $m_x$ is absolutely continuous with $m_x'(r) = |\mathrm{tr}_{\partial B_r(x)\setminus \Gamma}(U)|$ for a.e. $r$. Up to shrinking $r_x$ based on the explicit value of the constants, we can absorb the $m_x(r)$ into the $m_x(r)^{(d-1)/d}$. Assuming $x$ is in the measure-theoretic support of $U$, we have $m_x$ positive for all $r$, and we can divide to obtain:
\[ 1 \leq C(m_x^{1/d})'. \]
Integrating, we get
\[ c\vol(B_r\cap \Gamma_{s_0}) \leq m_x(r). \]
By the symmetric argument carried out for $\Gamma_{s_0}\setminus U$, we obtain for $x$ in the measure-theoretic support of $\Gamma_{s_0}\setminus U$:
\[ m_x(r) \leq (1-c)\vol(B_r\cap \Gamma_{s_0}). \]
\end{proof}

With similar modifications, the following perimeter estimate can also be shown. We omit the proof.

\begin{lemma}[\cite{DePhilippisMaggi}*{Lemma 2.8}]\label{l.perimeter_bound_near_contact_line}
Let $U$ be a $(\Lambda_0, \mu_\pm, r_0)$-minimizer in $\Gamma_{s_0}$ in the sense of \eref{abstract_almost_minimizer}. Let $x \in \Gamma$ then for $r \leq r_0$ and $t \leq s_0/2$
    \[\textup{Per}(U;B_r(x) \cap \Gamma_t) \leq C r^{d-2}t.\]
\end{lemma}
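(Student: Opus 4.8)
The plan is to adapt the proof of \cite{DePhilippisMaggi}*{Lemma 2.8}, incorporating the dissipation $\Diss$ exactly as in the density estimate above: the competitors used are diffeomorphic images $\Phi_\epsilon(U)$ along a vector field that vanishes on $\Gamma$, so they change neither $\mathrm{tr}_\Gamma(U)$ nor the capillary term, hence $\Diss[U,\Phi_\epsilon(U)]=0$ and \eref{abstract_almost_minimizer} reduces to plain perimeter almost-minimality with error $\Lambda_0\vol(U\triangle\Phi_\epsilon(U))$; the ellipticity $[\cos\theta-\mu_+,\cos\theta+\mu_-]\subset(-1,1)$ enters only in the final step, through the fact that at the contact line the outer normal of $U$ has ``vertical'' component equal to an effective $\cos\theta'\in[\cos\theta-\mu_+,\cos\theta+\mu_-]$, hence bounded away from $\pm1$. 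First I would record the Ahlfors upper bound $\mathrm{Per}(U;B_\rho(y)\cap\Gamma_{s_0})\le C\rho^{d-1}$ for all $y$ and $\rho\le r_0$, obtained by comparing $U$ with $U\cup B_\rho(y)$ and $U\setminus B_\rho(y)$ (all error terms are $O(\rho^{d-1})$ since $\Lambda_0 r_0\le1$ and $|\cos\theta\mp\mu_\pm|<1$). In particular, if $t\ge r/2$ then $\mathrm{Per}(U;B_r(x)\cap\Gamma_t)\le\mathrm{Per}(U;B_r(x)\cap\Gamma_{s_0})\le Cr^{d-1}\le Cr^{d-2}t$, so we may assume $t<r/2$.

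The core new computation is the first variation of $U$ in tubular coordinates $(y,s)\in\Gamma\times(0,s_0)$ along the ``vertical squeeze'' field $X=\eta(y)^2\,\psi(s)\,\partial_s$, where $\partial_s$ is the unit normal to the slices $\{s=\mathrm{const}\}$, $\eta$ is a cutoff equal to $1$ on the $(r/2)$-ball about $x$, supported in the $r$-ball, with $|\nabla\eta|\le C/r$, and $\psi$ increases from $\psi(0)=0$ to $\psi(t)=t$ on $(0,t)$ and then decreases linearly back to $0$ on $(t,r)$, so $|\psi|\le t$ and $|\psi'|\le Ct/r$ on $(t,r)$. Since $\psi(0)=0$ the flow of $X$ fixes $\Gamma$ pointwise, so there is no boundary term and no dissipation term; moreover $\mathrm{div}_{\partial^* U}X=\eta^2\psi'(1-(\nu_U\cdot\partial_s)^2)-2\eta\psi(\nu_U\cdot\partial_s)(\nabla_y\eta\cdot\nu_U)+O(\eta^2\psi)$, the last term from the tubular curvature. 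Plugging into the almost-minimality inequality for $\pm X$ and bounding the wrong-sign contribution of $\psi'<0$ on $(t,r)$, the $\nabla\eta$-term, the curvature term, and the $\Lambda_0\vol$-term by $C(\tfrac1r+\Lambda_0)t\,\mathrm{Per}(U;B_r(x)\cap\Gamma_{s_0})\le Cr^{d-2}t$ (using the Ahlfors bound, $r\le r_0\le1$, and $\Lambda_0 r_0\le1$), one is left with
\[
\int_{\partial^* U\cap(B_{r/2}(x)\cap\Gamma_t)}\bigl(1-(\nu_U\cdot\partial_s)^2\bigr)\,d\mathcal{H}^{d-1}\le C\,r^{d-2}t,
\]
i.e.\ the part of the free surface in the thin slab whose normal is \emph{not} vertical has area $\le Cr^{d-2}t$.

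It remains to control the complementary ``near-vertical'' part $\int(\nu_U\cdot\partial_s)^2$ over $\partial^* U\cap(B_{r/2}(x)\cap\Gamma_t)$, and this is the main obstacle — indeed it is exactly where ellipticity is indispensable, since at a non-capillary boundary or in the interior the estimate is false (a flat hyperplane parallel to $\Gamma$ is a perimeter minimizer with $\mathrm{Per}\sim r^{d-1}$ in every slab of thickness $t$). Following \cite{DePhilippisMaggi}, one uses the density estimates established above to confine $\partial^* U\cap\Gamma_t$ to an $O(s)$-neighborhood in $y$ of the contact line $\partial^*\mathrm{tr}_\Gamma(U)$, and then the pinning of the effective contact angle to the interval forced by $[\cos\theta-\mu_+,\cos\theta+\mu_-]\subset(-1,1)$ keeps $|\nu_U\cdot\partial_s|$ bounded away from $1$ there, so that $(\nu_U\cdot\partial_s)^2\le C\bigl(1-(\nu_U\cdot\partial_s)^2\bigr)$ on this part of $\partial^* U$; combining with the previous display gives $\mathrm{Per}(U;B_r(x)\cap\Gamma_t)\le Cr^{d-2}t$. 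The quantitative bookkeeping in this last step — the covering adapted to the contact line, the metric distortion of the tubular coordinates (absorbed by taking $r_0$ small depending on the $C^2$-character of $\Gamma$ and the ellipticity gap), and making ``confinement plus pinning'' effective — is the technical heart and is carried out in \cite{DePhilippisMaggi}; the final constant $C$ depends only on $d$, the $C^2$-character of $\Gamma$, $\Lambda_0 r_0$, $s_0$, and $1-\max(|\cos\theta-\mu_+|,|\cos\theta+\mu_-|)$.
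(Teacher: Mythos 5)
The early steps are sound: the Ahlfors upper bound $\mathrm{Per}(U;B_\rho(y)\cap\Gamma_{s_0})\le C\rho^{d-1}$ follows from a direct competitor comparison (in fact it does not even need the ellipticity gap, only that the coefficients are bounded), the reduction to $t<r/2$ is trivial, and the first variation along the squeeze field $X=\eta^2\psi(s)\partial_s$ is a correct computation that yields
\[
\int_{\partial^* U\cap B_{r/2}(x)\cap\Gamma_t}\bigl(1-(\nu_U\cdot\partial_s)^2\bigr)\,d\mathcal{H}^{d-1}\le C\,r^{d-2}t.
\]
You are also right that, because $X$ vanishes on $\Gamma$, the capillary and dissipation terms drop out exactly; that observation (ellipticity only enters through the effective angle range $[\cos\theta-\mu_+,\cos\theta+\mu_-]\subset(-1,1)$) is the right framing for why the De Philippis--Maggi lemma extends to this setting.

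The gap is your step controlling the complementary ``near-vertical-normal'' (i.e.\ near-horizontal surface) part $\int(\nu_U\cdot\partial_s)^2$, which you acknowledge is the technical heart but do not actually establish. The claim that ``density estimates confine $\partial^* U\cap\Gamma_t$ to an $O(s)$-neighborhood of the contact line'' does not follow from the density bounds as stated: those give volume lower bounds for $U$ and $U^c$ in balls around points of the measure-theoretic boundary, which is compatible with $\partial^* U$ containing a nearly horizontal sheet at height $s\sim t$ extending a macroscopic distance from the contact line, and it is precisely such a sheet that the first-variation bound leaves uncontrolled. The pinning of the contact angle is a statement at the contact line $\gamma_U\subset\Gamma$ and does not, by itself, propagate into the interior of the slab to control the normal at height $s>0$. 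Some additional mechanism (a comparison/slicing argument, or an iteration that first shows there is at most one essential transition in each short vertical column) is needed to close the estimate, and this is not supplied. Note that the paper itself omits the proof, citing \cite{DePhilippisMaggi}*{Lemma 2.8} and asserting it follows ``with similar modifications'' to the preceding density estimate; that density-estimate proof is a competitor argument combining the isoperimetric inequality, almost-minimality against $U\setminus B_\rho$, and the trace inequality of \lref{bv_trace_ineq} -- which suggests the intended route is competitor-based rather than via the vector-field first variation, so even the part of your argument that is correct may be a different mechanism than the one the paper has in mind. As written, the proposal is an incomplete sketch: it reduces the problem to a nontrivial residual claim rather than proving the stated bound.
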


We recall that $\mathrm{tr}_{\Gamma}(U)$ can be defined for $U\in \mathrm{Cacc}(\Gamma_{s_0})$ as $\lim_{s\to 0^+} \one_U(\cdot,s)$, where the slices $\one_U(\cdot,s)$, interpreted as functions on $\Gamma$, are Cauchy in $L^1(\Gamma, d\mathcal{H}^{d-1})$ \cite{Giusti1984}*{Lemma 2.4, generalized by incorporating the Jacobian for the tubular neighborhood parametrization}. Moreover, the trace satisfies the following distributional interpretation: for all $\varphi\in C^1(\R^d; \R^d)$,
\begin{equation}\label{e.distributional_trace}
    \int_{U} \nabla \cdot\varphi = \int_{\Gamma_{s_0}\cap \partial^*U} \varphi\cdot \nu_U d\mathcal{H}^{d-1} - \int_{\mathrm{tr}_\Gamma(U)} \varphi\cdot\nu_\Gamma d\mathcal{H}^{d-1}
\end{equation}
We use this along with the estimate in \lref{perimeter_bound_near_contact_line} to estimate the length of the contact line.

\begin{lemma}\label{l.local_length_bound}
Let $U$ be a $(\Lambda_0, \mu_\pm, r_0)$-minimizer in $\Gamma_{s_0}$. Then
\begin{equation}
    \mathrm{Per}(\mathrm{tr}_{\Gamma}(U); \Gamma) := \sup_\varphi \int_{\Gamma} \one_{\mathrm{tr}_\Gamma(U)}\nabla_{\Gamma}\cdot \varphi \leq C(\Lambda_0, \mu_\pm, r_0, s_0)
\end{equation}
where the supremum is taken over (tangent) vector fields on $\Gamma$ with compact support. In particular, we bound the length of the contact line as:
\[ \mathcal{H}^{d-2}(\gamma_U) = \mathcal{H}^{d-2}(\partial^*_{\Gamma}\mathrm{tr}_\Gamma(U)) \leq C(\Lambda_0, \mu_\pm, r_0, s_0). \]
Here, $\partial^*_\Gamma$ is the reduced boundary operator on $\Gamma$, which selects boundary points where $\mathrm{tr}_\Gamma(U)$ has a measure-theoretic normal inside $\Gamma$.
\end{lemma}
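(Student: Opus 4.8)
The plan is to bound the distributional perimeter of the trace,
\(\mathrm{Per}(\mathrm{tr}_\Gamma(U);\Gamma)=\sup_\varphi\int_\Gamma \one_{\mathrm{tr}_\Gamma(U)}\,\operatorname{div}_\Gamma\varphi\,d\mathcal H^{d-1}\) over compactly supported tangent fields \(\varphi\) on \(\Gamma\) with \(|\varphi|\le 1\), and then invoke De Giorgi's structure theorem on the \((d-1)\)-manifold \(\Gamma\) to identify this quantity with \(\mathcal H^{d-2}(\partial^*_\Gamma\mathrm{tr}_\Gamma(U))=\mathcal H^{d-2}(\gamma_U)\). The first step is to globalize the local near-boundary perimeter estimate \lref{perimeter_bound_near_contact_line}: covering \(\Gamma\) by finitely many balls \(B_{r_0}(x_i)\) with \(x_i\in\Gamma\) — the number of which is controlled by the size and \(C^2\) character of \(\Gamma\) — and summing the local bounds yields \(\mathrm{Per}(U;\Gamma_t)\le C_1 t\) for every \(t\le s_0/2\), with \(C_1\) depending only on \(\Lambda_0,\mu_\pm,r_0,s_0\) and \(\Gamma\).

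Next I would pass to tubular coordinates \(\xi(y,s)=y+s\nu_\Gamma(y)\), a \(C^1\)-diffeomorphism of \(\Gamma\times(0,s_0)\) onto \(\Gamma_{s_0}\) whose Jacobian and differential are bounded above and below in terms of the \(C^2\) character of \(\Gamma\); then \(v:=\one_U\circ\xi\in BV(\Gamma\times(0,s_0))\) with \(|Dv|(\Gamma\times(0,t))\) comparable to \(\mathrm{Per}(U;\Gamma_t)\). The standard slicing inequality for \(BV\) functions on a product domain gives
\[
\int_0^t |D_\Gamma v(\cdot,s)|(\Gamma)\,ds \;\le\; C\,|Dv|(\Gamma\times(0,t)) \;\le\; C\,C_1\,t,
\]
so for every \(t\le s_0/2\) the slice \(U_s:=\{y\in\Gamma: y+s\nu_\Gamma(y)\in U\}\) satisfies \(\mathrm{Per}_\Gamma(U_s)=|D_\Gamma v(\cdot,s)|(\Gamma)\le 2CC_1\) on a set of \(s\in(0,t)\) of positive measure. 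Choosing \(s_k\downarrow 0\) with this property, and using that \(\one_{U_s}\to\one_{\mathrm{tr}_\Gamma(U)}\) in \(L^1(\Gamma,d\mathcal H^{d-1})\) as \(s\to 0^+\) — the generalization of \cite{Giusti1984}*{Lemma 2.4} recalled before the statement, which also underlies the distributional identity \eref{distributional_trace} — lower semicontinuity of perimeter under \(L^1\) convergence yields \(\mathrm{Per}_\Gamma(\mathrm{tr}_\Gamma(U))\le\liminf_k\mathrm{Per}_\Gamma(U_{s_k})\le 2CC_1\), which is the desired bound.

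An equivalent route, the one suggested by the sentence preceding the statement, argues directly from \eref{distributional_trace}: given \(\varphi\), extend it to \(\hat\varphi\in C^1_c(\R^d;\R^d)\) supported in \(\overline{\Gamma_t}\), tangent to the level sets \(\{d=s\}\) and equal to \(\varphi\) on \(\Gamma\) (so \(\hat\varphi\cdot\nu_\Gamma\equiv 0\) there) via a normal cutoff \(\psi\) with \(\psi(0)=1\), \(\operatorname{spt}\psi\subset[0,t)\); then the trace term in \eref{distributional_trace} drops, \(\bigl|\int_U\operatorname{div}\hat\varphi\bigr|\le C\,\mathrm{Per}(U;\Gamma_t)\le CC_1 t\), and expanding \(\int_U\operatorname{div}\hat\varphi\) in tubular coordinates recovers \(t\int_\Gamma\one_{\mathrm{tr}_\Gamma(U)}\operatorname{div}_\Gamma\varphi+o(t)\) as \(t\to 0\), giving the same conclusion after dividing by \(t\) and taking the supremum over \(\varphi\). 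Either way, I expect the main technical obstacle to be the coordinate bookkeeping near \(\Gamma\): controlling the Jacobian and differential of the tubular map, and — in the second route — verifying that the surface divergence on the parallel surface \(\{d=s\}\) differs from \(\operatorname{div}_\Gamma\varphi\) (under the normal identification) only by errors \(O(s)\) governed by the \(C^2\) character of \(\Gamma\); equivalently, making the comparison between \(\int_0^t|D_\Gamma v(\cdot,s)|(\Gamma)\,ds\) and the tangential part of \(|Dv|(\Gamma\times(0,t))\) fully rigorous through the diffeomorphism.
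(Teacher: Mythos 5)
Your first route is essentially the paper's proof: globalize Lemma~\ref{l.perimeter_bound_near_contact_line} by covering to get $\mathrm{Per}(U;\Gamma_t)\le Ct$, use a coarea/slicing estimate to find slices $U_s$ with uniformly bounded perimeter on $\Gamma$ along a sequence $s\to 0^+$, and pass to the trace via $L^1$-continuity of slices and lower semicontinuity of perimeter; the only cosmetic difference is that the paper applies the coarea formula with the tubular distance function $N$ directly in $\R^d$ (estimating the change-of-variables Jacobian), whereas you first pull back to the product domain $\Gamma\times(0,s_0)$ and then slice. The second route you sketch, via the distributional trace identity \eqref{e.distributional_trace} with a normally-cut-off tangent extension $\hat\varphi$, is a reasonable alternative but not the one taken in the paper.
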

\begin{proof}
Write the tubular coordinate functions as $Y(x) = \mathrm{argmin}_{y\in \Gamma} |x - y|$ and $N(x) = |x - Y(x)|$. Set $U_s = Y(U\cap N^{-1}(\{ s\}))$, the slice of $U$ at distance $s$ from $\Gamma$, projected to $\Gamma$. We identify $U_0$ with $\mathrm{tr}_\Gamma(U)$.

The slices of a $BV$ function with respect to a one-dimensional coordinate are $L^1$-continuous, so we have that $\lim_{s\to 0^+}|U_s \Delta U_0| = 0$. It follows by lower semicontinuity of perimeter, that we have
\[ \mathrm{Per}_\Gamma(U_0) \leq \liminf_{s\to 0^+} \mathrm{Per}_\Gamma(U_s) \]
where we write $\mathrm{Per}_\Gamma$ to emphasize that the perimeter of a subset of $\Gamma$ is computed by testing against compactly supported tangent vector fields to $\Gamma$.

The coarea formula \cite{Maggi_2012}*{Theorem 18.8} gives for $s_1 < s_2$ that
\begin{align*}
    \int_{s_1}^{s_2} \mathrm{Per}_\Gamma(U_s) ds&\leq C\int_{s_1}^{s_2} \mathcal{H}^{d-2}(\partial^*U\cap N^{-1}(\{s\}))ds
    \\&= C\int_{\partial^* U\cap (\Gamma_{s_2}\setminus \Gamma_{s_1})} |\nabla^{\partial^* U}N| d\mathcal{H}^{d-1} 
    \\&\leq C\mathrm{Per}(U; \Gamma_{s_2}\setminus \Gamma_{s_1})
\end{align*}
where here the implicit constant in the first line appears from estimating the Jacobian for the change of variables, and is modified in the last line from estimating $N$ in $C^1$. Applying \lref{perimeter_bound_near_contact_line}, with a suitable covering argument, we have
\[ \mathrm{Per}(U; \Gamma_{s_2}) \leq C s_2 \]
By sending $s_1\to 0$, we conclude that for any $s_2$, we have
\[ \frac{1}{s_2}\int_{0}^{s_2} \mathrm{Per}_\Gamma(U_s) ds \leq C \]
and hence there exists $s\in (0, s_2)$ such that $\mathrm{Per}_\Gamma(U_s) \leq C$. It follows that
\[ \mathrm{Per}_\Gamma(U_0) \leq \liminf_{s\to 0^+} \mathrm{Per}_\Gamma(U_s) \leq C \]
\end{proof}

\bibliography{wilhelmy-articles}

\end{document}